\def\C{{\mathbb C}}
\def\bC{{\bf T}}
\def\bQ{{\bf Q}}
\def\bK{{\bf K}}
\def\Z{{\mathbb Z}}
\def\N{{\mathcal N}}
\def\R{{\mathbb R}}
\def\e{{\varepsilon}}
\def\g{{\gamma}}
\def\G{{\Gamma}}
\def\a{{\alpha}}
\def\b{{\beta}}
\def\d{{\delta}}
\def\l{{\lambda}}
\def\La{{|\partial_x|}}
\def\z{{\zeta}}
\def\p{{\prime}}
\def\K{ {\mathcal{K} } }
\def\H{ {\mathcal{H} } }
\def\P{{\mathcal{P}}}
\def\F{{\mathcal{F}}}
\def\Im{\mbox{Im}}
\def\Re{\mbox{Re}}
\def\id{\,\mbox{id} \,}
\def\+R{+_{_{ \!\! \R}}}
\def\curl{{\,\mbox{curl}\,}}
\def\nn{\nonumber}
\def\wt{\widetilde}
\def\what{\widehat}
\def\bar{\overline}
\def\id{\mbox{id}}
\numberwithin{equation}{section}
\begin{document}

\newtheorem{theo}{Theorem}[section]
\newtheorem{pro}[theo]{Proposition}
\newtheorem{lem}[theo]{Lemma}
\newtheorem{defin}[theo]{Definition}
\newtheorem{rem}[theo]{Remark}
\newtheorem{cor}[theo]{Corollary}

\title{Global solutions for the gravity water waves system in 2d}


\author{Alexandru D. Ionescu}\address{Princeton University}\email{aionescu@math.princeton.edu}

\author{Fabio Pusateri}\address{Princeton University}\email{fabiop@math.princeton.edu}


\begin{abstract} 
We consider the gravity water waves system in the case of a one dimensional interface, for sufficiently smooth and localized initial data,
and prove global existence of small solutions. This improves the almost global existence result of Wu \cite{WuAG}.
We also prove that the asymptotic behavior of solutions as time goes to infinity is different from linear,
unlike the three dimensional case \cite{GMS2,Wu3DWW}.
In particular, we identify a suitable nonlinear logarithmic correction and show modified scattering.
The solutions we construct in this paper appear to be the first global smooth nontrivial solutions of the gravity water waves system in 2d.
\end{abstract}

\maketitle

\tableofcontents

\section{Introduction}

\subsection{The problem}
The evolution of an inviscid perfect fluid that occupies a domain $\Omega_t$ in $\R^n$ ($n \geq 2$) at time $t$,
is described by the free boundary incompressible Euler equations.
If $v$ and $p$ denote respectively the velocity and the pressure of the fluid (which is assumed here to have constant density equal to $1$),
these equations are:
\begin{equation}
\tag{E}
\label{E}
\left\{
\begin{array}{ll}
v_t + v \cdot \nabla v = - \nabla p - g e_n  &   x \in \Omega_t
\\
\nabla \cdot v = 0    &   x \in \Omega_t
\\
v (0,x) = v_0 (x)     &   x \in \Omega_0 \, , 
\end{array}
\right.
\end{equation}
where $g$ is the gravitational constant, which we will assume to be $1$ from now on. 
The free surface $S_t := \partial \Omega_t$ moves with the normal component of the velocity,
and, in absence of surface tension, the pressure vanishes on the boundary:
\begin{equation}
\tag{BC}
\label{BC}
\left\{
\begin{array}{l}
\partial_t + v \cdot \nabla  \,\, \mbox{is tangent to} \,\, \bigcup_t S_t \subset \R^{n+1}
\\
p (t,x) = 0  \,\, , \,\,\, x \in S_t  \, .
\end{array}
\right.
\end{equation}

In the case of irrotational flows, i.e.
\begin{equation}
\label{irro}
\curl v = 0 \, ,                                   
\end{equation}
one can reduce \eqref{E}-\eqref{BC} to a system on the boundary.
Although this reduction can be performed identically regardless of the number of spatial dimensions, 
we only focus on the two dimensional case which is the one we are interested in.
Assume that $\Omega_t \subset \R^2$ is the region below the graph of a function $h : \R_t \times \R_x \rightarrow \R$,
that is $\Omega_t = \{ (x,y) \in \R^2 \, : y \leq h(t,x) \}$ and $S_t = \{ (x,y) : y = h(t,x) \}$.
Let us denote by $\Phi$ the velocity potential: $\nabla \Phi(t,x,y) = v (t,x,y)$, for $(x,y) \in \Omega_t$.
If $\phi(t,x) := \Phi (t, x, h(t,x))$ is the restriction of $\Phi$ to the boundary $S_t$, 
the equations of motion reduce to the following system for the unknowns $h, \phi : \R_t \times \R_x \rightarrow \R$:
\begin{equation}
\label{WWE}
\left\{
\begin{array}{l}
\partial_t h = G(h) \phi
\\
\\
\partial_t \phi = -h - \frac{1}{2} {|\phi_x|}^2 + \frac{1}{2(1+{|h_x|}^2)} {\left( G(h)\phi + h_x \phi_x \right)}^2
\end{array}
\right.
\end{equation}
with
\begin{equation}
\label{defG0}
G(h) := \sqrt{1+{|h_x|}^2} \N(h)
\end{equation}
where $\N(h)$ is the Dirichlet-Neumann operator associated to the domain $\Omega_t$. 
We refer to  \cite[chap. 11]{SulemBook} or \cite{CraSul} for the derivation of the  water wave equations \eqref{WWE}.

Another possible description for \eqref{E}-\eqref{BC} can be given in Lagrangian coordinates
again by deriving, in the case of irrotational flows \eqref{irro}, a system of equations on the boundary $S_t$.
More precisely, following \cite{WuAG}, let $z(t,\a)$, for $\a \in \R$, be the equation of the free interface $S_t$ at time $t$
in Lagrangian coordinates, i.e. $z_t(t,\a) = v (t, z(t,\a))$.
Identifying $\R^2$ with the complex plane we use the same notation for a point $z=(x,y)$ and its complex form $z = x + i y$.
We will then denote $\bar{z} = x - i y \sim (x,-y)$.
The divergence and curl free condition on the velocity $v$ imply that $\bar{v}$ is holomorphic in $\Omega_t$.
Therefore $\bar{z}_t = \H_z \bar{z}_t$, where $\H_\g$ denotes the Hilbert transform\footnote{Integrals like the one in \eqref{HT}
are to be understood in the principal value sense, but for simplicity we will often omit the p.v. notation.} along a curve $\g$:
\begin{align}
\label{HT}
(\H_\g f )(t,\a) := \frac{1}{i \pi} \mbox{ p.v.} \int_\R \frac{ f(t,\b) }{\g(t,\a) - \g(t,\b)} \, \g_\b (t,\b) \, d\b \, .
\end{align}
The vanishing of the pressure in \eqref{BC} implies that $\nabla p$ is perpendicular to $S_t$ and therefore
$- \nabla p = i a z_\a$, with $a := - \frac{\partial P}{\partial n} \frac{1}{|z_\a|}$.
Since $z_{tt}(t,\a) = \left(v_t + v\cdot \nabla v \right) (t,z(t,\a))$, 
one see that \eqref{E}-\eqref{BC}-\eqref{irro} in two dimensions are equivalent to
\begin{align}
\label{WWL}
\left\{
\begin{array}{l}
z_{tt} + i = i a z_\a
\\
\bar{z}_t = \H_z \bar{z}_t \, .
\end{array}
\right.
\end{align}

In \cite{Wu1} Wu was able to reduce \eqref{WWL} to a quasilinear system, and to exploit the weakly hyperbolic structure 
of the new system to obtain local-in-time existence of solutions in Sobolev spaces by energy methods. 
Earlier results for small initial data in two dimensions were proven in \cite{Nalimov,Yosi} and \cite{CraigLim}.
In \cite{Wu2} Wu was also able to prove local existence for the three dimensional problem (two dimensional interface).
Following the breakthrough of \cite{Wu1,Wu2}, there has been considerable amount of work on the local well-posedness of \eqref{E}-\eqref{BC},
also including other effects on the wave motion, such as surface tension on the interface or a finite bottom.
We refer the reader to \cite{ABZ1,ABZ2,AM,BG,CHS,CL,CoutShko2,Lindblad,Lannes,ShZ3} for some of the works on the local well-posedness of \eqref{E}.
Recently, blow-up solutions in the form of turning waves \cite{CCFGL} and ``splash'' solutions \cite{CCFGG,CoutShkoSplash} have been constructed.

The question of existence of global-in-time solutions for small, smooth, 
and suitably localized data, has also received attention in recent years.
In the case of one dimensional interfaces, the only work investigating the 
long time behavior of small gravity waves is that of Wu \cite{WuAG},
who was able to show almost global existence of solutions for \eqref{WWL}.
To do this, the author proposed some new unknowns, which we denote here by $F$, 
and a fully nonlinear change of coordinates, reducing \eqref{WWL} to a system of the form
\begin{align}
\label{cubicintro}
\partial_t^2 F + i\partial_\a F = G 
\end{align}
where $G$ are quasilinear nonlinearities of cubic and higher order with suitable structure. 
Thanks to the cubic nature of this new system Wu was then able to perform (almost optimal) energy estimates 
and obtain existence of solutions up to times of order $e^{c/\e}$, where $\e$ is the size of the initial data.

On the other hand, in the case of two dimensional interfaces, Germain, Masmoudi and Shatah \cite{GMS2} and Wu \cite{Wu3DWW} obtained global solutions. 
The result of \cite{GMS2} relied on the energy method of \cite{ShZ1,ShZ3} and on the space-time resonance method introduced in \cite{GMS1}.
In \cite{Wu3DWW} the author used instead a three dimensional version of the arguments of \cite{WuAG}
to derive a set of equations similar to \eqref{cubicintro}, perform weighted energy estimates on them, and obtain decay
via $L^2 - L^\infty$ type estimates.
Recently, Germain, Masmoudi and Shatah \cite{GMSC} obtained global solutions in three dimensions for capillary waves,
i.e. with surface tension on the interface and no gravitational force.

Here we are interested in the gravity water waves system \eqref{E}-\eqref{BC}-\eqref{irro}
in the case of one dimensional interfaces which are a perturbation of the flat one,
and initial velocity potentials which are suitably small in an appropriate norm.
We aim to prove the existence of global-in-time and pointwise decaying solutions,
and determine their asymptotic behavior as $t \rightarrow \infty$.

\subsection{The main theorem} 
We define first our main spaces of functions. Fix\footnote{We assume a large number of derivatives mostly to simplify the exposition. 
However one can likely reduce this number to, say, $N_0$ between $10$ and $100$ by a slightly more careful analysis. Similarly, 
the parameter $\beta$ in \eqref{defZ}, which is related to the size of the small frequencies, can be allowed to take other values in the 
interval $(0,1/2)$.} 
$N_0 = 10^4$ and define $N_1 := N_0/2 + 4$.
Let $S=\frac{1}{2} t \partial_t + \a \partial_\a$ be the scaling vector field. Given a time interval $I$ and a function 
$f:I\times\mathbb{R}\to\mathbb{C}$ we define the norm
\begin{align}
\label{defX_k}
{\| f (t)\|}_{X_k} :=  {\| f (t)\|}_{H^k} + {\| S f(t) \|}_{H^{k/2}}.
\end{align}
$X_{N_0}$ will be the weighted energy-space for the solution,
expressed in some appropriate modified Lagrangian coordinates, as well as in Eulerian coordinates.

Given the height function $h$ and the velocity potential $\phi$ in Eulerian coordinates, we define the $Z^\p$ norm
\begin{align}
\label{defZp}
{\| (h(t),\phi(t)) \|}_{Z^\p} :=  {\| h(t) \|}_{W^{N_1+4,\infty}} + {\| \Lambda \phi(t) \|}_{W^{N_1+4,\infty}} \, ,
  \quad \mbox{with} \quad \Lambda := {|\partial_x|}^{\frac{1}{2}} \, .
\end{align}
This is the decaying norm that we will estimate.
Decay of this norm at the rate of $t^{-1/2}$ will give us a small global solution to the water wave problem.

Finally, we define the space $Z$ by the norm
\begin{align}
\label{defZ}
{\| f(t) \|}_{Z} :=  \sup_{\xi\in\R} \left| \left( {|\xi|}^\b + |\xi|^{N_1 + 15} \right) \what{f}(\xi,t)  \right| 
\end{align}
where $\beta = 1/100$, and
\begin{align*}
\what{f}(\xi,t) := \int_\R e^{-i x\xi} f(t,x) \, dx
\end{align*}
is the partial Fourier transform in the spatial variable.
This space plays a key role in obtaining decay of the $Z^\p$ norm of solutions, see the linear estimate \eqref{disperseintro}.
The $Z$ norm also plays an important role in proving modified scattering of solutions in Eulerian coordinates.

The paper is concerned with the proof of the following Main Theorem:

\begin{theo}\label{maintheo}
Let $h_0(x) = h(0,x)$ be the initial height of the surface $S_0$,
and let $\phi_0 (x) = \phi(0,x)$ be the restriction to $S_0$ of the initial velocity potential.
Assume that at the initial time one has
\begin{subequations}
\label{initdata}
\begin{align} 
\label{initdataa}
& {\| (h_0, \Lambda \phi_0) \|}_{H^{N_0+2}} + {\left\| x \partial_x \left( h_0, \Lambda \phi_0 \right)  \right\|}_{H^{N_0/2+1}}
  + {\| h_0 + i \Lambda \phi_0 \|}_{Z}  \leq \e_0 \, ,
\end{align}
where $Z$ is defined in \eqref{defZ}.
Moreover, for $x \in \Omega_0$ let $v_0(x) = v(0,x)$, where $v$ is the irrotational and divergence free velocity field of the fluid,
and assume that
\begin{align}
\label{initdatab}
{\| |x|\nabla v_0 \|}_{H^{N_0/2} (\Omega_0)} 
\leq \e_0 \, .
\end{align}
\end{subequations}

\setlength{\leftmargini}{2.0em}
\begin{itemize}
 
\item[(i)] (Global existence) Then there exists $\bar{\e}_0$  small enough, such that for any $\e_0 \leq \bar{\e}_0$,
the initial value problem associated to \eqref{WWE} admits a unique global solution with
\begin{align*}
\sup_{t}  \left[ (1+t)^{-p_0} {\| (h(t), \phi_x (t)) \|}_{X_{N_0}} + 
  {\| h(t) + i\Lambda \phi(t)  \|}_{H^{N_1 + 10}}+\sqrt{1+t} {\| (h(t), \phi(t)) \|}_{Z^\p} \right] \lesssim \e_0 \, ,
\end{align*}
where $p_0 = 10^{-4}$.

\item[(ii)] (Modified scattering) Let $u(t) := h(t) + i \Lambda \phi(t)$, with $\Lambda := {|\partial_x|}^{1/2}$.
Define
\begin{equation*}
G(\xi,t):= \frac{{|\xi|}^{4}}{\pi} \int_0^t|\widehat{u}(\xi,s)|^2 \frac{ds}{s+1} \, , \qquad t\in[0,T] \, .
\end{equation*}
Then there is $p_1 > 0$ such that
\begin{equation}
\label{Zbound}
(1+t_1)^{p_1}\Big\|(1+|\xi|)^{N_1} \big[e^{iG(\xi,t_2)}e^{it_2\Lambda(\xi)}\widehat{u}(\xi,t_2)
  -e^{iG(\xi,t_1)}e^{it_1\Lambda(\xi)} \widehat{u}(\xi,t_1)\big]\Big\|_{L^2_\xi}
  \lesssim \varepsilon_0 \, ,
\end{equation}
for any $t_1\leq t_2\in [0,T]$. In particular, there is $w_\infty\in L^2((1+|\xi|)^{2N_1}d\xi)$ with the property that
\begin{equation}\label{scatt}
\sup_{t\in[0,\infty)}(1+t)^{p_1}{ \left\|   
  (1+|\xi|)^{N_1} \big( e^{iG(\xi,t)} e^{it\Lambda(\xi)} \what{u}(\xi,t)  -  w_\infty(\xi) \big) \right\|}_{L^2_\xi}\lesssim \varepsilon_0 \, .
\end{equation}
\end{itemize}
\end{theo}

\begin{rem} The first norm in \eqref{initdataa} ensures that our initial data is small and smooth in Sobolev spaces of high regularity.
Notice that we are assuming that the vertical variation from equilibrium of the interface, given by the graph of $h$, 
as well as half derivative of the velocity potential are small.
This is consistent with the conserved energy (and Hamiltonian) 
\begin{equation*}
E_0(h,\phi) := \frac{1}{2} \int \phi G(h) \phi \, dx + \frac{1}{2} \int h^2 \, dx\approx \|h+i\Lambda\phi\|_{L^2}^2,
\end{equation*}
for solutions of \eqref{WWE}.
The second norm in \eqref{initdataa}, properly evolved in time, gives some control of certain weighted norms of the solution.

The key new ingredient in \eqref{initdataa} is the smallness of $h_0$ and $\Lambda \phi_0$ in the $Z$-norm defined in \eqref{defZ}. 
The $Z$-norm is the key new component of our global argument: it is the only strong norm of the solution that we can 
control uniformly in time, while the other energy-type norms are allowed to increase slowly in time. Furthermore, 
the $Z$-norm allows us to describe properly the modified scattering of the solution.

Finally, \eqref{initdatab} is similar to a condition imposed by Wu in \cite{WuAG}.
We use it as in the cited paper to guarantee that the energy functional 
on which energy estimates are based is small at time $t=0$.
\end{rem}

\begin{rem} 
The solutions can also be defined on the time interval $(-\infty,0]$, since the equations are time-reversible. 
The global solutions we construct in Theorem \ref{maintheo} appear to be the first smooth nontrivial global solutions 
of the gravity water waves system \eqref{E}--\eqref{BC} in 2 dimensions. 
\end{rem}

\begin{rem}
A more precise statement of modified scattering can be found in Lemma \ref{bigbound2}, 
in terms of certain modified Eulerian variables. Also, more precise bounds on the solution, 
both in terms of the Eulerian variables $(h,\phi)$ and the Lagrangian variable $z$ can be found in section \ref{secproofmain},
see \eqref{apriori0i}--\eqref{aprioriki}.
\end{rem}

\subsection{Main ideas in the proof}\label{secintro2}

If one is interested in the long-time existence of small smooth solutions to quasilinear dispersive and wave equations, 
such as \eqref{WWE} or \eqref{WWL}, 
there are two main aspects one needs to consider: controlling high frequencies and proving dispersion.
The first aspect is generally connected to the construction of high order energies which control the Sobolev norm of a solution.
The second aspect is related to $L^p$ decay estimates, and to estimates of weighted norms.
When dealing with the water waves system both of these aspects are extremely delicate. 

\subsubsection{Supercriticality, energy estimates and normal forms}
The general strategy for obtaining a global small solution 
usually starts with local-in-time energy estimates.
The aim of the energy method is to construct an energy functional $E(t)$ such that
\begin{equation}
\label{Ein}
 E(t) \sim {\| u(t) \|}_{H^N}^2  \qquad \mbox{and} \qquad \frac{d}{dt} E(t) \lesssim E(t)^{3/2} \, .
\end{equation}
Here, the power $3/2$ is dictated by the quadratic nature of the nonlinearities in \eqref{WWE} or \eqref{WWL}.
The estimates \eqref{Ein} are often the key ingredient in obtaining local solutions,
and for initial data of size $\e$ they give existence for times of order $1/\e$.
We remark here that the construction of an energy satisfying \eqref{Ein}  for the water waves system is particularly challenging.
Nevertheless it has been done in several works, such as the already cited \cite{ABZ1,BG,CL,Lindblad,Lannes,ShZ3,Wu1,Wu2}, 
thanks to considerable insight into the structure of the equations.

To extend a local solution for longer times one needs to engage the dispersive effects of the equation.
One possibility is to try to upgrade \eqref{Ein} to
\begin{equation}
\label{Ein2}
{\| u(t) \|}^2_{H^N} +  {\| u (t) \|}^2_W  \lesssim E(t) \qquad \mbox{and} \qquad  \frac{d}{dt} E(t) \lesssim \frac{\e}{t^a}  \, E(t) \, ,
\end{equation}
provided the solution decays like $t^{-a}$ in some $L^\infty$-based space. 
The W-norm in \eqref{Ein2} is supposed to encode some information about the localization of the flow.
In the best case scenario a bound of the form ${\| u (t) \|}_W \lesssim 1$ implies the desired $t^{-a}$ decay.
Thus, if one can prove \eqref{Ein2} with $a > 1$ small solutions will exist globally and scatter to a linear solution.
If $a = 1$ solutions will automatically exist almost globally and further analysis\footnote{
Examples of such analysis are the classical vector fields method of Klainerman \cite{K0,K1}, 
or the more recent papers \cite{GMS1,GMS2,GNT1,zakharov,IP1,IP2,nullcondition} on global regularity results for certain physical systems.}
is needed in this critical case to show global existence and determine the asymptotic behavior.
If $a < 1$ the problem of global existence and scattering is much more difficult.
This case is referred to as scattering supercritical and it is the case of the $2d$ water waves problem, 
since solutions of the linear equation $i \partial_t u - \Lambda u = 0$, $\Lambda = {|\partial_x|}^{1/2}$, decay at the rate $t^{-1/2}$.

In the work of Wu \cite{WuAG} on the almost global existence in the two dimensional problem, 
the author relied on a nonlinear version of a normal form transformation.
Starting from the Lagrangian formulation \eqref{WWL}, Wu proposed some new quantities,
and a diffeomorphism depending fully nonlinearly on the solution, 
such that the set of equations obtained in the new coordinates
admit a certain type of energy estimates, consistent with cubic nonlinearities. More precisely, let us denote, schematically, by $F$ the new transformed unknowns, after the nonlinear change of variables. The point of Wu's remarkable construction is that the variables $F$ satisfy nonlinear equations like \eqref{cubicintro} with {\it{cubic}}-type nonlinearities. As a consequence, Wu defines a suitable energy functional $E(t)$ and proves the following type of energy estimates:
\begin{align}
 \label{EinWu}
 {\| F(t) \|}^2_{H^N} + {\| F(t) \|}^2_{W} \lesssim E (t) \qquad 
    \mbox{and} \qquad \frac{d}{dt} E (t) \lesssim  {\| F(t) \|}^2_{W^{\frac{N}{2} ,\infty}} \, E(t)  \log t  +  \frac{1}{t}  E^2 (t)\, .
\end{align}
These estimates can then be combined with $L^2-L^\infty$ estimates, which show that $F(t)$ decays pointwise like $t^{-1/2}$. 
Therefore $E(t) \lesssim \e^2$ as long as $\log t \lesssim \e^{-1}$, which concludes the proof of almost global existence.

\subsubsection{Dispersion and asymptotic behavior}

To pass to global existence, our first concern is to improve \eqref{EinWu} by eliminating logarithmic losses.
We will use the same approach of \cite{WuAG}, and the same equations derived there, to show an estimate of the form
\begin{align}
 \label{EinWuimp}
 {\| F(t) \|}^2_{H^N} + {\| F(t) \|}^2_{W} \lesssim E (t) \qquad 
    \mbox{and} \qquad \frac{d}{dt} E (t) \lesssim  {\| F (t) \|}^2_{Z_\infty} \, E(t)  \, ,
\end{align}
for some $L^\infty$ based space $Z_\infty$ which is stronger than $W^{\frac{N}{2},\infty}$.
Such an estimate is achieved by carefully analyzing the singular integrals (Calder\'{o}n commutators) appearing in the cubic nonlinearities,
and exploiting special structures present in some of them. 

Another important point is that in order to justify the existence of Wu's diffeomorphism $k$ 
and of the new unknowns $F$ for all times $t$, we need appropriate a priori control on Wu's change of coordinates.
This is obtained by taking advantage of a certain null structure present in the transformation.

Thanks to \eqref{EinWuimp} one can guarantee $E(t) \lesssim_{p_0} \e^2 t^{2p_0}$, for any fixed $p_0>0$, and for all $t \in [0,\infty)$,
provided  ${\| F(t) \|}_{Z_\infty} \lesssim \e t^{-1/2}$.
However, since $E(t)$ is forced to grow in time, although just slightly, 
one cannot obtain the desired sharp $Z_\infty$ decay through energy estimates like those in \cite{WuAG}.

Our main idea in this paper  is to use also the Eulerian formulation of the equations \eqref{WWE} 
for the purpose of proving decay and modified scattering. More precisely, we achieve this by bootstrapping at the same time several bounds,
which control the Eulerian variables, the Lagrangian variables, and Wu's diffeomorphism $k$ (see \eqref{apriori0}--\eqref{apriorik}). 
This consists of several steps:

\setlength{\leftmargini}{2.0em}
\begin{itemize}

\item We show first that the Eulerian variables $h$ and $\phi_x$ in \eqref{WWE} are controlled in the energy norms $H^{N_0}$
and in $S^{-1}H^{N_0/2}$ by the energy $E(t)$ of the modified Lagrangian variables. Here $S=(1/2)t\partial_t+x\partial_x$ is the scaling vector field. In other words we transfer the energy and weighted energy bounds (which are expected to increase slowly in time as a result of the bootstrap argument) to the Eulerian variables $h,\phi_x$.

\item The variables $h$ and $\phi$ satisfy the equations \eqref{WWE}, which can be written schematically in the form
\begin{equation*}
\begin{split}
&\partial_t h  = |\partial_x| \phi  - \partial_x( h  \partial_x \phi) - |\partial_x| (h |\partial_x| \phi)+\mathrm{Cubic}(h,\phi_x);\\
&\partial_t \phi   = - h - (1/2){|\phi_x|}^2 + (1/2){||\partial_x|\phi|}^2 + \mathrm{Cubic}(h,\phi_x),
\end{split}
\end{equation*}
where the remainders are cubic expression of $h,\phi_x$.  We use a normal form transformation $H=h+A(h,h)$, $\Psi=\phi+B(h,\phi)$, 
for suitable bilinear operators $A$ and $B$ to eliminate the quadratic nonlinearities and reduce this to an evolution equation of a 
complex variable with a cubic nonlinearity. More precisely, letting $V=H+i\Lambda\Psi$, we show that $V$ satisfies an equation of the form
\begin{equation}
\label{eqvintro}
\partial_t V+i\Lambda V = C(V,\bar{V}),\qquad\Lambda=|\partial_x|^{1/2},
\end{equation}
where $C$ is a nonlocal cubic quasilinear nonlinearity depending on all possible combination of $V$ and $\bar{V}$, and some of their derivatives. 
Moreover, we show that the variable $V$ satisfies similar energy and weighted energy estimates as the functions $h$ and $\phi_x$, 
i.e. with a slow increase in time.

\item Finally, to analyze\footnote{This step was considered, as a model case, in \cite{FNLS}. More precisely, we considered 
the semilinear Cauchy problem
\begin{equation*}
\partial_t u +i\Lambda u = ic_0{|u|}^2 u + c_1 u^3 + c_2 u \bar{u}^2  + c_3 \bar{u}^3, 
\end{equation*}
$c_0\in\mathbb{R}$, and $c_1,c_2,c_3 \in \C$. This is a simplified semilinear version of the quasilinear equation \eqref{eqvintro}, 
and energy and weighted energy estimates are not an issue. However, to prove global existence and pointwise decay, one still needs 
to identify an appropriate logarithmic correction, and prove modified scattering using a norm similar to the $Z$ norm. 
A similar argument was used in \cite{KP} in the case of scattering critical semilinear Schr\"{o}dinger equations (see also \cite{ozawa,HN}). 
For more works on modified scattering we refer the reader to the papers of Delort \cite{DelortKG1d},
Hayashi and Naumkin \cite{HN,HNKdV}, Lindblad and Soffer \cite{LSKG}, Deift and Zhou \cite{DZmKdV,DZNLS},
and references therein.
}
the equation \eqref{eqvintro} we use the key $Z$ norm defined in \eqref{defZ}. Unlike all the other energy and weighted energy norms, 
which are allowed to increase slowly at $t\to \infty$, the $Z$ norm of the solution is not based on $L^2$-type of spaces and is the 
only strong norm we are able to control uniformly in time. Letting $f(t)=e^{it\Lambda}V(t)$ and using the Fourier transform method, 
we identify an appropriate nonlinear correction $L=L(\widehat{f})$ and show that the function 
$t\to \mathcal{F}^{-1}[e^{iL(\xi,t)}\widehat{f}(\xi,t)]$ converges, at a polynomial rate, as $t\to\infty$. 
This suffices to prove global existence and modified scattering.

\end{itemize}
The crucial $t^{-1/2}$ pointwise decay of the solution, which is needed to close the energy estimates, 
is then a consequence of the linear bound in Lemma \ref{dispersive},
\begin{equation}
\label{disperseintro}
{\|e^{i t\Lambda}h\|}_{L^\infty} \lesssim (1+|t|)^{-1/2} {\|\,|\xi|^{3/4}\widehat{h}(\xi)\|}_{L^\infty_\xi}
  +(1+|t|)^{-5/8}\big[ {\|x\cdot\partial_x h\|}_{L^2} + {\|h\|}_{H^2} \big].
\end{equation}
Notice that this pointwise bound requires sharp control of the $Z$ norm, 
but can tolerate slow increase of the energy norms in time. This is consistent with the information we have on our solutions.

\subsubsection{Plan of the paper}
In section \ref{secproof} we describe in detail the strategy of our proof through Propositions  \ref{prolocal2}-\ref{prodecay}.
In section \ref{secproofmain} we prove the main Theorem \ref{maintheo} assuming these propositions.
In section \ref{secEuler} we state Propositions \ref{proE1}-\ref{proE4} and show how they imply the decay 
and the control of lower Sobolev norms stated in Proposition \ref{prodecay}.
Propositions \ref{proE1}-\ref{proE4} are then proved in sections \ref{secproE1} and \ref{secproE4}.
In section \ref{secL} we describe the change of coordinates used by Wu in \cite{WuAG}, the cubic equations obtained there,
and the associated energy functional.
In section \ref{secproenergy} we prove the energy estimates contained in 
Proposition \ref{proenergy} via Propositions \ref{proenergy1}-\ref{proenergy3}.
We then prove Proposition \ref{prok}. i.e. that the change of coordinates used is a diffeomorphism,
on any time interval where one has a small solution satisfying certain a priori bounds.
The transition of energy norms to Eulerian coordinates is done in section \ref{secproLE}, where we prove Proposition \ref{proLE}.
In appendix \ref{appWu} we first give some variants of the estimates used in \cite{WuAG} that are compatible with our energy estimates.
Section \ref{secop} contains some estimates for singular integral operators of ``Calder\'{o}n commutators'' type
that are used in the course of the energy estimates.
In appendix \ref{secsym} we calculate the resonant contribution of the cubic nonlinearities in Eulerian coordinates, 
after the application of the normal form. 
Appendix \ref{appR} contains estimates for the quartic and higher order remainders in the equation \eqref{WWE}.

\subsection*{Acknowledgements}
The authors are grateful to the anonymous referees for carefully reading the manuscript and for their many useful comments.
After submission of this manuscript a different proof of a similar result was given by Alazard and Delort \cite{AD13}.
The first author was partially supported by a Packard Fellowship and NSF Grant DMS 1265818.
The second author was partially supported by a Simons Postdoctoral Fellowship and NSF Grant DMS 1265875.

\section{Strategy of the proof}\label{secproof}
The proof of Theorem \ref{maintheo} relies on a set of different Propositions.
We state these key Propositions below and make some comments.

\subsection{Local Existence} Our strategy for controlling high Sobolev norms of solutions relies on the energy method of Wu \cite{WuAG},
which is developed starting from the Lagrangian formulation of the problem.
Therefore we begin by describing the local existence theory in Lagrangian coordinates.
Assume that at the initial time the interface $S_0$ is given by the graph of a function $h_0:\R \rightarrow \R$, 
with $h_0(\a) \rightarrow 0$ as $|\a| \rightarrow \infty$.
Let $z_0(\a) = \a + i h_0(\a)$ be a parametrization of $S_0 \subset \C$. 
Assume that for some $\mu > 0$
\begin{align}
|z_0(\a) - z_0(\b)| \geq \mu |\a-\b| \qquad \forall \, \a, \, \b \in \R \, .
\end{align}
Let $z = z(t,\a)$ be the equation of the free surface $S_t$ at time $t$, in the Lagrangian coordinate $\a$, with $z(0,\a) = z_0(\a)$.
The following local existence result holds:

\begin{pro}[{Local existence in Lagrangian coordinates \cite[Theorem 5.1]{WuAG}, \cite{Wu1}}]\label{prolocal2}

Let $N \geq 4$ be an integer. Assume that
\begin{align}
\label{localinitdataz}
\sum_{ 0 \leq j \leq N} {\left\| \partial_\a^j 
  \left( z_\a(0) - 1, z_t(0), \partial_\a z_t(0) \right) \right\|}_{H^{1/2}} +
  {\left\| \left( \partial_\a^j  (z_{tt}(0), \partial_\a z_{tt}(0) \right) \right\|}_{L^2} \leq \e_0 \, .
\end{align}
Then there exists a time $T>0$, depending only on the norm of the initial data,
such that the initial value problem for \eqref{WWL}\footnote{
Recall that the Taylor coefficient $a$ can be expressed in terms of $z$ and $z_t$, see formula (5.13) in \cite{WuAG}.}
has unique solution $z = z(t,\a)$ for $t \in [0,T]$, satisfying for all $j \leq N$
\begin{align*}
& \partial_\a^j \left( z_\a - 1, z_t, \partial_\a z_t \right) \in C \left( [0,T],H^{1/2}(\R) \right)
\\
& \partial_\a^j \left( z_{tt}, \partial_\a z_{tt}  \right) \in C \left( [0,T],L^2(\R) \right)  \, ,
\end{align*}
and $|z(t,\a) - z(t,\b)| \geq \nu |\a-\b|$, for all $\a,\b \in \R$, $t \in [0,T]$, and some $\nu > 0$.

Moreover, one has the following continuation criterion:
if $T^\ast$ is the supremum of all such times $T$, then either $T^\ast = \infty$ or 
\begin{align}
\label{contcrit}
\sup_{t\in[0,T^\ast)} \Big( \sum_{ 0 \leq j \leq \left[\frac{N+1}{2}\right] + 2}
  {\| \partial^j_\a z_{tt}(t)  \|}_{L^2} + {\| \partial^j_\a z_t(t)  \|}_{H^{1/2}} 
  +  \sup_{\a \neq \b} \Big| \frac{\a-\b}{z(t,\a) - z(t,\b)} \Big| \Big) = \infty\, .
\end{align}
\end{pro}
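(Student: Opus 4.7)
The strategy is Wu's quasilinearization approach developed in \cite{Wu1} and adapted in \cite{WuAG}. The fully nonlinear system \eqref{WWL} is not amenable to direct energy methods because of the implicit dependence of $\H_z$ on the unknown curve. The plan is to differentiate the equation in time and tangentially to extract a quasilinear structure whose principal part is symmetric-hyperbolic modulo controllable lower order terms, prove energy estimates for this differentiated system, and then reconstruct a solution of the original system by a standard approximation/compactness argument.

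First I would establish positivity of the Taylor sign coefficient ${\bf a}$ at the initial time. Under \eqref{localinitdataz} the curve $z_0$ is a small perturbation of the real line, so the elliptic problem satisfied by the pressure gives ${\bf a}(0,\a)\geq 1/2$ uniformly in $\a$. This positivity is crucial because it makes $(\partial_t^2 + i{\bf a}\partial_\a)$ behave like a wave operator with strictly positive speed, and once it holds initially it propagates in time so long as the solution stays smooth and the chord-arc condition $|z(t,\a)-z(t,\b)|\ge\nu|\a-\b|$ is preserved.

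The core step is the construction of Wu's energy functional of the schematic form
\begin{equation*}
E(t) \;=\; \sum_{j\leq N} \int \frac{1}{{\bf a}}\big|\partial_t \partial_\a^j z_t\big|^2 \, d\a \;+\; \Im\int \partial_\a^j \bar{z}_t \cdot \partial_\a \partial_\a^j z_t \, d\a \;+\; \mbox{lower order terms},
\end{equation*}
designed so that $E(t)$ is equivalent to the square of the norm appearing in \eqref{localinitdataz} and so that its time derivative closes as $\frac{d}{dt}E(t) \leq C\,E(t)^{3/2}$. Computing $\frac{d}{dt}E(t)$ produces commutators $[\partial_t,\H_z]$ and $[\partial_\a^j,\H_z]$ of Calderon type along the curve $z$, whose $L^2$ boundedness requires both the chord-arc condition and Sobolev control of $\partial_\a z$. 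I would track the evolution of the chord-arc constant in parallel, which is straightforward given pointwise control on $z_t - \text{id}$ coming from $E$.

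With the a priori estimates in hand, existence follows by mollifying the data, solving an approximate system (either via a Friedrichs regularization compatible with the quasilinear structure or via the Picard iteration of \cite{Wu1}), and passing to the limit using the uniform bound on $E(t)$ together with weak-$\star$ compactness; uniqueness follows from the same energy identity applied to the difference of two solutions. The continuation criterion \eqref{contcrit} is then argued by contradiction: if the quantity inside \eqref{contcrit} remained bounded up to $T^\ast$, then ${\bf a}$ would stay bounded away from zero, the chord-arc constant would stay positive, all the Calderon commutator bounds would remain valid, $E(t)$ would stay finite, and one could restart the local theorem from a time slightly before $T^\ast$, extending the solution past $T^\ast$. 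The main obstacle, and the deepest point in Wu's original argument, is identifying the precise combination of holomorphic projection, the conjugate ``good derivative'' $D_t = \partial_t + b\partial_\a$, and the Hilbert transform $\H_z$ such that the apparently quasilinear system actually incurs \emph{no} derivative loss in the energy estimate; once this algebraic structure is in place, the rest of the argument is technical but conceptually standard.
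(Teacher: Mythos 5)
The paper does not prove this proposition; it cites it directly from Wu (\cite[Theorem 5.1]{WuAG} and \cite{Wu1}), and your sketch is a faithful high-level reconstruction of Wu's quasilinearization-and-energy approach in those references, including the Taylor-sign positivity, the weighted energy with the $1/\mathbf{a}$ factor, the $E'\lesssim E^{3/2}$ closure, and the continuation criterion by contradiction. So your proposal takes essentially the same route as the cited proof.
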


Given $N = N_0$, and a local solution on $[0,T]$, with initial data as in the main Theorem, we assume that
\begin{align}
\label{apriori0L}
\sup_{[0,T]} \Big( \sum_{ 0 \leq j \leq \frac{N_0}{2} + 2}
  {\| \partial^j_\a z_{tt} (t) \|}_{L^2} + {\| \partial^j_\a z_t (t) \|}_{H^{1/2}} 
  +  {\| z_\a(t)  - 1 \|}_{L^\infty} \Big) \leq \e_0^{3/4} \, .
\end{align}
To obtain a global solution it suffices to prove that the quantity in the left-hand side of the inequality above is bounded by $C \e_0$.

\subsection{The main a priori assumptions}
Under the a priori assumption \eqref{apriori0L}, we have that $\Re z$ is a diffeomorphism on $[0,T]$.
We can then relate the Eulerian variables $h$ and $\phi$ to the Lagrangian map $\a \rightarrow z(t,\a)$, for $t\in[0,T]$,
via the following identities
\begin{align}
\label{h00}
 h (t, \Re z(t,\a) ) = \Im z(t,\a),\qquad  \phi (t, \Re z(t,\a) ) = \psi (t,\a) \,.
\end{align}
Here $\psi(t,\alpha)$ is the trace of the velocity potential in Lagrangian coordinates, which can be obtained from the map $z$, 
for example, using the Bernoulli equation $\psi_t=-\Im z+(1/2)|z_t|^2$, $\psi(0,\alpha)=\phi_0(\alpha)$.

In addition, we also need Wu's change of coordinates $k$ as in \cite{WuAG},
to obtain cubic equations amenable to energy estimates.
The explicit form for $k$ is given in \eqref{defk}, and is the same as the one used by Totz and Wu in \cite{WuNLS}, see formula (2.3) there.
As long as this transformation $k$ is a well-defined diffeomorphism, 
one can associate to the Lagrangian map $z$ a modified Lagrangian map $\z = z \circ k^{-1}$,
and the following vector associated to the new coordinates:
\begin{equation}
\label{defwtL}
 \wt{L}(t,\a) :=  \left( \z_\a(t,\a) - 1, u(t,\a), w(t,\a), \Im \z(t,\a) \right) \, ,
\end{equation}
with 
\begin{align}
\z := z \circ k^{-1}
\quad , \quad
u := z_t \circ k^{-1}
\quad , \quad
w := z_{tt} \circ k^{-1} \, .
\label{w_0}
\end{align}

Our main bootstrap assumptions on the solution are:
\begin{align}
\label{apriori0}
\sup_{t\in [0,T]}  \left[ (1+t)^{-p_0} {\| (h(t), \phi_x (t)) \|}_{X_{N_0}} + 
  {\| h(t) + i\Lambda \phi(t)  \|}_{H^{N_1 + 10}}+\sqrt{1+t} {\| (h(t), \phi(t)) \|}_{Z^\p} \right] \leq \e_1 \, ,
\end{align}
\begin{align}
\label{aprioriL1}
\sup_{t \in [0,T]}  \left[(1+t)^{-p_0} {\| \wt{L}(t) \|}_{X_{N_0}}+{\| \wt{L}(t) \|}_{H^{N_1+5}} + \sqrt{1+t}  {\| \wt{L}(t) \|}_{W^{N_1,\infty}} \right] \leq \e_1 \,,
\end{align}
and
\begin{align}
\label{apriorik}
\sup_{t\in [0,T]}  {\| k_\a (t) - 1 \|}_{W^{N_0/2+3,\infty}}  \leq \e_1 \, ,
\end{align}
where $X_{N_0}$ is defined by \eqref{defX_k} and $Z^\p$ is defined in \eqref{defZp}, $\e_1\leq \e_0^{3/4}$, and $p_0=10^{-4}$. 
In other words we assume a priori control on the Eulerian variables $(h,\phi)$, 
on the modified Lagrangian variable $\zeta$, and on the diffeomorphism $k$. 
To close the bootstrap argument we need to prove improved control on these quantities; 
this is the content of Propositions \ref{prok}--\ref{prodecay} below.

\subsection{The main propositions} Our first proposition, which is proved in section \ref{secprok}, provides improved control on the diffeomorphism $k$.

\begin{pro}[Control on the diffeomorphism $k$]\label{prok}
Assume that \eqref{apriori0}--\eqref{apriorik} hold and, in addition,
\begin{align}
\label{kinit}
{\| k_\a(0) - 1 \|}_{W^{N_0/2+3,\infty}}  \lesssim \e_0 \,.
\end{align}
Then
\begin{align}
\label{apriorikconc}
\sup_{t\in [0,T]}  {\| k_\a (t) - 1 \|}_{W^{N_0/2+3,\infty}}  \lesssim \e_0 +\e_1^2.
\end{align}
\end{pro}
The proof of the above proposition relies crucially on the exploitation of a special null structure present in the transformation $k$.

Our second proposition concerns improved control of the highest energy norm in modified Lagrangian variables. More precisely:

\begin{pro}[{Energy estimates for the modified Lagrangian variables}]\label{proenergy}
Assume that \eqref{apriori0}--\eqref{apriorik} hold. Then
\begin{align}
\label{concproenergy}
\sup_{t\in [0,T]} (1+t)^{-p_0}{\| \wt{L}(t) \|}_{X_{N_0}} \lesssim \e_0 + \e_1^2.
\end{align}
\end{pro}

This is proved in section \ref{secproenergy}. We follow, to a large extent, the proof of Wu in \cite{WuAG},  using cubic equations for the  ``good unknowns'' related to $\wt{L}$, and performing energy estimates. However, some arguments need to be adjusted in order to avoid the logarithmic losses in the energy bound (compare with \eqref{EinWu}).

The next step consists in translating the bounds given by the energy estimates in terms of the modified Lagrangian coordinates,
to bounds on the norms of the Eulerian variables $h$ and $\partial_x \phi$.

\begin{pro}[{Transition to Eulerian coordinates}]\label{proLE}
Assume that \eqref{apriori0}--\eqref{apriorik} hold. Then, for $t\in[0,T]$,
\begin{align}
\label{conc1proLE}
& {\| (h(t) , \partial_x \phi(t)) \|}_{X_{N_0}}  \lesssim {\| \wt{L}(t) \|}_{X_{N_0}} ,
\end{align}
and
\begin{subequations}
\begin{align}
\label{conc2proLE}
& {\| \wt{L}(t) \|}_{H^{N_1+5}}  \lesssim {\| (h(t) , \partial_x \phi(t)) \|}_{H^{N_1+7}}  + \e_1^2\,,
\\
\label{conc3proLE}
& {\| \wt{L}(t) \|}_{W^{N_1,\infty}} \lesssim {\| (h(t), \phi(t)) \|}_{Z^\p}  \, .
\end{align}
\end{subequations}
\end{pro}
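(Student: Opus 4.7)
Introduce the modified Lagrangian-to-Eulerian map $\eta(t,\a) := \Re\z(t,\a)$. From \eqref{defwtL}, Proposition \ref{prok}, and the a priori hypotheses \eqref{aprioriL1}--\eqref{aprioriL2}, the quantity $\eta_\a - 1$ is small in both $W^{N_1+1,\infty}$ and $H^{N_0-1}$, so $\eta(t,\cdot)$ is a bi-Lipschitz diffeomorphism close to the identity for every $t\in[0,T]$, with inverse $\eta^{-1}$ enjoying analogous bounds. Identity \eqref{h00} then reads $h\circ\eta = \Im\z$, and combining \eqref{phi00} with the $k$-transformation gives $\phi\circ\eta = \psi\circ k^{-1}$. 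The velocity-trace relation $z_t = v\circ z$ and the Euler system \eqref{WWL} allow us to rewrite
\begin{align*}
u = v|_{S_t} \circ \z, \qquad w = z_{tt}\circ k^{-1} = -i + i\mathbf{a}\,\z_\a,
\end{align*}
where both $v|_{S_t}$ and the Taylor coefficient $\mathbf{a}$ can be reconstructed from the Eulerian pair $(h,\phi)$ through the Dirichlet--Neumann calculus recorded in Appendix \ref{appR}.

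\textbf{Proof of \eqref{conc1proLE}.} For the $H^{N_0}$ part, apply standard Moser-type composition estimates to $h = \Im\z\circ\eta^{-1}$, using only the $L^\infty$ smallness of $\eta_\a - 1$ at the top derivative level and Sobolev control below. For $\phi_x$, differentiate $\phi\circ\eta = \psi\circ k^{-1}$ and use that $\psi_\a$ is essentially the tangential component of $u$, yielding $\|\phi_x\|_{H^{N_0}}\lesssim \|u\|_{H^{N_0}} + \|\z_\a - 1\|_{H^{N_0}}\|u\|_{L^\infty}\lesssim\|\wt L\|_{X_{N_0}}$. For the scaling piece, express the Eulerian operator $S_x = \tfrac12 t\partial_t + x\partial_x$ in $\a$-coordinates as $S_x(f\circ\eta^{-1}) = (S_\a f)\circ\eta^{-1} + R$, where the commutator $R$ is schematically $[(t\eta_t + \eta - \a)\eta_\a^{-1}]\,(\partial_\a f)\circ\eta^{-1}$; using \eqref{apriori0}, \eqref{aprioriL1}--\eqref{aprioriL2} and Proposition \ref{prok} to estimate $t\eta_t$ and $\eta-\a$ in appropriate norms, $R$ is quadratically small in $\wt L$, and its $H^{N_0/2}$ norm is absorbed.

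\textbf{Proof of \eqref{conc2proLE}--\eqref{conc3proLE}.} The reverse identity $\Im\z = h\circ\eta$ combined with composition estimates gives $\|\Im\z\|_{H^{N_1+5}} \lesssim \|h\|_{H^{N_1+5}} + \e_1^2$. The regularity jump $N_1+5\to N_1+7$ on the Eulerian side accommodates the two-derivative loss incurred in solving $u = v|_{S_t}\circ\z$ and $w = -i+i\mathbf{a}\z_\a$ for $v|_{S_t}$ and $\mathbf{a}$ via the Dirichlet--Neumann operator applied to $(h,\phi)$, then composing with the near-identity map $\z$. The pointwise estimate \eqref{conc3proLE} is obtained by writing every component of $\wt L$ as a nonlinear expression in $h$, $h_x$, $\Lambda\phi$, $G(h)\phi$ composed with $\eta$ and $k^{-1}$, and applying $L^\infty$ product/composition estimates; the $Z^\p$-norm of $(h,\phi)$ controls all the pieces at level $W^{N_1,\infty}$.

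\textbf{Main obstacle.} The delicate point is transporting the scaling vector field across the fully nonlinear time-dependent change of coordinates $k$. Verifying that the commutator $R$ above, as well as its counterpart for $\phi$, lies in $H^{N_0/2}$ with purely quadratic smallness requires (i) the null structure of $k$ already used in Proposition \ref{prok} to handle $t k_t$ and $k-\id$, and (ii) the mapping properties of the Dirichlet--Neumann operator on scaling-type inputs, since $t\eta_t = tz_t\circ k^{-1}$ modulo corrections contains terms of the form $G(h)\phi$ differentiated by $S$. Once these are controlled, the remaining work reduces to Moser inequalities and standard Sobolev product estimates.
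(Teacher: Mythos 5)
Your sketch has the right macroscopic shape, but it misses the crucial structural tool that makes the scaling-field commutator tractable. You correctly identify that the problem reduces to controlling a commutator term involving $S\Re\z - \Re\z$ (your expression $(t\eta_t + \eta - \a)\eta_\a^{-1}$ is, up to bookkeeping, $-(S\eta-\eta)/\eta_\a$). However, you then propose to handle this by re-examining $tk_t$, the null structure of $k$, and Dirichlet--Neumann mapping properties on scaling-type inputs; this would be both circular and unnecessarily heavy. The paper's mechanism is instead a purely algebraic holomorphicity identity: since $\bar\z - \a$ is the trace of a function antiholomorphic below the surface, $(I-\H_\z)(\bar\z-\a)=0$, hence $(I-\H_\z)(\Re\z-\a) = i(I-\H_\z)\Im\z$ (Lemma \ref{lemRez}). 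Feeding this into the inversion Lemma \ref{lemI-Hf} and the composition rule $Sf\circ g = S(f\circ g) - (f'\circ g)(Sg-g)$ (Lemma \ref{lemcomp}) yields $\|S\Re\z - \Re\z\|_{H^{N_0/2}} \lesssim \|\wt L\|_{X_{N_0}}$ directly, with no recourse to the explicit structure of $k$. Without this identity, $\Re\z - \a$ has no a priori reason to be small (it can drift), so your route has a genuine gap.

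Two further imprecisions. First, for $\|\partial_x\phi\|_{X_{N_0}}$ you quote $\|\phi_x\|_{H^{N_0}}\lesssim\|u\|_{H^{N_0}}+\cdots$ on the grounds that "$\psi_\a$ is essentially the tangential component of $u$," but the paper's actual route is $(I-\H_\z)(\phi\circ\Re\z)=\l$, which must be inverted via Lemma \ref{lemI-Hf}; the inversion produces a term $\|(\partial_\a f,\H\partial_\a f)\|_{W^{N_0/2+1,\infty}}\|\z_\a-1\|_{X_{N_0}}$, and controlling $\|\H\partial_\a f\|_{W^{N_0/2+1,\infty}}$ through Lemma \ref{lemHinfty} requires $\|\phi\|_{W^{N_0/2+3,\infty}}$, which the $Z'$ norm alone does not give (it only controls $\Lambda\phi$); the paper imports Corollary \ref{corproE4} for this. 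Second, for \eqref{conc2proLE} the paper does not reconstruct the velocity trace or the Taylor coefficient via the Dirichlet--Neumann expansion; it uses Lemma \ref{lemenergy31} to write $L^-$ as a constant linear combination of $\partial_\a\chi$, $\partial_\a\l$ plus a quadratic remainder, and then uses $\chi = 2i(I-\H_\z)(h\circ\Re\z)$ and $\l = (I-\H_\z)(\phi\circ\Re\z)$ together with the flat-Hilbert-transform approximation of $\H_\z$; this also explains the $N_1+7$ loss. Also note that under the a priori hypotheses only $\|\eta_\a-1\|_{H^{N_1+5}}\lesssim\e_1$ holds uniformly in time, not $\|\eta_\a-1\|_{H^{N_0-1}}\lesssim\e_1$ (the latter grows like $(1+t)^{p_0}$), so your composition estimates for the top-order Sobolev piece need the standard care of keeping high norms linear.
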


The proof of Proposition \ref{proLE} is given in section \ref{secproLE}.

Finally, we use the Eulerian formulation of the equations to show the decay of the $Z^\p$-norm and bound lower Sobolev norms:

\begin{pro}[{Control of dispersive and lower Sobolev norms}]\label{prodecay}
Assume that \eqref{apriori0}--\eqref{apriorik} hold. Then
\begin{align}
\label{conc1prodecay}
\sup_{[0,T]} \sqrt{1+t} {\| (h(t),\phi(t)) \|}_{Z^\p}  \lesssim \e_0 + \e_1^2
\end{align}
and
\begin{align}
\label{conc3prodecay}
{\| h(t) + i\Lambda \phi(t)  \|}_{H^{N_1 + 10}} \lesssim \e_0 +\e_1^2.
\end{align}
\end{pro}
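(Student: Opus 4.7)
The plan is to work with the complex Eulerian variable $u := h + i\Lambda \phi$, which under \eqref{Euler} satisfies a scalar dispersive equation of the form
\begin{equation*}
i\partial_t u - \Lambda u = Q(u,\bar u) + C(u,\bar u) + R(u,\bar u),
\end{equation*}
where $Q$, $C$, and $R$ collect the quadratic, cubic, and quartic-and-higher nonlinearities respectively. Since the decay rate of the linear group $e^{-it\Lambda}$ is $t^{-1/2}$, the quadratic term $Q$ is scattering-supercritical and must be removed: exploiting the absence of time-resonances for the bilinear phase $\pm|\xi|^{1/2}\pm|\xi-\eta|^{1/2}\pm|\eta|^{1/2}$ on the output frequency variable (as recalled around \eqref{WWcubicmodel}), one builds a bilinear normal form $v := u + B(u,u)$ so that $v$ solves a cubic equation
\begin{equation*}
i\partial_t v - \Lambda v = \widetilde{C}(v,\bar v) + (\text{quartic and higher}),
\end{equation*}
which brings the problem into the framework of the model \eqref{modeleq} analyzed in \cite{FNLS}.

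The heart of the proof is uniform control of the $Z$-norm of $v$ (hence of $u$), which by the definition \eqref{defZ} amounts to bounding $\bigl(|\xi|^\beta + |\xi|^{N_1+15}\bigr)\widehat{v}(\xi,t)$ in $L^\infty_\xi$. After removing the linear oscillation via $\tilde v(\xi,t) := e^{it|\xi|^{1/2}}\widehat{v}(\xi,t)$, the cubic nonlinearity in Fourier space splits, via stationary phase at the space-time resonant frequencies, into a purely imaginary multiple of $|\tilde v(\xi,t)|^2\tilde v(\xi,t)/t$ plus non-resonant remainders. The resonant part is absorbed by a logarithmic phase correction
\begin{equation*}
w(\xi,t) := \exp\!\Bigl(-i c_\ast(\xi) \int_1^t \frac{|\tilde v(\xi,s)|^2}{s}\, ds \Bigr)\,\tilde v(\xi,t),
\end{equation*}
where $c_\ast(\xi)\in\R$ is the explicit coefficient computed from the resonant symbol of $\widetilde C$ (the Eulerian symbol computation is what Section \ref{secsym} supplies). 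Since $|w|=|\tilde v|=|\widehat{v}|$, it suffices to control $w$, and $\partial_t w$ consists of non-resonant terms that gain either a factor $t^{-1-\delta}$ through integration by parts in time against the cubic phase (away from the resonant set) or a factor $|\xi|^{\beta}$-times smallness by integration by parts in frequency, both estimates being carried out with the help of the bootstrap bounds from \eqref{apriori0} and the weighted bound $\|Su\|_{H^{N_0/2}}\lesssim \epsilon_1(1+t)^{p_0}$. This closes $\|w(t)\|_Z \lesssim \epsilon_0 + \epsilon_1^2$.

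Once $\|\widehat v(t)\|_Z$ is bounded, the $t^{-1/2}$ decay of $\|(h,\phi)\|_{Z^\p}$ follows by a standard stationary-phase dispersive estimate: the $L^\infty$-norm of $e^{-it\Lambda}v$ is controlled near the stationary point of the linear phase by the uniform $Z$-bound (which handles the low and very high frequency regimes through the weights $|\xi|^\beta$ and $|\xi|^{N_1+15}$) together with the $X_{N_0}$-bound on $Sv$ coming from Proposition \ref{proenergy} and \eqref{conc1proLE}, which provides control on $\partial_\xi \widehat v$ on the bulk of frequencies. The main obstacle I expect is the transfer of this decay from $v$ back to $u = h + i\Lambda\phi$: the normal form kernel $B(u,u)$ has low-frequency singularities and loses derivatives, so one must produce paraproduct-type bounds for both $B$ and its approximate inverse in $Z^\p$ and in $H^{N_1+10}$, taking advantage of a null structure in the quadratic symbol to compensate the derivative loss. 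Finally, \eqref{conc3prodecay} follows from \eqref{conc1proLE}–\eqref{concproenergy} by interpolation with the newly established $Z^\p$-decay, using the comfortable gap between $N_1+10$ and $N_0$.
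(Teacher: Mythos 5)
Your high-level strategy (bilinear normal form to remove the quadratic nonlinearity, a logarithmic phase correction to absorb the space--time resonant cubic contribution \`{a} la \cite{FNLS}, then a $Z$-norm bootstrap plus a stationary-phase dispersive bound) is indeed the one the paper follows via Propositions \ref{proE1}--\ref{proE4}. However, there are two genuine gaps.

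First, the normal form $v := u + B(u,u)$ on the scalar variable $u = h + i\Lambda\phi$ is \emph{not} what the paper uses, and your own remark about low-frequency singularities of $B$ hints at why it would fail: the a priori bound \eqref{apriori0} controls $(h,\partial_x\phi)$ in $X_{N_0}$ and $Z^\p$, not $\phi$ or $\Lambda\phi$ in $L^2$ at low frequency, so a transformation whose symbol produces outputs depending on $\phi$ (rather than $\partial_x\phi$ or $\Lambda\phi$) is not estimable. The paper resolves this \emph{preemptively}, not by exhibiting a null structure as you suggest, but by formulating the normal form on $(h,\phi)$ with \emph{separate} bilinear forms, $H = h + A(h,h)$, $\Psi = \phi + B(h,\phi)$, and solving the homological system \eqref{al0.1} so that (after the one-dimensional simplification $A_1=0$) the new unknowns \emph{and} the cubic nonlinearity $\mathcal{N}_3$ in \eqref{defN_3} depend manifestly only on $h$ and $\partial_x\phi$. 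This is a different mechanism from a cancellation in the symbol, and it is what makes Proposition \ref{proE2} (transfer of a priori bounds to $V$) possible.

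Second, your assertion that the a priori assumption provides $\|Su\|_{H^{N_0/2}} \lesssim \e_1(1+t)^{p_0}$ is false. The hypothesis \eqref{apriori0} gives $\|S\phi_x\|_{H^{N_0/2}}$, and since $\Lambda\phi = |\partial_x|^{-1/2}\phi_x$, passing from $S\phi_x$ to $S\Lambda\phi$ costs a factor $2^{-k/2}$ at frequency $2^k$, which diverges as $k\to-\infty$. Thus $\|S\Lambda\phi\|_{H^{N_0/2}}$, hence $\|Su\|_{H^{N_0/2}}$, is \emph{not} controlled by the bootstrap hypothesis, and the $x\partial_x f$-type bound you invoke in the stationary-phase/frequency-integration-by-parts steps is missing. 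The paper fills this gap with Proposition \ref{proE3}: using the Duhamel formula for $V$ and the fact that the nonlinearity is cubic, it proves $\|SV\|_{H^{N_0/2-20}} + \|x\partial_x f\|_{H^{N_0/2-20}} \lesssim (\e_0 + \e_1^2)(1+t)^{5p_0}$, i.e., the half derivative is recovered at low frequency at the cost of some decay and regularity. Without this improvement step, the integrations by parts in $\eta,\sigma,s$ in the proof of the $Z$-norm bound (Proposition \ref{proE4}) do not close.
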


The detailed strategy for the proof of Proposition \ref{prodecay} can be found in section \ref{secEuler},
and relies on Propositions \ref{proE1}-\ref{proE4}, which are proven in sections \ref{secproE1} and \ref{secproE4}.

Proposition \ref{prodecay} is the main new ingredient in our proof of global regularity. 
We start from the equations \eqref{WWE}, written schematically in \eqref{Eeq} and perform first a normal form transformation 
to eliminate the quadratic terms in the nonlinearity. We then analyze the resulting cubic equation, given in Lemma \ref{proE1+}.
A crucial step in the analysis consists in applying a phase correction to the transformed solution, and estimating it in the auxiliary 
$Z$-norm defined in \eqref{defZ}.

\section{Proof of the main Theorem}\label{secproofmain}

Using Propositions \ref{prok}--\ref{prodecay} we can now complete the proof of the main theorem. 
Set $\e_1=\e_0^{3/4}$ and assume that \eqref{apriori0}--\eqref{apriorik} hold. 
We apply Propositions \ref{prok}--\ref{prodecay} to conclude that
\begin{align}
\label{apriori0i}
\sup_{t\in [0,T]}  \left[ (1+t)^{-p_0} {\| (h(t), \phi_x (t)) \|}_{X_{N_0}} + {\| h(t) + i\Lambda \phi(t)  \|}_{H^{N_1 + 10}}+\sqrt{1+t} {\| (h(t), \phi(t)) \|}_{Z^\p} \right] \lesssim \e_0 \, ,
\end{align}
\begin{align}
\label{aprioriL1i}
\sup_{t \in [0,T]}  \left[(1+t)^{-p_0} {\| \wt{L}(t) \|}_{X_{N_0}}+{\| \wt{L}(t) \|}_{H^{N_1+5}} + \sqrt{1+t}  {\| \wt{L}(t) \|}_{W^{N_1,\infty}} \right] \lesssim \e_0\,,
\end{align}
and
\begin{align}
\label{aprioriki}
\sup_{t\in [0,T]}  {\| k_\a (t) - 1 \|}_{W^{N_0/2+3,\infty}}  \lesssim \e_0.
\end{align}
This provides the desired improvement of the a priori assumptions \eqref{apriori0}--\eqref{apriorik}. By continuity, the improved bounds \eqref{apriori0i}--\eqref{aprioriki} hold on the time interval $[0,T]$.  

We consider now the a priori assumption \ref{apriori0L}. To improve it, we need to show that 
\begin{align}
\label{apriori0Li}
\sup_{[0,T]} \Big( \sum_{ 0 \leq j \leq \frac{N_0}{2} + 2}
  {\| \partial^j_\a z_{tt} (t) \|}_{L^2} + {\| \partial^j_\a z_t (t) \|}_{H^{1/2}} 
  +  {\| z_\a(t)  - 1 \|}_{L^\infty} \Big) \lesssim\e_0\, .
\end{align}
This follows using the chain rule from the identities $z=\zeta\circ k$, $z_t=u\circ k$, and $z_{tt}=w\circ k$, see \eqref{w_0}, and the bounds \eqref{aprioriL1i} and \eqref{aprioriki}. As a consequence of Proposition \ref{prolocal2}, the solution extends globally, and part (i) of the Main Theorem follows.

To prove the modified scattering statement in part (ii) we may assume that $t_1\leq t_2\leq 2t_1$ and use 
Lemma \ref{bigbound2}, more precisely the bound \eqref{bn8}
\begin{equation}\label{bm1}
\|(|\xi|^\beta+|\xi|^{N_1+15})(g(\xi,t_2)-g(\xi,t_1))\|_{L^\infty_\xi}\lesssim\varepsilon_0(1+t_1)^{-p_1},
\end{equation}
where, see \eqref{bn2} and Lemma \ref{proE1+},
\begin{equation}\label{bm2}
\begin{split}
&g(\xi,t)=e^{iL(\xi,t)}\widehat{f}(\xi,t),\quad L(\xi,t):=\frac{\xi^4}{\pi}\int_0^t|\widehat{f}(\xi,s)|^2 \frac{ds}{s+1},\\
&f(t)=e^{it\Lambda}V(t),\quad V=H+i\Lambda\Psi=[h+A(h,h)]+i\Lambda[\phi+B(h,\phi)].
\end{split}
\end{equation}
With the notation in Theorem \ref{maintheo}, and using also \eqref{Al1000}, we notice that, for $t\in\{t_1,t_2\}$,
\begin{equation*}
 \Big\|(1+|\xi|)^{N_1} \big[e^{iG(\xi,t)}e^{it\Lambda(\xi)}\widehat{u}(\xi,t)
  -e^{iG(\xi,t)}\widehat{f}(\xi,t)\big]\Big\|_{L^2_\xi}\lesssim  
\Big\|(1+|\xi|)^{N_1} \big[\widehat{u}(\xi,t)-\widehat{V}(\xi,t)\big]\Big\|_{L^2_\xi}\lesssim \e_0(1+t)^{-1/4}.
\end{equation*}
Therefore, for \eqref{Zbound} it suffices to prove that
\begin{equation}\label{bm3}
\Big\|(1+|\xi|)^{N_1} \big[e^{iG(\xi,t_2)}\widehat{f}(\xi,t_2)
  -e^{iG(\xi,t_1)}\widehat{f}(\xi,t_1)\big]\Big\|_{L^2_\xi}\lesssim \varepsilon_0(1+t_1)^{-p_1}.
\end{equation}

Notice that $e^{iG(\xi,t)}\widehat{f}(\xi,t)=g(\xi,t)e^{i[G(\xi,t)-L(\xi,t)]}$, $t\in\{t_1,t_2\}$. Moreover
\begin{equation*}
\begin{split}
\big |e^{i[G(\xi,t_2)-L(\xi,t_2)]}-e^{i[G(\xi,t_1)-L(\xi,t_1)]}\big|
&\lesssim \big |[G(\xi,t_2)-L(\xi,t_2)]-[G(\xi,t_1)-L(\xi,t_1)]\big|\\
&\lesssim \xi^4\int_{t_1}^{t_2}\big|\widehat{u}(\xi,s)-\widehat{V}(\xi,s)\big|(|\widehat{u}(\xi,s)|+|\widehat{V}(\xi,s)|)\frac{ds}{s+1}.
\end{split}
\end{equation*}
In view of Proposition \ref{proE4}, we have, for any $\xi\in\mathbb{R}$ and $s\in [t_1,t_2]$,
\begin{equation*}
\big(|\xi|^\beta+|\xi|^{N_1+15}\big)\big(|\widehat{f}(\xi,s)|+|\widehat{V}(\xi,s)|+|\widehat{g}(\xi,s)|\big)\lesssim\e_0.
\end{equation*}
Also, using Proposition \ref{proE1+} and the bounds $\|h(s)\|_{H^{N_0}}+\|\phi_x(s)\|_{H^{N_0}}\lesssim \e_0(1+s)^{p_0}$, 
see \eqref{apriori0}, it follows that, for any $\xi\in\mathbb{R}$ and $s\in [t_1,t_2]$,
\begin{equation*}
\big(1+|\xi|^{N_1+15}\big)\big|\widehat{u}(\xi,s)-\widehat{V}(\xi,s)\big|\lesssim\e_0(1+s)^{2p_0}.
\end{equation*}
Therefore, using these three bounds and \eqref{bm1}, the left-hand side of \eqref{bm3} is dominated by
\begin{equation*}
\begin{split}
\Big\|(1+|\xi|)^{N_1} \big[g(\xi,t_2)-g(\xi,t_1)\big]\Big\|_{L^2_\xi}+
\Big\|(1+|\xi|)^{N_1} g(\xi,t_2)\big |e^{i[G(\xi,t_2)-L(\xi,t_2)]}-e^{i[G(\xi,t_1)-L(\xi,t_1)]}\big|\Big\|_{L^2_\xi}\\
\lesssim \varepsilon_0(1+t_1)^{-p_1}+\e_0\sup_{s\in[t_1,t_2]}(1+s)^{2p_0}\big\|\widehat{u}(\xi,s)-\widehat{V}(\xi,s)\big\|_{L^2_\xi}\\
\lesssim \e_0(1+t)^{-p_1}.
\end{split}
\end{equation*}
This completes the proof of the desired bound \eqref{bm3} and of the main theorem.

\section{Eulerian formulation and proof of Proposition \ref{prodecay}}\label{secEuler}

In this section we first recall the water waves equations in Eulerian coordinates.
We then explain our strategy for the proof of Proposition \ref{prodecay}.
This will be obtained as a consequence of Propositions \ref{proE1}, \ref{proE2}, \ref{proE3} and \ref{proE4} below.

\subsection{The equations in Eulerian coordinates}
The system of equations in Eulerian coordinates is
\begin{equation}
\left\{
\begin{array}{l}
\partial_t h = G(h) \phi
\\
\\
\partial_t \phi = -h - \frac{1}{2} {|\phi_x|}^2 + \frac{1}{2(1+{|h_x|}^2)} {\left( G(h)\phi + h_x \phi_x \right)}^2,
\end{array}
\right.
\end{equation}
where
\begin{equation}
G(h) := \sqrt{1+{|h_x|}^2} \N(h)
\end{equation}
and $\N$ denotes the Dirichlet-Neumann operator associated to $\Omega_t$.

Given a multilinear expression of $h$ and $\phi$
\begin{align*}
F = F(h,\phi) = \sum_{j\geq 1} F_j(h,\phi),
\end{align*}
where $F_j$ is an homogeneous polynomial of order $j$ in its arguments, we denote
\begin{align}
\label{kterm}
{[F]}_k := F_k (h,\phi) 
\end{align}
and
\begin{align}
\label{k+term}
{[F]}_{\geq k} := \sum_{j \geq k} {[F]}_j, \qquad  {[F]}_{\leq k} := \sum_{1 \leq j \leq k} {[F]}_j \, .
\end{align}

After expanding $\N$ for small displacements of the moving surface, see \cite{SulemBook,GMS2}, one obtains the equations
\begin{equation}
\label{Eeq}
\left\{
\begin{array}{rl}
\partial_t h  & = |\partial_x| \phi  - \partial_x( h  \partial_x \phi) - |\partial_x| (h |\partial_x| \phi)
    \\  & - \frac{1}{2} |\partial_x| \left[  h^2 |\partial_x|^2 \phi + |\partial_x| (h^2 |\partial_x| \phi) 
    - 2 h |\partial_x| (h |\partial_x| \phi )  \right] + R_1(h,\phi),
\\
\\
\partial_t \phi  & = - h - \frac{1}{2} {|\phi_x|}^2 + \frac{1}{2} {||\partial_x|\phi|}^2 + 
      |\partial_x|\phi \left[ h |\partial_x|^2 \phi - |\partial_x| (h |\partial_x| \phi) \right] + R_2(h,\phi),
\end{array}
\right.
\end{equation}
where:
\begin{align}
\label{defR_1}
R_1 (h,\phi) & := {[G(h)\phi]}_{\geq 4}
\\
\label{defR_2}
R_2 (h,\phi) & := {\left[ \frac{(G(h)\phi + h_x\phi_x)^2}{2 (1+{|h_x|}^2)} \right]}_{\geq 4} \, .
\end{align}

Let us denote
\begin{align}
\label{p_2}
M_2(h,\phi) & := -\partial_x( h  \partial_x \phi) - \La (h \La \phi),
\\
\label{p_3}
M_3(h,h,\phi) & := - \frac{1}{2} \La \left[  h^2 \La^2 \phi + \La (h^2 \La \phi)
	  - 2 h \La (h \La \phi ) \right],
\\
\label{q_2}
Q_2(\phi,\phi) & := - \frac{1}{2} {|\phi_x|}^2 + \frac{1}{2} {|\La \phi|}^2,
\\
\label{q_3}
Q_3(\phi,h,\phi) & := \La \phi \left[ h \La^2 \phi - \La (h \La \phi) \right],
\end{align}
so that
\begin{equation}
\label{Eeq1}
\left\{
\begin{array}{l}
\partial_t h  = \La \phi + M_2(h,\phi) + M_3(h,h,\phi) + R_1(h,\phi),
\\
\\
\partial_t \phi  = - h + Q_2(\phi,\phi) + Q_3(\phi,h,\phi) + R_2(h,\phi).
\end{array}
\right.
\end{equation}

\subsection{Strategy for the proof of Proposition \ref{prodecay}}

Recall that in Proposition \ref{prodecay} we are making the following a priori assumptions:
\begin{align}
\label{Eapriori}
\sup_{t\in[0,T]}  \left[ (1+t)^{-p_0} {\| (h(t) , \partial_x \phi(t)) \|}_{X_{N_0}} +\sqrt{1+t}  {\| (h(t) , \phi(t)) \|}_{Z^\p}\right] \leq \e_1 
\end{align}
and
\begin{align}
\label{Eapriori0}
\|h_0+i\Lambda\phi_0\|_{H^{N_0+1}}+\|x\partial_x(h_0+i\Lambda\phi_0)\|_{H^{N_0/2}}+\|h_0+i\Lambda\phi_0\|_{Z}\leq \e_0,
\end{align}
for some $\e_1\in [\e_0,1]$. We then aim to prove 
\begin{align}
\label{Zpconc}
\sup_{t\in[0,T]} \sqrt{1+t} {\| (h(t), \phi(t)) \|}_{Z^\p}  \lesssim \e_0 +\e_1^2 \, 
\end{align}
and
\begin{align}
\label{H^Nconc}
\sup_{t\in[0,T]}{\| h(t) + i\Lambda \phi(t)  \|}_{H^{N_1 + 10}} \lesssim \e_0 + \e_1^2.
\end{align}

To prove \eqref{Zpconc} and \eqref{H^Nconc} the idea to transform the quadratic equations into cubic ones, 
and then apply the strategy of our previous paper \cite{FNLS} to the cubic equations.
We will proceed through several steps. We first perform a bilinear normal form transformation:

\begin{pro}[Poincar\'{e}-Shatah Normal Form]\label{proE1}
There exist bilinear operators $A$ and $B$ such that if
\begin{equation}
\label{HPsi0}
\left\{
\begin{array}{l}
H \stackrel{def}{=} h + A (h,h),
\\
\\
\Psi \stackrel{def}{=} \phi + B(h,\phi),
\end{array}
\right.
\end{equation}
then the function $V$ defined by
\begin{align}
\label{V}
V \stackrel{def}{=} H + i \Lambda \Psi 
\end{align}
satisfies
\begin{equation}
\label{eqV}
\partial_t V + i \Lambda V = C \left(h, |\partial_x| \phi \right) 
\end{equation}
where $C$ is a nonlinearity consisting of cubic and higher order terms.
\end{pro}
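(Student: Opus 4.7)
\emph{Plan.} The strategy is to construct $A$ and $B$ as bilinear Fourier-multiplier (pseudoproduct) operators whose symbols are tailored to cancel the quadratic contributions to $(\partial_t + i\Lambda) V$. First I would represent $M_2(h,\phi)$ and $Q_2(\phi,\phi)$ as pseudoproducts with explicit symbols $m_2(\xi,\eta)$ and $q_2(\xi,\eta)$, recording the key null structure of $q_2$: a direct calculation gives $q_2(\xi,\eta) = \tfrac{1}{2}(\eta(\xi-\eta) + |\eta||\xi-\eta|)$, which shows that $q_2$ vanishes whenever the input frequencies $\eta$ and $\xi-\eta$ have opposite signs. Making the ansatz
\[
(\partial_t + i\Lambda)V = \partial_t H + i\Lambda\partial_t \Psi + i\Lambda H - \Lambda^2 \Psi,
\]
with $H = h + A(h,h), \Psi = \phi + B(h,\phi)$, and using \eqref{Eeq1} to express $\partial_t h, \partial_t \phi$ modulo cubic terms, the linear part cancels and the quadratic residue splits into three bilinear types $(h,h)$, $(h,\phi)$, $(\phi,\phi)$, each giving an algebraic equation for the symbols $a(\xi,\eta), b(\xi,\eta)$ of $A$ and $B$.

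The key point is that each such equation factors through the corresponding resonance phase
\[
\Phi_{\epsilon_1\epsilon_2}(\xi,\eta) = |\xi|^{1/2} - \epsilon_1|\eta|^{1/2} - \epsilon_2|\xi-\eta|^{1/2},\quad \epsilon_1,\epsilon_2\in\{+,-\},
\]
and the time non-resonance of the gravity water waves in one dimension (see e.g.~\cite{CraigBirk,CW,GMS2}) asserts that none of these phases vanishes except on a measure-zero set of $(\xi,\eta)$-space. Dividing the numerators coming from $m_2$, $q_2$ by the appropriate phase then defines $a, b$ explicitly. At first glance the resulting system of three equations for two unknown symbols might look overdetermined, but its consistency is equivalent to the special structure of the water waves quadratic terms and precisely reflects the restriction that $A$ depends only on $(h,h)$ and $B$ only on $(h,\phi)$. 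Once $a, b$ are defined, the cubic nonlinearity $C$ is obtained by collecting $M_3$, $Q_3$, $R_1$, $R_2$ together with the cubic contributions that arise when the quadratic parts of $\partial_t h$ and $\partial_t \phi$ are substituted into $2A(h, \partial_t h)$, $B(\partial_t h,\phi)$ and $B(h, \partial_t\phi)$.

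The main obstacle will be verifying that the symbols $a, b$ defined by these divisions give pseudoproducts that are bounded at the level of regularity needed later, or at least do not lose more derivatives than $m_2, q_2$ already carry. Since the water waves system is quasilinear, a naive division by $\Phi_{\epsilon_1\epsilon_2}$ could in principle produce symbols growing too fast at high frequency; here one uses that for large $|\xi|$ the dominant term in $\Phi$ is $|\xi|^{1/2}$, which provides a gain essentially matching the derivative count in $m_2, q_2$. A more delicate issue is the behavior near the resonant set and at low frequencies, where the null structure of $q_2$ (and an analogous structure of $m_2$) must be invoked to guarantee that the numerator vanishes where the denominator does, so that the quotient symbol is bounded. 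The final check is that $C$ can be written as a cubic-and-higher nonlinearity depending on $h$ and $|\partial_x|\phi$ only, matching the form $C(h, |\partial_x|\phi)$ stated in the proposition; this last reduction uses only simple derivative rearrangements since the symbols $a, b$ act on the appropriate derivatives of $\phi$.
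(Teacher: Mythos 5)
Your high-level plan---set up homological equations for the symbols of $A$ and $B$, exploit the fact that the quadratic phases for $\Lambda=|\partial_x|^{1/2}$ are time non-resonant, and solve for the symbols by division---matches the spirit of the paper's proof. But there is a concrete gap in the middle step that you acknowledge but do not close.

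If you start directly from the restricted ansatz $H=h+A(h,h)$, $\Psi=\phi+B(h,\phi)$ and demand that the quadratic parts of $\partial_t H-|\partial_x|\Psi$ and $\partial_t\Psi+H$ vanish, you obtain three independent bilinear identities (one of type $(h,\phi)$, one of type $(\phi,\phi)$, one of type $(h,h)$) for only two symbols $a(\xi,\eta)$ and $b(\xi,\eta)$. You note that the system ``might look overdetermined'' and then assert that consistency ``is equivalent to the special structure of the water waves quadratic terms and precisely reflects the restriction that $A$ depends only on $(h,h)$.'' This is circular: the compatibility of the two equations that both constrain $b$ is precisely the thing that needs to be proven, and it does not follow from the form of the ansatz. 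The paper avoids this trap by widening the ansatz to $H=h+A_1(\phi,\phi)+A_2(h,h)$, which makes the homological system a \emph{determined} $3\times 3$ linear system in $(a_1,a_2,b)$; after solving for $b$ in closed form with denominator
\[
D(\xi,\eta)=-|\xi|^2-|\xi-\eta|^2-|\eta|^2+2|\xi||\xi-\eta|+2|\xi||\eta|+2|\eta||\xi-\eta|,
\]
it then verifies by an explicit case-by-case one-dimensional computation (the three sectors $\eta\leq 0$, $\eta\in[0,\xi]$, $\eta\geq\xi$ for $\xi\geq 0$) that $a_1\equiv 0$, so the restricted form of the statement holds after all. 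That computation is the substance of the proof, and it is absent from your proposal.

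Two smaller inaccuracies worth flagging. First, the symbols are not obtained by dividing the numerators of $m_2$, $q_2$ by a single resonance phase $\Phi_{\epsilon_1\epsilon_2}$; the actual denominator $D(\xi,\eta)$ is a quartic combination (essentially the product of the four phase factors), so the analysis of where the quotient is bounded is different from what a naive per-phase division would suggest. Second, you flag the boundedness and high-frequency behavior of $a,b$ as ``the main obstacle'' but leave it open; in the paper this is settled not by abstract arguments about the null structure but by the same explicit one-dimensional evaluation, which reveals that $a(\xi,\eta)=-\tfrac{|\xi|}{2}\tfrac{\eta}{|\eta|}\tfrac{\xi-\eta}{|\xi-\eta|}$ and $b(\xi,\eta)=-|\eta|\tfrac{\xi-\eta}{|\xi-\eta|}\tfrac{\xi}{|\xi|}$ are piecewise linear, so the multiplier bounds (Lemma \ref{description}) are immediate. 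Without carrying out this evaluation, your argument leaves both the consistency and the boundedness unverified.
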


A more precise statement of this Proposition, with the explicit form of $A,B$, and $C$
is given in section \ref{secproE1}, followed by its proof.
In section \ref{secproE2} we show the following bounds on the transformation:

\begin{pro}[Bounds for the transformation]\label{proE2}
Under the a priori  assumptions \eqref{Eapriori} on $h = h(t)$ and $\phi = \phi(t)$, we have for any $t \in [0,T]$
\begin{align}
\label{proE2conc1}
& {\|A(h,h) \|}_{W^{N_1+4,\infty}} 
  + {\| \Lambda B(h,\phi) \|}_{W^{N_1+4,\infty}} \lesssim \e_1^2(1+t)^{-1/2},
\\
\label{proE2conc2}
& {\|A(h,h) \|}_{H^{N_0-5}} + {\| \Lambda B(h,\phi)\|}_{H^{N_0-5}}  \lesssim \e_1^2,
\\
\label{proE2conc3}
& {\| SA(h,h) ) \|}_{H^{\frac{N_0}{2}-5}} 
  + {\| S \Lambda B(h,\phi)\|}_{H^{\frac{N_0}{2}-5}} \lesssim \e_1^2.
\end{align}

In particular we have
\begin{align}
\label{hphivZp}
& {\| (h, \phi) \|}_{Z^\p} \lesssim  {\| V \|}_{W^{N_1+4,\infty}} + \e_1^2(1+t)^{-1/2} \, ,
\\
\label{hphivH^N}
& {\| h+i\Lambda\phi \|}_{H^{N_1+10}} \lesssim  {\| V \|}_{H^{N_1+11}} + \e_1^2 \, ,
\end{align}
and
\begin{align}
\label{vhphiZp}
& {\| V \|}_{W^{N_1+4,\infty}} \lesssim \e_1(1+t)^{-1/2}\, ,
\\
\label{vhphiSobolev}
& {\| H+i\partial_x\Psi\|}_{H^{N_0 -6}} \lesssim  \varepsilon_1(1+t)^{p_0},
\\
\label{vhphiS}
& {\| S H \|}_{H^{\frac{N_0}{2}-6}} + {\| S \partial_x \Psi \|}_{H^{\frac{N_0}{2}-6}} \lesssim \e_1(1+t)^{p_0}.
\end{align}
\end{pro}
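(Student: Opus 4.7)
The plan is to reduce everything to bilinear Fourier multiplier estimates for the operators $A$ and $B$ whose explicit symbols are provided by Proposition \ref{proE1}, combined with the a priori hypotheses \eqref{Eapriori}. The key structural point is that since the gravity water waves system has no quadratic time resonances, the phase function $\Lambda(\xi)\pm\Lambda(\eta)\pm\Lambda(\xi+\eta)$ does not vanish away from the origin, so the symbols of $A$ and $B$—obtained by dividing the original quadratic symbols by this phase—lie in a suitable Coifman--Meyer class modulo controllable low-frequency singularities. Once this is in hand, all the bounds reduce to elementary product estimates in Sobolev and Besov spaces weighted against the a priori decay of the $Z^\p$-norm and growth of the $X_{N_0}$-norm.

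First I would establish the three estimates on the bilinear corrections. For the $L^\infty$-decay bound \eqref{proE2conc1}, after a Littlewood--Paley decomposition and Bernstein's inequality one obtains
\begin{equation*}
\|A(h,h)\|_{W^{N_1+4,\infty}} + \|\Lambda B(h,\phi)\|_{W^{N_1+4,\infty}} \lesssim \|h\|_{W^{N_1+k,\infty}}\bigl(\|h\|_{W^{N_1+k,\infty}}+\|\Lambda\phi\|_{W^{N_1+k,\infty}}\bigr)
\end{equation*}
for some fixed $k$ that absorbs the derivative losses in the symbols; the a priori bound $\|(h,\phi)\|_{Z^\p}\lesssim \e_1(1+t)^{-1/2}$ then gives $\e_1^2(1+t)^{-1}$, which is stronger than claimed. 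For the Sobolev bound \eqref{proE2conc2}, I place one factor in the decaying $W^{N_1+k,\infty}$-norm and the other in the slowly growing $H^{N_0}$-norm, so that the product is controlled by $\e_1(1+t)^{-1/2}\cdot \e_1(1+t)^{p_0}\lesssim \e_1^2$ since $p_0<1/2$. For the weighted bound \eqref{proE2conc3}, I use the identity
\begin{equation*}
S\, T_m(f,g)=T_m(Sf,g)+T_m(f,Sg)+T_{\widetilde m}(f,g),
\end{equation*}
valid for any bilinear Fourier multiplier $T_m$, where $\widetilde m(\xi,\eta)=-(\xi\partial_\xi+\eta\partial_\eta)m(\xi,\eta)$ lies in the same symbol class by the quasi-homogeneity of $m$. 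Applying the bilinear estimate to each piece, with $Sh$ or $S(\Lambda\phi)$ in $H^{N_0/2}$ paired against the decaying $L^\infty$ factor, again produces the product $\e_1^2(1+t)^{p_0-1/2}\lesssim \e_1^2$.

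Once \eqref{proE2conc1}--\eqref{proE2conc3} are in hand, the six remaining assertions follow by expanding \eqref{HPsi0}--\eqref{V} and applying the triangle inequality. For \eqref{hphivZp} and \eqref{hphivH^N}, write $h=H-A(h,h)$ and $\phi=\Psi-B(h,\phi)$ and invoke \eqref{proE2conc1}--\eqref{proE2conc2}. For \eqref{vhphiZp}, \eqref{vhphiSobolev}, and \eqref{vhphiS}, expand $V=h+A(h,h)+i\Lambda\phi+i\Lambda B(h,\phi)$ and $H+i\partial_x\Psi=h+A(h,h)+i\partial_x\phi+i\partial_x B(h,\phi)$, then apply the a priori hypotheses on $(h,\partial_x\phi)$ in $X_{N_0}$ and in $Z^\p$, absorbing the quadratic corrections through \eqref{proE2conc1}--\eqref{proE2conc3}.

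The main technical obstacle is the careful analysis of the bilinear symbols in the low-frequency regime, where one of the three frequencies $\xi,\eta,\xi+\eta$ becomes small and the phase denominator $\Lambda(\xi)\pm\Lambda(\eta)\pm\Lambda(\xi+\eta)$ is comparable only to $\min(|\xi|,|\eta|,|\xi+\eta|)^{1/2}$. This mild singularity has to be tracked block by block to confirm that the Coifman--Meyer-type bounds hold uniformly, and in particular that the derivative weight $\langle\xi+\eta\rangle^{N_1+4}$ can be distributed between the two input factors without generating a derivative loss exceeding what is available in $Z^\p$ or in $X_{N_0}$. Once the symbol bookkeeping is in place, all the remaining estimates are routine applications of Bernstein's inequality and the Leibniz rule for $S$.
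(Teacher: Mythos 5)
Your overall plan---Littlewood--Paley decomposition, $S^\infty$ multiplier bounds for the explicit symbols of $A$ and $B$, commutation of $S$ with bilinear operators via quasi-homogeneity of the symbols, and triangle inequality for the last five bounds---is the same as the paper's, and it would work. Two points in the writeup, however, need correction.

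First, the worry about a ``mild singularity'' at low frequencies from dividing by the phase is not an issue here. The paper's Lemma \ref{proE1+} carries out the division explicitly and, after the one-dimensional case analysis (the computation sorted by the sign of $\eta$ relative to $0$ and $\xi$), the symbols collapse to
\begin{equation*}
a(\xi,\eta)=-\frac{|\xi|}{2}\frac{\eta}{|\eta|}\frac{\xi-\eta}{|\xi-\eta|},\qquad
b(\xi,\eta)=-|\eta|\frac{\xi-\eta}{|\xi-\eta|}\frac{\xi}{|\xi|}.
\end{equation*}
These are homogeneous of degree one, piecewise constant in the sign variables, and have clean localized $S^\infty$ bounds $\|a^{k,k_1,k_2}\|_{\mathcal{S}^\infty}\lesssim 2^k$ and $\|b^{k,k_1,k_2}\|_{\mathcal{S}^\infty}\lesssim 2^{k_2}$ (Lemma \ref{description}). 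Nothing remains to be ``tracked block by block.'' If you write your proof without computing the symbols explicitly you will be left carrying around the spurious singularity, whereas in fact there is none.

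Second, the displayed inequality for \eqref{proE2conc1},
\begin{equation*}
\|A(h,h)\|_{W^{N_1+4,\infty}} + \|\Lambda B(h,\phi)\|_{W^{N_1+4,\infty}} \lesssim \|h\|_{W^{N_1+k,\infty}}\bigl(\|h\|_{W^{N_1+k,\infty}}+\|\Lambda\phi\|_{W^{N_1+k,\infty}}\bigr)
\end{equation*}
with $k>4$, cannot be concluded from the a priori hypothesis: the $Z^\p$ norm only controls $W^{N_1+4,\infty}$, and because $a$ carries a full factor of $|\xi|$ and $b$ carries a full factor of $|\eta|$, the derivative weight $\langle\xi\rangle^{N_1+4}$ together with the symbol costs at least $N_1+5$ derivatives on the inputs. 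The paper handles this by splitting the frequency sum according to $\mathcal{X}_k^1,\mathcal{X}_k^2$ and using the $L^2$ bounds $\|P_kh\|_{L^2}\lesssim\e_1(1+t)^{p_0}2^{-N_0k_+}$ (paired with Bernstein and the multiplier estimate) for the high-frequency tail. Consequently the actual decay one obtains is $\e_1^2(1+t)^{p_0-1}$ rather than your claimed $\e_1^2(1+t)^{-1}$. Both are stronger than the stated $\e_1^2(1+t)^{-1/2}$, so the error does not propagate, but the stated intermediate estimate is not correct as written: you must use the $X_{N_0}$ (Sobolev) part of the a priori hypothesis, not just $Z^\p$, in the $L^\infty$ estimate as well.

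Everything else, including the $S$-commutation identity using degree-one homogeneity, the Sobolev estimate via one-low/one-high splitting, and the derivation of \eqref{hphivZp}--\eqref{vhphiS} by expanding $V=(h+A(h,h))+i\Lambda(\phi+B(h,\phi))$, matches the paper's argument.
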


The above Proposition shows that 
the a priori  smallness assumption 
\eqref{Eapriori} can be suitably transferred to $V$. The next step is to improve \eqref{vhphiS} by using the equation \eqref{eqV} and the specific properties of the nonlinearity:

\begin{pro}[Improvement of the weighted bound on $V$]\label{proE3}
Let $V$ be the function defined by \eqref{V} and satisfying \eqref{eqV}. Then
\begin{align}
\label{vhphiSimp}
& \sup_{t\in[0,T]} {(1+t)}^{-5p_0} \big[{\| S V(t) \|}_{H^{N_0/2-20}}+{\| V(t) \|}_{H^{N_0/2-20}}\big]\lesssim \e_0+\e_1^2 \, .
\end{align}
Furthermore, if we define the profile of $V$ as
\begin{align}
\label{FV}
 f(t,x) := \left( e^{it\Lambda} V(t) \right) (x) \, .
\end{align}
we have
\begin{align}
\label{xd_xF} 
&  \sup_{t\in[0,T]} {(1+t)}^{-5p_0} \big[{\| x\partial_x f(t) \|}_{H^{N_0/2-20}}+{\| f(t) \|}_{H^{N_0/2-20}}\big]\lesssim \e_0+\e_1^2\, .
\end{align}
\end{pro}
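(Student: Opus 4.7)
I would prove Proposition \ref{proE3} via energy estimates on the cubic equation \eqref{eqV} for $V$ and on an analogous equation for $SV$. The equation for $SV$ is obtained by commuting: using $[S,\partial_t]=-\tfrac{1}{2}\partial_t$ and $[S,\Lambda]=-\tfrac{1}{2}\Lambda$ (the latter being immediate on the Fourier side from the homogeneity of $\Lambda$), applying $S$ to \eqref{eqV} and re-expressing $\partial_t V$ in the commutator term via the equation itself, the two resulting $\Lambda V$ contributions cancel and one obtains
\[
(\partial_t + i\Lambda) SV = SC + \tfrac{1}{2} C.
\]
Thus both $V$ and $SV$ solve linear Schr\"odinger-type equations driven by trilinear-or-higher source terms.

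Pairing with $V$ (respectively $SV$) in $H^{N_0/2-20}$, the skew-adjoint $i\Lambda$ contributes nothing, so
\[
\frac{d}{dt}\big(\|V\|_{H^{N_0/2-20}}^2 + \|SV\|_{H^{N_0/2-20}}^2\big) \lesssim \big(\|V\|_{H^{N_0/2-20}} + \|SV\|_{H^{N_0/2-20}}\big)\big(\|C\|_{H^{N_0/2-20}}+\|SC\|_{H^{N_0/2-20}}\big).
\]
The heart of the argument is the trilinear bound on $C$ and $SC$. Since $C$ is cubic in $(h,\Lambda\phi)$ plus the quartic-and-higher remainders $R_1,R_2$ of \eqref{defR_1}--\eqref{defR_2}, one splits every term via paraproduct decompositions into three factors and places the two of lowest regularity in $L^\infty$, invoking the dispersive bound \eqref{vhphiZp}, $\|V\|_{W^{N_1+4,\infty}}\lesssim \e_1(1+t)^{-1/2}$. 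This yields $\|C\|_{H^{N_0/2-20}}\lesssim \e_1^2(1+t)^{-1}\|V\|_{H^{N_0/2-20+c}}$ for a fixed $c$, which is safely absorbed by the regularity buffer between $N_0/2-20$ and the $H^{N_0-6}$-control provided by \eqref{vhphiSobolev}. For $SC$, the operator $S$ distributes as a derivation across the three factors up to mild commutators with the Fourier multipliers appearing in $C$; when $S$ hits a factor, that factor contributes an $\|SV\|$-norm while the other two are again in $L^\infty$, giving $\|SC\|_{H^{N_0/2-20}}\lesssim \e_1^2(1+t)^{-1}(\|SV\|_{H^{N_0/2-20+c}}+\|V\|_{H^{N_0/2-20+c}})$. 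The quartic-and-higher remainders gain an extra factor of $\e_1(1+t)^{-1/2}$ by placing a third factor in $L^\infty$.

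Gronwall then gives $\|V\|_{H^{N_0/2-20}}^2+\|SV\|_{H^{N_0/2-20}}^2 \lesssim (\e_0+\e_1^2)^2(1+t)^{C\e_1^2}$, and choosing $\e_1$ small enough that $C\e_1^2 < 10p_0$ yields \eqref{vhphiSimp}; the bound at $t=0$ follows from \eqref{Eapriori0} together with the bilinear bounds \eqref{proE2conc2}--\eqref{proE2conc3} on the normal-form correction. For the profile bound \eqref{xd_xF}, using $[x\partial_x, e^{it\Lambda}]=-\tfrac{it}{2}\Lambda e^{it\Lambda}$ and $\partial_t f = e^{it\Lambda} C$ one derives the algebraic identity
\[
x\partial_x f = e^{it\Lambda} SV - \tfrac{t}{2} e^{it\Lambda} C,
\]
so the desired bound on $x\partial_x f$ follows from the bound on $SV$ together with $t\|C\|_{H^{N_0/2-20}}\lesssim \e_1^2(1+t)^{5p_0}$, which is a direct consequence of the trilinear estimate above applied to the already-established control of $\|V\|_{H^{N_0/2-20+c}}$.

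The principal obstacle will be the precise bookkeeping in the trilinear estimates on $C$ and $SC$: the nonlinearity $C$ produced by the normal form of Proposition \ref{proE1} contains paradifferential pieces and nonlocal (Calderon-commutator-type) operators inherited from the Dirichlet-Neumann expansion, and one must verify that the two lowest-regularity factors can be placed in $L^\infty$ uniformly across all such terms while keeping the derivative loss $c$ comfortably inside the buffer between $N_0/2-20$ and $N_0-6$.
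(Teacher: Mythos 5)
Your skeleton matches the paper's: the commuted equation $(\partial_t + i\Lambda)SV = S\mathcal{N}_3 + S\mathcal{N}_4 + \tfrac12(\mathcal{N}_3+\mathcal{N}_4)$, the identity $x\partial_x f = e^{it\Lambda}\big[SV - (t/2)(\mathcal{N}_3+\mathcal{N}_4)\big]$, and the initial-data bound from \eqref{Eapriori0} and Proposition \ref{proE2} are all exactly what the paper uses. The crucial divergence is in how the source terms are estimated, and there you have a gap.

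You propose a Gronwall closure in which the factor hit by $S$ contributes an $\|SV\|_{H^{N_0/2-20+c}}$ norm for some $c>0$, with the derivative loss to be ``safely absorbed by the regularity buffer.'' That absorption does not work: the buffer \eqref{vhphiSobolev} controls $\|H+i\partial_x\Psi\|_{H^{N_0-6}}$ and hence $\|V\|$ at higher regularity, but there is no a priori control on $\|SV\|$ at regularity higher than $N_0/2-20$ (the whole point of Proposition \ref{proE3} is to establish control of $SV$ in the first place, since \eqref{vhphiS} only controls $SH$ and $S\partial_x\Psi$, not $S\Lambda\Psi$ at low frequencies). So a Gronwall inequality whose right-hand side contains $\|SV\|_{H^{N_0/2-20+c}}$ does not close at the level $H^{N_0/2-20}$: you would need either the same regularity on both sides, or an independent higher-regularity bound on $SV$ to interpolate against, and you have neither.

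The paper (Lemma \ref{Al30}) avoids this circularity entirely by \emph{not} rewriting the nonlinearity in terms of $V$ and $\bar V$ for this step. It keeps $\mathcal{N}_3, \mathcal{N}_4$ expressed in $h$ and $\partial_x\phi$ — exactly the variables whose $S$-derivatives \emph{are} controlled by the a priori assumption \eqref{Eapriori} (i.e.\ the $X_{N_0}$-norm of $(h,\partial_x\phi)$, encoded in \eqref{Al32}) — and proves the \emph{autonomous} bounds $\|\mathcal{N}_j\|_{H^{N_0-20}}+\|S\mathcal{N}_j\|_{H^{N_0/2-20}}\lesssim \e_1^3(1+t)^{5p_0-1}$ for $j=3,4$. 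With such a bound on the source, one just integrates Duhamel directly: $\|SV(t)\|_{H^{N_0/2-20}}\lesssim \|SV(0)\|_{H^{N_0/2-20}}+\int_0^t\e_1^3(1+s)^{5p_0-1}ds\lesssim (\e_0+\e_1^2)(1+t)^{5p_0}$, with no Gronwall and no appeal to any norm of $SV$ on the right. This is precisely why the normal form of Lemma \ref{proE1+} was constructed to depend on $h$ and $\partial_x\phi$ rather than on $\Lambda\phi$ alone: that structural property is what allows the $S$-derivative of the cubic terms to be estimated using quantities already controlled. Your argument would be repaired by recognizing that $S$ hitting a factor should yield $Sh$, $S\partial_x\phi$, $SA(h,h)$, $SB(h,\phi)$ (all a-priori-bounded via \eqref{Al32}, \eqref{Al1002}, \eqref{Al61}) rather than $SV$, after which Gronwall becomes unnecessary.
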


This is proved in section \ref{secproE3}. The bound \eqref{vhphiSimp} improves the bound \eqref{vhphiS} by gaining half derivative for low frequencies on the estimate for $\Psi$, at the expense of losing a small amount of decay and some derivatives. This gives us \eqref{xd_xF} and allows us to exploit the bounds obtained in our previous paper \cite{FNLS}.

Using the bounds given by Proposition \ref{proE2} we will work on the scalar cubic equation \eqref{eqV} with the aim of showing:

\begin{pro}[Bound on the $Z$-norm and decay of the $Z^\p$-norm of $V$]\label{proE4}
Let $V$ be defined as above, and satisfying the bounds \eqref{vhphiZp}, \eqref{vhphiSobolev}, \eqref{xd_xF} . 
Assume further that
\begin{align}
\label{VZapriori}
\sup_{t\in[0,T]} {\| V(t) \|}_{Z} \leq \e_1 \, ,
\end{align}
where $Z$ is the norm defined in \eqref{defZ}.
Then
\begin{align}
\label{VZ}
\sup_{t\in[0,T]} {\| V(t) \|}_{Z} \lesssim\e_0  +  \e_1^3 \, .
\end{align}

As a consequence, using also Lemma \ref{dispersive},
\begin{align}
\label{VZp}
& \sup_{t\in[0,T]} \sqrt{1+t}  {\| V(t) \|}_{W^{N_1+4,\infty}} \lesssim \e_0  +  \e_1^2,
\end{align}
and
\begin{align}
\label{VHN_1}
\sup_{t\in[0,T]} {\| V(t)\|}_{H^{N_1 + 11}} \lesssim \e_0  +  \e_1^3\, .
\end{align}
\end{pro}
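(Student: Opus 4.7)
The plan is to convert \eqref{eqV} into a Fourier-side equation for the profile $f(t) := e^{it\Lambda}V(t)$ and to carry out a space-time resonance analysis on the resulting cubic integrals, adapting the scheme from our previous work on the fractional NLS model \cite{FNLS}. First I would invert the normal form of Proposition \ref{proE1} to write $h = \mathrm{Re}\,V + O(V^2)$ and $\Lambda\phi = \mathrm{Im}\,V + O(V^2)$, substitute into the cubic part of $C(h,|\partial_x|\phi)$, and absorb the quartic and higher remainders using the Dirichlet--Neumann bounds from Appendix \ref{appR}. Taking Fourier transform and writing $\widehat{V}(\xi,t) = e^{-it|\xi|^{1/2}}\widehat{f}(\xi,t)$ produces
\begin{equation*}
\partial_t \widehat{f}(\xi,t) = \sum_{\epsilon\in\{+,-\}^3} \int_{\R^2} e^{it\Phi_\epsilon(\xi,\eta,\sigma)}\,m_\epsilon(\xi,\eta,\sigma)\,\widehat{f}_{\epsilon_1}(\eta)\widehat{f}_{\epsilon_2}(\sigma)\widehat{f}_{\epsilon_3}(\xi-\eta-\sigma)\,d\eta\,d\sigma + \mathcal{R}(\xi,t),
\end{equation*}
where $\Phi_\epsilon = |\xi|^{1/2} - \epsilon_1|\eta|^{1/2} - \epsilon_2|\sigma|^{1/2} - \epsilon_3|\xi-\eta-\sigma|^{1/2}$, the symbols $m_\epsilon$ are those produced by the Appendix \ref{secsym} computation, and $\mathcal{R}$ is a quartic-and-higher order remainder.

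Next I would classify the eight sign choices into resonant and non-resonant contributions. For every combination except the one corresponding to $|V|^2 V$ (i.e.\ two profiles of one conjugation and one of the other), either $\Phi_\epsilon$ stays bounded away from zero on the dominant input regime, or $|\nabla_{\eta,\sigma}\Phi_\epsilon|$ admits a suitable lower bound. In each such case I would integrate by parts, either in $t$ or in $(\eta,\sigma)$, converting the spatial weights controlled by \eqref{xd_xF} into frequency derivatives of $\widehat f$; the outcome is a contribution to $\partial_t\widehat f$ of size $\varepsilon_1^3(1+t)^{-1-\delta}$ for some $\delta>0$ in the weight $|\xi|^\beta+|\xi|^{N_1+15}$. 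The sole resonant combination has a phase that vanishes on $\{\eta=0\}\cup\{\sigma=0\}$; a two-dimensional stationary-phase expansion at the origin in $(\eta,\sigma)$ then yields
\begin{equation*}
\partial_t\widehat{f}(\xi,t) = \frac{i}{\pi(t+1)}|\xi|^{4}\,|\widehat{f}(\xi,t)|^{2}\widehat{f}(\xi,t) + (\text{integrable-in-}t\text{ remainder}),
\end{equation*}
where the leading constant is obtained from the explicit value of the resonant symbol at $\eta=\sigma=0$ computed in Appendix \ref{secsym}.

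The leading coefficient $i|\xi|^4/\pi$ being purely imaginary forces $|\widehat{f}(\xi,t)|$ to be essentially conserved, so the substitution $g(\xi,t):=e^{iH(\xi,t)}\widehat f(\xi,t)$ with $H$ as in Proposition \ref{proscatt} exactly cancels the resonant drift, leaving
\begin{equation*}
\partial_t g(\xi,t) = O\bigl(\varepsilon_1^3(1+t)^{-1-\delta}\bigr)\quad\text{in }L^\infty_\xi\bigl((|\xi|^\beta+|\xi|^{N_1+15})\,d\xi\bigr).
\end{equation*}
Integrating in $t$ and using $\|g(0)\|_Z = \|V(0)\|_Z \lesssim \varepsilon_0$ yields \eqref{VZ}. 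The pointwise decay \eqref{VZp} then follows from Lemma \ref{dispersive}, which converts the $Z$-control of $\widehat f$ together with the weighted $L^2$ bound \eqref{xd_xF} on $x\partial_x f$ into $t^{-1/2}$ decay of $V$ in $W^{N_1+4,\infty}$. Finally \eqref{VHN_1} is immediate from \eqref{VZ} by Plancherel, since $\beta=1/100<1/2$ makes the majorant $\min(|\xi|^{-\beta},|\xi|^{-(N_1+15)})$ lie comfortably in $L^2((1+|\xi|)^{2(N_1+11)}d\xi)$.

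The main obstacle will be executing the stationary-phase expansion of the resonant $|V|^2 V$ integral with the correct leading constant and with a controllable remainder. Two aspects demand special care: first, the symbols $m_\epsilon$ must be computed to sufficient accuracy at the resonant point (the content of Appendix \ref{secsym}) in a way compatible with the null structure inherited from the normal form of Proposition \ref{proE1}; second, the partial-resonance contributions coming from the strata $\{\eta=0,\sigma\neq 0\}$ and $\{\eta\neq 0,\sigma=0\}$ are individually borderline of size $1/t$ and must either cancel after symmetrization in $(\eta,\sigma)$ or be subsumed into a refined phase correction---both mechanisms mirror what was done in \cite{FNLS}, with the water-waves case requiring the additional tracking of the null structure produced by the bilinear transformation.
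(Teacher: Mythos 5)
Your proposal reproduces the paper's strategy essentially exactly: rewrite the equation for the profile $f=e^{it\Lambda}V$ on the Fourier side using the normal-form variables, split the cubic nonlinearity into the interaction types $(++-),(--+),(+++),(---)$, single out the $|V|^2V$ interaction ($++-$), perform a stationary-phase expansion at $\eta=\sigma=0$ to extract the $1/t$ drift, define the corrected profile $g=e^{iH(\xi,t)}\widehat f(\xi,t)$ to cancel it, handle the remaining interactions (weakly elliptic phases) by integration by parts in $s$, and absorb the quartic remainder via the Dirichlet--Neumann bounds. The passage from the $Z$-bound to \eqref{VZp} via Lemma \ref{dispersive} and to \eqref{VHN_1} by Plancherel (using $\beta<1/2$) is also exactly as in the paper.

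Two small points where your expectations differ from what actually happens. First, the leading coefficient in your resonant extraction has the wrong sign: since $g=e^{iH}\widehat f$ with $H\geq 0$, the drift that is being cancelled in $\partial_t\widehat f$ is $-\frac{i}{\pi}\frac{|\xi|^4}{t+1}|\widehat f|^2\widehat f$, not $+\frac{i}{\pi}\cdots$; this is cosmetic. Second, and more substantively, your worry that the partial-resonance strata $\{\eta=0,\sigma\neq 0\}$ and $\{\sigma=0,\eta\neq 0\}$ contribute borderline $1/t$ terms that must cancel by symmetrization or be absorbed into a refined phase is unnecessary. In the change of variables where $\Phi=\Lambda(\xi)-\Lambda(\xi+\eta)-\Lambda(\xi+\sigma)+\Lambda(\xi+\eta+\sigma)$, the gradient satisfies $|\partial_\eta\Phi|\gtrsim |\sigma|\,|\xi|^{-3/2}$ (and symmetrically in $\eta$), so off the full resonance $\{\eta=\sigma=0\}$ one variable always carries a nondegenerate phase derivative. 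Two integrations by parts in that variable, together with the $L^2$ bound on $\partial\widehat f$ from \eqref{xd_xF} and the symbol bounds \eqref{csymbols}--\eqref{csymbols2}, already produce a gain of order $2^{-m/50}$ below the critical rate, with no cancellation mechanism needed; only the cube $|\eta|,|\sigma|\lesssim|\xi|^{3/4}t^{-1/2+}$ around the origin survives, and it is there that the stationary-phase constant $c^\ast_\xi(0,0)=-|\xi|^{5/2}/2$ enters.
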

The proof of \eqref{VZ} constitutes the heart of the proof for the decay in Eulerian coordinates,
and is performed in section \ref{secproE4}, using a construction similar to our paper \cite{FNLS}.

Using \eqref{VZapriori} and an estimate similar to \eqref{proE2conc1}, we can also obtain the following:
\begin{cor}\label{corproE4}
Under the a priori  assumptions \eqref{Eapriori} and \eqref{VZapriori} we have
\begin{align}
\label{estcorproE4}
\sup_{t\in[0,T]} {(1+t)}^{1/8} {\| \phi(t) \|}_{L^\infty} \lesssim \e_0 + \e_1^2 \, .
\end{align}
This shows in particular the validity of the assumption \eqref{lemenergy32hyp} in Lemma \ref{lemenergy32}.
\end{cor}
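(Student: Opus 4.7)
The strategy is to convert the quantitative information already available on $V$---namely the $Z$-norm bound \eqref{VZapriori} at low frequencies together with the pointwise decay of $V$ itself---into pointwise control on $\phi$, via the normal form identity $\Psi=\phi+B(h,\phi)$ provided by Proposition \ref{proE1}. The underlying obstacle is that $\phi$ differs from $\Lambda\phi$, which is already well understood through the $Z^\p$-norm bound in \eqref{Eapriori}, by a full half-derivative. At low frequencies one therefore gains a factor of $|\xi|^{-1/2}$, and pointwise control of $\phi$ must rely on the integrability of this factor against the $Z$-norm of $V$.

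\textbf{Step 1: Reduction to $\Psi$.} Write $\phi=\Psi-B(h,\phi)$. An estimate entirely analogous to \eqref{proE2conc1}, but with the outer $\Lambda$ removed, yields $\|B(h,\phi)\|_{L^\infty}\lesssim \e_1^2(1+t)^{-1/2+\delta}$ for some small $\delta>0$, using the $(1+t)^{-1/2}$ decay of $h$ and $\Lambda\phi$ from \eqref{Eapriori} and a careful low-frequency analysis of the symbol of $B$. This contribution already beats $(1+t)^{-1/8}$, so matters reduce to bounding $\|\Psi\|_{L^\infty}$.

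\textbf{Step 2: Frequency splitting of $\Psi$.} From $V=H+i\Lambda\Psi$ we have $\widehat{\Psi}(\xi)=|\xi|^{-1/2}\,\Im\widehat{V}(\xi)$. Split $\Psi=P_{\leq N}\Psi+P_{>N}\Psi$ at a threshold $N=N(t)$ to be optimized. For low frequencies, using the a priori bound $|\widehat{V}(\xi)|\leq \e_1|\xi|^{-\beta}$ with $\beta=1/100$ from \eqref{VZapriori},
\[
\|P_{\leq N}\Psi\|_{L^\infty}\leq \int_{|\xi|\leq N}|\xi|^{-1/2}|\widehat{V}(\xi)|\,d\xi\lesssim \e_1\int_0^N\xi^{-1/2-\beta}\,d\xi\lesssim \e_1 N^{1/2-\beta}.
\]
For high frequencies we use the pointwise decay $\|V\|_{L^\infty}\lesssim \e_1(1+t)^{-1/2}$, which follows from the $Z$-norm via the dispersive Lemma (cf.\ \eqref{vhphiZp}); a dyadic sum gives
\[
\|P_{>N}\Psi\|_{L^\infty}\lesssim \sum_{2^k>N}2^{-k/2}\|P_k V\|_{L^\infty}\lesssim \e_1 N^{-1/2}(1+t)^{-1/2}.
\]
Optimizing by setting $N^{1-\beta}\sim(1+t)^{-1/2}$, i.e.\ $N\sim(1+t)^{-1/(2(1-\beta))}$, yields
\[
\|\Psi\|_{L^\infty}\lesssim \e_1(1+t)^{-(1/2-\beta)/(2(1-\beta))}.
\]
With $\beta=1/100$ this exponent is roughly $49/198>1/4$, comfortably better than $1/8$, so combined with Step 1 we obtain $\|\phi(t)\|_{L^\infty}\lesssim(\e_0+\e_1^2)(1+t)^{-1/8}$.

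The main technical point is Step 1: one must verify that the operator $B$ from Proposition \ref{proE1} admits an $L^\infty$ bilinear bound even without the smoothing $\Lambda$ in front. This requires examining the explicit symbol of $B$ and checking that its low-frequency singularity is no worse than $|\xi|^{-1/2}$, so that integrability is preserved and the resulting decay rate still exceeds $(1+t)^{-1/8}$. Everything else is a balance between Bernstein-type estimates and the $Z$-norm information already invested in the proof of Proposition \ref{proE4}.
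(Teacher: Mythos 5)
Your overall approach---write $\phi=\Psi-B(h,\phi)$, control $\Psi$ through the $Z$-norm of $V$ and the pointwise decay of $V$, control $B$ through Proposition \ref{proE2}, and then split in frequency to trade off the two inputs---is exactly the approach the paper takes. Your Step 2 is correct: the exact identity $\widehat{\Psi}(\xi)=|\xi|^{-1/2}\Im\widehat{V}(\xi)$ is off (for real $\Psi$ one has $\widehat{\Psi}(\xi)=(2i)^{-1}|\xi|^{-1/2}\big(\widehat V(\xi)-\overline{\widehat V(-\xi)}\big)$), but the bound $|\widehat\Psi(\xi)|\lesssim|\xi|^{-1/2}\sup_{\pm}|\widehat V(\pm\xi)|$ that you actually use is fine, and the threshold optimization does give a decay rate comfortably better than $(1+t)^{-1/8}$. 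The paper phrases this as two per-dyadic-block estimates, interpolated implicitly; your explicit threshold $N(t)$ is an equivalent packaging.

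The gap is in Step 1, specifically the claim that $\|B(h,\phi)\|_{L^\infty}\lesssim\e_1^2(1+t)^{-1/2+\delta}$ follows from ``an estimate entirely analogous to \eqref{proE2conc1}, but with the outer $\Lambda$ removed,'' using only the $L^\infty$ decay of $h$ and $\Lambda\phi$. Removing the $\Lambda$ is not harmless at low frequencies: the dyadic version of \eqref{proE2conc1} (i.e.\ \eqref{Al1001}) gives $\|P_k\Lambda B(h,\phi)\|_{L^\infty}\lesssim\e_1^2(1+t)^{p_0-1}2^{k/4}2^{-N_0k_+/2}$, so stripping $\Lambda$ produces $\|P_kB(h,\phi)\|_{L^\infty}\lesssim\e_1^2(1+t)^{p_0-1}2^{-k/4}2^{-N_0k_+/2}$, and the sum over $k\leq 0$ diverges. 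The $L^\infty$ decay of $h$ and $\Lambda\phi$ alone cannot control the low-frequency pile-up of $B(h,\phi)$. To obtain a summable dyadic bound one must bring in the $L^2$ information: from \eqref{proE2conc2}/\eqref{Al1000} together with Bernstein, $\|P_kB(h,\phi)\|_{L^\infty}\lesssim 2^{k/2}\|P_kB(h,\phi)\|_{L^2}\lesssim\e_1^2(1+t)^{2p_0-1/2}2^{k/4}2^{-(N_0-3)k_+}$, which now sums over $k\in\mathbb{Z}$ and yields the decay you claim. This is also exactly the dyadic $\|P_kB\|_{L^\infty}$ bound (with the crucial $2^{k/4}$, not $2^{-k/4}$) that the paper's one-line proof relies on. So the conclusion of your Step 1 is correct, but the route you cite to it would not close; you need the Bernstein/$L^2$ step.
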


\begin{proof}[Proof of Corollary \ref{corproE4}] For any $k\in\mathbb{Z}$ let $P_k$ be the Littlewood-Paley projector defined after \eqref{phi_k}.
We estimate, using \eqref{VZp} and \eqref{hphivZp}
\begin{equation*}
{\|P_k\phi(t)\|}_{L^\infty} \lesssim (\e_0 + \e_1^2)2^{-k/2}(1+t)^{-1/2}.
\end{equation*}
At the same time, using \eqref{VZ}, \eqref{proE2conc1}, and the bound
\begin{align*}
{\| P_k B(h(t),\phi(t)) \|}_{L^\infty} \lesssim 2^k {\| \F P_k B(h(t),\phi(t)) \|}_{L^\infty} \lesssim
  2^k {\| h(t) \|}_{L^2}  {\| |\partial_x| \phi(t) \|}_{L^2} \lesssim  2^k \e_1^2 ,
\end{align*}
see the explicit from of the symbol $b$ in \eqref{b_10},
we can obtain
\begin{equation*}
{\|P_k\phi(t)\|}_{L^\infty} \lesssim {\|P_k\Psi(t)\|}_{L^\infty} + 
  {\|P_kB(h(t),\phi(t))\|}_{L^\infty} \lesssim 2^{k/3}(\e_0 + \e_1^2)+2^{k} \e_1^2.
\end{equation*}
The desired conclusion follows from these estimates by considering the cases $2^k \leq {(1+t)}^{-3/8}$ and $2^k \geq {(1+t)}^{-3/8}$.
\end{proof}

Observe that \eqref{VZp} together with \eqref{hphivZp} imply \eqref{Zpconc}.
The bound \eqref{VHN_1} together with \eqref{hphivH^N} implies \eqref{H^Nconc}, 
thereby conluding the proof of Proposition \ref{prodecay}.

\section{Proof of Propositions \ref{proE1}, \ref{proE2}, and \ref{proE3}: normal forms}\label{secproE1}

In this section we aim to transform the quadratic equations \eqref{Eeq} into cubic ones using a normal form transformation,
as in \cite{shatahKGE,GMS2}.
The possibility of doing this relies on the vanishing of the symbols of the quadratic interaction on the time resonant set.
We remark that the structure of the transformation here is very important because we only have information on $\partial_x \phi$
and not on $\phi$ or $\Lambda \phi$.
Therefore we want to find $H$  and $\Psi$ as in \eqref{HPsi0}, with $A$ and $B$ depending nicely on $h$ and $\partial_x \phi$, 
and such that $H+i\Lambda\Psi$ satisfies a cubic equation.

\subsection{Solving the homological equation}
Given a suitable symbol $m:\mathbb{R}\times\mathbb{R}\to\mathbb{C}$ we define the associated bilinear operator $M(f,g)$ by the formula
\begin{equation}\label{al0}
\mathcal{F}\big[M(f,g)\big](\xi)=\frac{1}{2\pi}\int_{\mathbb{R}}m(\xi,\eta)\widehat{f}(\xi-\eta)\widehat{g}(\eta)\,d\eta.
\end{equation}
The following lemma gives the explicit form for the transformation in Proposition \ref{proE1}:

\begin{lem}\label{proE1+}
Let
\begin{align}
\label{a_10}
a(\xi,\eta) &:=-\frac{|\xi|}{2}\frac{\eta}{|\eta|}\frac{\xi-\eta}{|\xi-\eta|},\\
\label{b_10}
b(\xi,\eta) &:=-|\eta|\frac{\xi-\eta}{|\xi-\eta|}\frac{\xi}{|\xi|},
\end{align}
and
\begin{align}
\label{p_20}
m_2(\xi,\eta) &= \xi\eta - |\xi||\eta| \, ,
\\
\label{q_20}
q_2(\xi,\eta) &= \frac{1}{2}(\xi-\eta) \eta + \frac{1}{2}|\xi-\eta||\eta|.
\end{align}
Then, the function $V$ defined as
\begin{align}
\label{V0}
V \stackrel{def}{=} H + i \Lambda \Psi=[h+A(h,h)]+i\Lambda[\phi+B(h,\phi)]
\end{align}
satisfies
\begin{align}
\label{eqV0}
\partial_t V & + i \Lambda V = \N_3 + \N_4 
\end{align}
where $\N_3$, respectively $\N_4$, are cubic, respectively quartic and higher, order terms explicitly given by
\begin{align}
\label{defN_3}
\N_3 &\stackrel{def}{=} M_3(h,h,\phi)+2A(M_2(h,\phi),h)+ i \Lambda \left[ Q_3(\phi,h,\phi) + B(M_2(h,\phi),\phi) + B(h, Q_2(\phi,\phi))\right],
\\
\label{defN_4}
\N_4 &\stackrel{def}{=} R_1(h,\phi) +2A(M_3(h,h,\phi)+R_1(h,\phi),h)
\\
\nn
& + i \Lambda \left[R_2(h,\phi)+B(h,Q_3(\phi,h,\phi)+R_2(h,\phi))+B(M_3(h,h,\phi)+R_1(h,\phi),\phi)\right].
\end{align}
\end{lem}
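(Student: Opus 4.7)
The plan is a direct verification. First I would substitute $V = h + A(h,h) + i\Lambda\phi + i\Lambda B(h,\phi)$ into $\partial_t V + i\Lambda V$, using $(i\Lambda)^2 = -|\partial_x|$, the bilinearity of $A$ and $B$, and the observation --- immediate from \eqref{a_10} --- that the symbol $a$ is invariant under the involution $\eta\leftrightarrow\xi-\eta$, so that $\partial_t A(h,h) = 2A(\partial_t h,h)$. After substituting the equations \eqref{Eeq1} for $\partial_t h$ and $\partial_t \phi$ and collecting, one obtains
\begin{equation*}
\partial_t V + i\Lambda V = M_2(h,\phi) + i\Lambda Q_2(\phi,\phi) + 2A(\partial_t h, h) + i\Lambda A(h,h) + i\Lambda B(\partial_t h,\phi) + i\Lambda B(h,\partial_t\phi) - |\partial_x|B(h,\phi) + \cdots,
\end{equation*}
where all linear terms cancel thanks to the dispersion relation, and the ellipsis denotes the cubic contributions $M_3,\ i\Lambda Q_3$ and the remainders $R_1,\ i\Lambda R_2$ coming directly from \eqref{Eeq1}.

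Next I would sort by homogeneity. At quadratic order the expression above splits, according to the type of bilinear form, into three independent pieces that must each vanish: the $(h,h)$ piece $i\Lambda[A(h,h) - B(h,h)]$, the $(h,\phi)$ piece $M_2(h,\phi) + 2A(|\partial_x|\phi,h) - |\partial_x|B(h,\phi)$, and the $(\phi,\phi)$ piece $i\Lambda[Q_2(\phi,\phi) + B(|\partial_x|\phi,\phi)]$. Each is verified as a symbol identity using the explicit formulas \eqref{a_10}--\eqref{q_20}, after rewriting $A(|\partial_x|\phi,h)$ in the $(h,\phi)$ ordering via the symmetry of $a$.

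The $(h,h)$ identity is equivalent to $a(\xi,\eta) = b^{\mathrm{sym}}(\xi,\eta)$, where $b^{\mathrm{sym}}$ denotes the symmetrization of $b$ in $\eta\leftrightarrow\xi-\eta$. This follows from $\xi = (\xi-\eta) + \eta$, that is, $\mathrm{sgn}(\xi)|\xi| = \mathrm{sgn}(\xi-\eta)|\xi-\eta| + \mathrm{sgn}(\eta)|\eta|$: multiplying both sides by $\mathrm{sgn}(\eta)\mathrm{sgn}(\xi-\eta)/2$ and comparing with \eqref{a_10}, \eqref{b_10} gives the identity. The $(h,\phi)$ identity reduces to
\begin{equation*}
\xi\eta - |\xi||\eta| + \mathrm{sgn}(\xi-\eta)\bigl[\xi|\eta| - |\xi|\eta\bigr] = 0,
\end{equation*}
which is verified by exhausting the four sign-cases of $(\xi,\eta)$, and the $(\phi,\phi)$ identity is analogous.

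With the quadratic part eliminated, the remaining contributions to $\partial_t V + i\Lambda V$ are all of order three or higher. The cubic part --- obtained by feeding the quadratic parts of $\partial_t h$ and $\partial_t \phi$ into $2A(\partial_t h,h)$, $i\Lambda B(\partial_t h,\phi)$, $i\Lambda B(h,\partial_t\phi)$ and adding $M_3(h,h,\phi) + i\Lambda Q_3(\phi,h,\phi)$ --- equals exactly $\N_3$ as in \eqref{defN_3}; the quartic-and-higher part collects the $R_1,\ R_2$ terms together with $A,\ B$ applied to at least one factor of cubic order, yielding precisely $\N_4$ as in \eqref{defN_4}. The only real obstacle is the $(h,h)$ symbol identity $a = b^{\mathrm{sym}}$, because $a$ and $b$ look quite different at first glance; once that algebraic identity is in hand, the rest is systematic bookkeeping.
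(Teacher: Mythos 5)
Your proposal is correct and follows essentially the same route as the paper's: substitute the ansatz, use the homological equations to cancel the quadratic terms via symbol identities in $a, b, m_2, q_2$, and read off $\N_3$, $\N_4$ from the remaining terms. The only presentational difference is that the paper starts from a more general ansatz including a term $A_1(\phi,\phi)$ and then shows $a_1=0$ by a four-case computation, whereas you verify the given formulas directly; your sign-trick proof of $a = b^{\mathrm{sym}}$ via $\xi=(\xi-\eta)+\eta$ is a modest streamlining of the paper's case analysis, and the $(\phi,\phi)$ identity you leave implicit is indeed verified the same way.
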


\begin{proof}[Proof of Lemma \ref{proE1+}]
Given equation \ref{Eeq1} we look for a transformation of the form $(h, \phi) \rightarrow (H, \Psi)$, with
\begin{equation}
\label{HPsi}
\left\{
\begin{array}{l}
H = h + A_1 (\phi,\phi) + A_2 (h,h),
\\
\\
\Psi = \phi + B(h,\phi),
\end{array}
\right.
\end{equation}
where $A_1,A_2$ are symmetric bilinear forms and $B$ is a bilinear form. Our goal is to eliminate the quadratic nonlinear expressions, i. e. 
\begin{equation}
\label{d_tHPsi}
\left\{
\begin{array}{l}
\partial_t H = \La \Psi + \mbox{cubic terms}
\\
\\
\partial_t \Psi = - H + \mbox{cubic terms}.
\end{array}
\right.
\end{equation}

Indeed, using \eqref{HPsi} and \eqref{Eeq1}, we have
\begin{align*}
\partial_t H-|\partial_x|\Psi =& -|\partial_x|\phi-|\partial_x|B(h,\phi)+\partial_t h+2A_1(\partial_t\phi,\phi)+2A_2(\partial_th,h)\\
=&-|\partial_x|B(h,\phi)+M_2(h,\phi)+M_3(h,h,\phi)+R_1(h,\phi)\\
&-2A_1(h,\phi)+2A_1(Q_2(\phi,\phi),\phi) + 2A_1(Q_3(\phi,h,\phi),\phi) + 2A_1(R_2(h,\phi),\phi)\\
&+2A_2(\La \phi,h) + 2A_2(M_2(h,\phi),h) + 2A_2(M_3(h,h,\phi),h) + 2A_2(R_1(h,\phi),h),
\end{align*}
and
\begin{align*}
\partial_t \Psi+H =& h + A_1 (\phi,\phi) + A_2 (h,h)+\partial_t\phi+B(h,\partial_t\phi)+B(\partial_th,\phi)\\
=&+A_1 (\phi,\phi) + A_2 (h,h)+Q_2(\phi,\phi) + Q_3(\phi,h,\phi) + R_2(h,\phi)\\
&-B(h,h)+B(h,Q_2(\phi,\phi)) + B(h,Q_3(\phi,h,\phi)) + B(h,R_2(h,\phi))\\
&+B(\La \phi,\phi) + B(M_2(h,\phi),\phi) + B(M_3(h,h,\phi),\phi) + B(R_1(h,\phi),\phi).
\end{align*}
The condition \eqref{d_tHPsi} is equivalent to
\begin{equation}\label{al0.1}
\begin{split}
-|\partial_x|B(h,\phi)+M_2(h,\phi)-2A_1(h,\phi)+2A_2(\La \phi,h)&=0,\\
A_1 (\phi,\phi)+Q_2(\phi,\phi)+B(\La \phi,\phi)&=0,\\
A_2 (h,h)-B(h,h)&=0.
\end{split}
\end{equation}
Therefore one can define
\begin{equation}\label{al0.2}
\begin{split}
&a_2(\xi,\eta)=\frac{b(\xi,\eta)+b(\xi,\xi-\eta)}{2},\qquad a_1(\xi,\eta)=-q_2(\xi,\eta)-\frac{b(\xi,\eta)|\xi-\eta|+b(\xi,\xi-\eta)|\eta|}{2},\\
&b(\xi,\eta)=\frac{-2(|\xi|+|\eta|-|\xi-\eta|)q_2(\xi,\eta)+(|\eta|+|\xi-\eta|-|\xi|)m_2(\xi,\eta)-2|\eta|m_2(\xi,\xi-\eta)}{D(\xi,\eta)},
\end{split}
\end{equation}
where
\begin{equation*}
D(\xi,\eta) = -{|\xi|}^2 - {|\xi-\eta|}^2 - {|\eta|}^2 +2|\xi||\xi-\eta| +2|\xi||\eta| +2|\eta||\xi-\eta|\,
\end{equation*}
and the identities \eqref{al0.1} are easily seen to be verified.

The formulas can be simplified in the one-dimensional situation. 
Indeed, we notice first that $m(\xi,\eta)=m(-\xi,-\eta)$ for all $m\in\{q_2,m_2,a_1,a_2,b\}$. 
Moreover, for $\xi > 0$, we calculate\footnote{Some of our symbols are discontinuous when $\xi=0$ or $\eta=0$ or $\xi-\eta=0$
due to the vanishing of the denominator $D(\xi,\eta)$.} explicitly:

if $\eta < 0$ then
\begin{equation*}
\begin{split}
&q_2(\xi,\eta)=0,\quad m_2(\xi,\eta)=-2|\xi||\eta|,\quad m_2(\xi,\xi-\eta)=0,\quad D(\xi,\eta)=4|\xi||\eta|,\quad b(\xi,\eta)=-|\eta|;
\end{split}
\end{equation*}

if $\eta \in (0,\xi)$ then
\begin{equation*}
\begin{split}
&q_2(\xi,\eta)=|\xi-\eta||\eta|,\quad m_2(\xi,\eta)=0,\quad m_2(\xi,\xi-\eta)=0,\quad D(\xi,\eta)=4|\xi-\eta||\eta|,\quad b(\xi,\eta)=-|\eta|;
\end{split}
\end{equation*}

if $\eta > \xi$ then
\begin{equation*}
\begin{split}
&q_2(\xi,\eta)=0,\quad m_2(\xi,\eta)=0,\quad m_2(\xi,\xi-\eta)=-2|\xi||\xi-\eta|,\quad D(\xi,\eta)=4|\xi||\xi-\eta|,\quad b(\xi,\eta)=|\eta|.
\end{split}
\end{equation*}

Using also the formulas in the first line of \eqref{al0.2}, we calculate, for $\xi > 0$,
\begin{equation*}
\begin{split}
\text{ if } \quad \eta < 0 \quad&\text{ then }\quad a_1(\xi,\eta)=0,\quad a_2(\xi,\eta)=\xi/2;\\
\text{ if } \quad \eta \in (0,\xi) \quad&\text{ then }\quad a_1(\xi,\eta)=0,\quad a_2(\xi,\eta)=-\xi/2;\\
\text{ if } \quad \eta > \xi \quad&\text{ then }\quad a_1(\xi,\eta)=0,\quad a_2(\xi,\eta)=\xi/2.
\end{split}
\end{equation*}

In particular $A_1=0$ and the desired formulas in the lemma follow.
\end{proof}

\subsection{Analysis of the symbols}
We now want to study the behavior of the symbols that appear in Lemma \ref{proE1+}. We will describe our multipliers in terms of a simple class of symbols $\mathcal{S}^{\infty}$, which is defined by
\begin{equation}\label{Al4}
\mathcal{S}^\infty\stackrel{def}{=}\{m:\mathbb{R}^2\to\mathbb{C}:\,m\text{ continuous and }\|m\|_{\mathcal{S}^\infty}:=\|\mathcal{F}^{-1}(m)\|_{L^1}<\infty\}.
\end{equation}
Clearly, $\mathcal{S}^\infty\hookrightarrow L^\infty(\mathbb{R}\times\mathbb{R})$.
Moreover, $\mathcal{S}^\infty$ symbols are compatible with H\"{o}lder-type bounds on bilinear operators.
Our first lemma summarizes some simple properties of the $\mathcal{S}^{\infty}$ symbols.

\begin{lem}\label{touse}
(i) If $m,m'\in \mathcal{S}^\infty$ then $m\cdot m'\in\mathcal{S}^\infty$ and
\begin{equation}\label{al8}
\|m\cdot m'\|_{\mathcal{S}^\infty}\lesssim \|m\|_{\mathcal{S}^\infty}\|m'\|_{\mathcal{S}^\infty}.
\end{equation}
Moreover, if $m\in \mathcal{S}^\infty$, $A:\mathbb{R}^2\to\mathbb{R}^2$ is a linear transformation, $v\in \mathbb{R}^2$, and $m_{A,v}(\xi,\eta):=m(A(\xi,\eta)+v)$ then
\begin{equation}\label{al9}
\|m_{A,v}\|_{\mathcal{S}^\infty}=\|m\|_{\mathcal{S}^\infty}.
\end{equation}

(ii) Assume $p,q,r\in[1,\infty]$ satisfy $1/p+1/q=1/r$, and $m\in \mathcal{S}^\infty$. Then, for any $f,g\in L^2(\mathbb{R})$,
\begin{equation}\label{mk6}
\|M(f,g)\|_{L^r}\lesssim \|m\|_{S^\infty}\|f\|_{L^p}\|g\|_{L^q},
\end{equation}
where $M$ is defined as in \eqref{al0}.
\end{lem}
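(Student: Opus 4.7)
The plan is to handle the two parts of the lemma separately, reducing everything to standard Fourier-inversion manipulations together with Minkowski's integral inequality and H\"older's inequality.

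For part (i), the product inequality \eqref{al8} is immediate from the convolution theorem. With the Fourier conventions of the paper, one has $\mathcal{F}^{-1}(m \cdot m') = \mathcal{F}^{-1}(m) \ast \mathcal{F}^{-1}(m')$ on $\mathbb{R}^2$, so Young's inequality $\|K_1 \ast K_2\|_{L^1} \leq \|K_1\|_{L^1}\|K_2\|_{L^1}$ yields \eqref{al8}. For the affine invariance \eqref{al9}, assume $A$ is invertible (which is the only case needed in the applications). A direct change of variables inside the Fourier inversion integral for $\mathcal{F}^{-1}(m_{A,v})$ shows
\[
\mathcal{F}^{-1}(m_{A,v})(x) \;=\; \frac{1}{|\det A|}\, e^{-i\langle (A^T)^{-1}x,\, v\rangle} \, \mathcal{F}^{-1}(m)\big((A^T)^{-1}x\big),
\]
and a second change of variables $y = (A^T)^{-1}x$ in the $L^1_x$-integral produces a Jacobian $|\det A|$ that exactly cancels the prefactor, yielding equality of $L^1$-norms.

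For part (ii), the key step is to produce an explicit spatial representation of the bilinear operator $M$. Setting $K := \mathcal{F}^{-1}(m)$, Fourier inversion on $\mathbb{R}^2$ gives $m(\xi,\eta) = \iint K(x_1,x_2)\, e^{-i(x_1\xi + x_2\eta)}\,dx_1\,dx_2$. I would substitute this into the definition \eqref{al0}, use Fubini to pull $K$ outside, recognize the inner $\eta$-integral (after the trivial change of variable $\xi' = \xi - \eta$ or, equivalently, via the one-dimensional convolution theorem) as the Fourier transform of the product $f(\cdot)\,g(\cdot - x_2)$, and then invert in $\xi$ using the translation rule $\mathcal{F}^{-1}[e^{-ix_1(\cdot)} h](z) = h^{\vee}(z - x_1)$. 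The outcome is the clean representation
\[
M(f,g)(z) \;=\; \iint K(x_1,x_2)\, f(z - x_1)\, g(z - x_1 - x_2)\, dx_1\, dx_2.
\]
From here \eqref{mk6} follows in a single line: Minkowski's integral inequality in $L^r_z$, together with translation invariance of the $L^p$ and $L^q$ norms and H\"older's inequality under $1/p + 1/q = 1/r$, bounds the right-hand side by $\|K\|_{L^1}\|f\|_{L^p}\|g\|_{L^q} = \|m\|_{\mathcal{S}^\infty}\|f\|_{L^p}\|g\|_{L^q}$.

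I do not anticipate any real obstacle: the entire lemma is a standard catalogue of properties of the $\mathcal{S}^\infty$ symbol class, a variant of the Coifman--Meyer bilinear multiplier framework. The only mild point requiring care is the justification of Fubini in the derivation of the representation formula, which is handled in the usual way by first proving the identity for Schwartz $K$ (equivalently, Schwartz $m$) and Schwartz $f,g$, and then extending by density in $L^1(\mathbb{R}^2)$, $L^p$, and $L^q$, using the a priori bound itself.
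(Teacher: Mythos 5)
Your proof is correct and follows essentially the same route as the paper: part (i) via the convolution theorem, Young's inequality, and a change of variables (implicitly assuming $A$ invertible, as the paper also does), and part (ii) by deriving the kernel representation $M(f,g)(x)=\int K(u,v)f(x-u)g(x-u-v)\,du\,dv$ and applying Minkowski and H\"older. The paper's proof is just a terser version of the same argument.
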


\begin{proof}[Proof of Lemma \ref{touse}] Part (i) follows directly from the definition. To prove (ii) let
\begin{equation*}
K(x,y):=(\mathcal{F}^{-1}m)(x,y)=\int_{\mathbb{R}\times\mathbb{R}}m(\xi,\eta)e^{ix\cdot\xi}e^{iy\cdot\eta}\,d\xi d\eta.
\end{equation*}
Then
\begin{equation*}
\begin{split}
M(f,g)(x)=C\int_{\mathbb{R}^2}e^{ix\xi}m(\xi,\eta)\widehat{f}(\xi-\eta)\widehat{g}(\eta)\,d\eta d\xi=C\int_{\mathbb{R}^2}K(u,v)f(x-u)g(x-u-v)\,dudv,
\end{split}
\end{equation*}
and the desired bound \eqref{mk6} follows.
\end{proof}

We fix $\varphi:\mathbb{R}\to[0,1]$ an even smooth function supported in $[-8/5,8/5]$ and 
equal to $1$ in $[-5/4,5/4]$. Let
\begin{equation}
\label{phi_k}
\varphi_k(x):=\varphi(x/2^k)-\varphi(x/2^{k-1}),\qquad k\in\mathbb{Z},\,x\in\mathbb{R}.
\end{equation}
Let $P_k$ denote the operator defined by the Fourier multiplier $\xi\to\varphi_k(\xi)$. Given any multiplier $m:\mathbb{R}^2\to\mathbb{C}$ and any $k,k_1,k_2\in\mathbb{Z}$ we define
\begin{equation}\label{al11}
m^{k,k_1,k_2}(\xi,\eta):=m(\xi,\eta)\cdot\varphi_k(\xi)\varphi_{k_1}(\xi-\eta)\varphi_{k_2}(\eta).
\end{equation}
Our next lemma, which is an easy consequence of the explicit formulas \eqref{a_10}--\eqref{q_20}, describes our main multipliers $m_2,q_2,a,b$ in terms of $\mathcal{S}^\infty$ symbols.

\begin{lem}\label{description}
For any $k,k_1,k_2\in\mathbb{Z}$ we have
\begin{equation}\label{descpq}
\|m_2^{k,k_1,k_2}\|_{\mathcal{S}^\infty}+\|q_2^{k,k_1,k_2}\|_{\mathcal{S}^\infty}\lesssim 2^k2^{\min(k_1,k_2)},
\end{equation}
\begin{equation}\label{desca1}
\|a^{k,k_1,k_2}\|_{\mathcal{S}^\infty}\lesssim 2^k,
\end{equation}
and
\begin{equation}\label{descb}
\|b^{k,k_1,k_2}\|_{\mathcal{S}^\infty}\lesssim 2^{k_2}.
\end{equation}
\end{lem}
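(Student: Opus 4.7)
The strategy is to express each of the four symbols, after cutoff by $\varphi_k(\xi)\varphi_{k_1}(\xi-\eta)\varphi_{k_2}(\eta)$, as a product of one-dimensional symbols in the variables $\xi$, $\eta$, $\xi-\eta$, and then to apply the multiplicative and affine-invariance properties of $\mathcal{S}^\infty$ recorded in Lemma \ref{touse}(i). Because the $\mathcal{S}^\infty$-norm of a tensor product $m_1(\xi) m_2(\eta)$ equals $\|m_1\|_{\mathcal{S}^\infty_1}\|m_2\|_{\mathcal{S}^\infty_1}$, and because $\mathcal{S}^\infty$ is invariant under the linear change of variables $(\xi,\eta)\mapsto(\xi-\eta,\eta)$, any symbol of the form $m_1(\xi)m_2(\eta)m_3(\xi-\eta)$ satisfies
\begin{equation*}
\|m_1(\xi)m_2(\eta)m_3(\xi-\eta)\|_{\mathcal{S}^\infty}\lesssim \|m_1\|_{\mathcal{S}^\infty_1}\|m_2\|_{\mathcal{S}^\infty_1}\|m_3\|_{\mathcal{S}^\infty_1}.
\end{equation*}

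The necessary one-dimensional building blocks are: (a) $\|\varphi_k\|_{\mathcal{S}^\infty_1}\lesssim 1$ uniformly in $k$, which is immediate from the scaling $\mathcal{F}^{-1}(\varphi_k)(x)=2^k(\mathcal{F}^{-1}\varphi_1)(2^kx)$; (b) $\||\xi|\varphi_k(\xi)\|_{\mathcal{S}^\infty_1}\lesssim 2^k$, by the same scaling after factoring out the dilation; and (c) $\|\mathrm{sgn}(\xi)\varphi_k(\xi)\|_{\mathcal{S}^\infty_1}\lesssim 1$, because the cutoff $\varphi_k$ lives in $|\xi|\sim 2^k$ so $\mathrm{sgn}(\xi)\varphi_k(\xi)=\pm\varphi_k(\xi)$ on each component of its support, reducing to (a). The same applies with $\xi$ replaced by $\eta$ or $\xi-\eta$.

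With these, the estimates \eqref{desca1} and \eqref{descb} are immediate factorizations: write
\begin{equation*}
a(\xi,\eta)=-\tfrac{1}{2}|\xi|\cdot\mathrm{sgn}(\eta)\cdot\mathrm{sgn}(\xi-\eta),\qquad b(\xi,\eta)=-|\eta|\cdot\mathrm{sgn}(\xi-\eta)\cdot\mathrm{sgn}(\xi),
\end{equation*}
localize each scalar factor with the appropriate $\varphi$-cutoff, and apply the product bound: $a$ gives $2^k\cdot 1\cdot 1=2^k$ and $b$ gives $2^{k_2}\cdot 1\cdot 1=2^{k_2}$. The remaining (unused) localization $\varphi$-factor is simply a bounded multiplier in one of the three variables and contributes $O(1)$.

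For \eqref{descpq} the main point is to exhibit the cancellation that makes these two symbols behave like their resonant counterparts. Using $|\eta|=\eta\,\mathrm{sgn}(\eta)$ one checks
\begin{equation*}
m_2(\xi,\eta)=|\xi|\,\eta\,\bigl[\mathrm{sgn}(\xi)-\mathrm{sgn}(\eta)\bigr],\qquad q_2(\xi,\eta)=\tfrac{1}{2}\,|\xi-\eta|\,\eta\,\bigl[\mathrm{sgn}(\xi-\eta)+\mathrm{sgn}(\eta)\bigr].
\end{equation*}
Splitting the bracketed factor into its two summands and applying the three-factor bound yields $\|m_2^{k,k_1,k_2}\|_{\mathcal{S}^\infty}\lesssim 2^k\cdot 2^{k_2}$ and $\|q_2^{k,k_1,k_2}\|_{\mathcal{S}^\infty}\lesssim 2^{k_1}\cdot 2^{k_2}$. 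The step I expect to be slightly subtle is matching these to the $2^k\cdot 2^{\min(k_1,k_2)}$ stated in the lemma: here one uses the geometry of the cutoff support. The symbol $m_2$ vanishes unless $\mathrm{sgn}(\xi)\neq\mathrm{sgn}(\eta)$, in which case $|\xi-\eta|=|\xi|+|\eta|$, so on the non-trivial support $k_1\geq\max(k,k_2)-O(1)$, forcing $\min(k_1,k_2)=k_2+O(1)$; and $q_2$ vanishes unless $\mathrm{sgn}(\xi-\eta)=\mathrm{sgn}(\eta)$, in which case $|\xi|=|\xi-\eta|+|\eta|$, so $k=\max(k_1,k_2)+O(1)$, yielding $2^{k_1}\cdot 2^{k_2}\sim 2^k\cdot 2^{\min(k_1,k_2)}$. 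In each case the factorization bound coincides with the one in the statement on the effective support, and is trivially true (both sides zero) off it.
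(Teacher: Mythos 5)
Your proof is correct, and the paper itself provides no proof of Lemma~\ref{description} --- it is stated as ``an easy consequence of the explicit formulas \eqref{a_10}--\eqref{q_20}'' --- so there is no written argument of theirs to compare against. Your approach (factor each symbol into one-variable pieces in $\xi$, $\eta$, $\xi-\eta$, bound those, then use the support constraint created by the $\mathrm{sgn}$-cancellations to recover the $\min(k_1,k_2)$ gain for $m_2$ and $q_2$) is the natural one and the sign-function factorizations you write down are all correct.

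One small remark on how you justify the three-factor bound. You derive
\begin{equation*}
\|m_1(\xi)m_2(\eta)m_3(\xi-\eta)\|_{\mathcal{S}^\infty}\lesssim \|m_1\|_{\mathcal{S}^\infty_1}\|m_2\|_{\mathcal{S}^\infty_1}\|m_3\|_{\mathcal{S}^\infty_1}
\end{equation*}
from the tensor-product identity together with affine invariance and the multiplicativity \eqref{al8}, but \eqref{al8} requires both factors to lie in $\mathcal{S}^\infty$, and $m_3(\xi-\eta)$ on its own (as a bivariate symbol) does not: its two-dimensional inverse Fourier transform is a singular measure supported on a line, not an $L^1$ function. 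The bound is nevertheless true and is most cleanly seen by direct computation: writing $K_j=\mathcal{F}^{-1}m_j\in L^1(\mathbb{R})$, one has
\begin{equation*}
\mathcal{F}^{-1}\big[m_1(\xi)m_2(\eta)m_3(\xi-\eta)\big](x,y)=\int_{\mathbb{R}} K_1(x-r)\,K_2(y+r)\,K_3(r)\,dr,
\end{equation*}
and the $L^1(dx\,dy)$ norm of the right-hand side is $\leq\|K_1\|_{L^1}\|K_2\|_{L^1}\|K_3\|_{L^1}$ by Fubini. With this substituted in, the rest of your argument --- the scaling bounds $\|\varphi_k\|_{\mathcal{S}^\infty_1}\lesssim 1$, $\||\xi|\varphi_k\|_{\mathcal{S}^\infty_1}\lesssim 2^k$, $\|\mathrm{sgn}(\xi)\varphi_k\|_{\mathcal{S}^\infty_1}\lesssim 1$, the factorizations of $a,b,m_2,q_2$, and the observations that $m_2$ lives where $|\xi-\eta|=|\xi|+|\eta|$ (so $k_1\approx\max(k,k_2)$, hence $2^{k_2}\sim 2^{\min(k_1,k_2)}$) and $q_2$ lives where $|\xi|=|\xi-\eta|+|\eta|$ (so $k\approx\max(k_1,k_2)$, hence $2^{k_1}2^{k_2}\sim 2^k2^{\min(k_1,k_2)}$) --- is sound and gives exactly \eqref{descpq}, \eqref{desca1}, and \eqref{descb}.
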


\subsection{Proof of Proposition \ref{proE2}: bounds on the normal form}\label{secproE2}
Recall that we are assuming \eqref{Eapriori} and we want to show the three estimates \eqref{proE2conc1}, 
\eqref{proE2conc2} and \eqref{proE2conc3} for the bilinear operators $A,B$ defined through their symbols $a,b$ in Lemma \ref{proE1+}.

As a consequence of \eqref{Eapriori}, we have the following bounds on $h=h(t)$ and $\phi=\phi(t)$, for any $k\in\mathbb{Z}$:
\begin{equation}\label{Al32}
\begin{split}
&\|P_k h\|_{L^2}+2^k\|P_k\phi\|_{L^2}\lesssim \e_1(1+t)^{p_0}2^{-N_0k_+},
\\
&\|P_k h\|_{L^\infty}+2^{k/2}\|P_k\phi\|_{L^\infty}\lesssim \e_1(1+t)^{-1/2}2^{-(N_1+4)k_+},
\\
&\|P_k Sh\|_{L^2}+2^k\|P_k S\phi\|_{L^2}\lesssim \e_1(1+t)^{p_0}2^{-N_0k_+/2},
\end{split}
\end{equation}
where here, and from now on, we denote $k_+ = \max(k,0)$.

For any $k\in\mathbb{Z}$ let
\begin{equation}\label{Al31}
\begin{split}
&\mathcal{X}_k:=\mathcal{X}_k^1\cup \mathcal{X}_k^2,\\
&\mathcal{X}_k^1:=\{(k_1,k_2)\in\mathbb{Z}\times\mathbb{Z}:\min(k_1,k_2)\leq k+4,\,|\max(k_1,k_2)-k|\leq 4\},\\
&\mathcal{X}_k^2:=\{(k_1,k_2)\in\mathbb{Z}\times\mathbb{Z}:\min(k_1,k_2)\geq k-4,\,|k_1-k_2|\leq 4\}.
\end{split}
\end{equation}
Also let
\begin{equation}\label{Al31.5}
\begin{split}
&\mathcal{X}_{k,s}:=\{(k_1,k_2)\in\mathcal{X}_k:2^{\min(k_1,k_2)}\leq\min(2^{k-10},(1+t)^{-10})\},\\
&\mathcal{X}_{k,l}:=\{(k_1,k_2)\in\mathcal{X}_k:2^{\min(k_1,k_2)}\geq\min(2^{k-10},(1+t)^{-10})\}.
\end{split}
\end{equation}

To prove \eqref{proE2conc1}--\eqref{proE2conc3} we estimate for any $k\in\mathbb{Z}$, using \eqref{Al32}, \eqref{desca1}--\eqref{descb}, 
and Lemma \ref{touse} (ii),
\begin{equation*}
\begin{split}
\|P_kA(h,h)\|_{L^2}&\lesssim \sum_{(k_1,k_2)\in\mathcal{X}_k}\|P_kA(P_{k_1}h,P_{k_2}h)\|_{L^2}\\
&\lesssim \sum_{k_1\leq k+4,\,|k_2-k|\leq 4}2^k\|P_{k_1}h\|_{L^\infty}\|P_{k_2}h\|_{L^2}+\sum_{(k_1,k_2)\in\mathcal{X}_k^2}2^k\|P_{k_1}h\|_{L^\infty}\|P_{k_2}h\|_{L^2}\\
&\lesssim \e_1^2(1+t)^{2p_0-1/2}2^{k/4}2^{-(N_0-3)k_+}
\end{split}
\end{equation*}
and
\begin{equation*}
\begin{split}
\|P_k\Lambda B&(h,\phi)\|_{L^2}\lesssim \sum_{(k_1,k_2)\in\mathcal{X}_k}2^{k/2}\|P_kB(P_{k_1}h,P_{k_2}\phi)\|_{L^2}\\
&\lesssim \sum_{k_1\leq k+4,\,|k_2-k|\leq 4}2^{k/2}2^{k_2}\|P_{k_1}h\|_{L^\infty}\|P_{k_2}\phi\|_{L^2}+\sum_{k_2\leq k+4,\,|k_1-k|\leq 4}2^{k/2}2^{k_2}\|P_{k_1}h\|_{L^2}\|P_{k_2}\phi\|_{L^\infty}\\
&+\sum_{(k_1,k_2)\in\mathcal{X}_k^2}2^k2^{k_1}\|P_{k_1}h\|_{L^\infty}\|P_{k_2}\phi\|_{L^2}\\
&\lesssim \e_1^2(1+t)^{2p_0-1/2}2^{k/4}2^{-(N_0-3)k_+}.
\end{split}
\end{equation*}
Therefore, for any $k\in\mathbb{Z}$,
\begin{equation}\label{Al1000}
\|P_kA(h,h)\|_{L^2}+\|P_k\Lambda B(h,\phi)\|_{L^2}\lesssim \e_1^2(1+t)^{2p_0-1/2}2^{k/4}2^{-(N_0-3)k_+},
\end{equation}
and the desired bound \eqref{proE2conc2} follows.

Similarly, we also have the $L^\infty$ bounds,
\begin{equation*}
\begin{split}
\|P_kA(h,h)\|_{L^\infty}&\lesssim \sum_{(k_1,k_2)\in\mathcal{X}_k}\|P_kA(P_{k_1}h,P_{k_2}h)\|_{L^\infty}\\
&\lesssim \sum_{(k_1,k_2)\in\mathcal{X}_k}2^k\|P_{k_1}h\|_{L^\infty}\|P_{k_2}h\|_{L^\infty}\\
&\lesssim \e_1^2(1+t)^{p_0-1}2^{k/4}2^{-N_0k_+/2}
\end{split}
\end{equation*}
and
\begin{equation*}
\begin{split}
\|P_k\Lambda B(h,\phi)\|_{L^\infty}&\lesssim \sum_{(k_1,k_2)\in\mathcal{X}_k}2^{k/2}\|P_kB(P_{k_1}h,P_{k_2}\phi)\|_{L^\infty}\\
&\lesssim \sum_{(k_1,k_2)\in\mathcal{X}_k}2^{k/2}2^{k_2}\|P_{k_1}h\|_{L^\infty}\|P_{k_2}\phi\|_{L^\infty}\\
&\lesssim \e_1^2(1+t)^{p_0-1}2^{k/4}2^{-N_0k_+/2}.
\end{split}
\end{equation*}
Therefore, for any $k\in\mathbb{Z}$,
\begin{equation}\label{Al1001}
\|P_kA(h,h)\|_{L^\infty}+\|P_k\Lambda B(h,\phi)\|_{L^\infty}\lesssim \e_1^2(1+t)^{p_0-1}2^{k/4}2^{-N_0k_+/2},
\end{equation}
and the desired bound \eqref{proE2conc1} follows.

To prove \eqref{proE2conc3} we notice first that the symbol $a$ is homogeneous of degree $1$, i. e.
\begin{equation*}
a(\lambda\xi,\lambda\eta)=\lambda a(\xi,\eta)\qquad\text{ for any }\xi,\eta\in\mathbb{R},\lambda\in(0,\infty).
\end{equation*}
Differentiating this identity with respect to $\lambda$ and then setting $\lambda=1$, we have
\begin{equation*}
(\xi\partial_\xi a)(\xi,\eta)+(\eta\partial_\eta a)(\xi,\eta)=a(\xi,\eta).
\end{equation*}
The symbol to $(\xi,\eta)\to b(\xi,\eta)$ is homogeneous of degree $1$. As a consequence, we have the identities
\begin{equation}\label{Al25}
\begin{split}
&(\xi\partial_\xi a)(\xi,\eta)+(\eta\partial_\eta a)(\xi,\eta)=a(\xi,\eta),\\
&(\xi\partial_\xi b)(\xi,\eta)+(\eta\partial_\eta b)(\xi,\eta)=b(\xi,\eta).
\end{split}
\end{equation}

Using the first formula in \eqref{Al25} we calculate
\begin{equation*}
\begin{split}
\mathcal{F}&\big[SA(h,h)\big](\xi)=\Big[\frac{1}{2}t\partial_t-\xi\partial_\xi-I\Big]\Big[\int_{\mathbb{R}}a(\xi,\eta)\widehat{h}(\xi-\eta,t)\widehat{h}(\eta,t)\,d\eta\Big]\\
&=\int_{\mathbb{R}}a(\xi,\eta)\Big[\frac{1}{2}t(\partial_t\widehat{h})-\widehat{h}\Big](\xi-\eta,t)\widehat{h}(\eta,t)\,d\eta+\int_{\mathbb{R}}a(\xi,\eta)\widehat{h}(\xi-\eta,t)\frac{1}{2}t(\partial_t\widehat{h})(\eta,t)\,d\eta\\
&-\int_{\mathbb{R}}(\xi\partial_\xi a)(\xi,\eta)\widehat{h}(\xi-\eta,t)\widehat{h}(\eta,t)\,d\eta-\int_{\mathbb{R}}a(\xi,\eta)\xi(\partial\widehat{h})(\xi-\eta,t)\widehat{h}(\eta,t)\,d\eta\\
&=\int_{\mathbb{R}}a(\xi,\eta)\widehat{Sh}(\xi-\eta,t)\widehat{h}(\eta,t)\,d\eta-\int_{\mathbb{R}}a(\xi,\eta)\eta(\partial\widehat{h})(\xi-\eta,t)\widehat{h}(\eta,t)\,d\eta\\
&+\int_{\mathbb{R}}a(\xi,\eta)\widehat{h}(\xi-\eta,t)\widehat{Sh}(\eta,t)\,d\eta+\int_{\mathbb{R}}a(\xi,\eta)\widehat{h}(\xi-\eta,t)[\eta(\partial\widehat{h})(\eta,t)+\widehat{h}(\eta,t)]\,d\eta\\
&+\int_{\mathbb{R}}[(\eta\partial_\eta a)(\xi,\eta)-a(\xi,\eta)]\widehat{h}(\xi-\eta,t)\widehat{h}(\eta,t)\,d\eta\\
&=\mathcal{F}\big[A(Sh,h)\big](\xi)+\mathcal{F}\big[A(h,Sh)\big](\xi)-\mathcal{F}\big[A(h,h)\big](\xi).
\end{split}
\end{equation*}
A similar calculation can be applied to the operator $B$, using also \eqref{Al25}. Therefore
\begin{equation}\label{Al26}
\begin{split}
SA(h,h)&=A(Sh,h)+A(h,Sh)-A(h,h),\\
SB(h,\phi)&=B(Sh,\phi)+B(h,S\phi)-B(h,\phi).
\end{split}
\end{equation}

For any $k\in\mathbb{Z}$ we estimate, using \eqref{Al32}, Lemma \ref{touse} (ii), and \eqref{desca1}--\eqref{descb}, and recalling \eqref{Al31.5},
\begin{equation*}
\begin{split}
\|P_kA&(Sh,h)\|_{L^2}\lesssim \sum_{(k_1,k_2)\in\mathcal{X}_k}\|P_kA_2(P_{k_1}Sh,P_{k_2}h)\|_{L^2}\\
&\lesssim \sum_{(k_1,k_2)\in\mathcal{X}_{k,l}}2^k\|P_{k_1}Sh\|_{L^2}\|P_{k_2}h\|_{L^\infty}+\sum_{(k_1,k_2)\in\mathcal{X}_{k,s}}2^k2^{\min(k_1,k_2)/2}\|P_{k_1}Sh\|_{L^2}\|P_{k_2}h\|_{L^2}\\
&\lesssim \e_1^2(1+t)^{2p_0-1/2}2^{k/4}2^{-(N_0/2-3)k_+},
\end{split}
\end{equation*}
and
\begin{equation*}
\begin{split}
\|P_k\Lambda &B(Sh,\phi)\|_{L^2}+\|P_k\Lambda B(h,S\phi)\|_{L^2}\\
&\lesssim 2^{k/2}\sum_{(k_1,k_2)\in\mathcal{X}_k}\big[\|P_kB(P_{k_1}Sh,P_{k_2}\phi)\|_{L^2}+\|P_kB(P_{k_1}h,P_{k_2}S\phi)\|_{L^2}\big]\\
&\lesssim 2^{k/2}\sum_{(k_1,k_2)\in\mathcal{X}_{k,l}}\big[2^{k_2}\|P_{k_1}Sh\|_{L^2}\|P_{k_2}\phi\|_{L^\infty}+2^{k_2}\|P_{k_1}h\|_{L^\infty}\|P_{k_2}S\phi\|_{L^2}\big]\\
&+2^{k/2}\sum_{(k_1,k_2)\in\mathcal{X}_{k,s}}2^{\min(k_1,k_2)/2}\big[2^{k_2}\|P_{k_1}Sh\|_{L^2}\|P_{k_2}\phi\|_{L^2}+2^{k_2}\|P_{k_1}h\|_{L^2}\|P_{k_2}S\phi\|_{L^2}\big]\\
&\lesssim \e_1^2(1+t)^{2p_0-1/2}2^{k/4}2^{-(N_0/2-3)k_+}.
\end{split}
\end{equation*}
Therefore, using also \eqref{Al26} and \eqref{Al1000}, for any $k\in\mathbb{Z}$,
\begin{equation}\label{Al1002}
\|P_kSA(h,h)\|_{L^2}+\|P_kS\Lambda B(h,\phi)\|_{L^2}\lesssim \e_1^2(1+t)^{2p_0-1/2}2^{k/4}2^{-(N_0/2-3)k_+},
\end{equation}
and the desired bound \eqref{proE2conc3} follows.

\subsection{Proof of Proposition \ref{proE3}}\label{secproE3} We start from the formula \eqref{eqV0},
\begin{equation*}
\partial_tV+i\Lambda V=\mathcal{N}_3+\mathcal{N}_4,
\end{equation*}
where $\mathcal{N}_3$ and $\mathcal{N}_4$ are given in \eqref{defN_3} and \eqref{defN_4}. Applying $S$ and commuting we derive the equation
\begin{equation*}
(\partial_t+i\Lambda) SV=S\mathcal{N}_3+S\mathcal{N}_4+(1/2)(\mathcal{N}_3+\mathcal{N}_4).
\end{equation*}
Moreover, with $f(t)=e^{it\Lambda}V(t)$, we have
\begin{equation*}
(x\partial_xf)(t)=e^{it\Lambda}\big[SV-(t/2)(\mathcal{N}_3+\mathcal{N}_4)].
\end{equation*}

It follows from the assumption \eqref{Eapriori0} and Proposition \ref{proE2} that
\begin{equation*}
\|V(0)\|_{H^{N_0/2-5}}+\|SV(0)\|_{H^{N_0/2-5}}\lesssim \e_0+\e_1^2.
\end{equation*}
Therefore, it suffices to prove the following:

\begin{lem}\label{Al30}
For any $t$, 
\begin{equation}\label{SN_3}
\|\mathcal{N}_3\|_{H^{N_0-20}}+\|S\mathcal{N}_3\|_{H^{N_0/2-20}}\lesssim \e_1^3(1+t)^{5p_0-1},
\end{equation}
and
\begin{equation}\label{SN_4}
\|\mathcal{N}_4\|_{H^{N_0-20}}+\|S\mathcal{N}_4\|_{H^{N_0/2-20}}\lesssim \e_1^3(1+t)^{5p_0-1}.
\end{equation}
\end{lem}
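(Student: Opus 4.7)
The plan is to treat each term in the explicit formulas \eqref{defN_3}--\eqref{defN_4} as a multilinear operator with a symbol that admits Littlewood--Paley bounds in $\mathcal{S}^{\infty}$ analogous to those in Lemma \ref{description}, and to estimate it by placing two of the factors in the $Z'$-type $L^\infty$ norm (with decay $(1+t)^{-1/2}$) and one factor in the high $X_{N_0}$-based $L^2$ norm (with growth $(1+t)^{p_0}$). The bound $(1+t)^{5p_0-1}$ is very generous relative to the $(1+t)^{2p_0-1}$ that this distribution of norms naturally gives, so there is room to absorb the small losses coming from the $S$ derivative.

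First I would collect symbol estimates. The operators $M_2,M_3,Q_2,Q_3$ defined in \eqref{p_2}--\eqref{q_3} have explicit symbols that are smooth away from the origin and homogeneous of respective degrees $2,3,2,3$; exactly as in Lemma \ref{description}, a Littlewood--Paley decomposition yields, for any $k,k_1,k_2,k_3\in \Z$,
\begin{equation*}
\|m_3^{k,k_1,k_2,k_3}\|_{\mathcal{S}^\infty} + \|q_3^{k,k_1,k_2,k_3}\|_{\mathcal{S}^\infty} \lesssim 2^{k}\,2^{\min(k_1,k_2,k_3)\cdot (\mathrm{low})},
\end{equation*}
with appropriate gains at low frequencies; the compositions $A\circ M_2$, $B\circ M_2$, $B\circ Q_2$ (and, in $\mathcal{N}_4$, $A\circ M_3$, $B\circ M_3$, $B\circ Q_3$) fit into the same trilinear (resp.\ quartic) framework, their composite symbols being obtained by integrating the factor symbols against $a$ or $b$ from \eqref{a_10}--\eqref{b_10}, hence also of $\mathcal{S}^\infty$ type with a power of $2^k$ at the output frequency. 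Lemma \ref{touse}(ii) then reduces each term to a product of $L^p$ norms of its inputs.

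For $\|\mathcal{N}_3\|_{H^{N_0-20}}$, I distribute the $N_0-20$ output derivatives among the three input factors, putting two of them in $L^\infty$ (each with at most $N_1+4=N_0/2+8$ derivatives, controlled by $\|(h,\phi)\|_{Z'}\lesssim \e_1(1+t)^{-1/2}$ via \eqref{Al32}) and the remaining factor in $L^2$ with the rest of the derivatives (controlled by the $H^{N_0}$ part of $X_{N_0}$, which is $\lesssim \e_1(1+t)^{p_0}$). Since $2(N_1+4) + (N_0-20)$ comfortably covers the total, the Littlewood--Paley sum converges and yields $\|\mathcal{N}_3\|_{H^{N_0-20}}\lesssim \e_1^3 (1+t)^{p_0-1}$. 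For $\|S\mathcal{N}_3\|_{H^{N_0/2-20}}$ I exploit the homogeneity of all the relevant symbols (degrees $1,2,3$) to obtain trilinear Leibniz identities in the spirit of \eqref{Al25}--\eqref{Al26}: for a trilinear operator $T$ with homogeneous symbol of degree $d$ one has
\begin{equation*}
ST(f,g,h) = T(Sf,g,h) + T(f,Sg,h) + T(f,g,Sh) - (d-\text{something})\, T(f,g,h).
\end{equation*}
In each resulting term, $S$ lands on at most one factor; I place that factor in $L^2$ using the $SH^{N_0/2}$ part of $X_{N_0}$ (growth $(1+t)^{p_0}$) and the other two in $L^\infty$ via $Z'$ (decay $(1+t)^{-1/2}$ each), and distribute at most $N_0/2-20$ derivatives with the same Bernstein-type argument. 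This yields $\lesssim \e_1^3(1+t)^{2p_0-1}\ll \e_1^3(1+t)^{5p_0-1}$.

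For $\mathcal{N}_4$ I separate two kinds of terms. The Dirichlet--Neumann remainders $R_1,R_2$ and their $S$-derivatives are controlled by the estimates from Appendix \ref{appR}; since these are quartic or higher in $(h,\Lambda\phi)$ they even gain an extra factor of $\e_1(1+t)^{-1/2}$, comfortably giving $\lesssim \e_1^4(1+t)^{2p_0-3/2}$. The compositions $A(M_3(h,h,\phi)+R_1,h)$, $B(h,Q_3(\phi,h,\phi)+R_2)$, $B(M_3(h,h,\phi)+R_1,\phi)$ are estimated as (at least) quartic multilinear operators with symbols of $\mathcal{S}^\infty$-type in their output frequency, placing three factors in $L^\infty$ via $Z'$ and one in the appropriate $L^2$ Sobolev norm, treating the $S$ derivative by the same homogeneity trick. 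Everything is safely below $(1+t)^{5p_0-1}$. The main technical obstacle is verifying the trilinear (and quartic) Leibniz rule for the compositions, since the intermediate bilinear operators $A,B$ mix the frequencies before the outer operator acts, so one must be careful that the homogeneity of the full composite symbol in the triple (or quadruple) of input variables still equals the sum of the degrees — this is indeed the case because each building block is homogeneous, but the bookkeeping with the $\mathcal{S}^\infty$ decomposition at every dyadic scale is where most of the writing will go.
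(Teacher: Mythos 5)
Your overall strategy — two factors in the $Z'$-type $L^\infty$ norm, one in the $L^2$ Sobolev norm, homogeneity to commute $S$ past the multilinear operator — is the same as the paper's. The implementation differs in one substantive way. You propose to treat the compositions $A(M_2(h,\phi),h)$, $B(M_2(h,\phi),\phi)$, $B(h,Q_2(\phi,\phi))$ (and the quartic analogues) as single tri- and quadrilinear operators whose composite symbols lie in a tri-/quadrilinear analogue of $\mathcal{S}^\infty$. The paper instead works modularly: it first records $L^2$, $L^\infty$, and $S$-weighted $L^2$ bounds for the \emph{functions} $Q_2(\phi,\phi)$, $M_2(h,\phi)$, $M_3(h,h,\phi)$, $Q_3(\phi,h,\phi)$, $R_1$, $R_2$ (equations \eqref{Al40}, \eqref{Al42}, \eqref{Al45}, \eqref{estRL^2}), and then applies the \emph{bilinear} operators $A$, $B$ to those functions via Lemma~\ref{Al60}, so the only $\mathcal{S}^\infty$ machinery needed is the two-variable version from Lemma~\ref{description}. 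This avoids having to verify the multilinear $\mathcal{S}^\infty$ estimates you allude to and keeps the homogeneity argument (\eqref{Al25}--\eqref{Al26} and the analogous \eqref{commapp}) at the bi-/trilinear level where it is clean. Minor caution on a phrase in your write-up: the composite symbol of e.g.\ $A\circ M_2$ is a pointwise \emph{product} $a(\xi,\eta)\,m_2(\xi-\eta,\sigma)$ in the tri-variable framework, not an integral of one symbol against the other.

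There is one detail you gloss over that the paper handles explicitly and that would require care in your version as well: when $S$ lands on a factor, that factor can only be placed in $L^2$ (there is no $SW^{k,\infty}$ control), and then the dyadic sum over the other factors' $L^\infty$ norms does not converge at low frequency from the $Z'$ bound alone. The paper resolves this by the $\mathcal{X}_{k,s}/\mathcal{X}_{k,l}$ split \eqref{Al31.5}: in the $\mathcal{X}_{k,s}$ regime one uses Bernstein to trade the $L^\infty$ bound for a $2^{\min(k_1,k_2)/2}\|\cdot\|_{L^2}$ bound, recovering low-frequency summability at the cost of a harmless factor $(1+t)^{O(p_0)}$. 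Your ``Bernstein-type argument'' remark gestures at this but should spell it out, since it is precisely where the exponent degrades from $2p_0$ to the $5p_0$ allowed in the statement.
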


\begin{proof}[Proof of Lemma \ref{Al30}] As in the proof of Proposition \ref{proE2}, see \eqref{Al1000}, \eqref{Al1001}, and \eqref{Al1002}, for any $k\in\mathbb{Z}$ we have
\begin{equation}\label{Al40}
\begin{split}
&\|P_kQ_2(\phi,\phi)\|_{L^2}+\|P_kM_2(h,\phi)\|_{L^2}\lesssim \e_1^2(1+t)^{2p_0-1/2}2^{k/4}2^{-(N_0-3)k_+},
\\
&\|P_kSQ_2(\phi,\phi)\|_{L^2}+\|P_kSM_2(h,\phi)\|_{L^2}\lesssim \e_1^2(1+t)^{2p_0-1/2}2^{k/4}2^{-(N_0/2-3)k_+},
\\
&\|P_kQ_2(\phi,\phi)\|_{L^\infty}+\|P_kM_2(h,\phi)\|_{L^\infty}\lesssim \e_1^2(1+t)^{p_0-1}2^{k/4}2^{-(N_0/2-3)k_+}.
\end{split}
\end{equation}

We examine now the trilinear expressions $M_3(h,h,\phi)$ and $Q_3(\phi,h,\phi)$ in \eqref{p_3} and \eqref{q_3}. 
These expressions appear in both nonlinearities $\mathcal{N}_3$ and $\mathcal{N}_4$. 
To estimate them we start by estimating $h|\partial_x|\phi$ and $h|\partial_x|^2\phi$: 
using \eqref{Al32}, for any $k\in\mathbb{Z}$, we obtain as before
\begin{equation}\label{Al42}
\begin{split}
&\|P_k(h|\partial_x|\phi)\|_{L^2}+\|P_k(h|\partial_x|^2\phi)\|_{L^2}\lesssim \e_1^2(1+t)^{2p_0-1/2}2^{-(N_0-3)k_+},\\
&\|P_kS(h|\partial_x|\phi)\|_{L^2}+\|P_kS(h|\partial_x|^2\phi)\|_{L^2}\lesssim \e_1^2(1+t)^{2p_0-1/2}2^{-(N_0/2-3)k_+},\\
&\|P_k(h|\partial_x|\phi)\|_{L^\infty}+\|P_k(h|\partial_x|^2\phi)\|_{L^\infty}\lesssim \e_1^2(1+t)^{p_0-1}2^{-(N_0/2-3)k_+}.
\end{split}
\end{equation}
We examine the formulas \eqref{p_3} and \eqref{q_3}. 
For any $k\in\mathbb{Z}$ we use \eqref{Al32} and \eqref{Al42} and estimate as before, for any $k\in\mathbb{Z}$,
\begin{equation}\label{Al45}
\begin{split}
&2^{-k/4}\|P_kM_3(h,h,\phi)\|_{L^2}+\|P_kQ_3(\phi,h,\phi)\|_{L^2}\lesssim \e_1^3(1+t)^{3p_0-1}2^{-(N_0-6)k_+},\\
&2^{-k/4}\|P_kSM_3(h,h,\phi)\|_{L^2}+\|P_kSQ_3(\phi,h,\phi)\|_{L^2}\lesssim \e_1^3(1+t)^{3p_0-1}2^{-(N_0/2-6)k_+},\\
&2^{-k/4}\|P_kM_3(h,h,\phi)\|_{L^\infty}+\|P_kQ_3(\phi,h,\phi)\|_{L^\infty}\lesssim \e_1^3(1+t)^{3p_0-3/2}2^{-(N_0/2-6)k_+}.
\end{split}
\end{equation}

Recall the formulas \eqref{defN_3} and \eqref{defN_4},
\begin{equation*}
\begin{split}
\N_3 &= M_3(h,h,\phi)+2A(M_2(h,\phi),h)+ i \Lambda \left[ Q_3(\phi,h,\phi) + B(M_2(h,\phi),\phi) + B(h, Q_2(\phi,\phi))\right],\\
\N_4 &= R_1(h,\phi) +2A(M_3(h,h,\phi)+R_1(h,\phi),h)\\
&+ i \Lambda \left[R_2(h,\phi)+B(h,Q_3(\phi,h,\phi)+R_2(h,\phi))+B(M_3(h,h,\phi)+R_1(h,\phi),\phi)\right],
\end{split}
\end{equation*}
and the bounds \eqref{estRL^2}
\begin{equation*}
\|R_1(h,\phi)+i\Lambda R_2(h,\phi)\|_{H^{N_0-10}}+\|S\big(R_1(h,\phi)+i\Lambda R_2(h,\phi)\big)\|_{H^{N_0/2-10}}\lesssim \e_1^4(1+t)^{-5/4}.
\end{equation*}
The desired bounds \eqref{SN_3} and \eqref{SN_4} follow using \eqref{Al40}, \eqref{Al45}, and Lemma \ref{Al60} below, with $G=R_2(h,\phi)$ and
\begin{equation*}
F\in\{Q_2(\phi,\phi),M_2(h,\phi),M_3(h,h,\phi),Q_3(\phi,h,\phi),R_1(h,\phi)\}.
\end{equation*}
\end{proof}

\begin{lem}\label{Al60}
Assume $F$ and $G$ satisfy the bounds, for any $k\in\mathbb{Z}$,
\begin{equation}\label{Al61}
\begin{split}
&\|P_kF\|_{L^2}+2^{k/2}\|P_kG\|_{L^2}\lesssim\e_1^2(1+t)^{3p_0-1/2}2^{-(N_0-12)k_+},\\
&\|P_kSF\|_{L^2}+2^{k/2}\|P_kSG\|_{L^2}\lesssim\e_1^2(1+t)^{3p_0-1/2}2^{-(N_0/2-12)k_+},\\
&\|P_kF\|_{L^\infty}+2^{k/2}\|P_kG\|_{L^\infty}\lesssim\e_1^2(1+t)^{3p_0-1}2^{-(N_0/2-12)k_+}.
\end{split}
\end{equation}
Then, for any $k\in\mathbb{Z}$,
\begin{equation}\label{Al62}
\begin{split}
&\|P_kA(F,h)\|_{L^2}+2^{k/2}\|P_kB(F,\phi)\|_{L^2}+2^{k/2}\|P_kB(h,G)\|_{L^2}\lesssim \e_1^3(1+t)^{5p_0-1}2^{k/4}2^{-(N_0-16)k_+},\\
&\|P_kSA(F,h)\|_{L^2}+2^{k/2}\|P_kSB(F,\phi)\|_{L^2}+2^{k/2}\|P_kSB(h,G)\|_{L^2}\lesssim \e_1^3(1+t)^{5p_0-1}2^{k/4}2^{-(N_0/2-16)k_+}.
\end{split}
\end{equation}
\end{lem}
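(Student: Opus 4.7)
The proof is a direct bilinear paraproduct estimate that mirrors the arguments already carried out in the proof of Proposition \ref{proE2} (leading to \eqref{Al1000}–\eqref{Al1002}). The plan is to dyadically decompose each output $P_kA(F,h)$, $P_kB(F,\phi)$, $P_kB(h,G)$ and estimate each piece using Lemma \ref{touse}(ii) together with the symbol bounds $\|a^{k,k_1,k_2}\|_{\mathcal{S}^\infty}\lesssim 2^k$ and $\|b^{k,k_1,k_2}\|_{\mathcal{S}^\infty}\lesssim 2^{k_2}$ from Lemma \ref{description}. The frequency outputs factor through the interaction sets $\mathcal{X}_k=\mathcal{X}_k^1\cup\mathcal{X}_k^2$ of \eqref{Al31}; within each we split further into $\mathcal{X}_{k,l}$ and $\mathcal{X}_{k,s}$ as in \eqref{Al31.5}.

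For the first bound in \eqref{Al62} I would proceed as follows. In $\mathcal{X}_{k,l}$ one has at least one factor at frequency $\gtrsim\max(2^{k-10},(1+t)^{-10})$, so one places that factor in $L^2$ and the other in $L^\infty$, using the first/third lines of \eqref{Al61} for $F$ and $G$, and the first/second lines of \eqref{Al32} for $h$ and $\phi$; together with the $2^k$ or $2^{k_2}$ from the symbol one recovers both the $2^{k/4}$ low-frequency factor and the $2^{-(N_0-16)k_+}$ high-frequency decay, losing only $(1+t)^{5p_0-1}$ in time. In $\mathcal{X}_{k,s}$ both inputs have frequency $\leq (1+t)^{-10}$, so one uses $L^2\times L^2$ paired with Bernstein to pick up a factor $2^{\min(k_1,k_2)/2}$; summing over such very small frequencies contributes at most $(1+t)^{O(p_0)}$ and yields a much stronger low-frequency gain than the required $2^{k/4}$.

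For the second bound in \eqref{Al62}, I would invoke the $S$-commutation identities, entirely analogous to \eqref{Al26}:
\begin{equation*}
SA(F,h)=A(SF,h)+A(F,Sh)-A(F,h),
\end{equation*}
and similarly $SB(F,\phi)=B(SF,\phi)+B(F,S\phi)-B(F,\phi)$, $SB(h,G)=B(Sh,G)+B(h,SG)-B(h,G)$. Each of the six new bilinear expressions is estimated exactly as above, using the second line of \eqref{Al61} and the third line of \eqref{Al32} for the factor on which $S$ lands. This costs a factor of $2^{-N_0k_+/2}$ rather than $2^{-N_0k_+}$ at high output frequency, which is precisely the source of the weaker high-frequency decay $2^{-(N_0/2-16)k_+}$ in \eqref{Al62}.

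\textbf{Main obstacle.} The hypothesis \eqref{Al61} provides \emph{no} low-frequency smallness of $F$ (or $G$) in $L^2$: at $k_1\ll 0$ the $L^2$-norm of $P_{k_1}F$ is merely $O(\e_1^2(1+t)^{3p_0-1/2})$ uniformly. The required $2^{k/4}$ gain at the output must therefore be extracted either from the $L^\infty$ bound on $F,G$ via Bernstein's inequality (which gains $2^{k_1/2}$), or from the low-frequency structure of $h,\phi$ (which gain $2^{k_2/2}$ by the same Bernstein argument applied to the $L^2$ bound in \eqref{Al32}). The bookkeeping of this trade-off, especially in $\mathcal{X}_{k,s}$ where both inputs are forced into $L^2$ with a small time-dependent loss of $(1+t)^{O(p_0)}$, is the only delicate point; no new ideas beyond those already deployed in the proof of Proposition \ref{proE2} are needed.
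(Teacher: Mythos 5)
Your proposal is essentially correct and follows the same route as the paper: a dyadic paraproduct decomposition driven by the $\mathcal{S}^\infty$-bounds of Lemma \ref{description} and the bilinear estimate of Lemma \ref{touse}(ii), the $\mathcal{X}_{k,s}/\mathcal{X}_{k,l}$ split of \eqref{Al31.5} to close the sums in $k_1,k_2$ at low frequency, and the $S$-commutation identities (which follow from the degree-one homogeneity of $a$ and $b$, exactly as in \eqref{Al26}) for the weighted bound. The paper's own proof proceeds in precisely this way, treating $\|P_kA(F,h)\|_{L^2}$ and $\|P_kSA(F,h)\|_{L^2}$ in detail and declaring the remaining $B$-estimates analogous.

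Two small imprecisions worth flagging, though neither is a gap in the argument. First, your description of $\mathcal{X}_{k,s}$ is off: from \eqref{Al31.5} only $2^{\min(k_1,k_2)}$ is forced below $\min(2^{k-10},(1+t)^{-10})$; since $\mathcal{X}_{k,s}\subseteq\mathcal{X}_k^1$ the larger input frequency sits at $\approx k$, which can be large — so it is not true that ``both inputs have frequency $\leq(1+t)^{-10}$.'' Second, the $2^{k/4}$ low-frequency factor in \eqref{Al62} is not really ``extracted from Bernstein''; for $A$ it comes directly from the symbol factor $2^k$ in \eqref{desca1} (and similarly $2^{k_2}$ from \eqref{descb} for $B$), which already dominates $2^{k/4}$ for $k\leq 0$. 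Bernstein's role in the $\mathcal{X}_{k,s}$ region is different: there both inputs are placed in $L^2$ (losing the better $(1+t)^{-1}$ decay from $L^\infty$), and the factor $2^{\min(k_1,k_2)/2}\leq(1+t)^{-5}$ obtained via Bernstein more than compensates for that loss and makes the $k_1$-sum converge. With these two points corrected, your plan reproduces the paper's proof.
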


\begin{proof}[Proof of Lemma \ref{Al60}] 
We estimate, using \eqref{Al32}, \eqref{Al61}, Lemma \ref{description}, and Lemma \ref{touse} (ii),
\begin{equation*}
\begin{split}
\|P_kA(F,h)\|_{L^2}&\lesssim \sum_{k_1\leq k+4,\,|k_2-k|\leq 4}2^k\|P_{k_1}F\|_{L^\infty}\|P_{k_2}h\|_{L^2}+\sum_{k_2\leq k+4,\,|k_1-k|\leq 4}2^k\|P_{k_1}F\|_{L^2}\|P_{k_2}h\|_{L^\infty}\\
&+\sum_{(k_1,k_2)\in\mathcal{X}_k^2}2^k\|P_{k_1}F\|_{L^\infty}\|P_{k_2}h\|_{L^2}\\
&\lesssim \e_1^3(1+t)^{5p_0-1}2^{k/4}2^{-(N_0-16)k_+}
\end{split}
\end{equation*}
and
\begin{equation*}
\begin{split}
\|P_kA(F,Sh)\|_{L^2}+&\|P_kA(SF,h)\|_{L^2}\lesssim \sum_{(k_1,k_2)\in\mathcal{X}_{k,l}}2^k\big[\|P_{k_1}F\|_{L^\infty}\|P_{k_2}Sh\|_{L^2}+\|P_{k_1}SF\|_{L^2}\|P_{k_2}h\|_{L^\infty}\big]\\
&+\sum_{(k_1,k_2)\in\mathcal{X}_{k,s}}2^k2^{\min(k_1,k_2)/2}\big[\|P_{k_1}F\|_{L^2}\|P_{k_2}Sh\|_{L^2}+\|P_{k_1}SF\|_{L^2}\|P_{k_2}h\|_{L^2}\big]\\
&\lesssim \e_1^3(1+t)^{5p_0-1}2^{k/4}2^{-(N_0/2-16)k_+}.
\end{split}
\end{equation*}
This proves the desired bounds for $\|P_kA(F,h)\|_{L^2}$ and $\|P_kSA(F,h)\|_{L^2}$. The other bounds in \eqref{Al62} are similar.
\end{proof}

\section{Proof of Proposition \ref{proE4}: uniform control of the $Z$ norm}\label{secproE4}
In this section we prove Proposition \ref{proE4}, which is our main bootstrap estimate. 
We start by rewriting the equation \eqref{eqV0} in the form 
\begin{equation}\label{fd1}
\partial_t V + i\Lambda V = \widetilde{\N}_3 + R,
\end{equation}
where
\begin{equation}\label{fd2}
\begin{split}
\widetilde{\N}_3:= &M_3(H,H,\Psi) + 2A_2(M_2(H,\Psi),H)\\
&+ i \Lambda \left[ Q_3(\Psi,H,\Psi) + B(M_2(H,\Psi),\Psi) + B(H, Q_2(\Psi,\Psi))\right],
\end{split}
\end{equation}
and
\begin{equation}\label{fd3}
R:=\mathcal{N}_3+\mathcal{N}_4-\widetilde{\mathcal{N}}_3.
\end{equation}
The point of the above decomposition is to express the cubic part of the nonlinearity in terms of $H$ and $\Psi$,
hence of the solution $V$.

Letting $f(t)=e^{it\Lambda}V(t)$ as in \eqref{FV}, we have
\begin{equation}\label{fd3.5}
\partial_tf=e^{it\Lambda}(\widetilde{\N}_3(t)+R(t)).
\end{equation}

Notice that
\begin{equation}\label{fd4}
\begin{split}
&H(t)=\frac{V(t)+\overline{V}(t)}{2}=\frac{e^{-it\Lambda}f(t)+e^{it\Lambda}\overline{f}(t)}{2},\\
&\Psi(t)=\frac{i\Lambda^{-1}(\overline{V}(t)-V(t))}{2}=i\frac{e^{it\Lambda}(\Lambda^{-1}\overline{f})(t)-e^{-it\Lambda}(\Lambda^{-1}f)(t)}{2}.
\end{split}
\end{equation}
Therefore, the trilinear expression $\widetilde{\N}_3$ can be written in terms of $f(t)$ and $\overline{f}(t)$, in the form
\begin{equation}\label{fd5}
\begin{split}
&\mathcal{F}(e^{it\Lambda}\widetilde{\N}_3(t))(\xi)=\frac{i}{(2\pi)^2}\big[I^{++-}(\xi,t)+I^{--+}(\xi,t)+I^{+++}(\xi,t)+I^{---}(\xi,t)\big],\\
&I^{++-}(\xi,t):=\int_{\mathbb{R}\times\mathbb{R}}e^{it[\Lambda(\xi)-\Lambda(\xi-\eta)-\Lambda(\eta-\sigma)+\Lambda(\sigma)]}c^{++-}(\xi,\eta,\sigma)\widehat{f}(\xi-\eta,t)\widehat{f}(\eta-\sigma,t)\widehat{\overline{f}}(\sigma,t)\,d\eta d\sigma,\\
&I^{--+}(\xi,t):=\int_{\mathbb{R}\times\mathbb{R}}e^{it[\Lambda(\xi)-\Lambda(\xi-\eta)+\Lambda(\eta-\sigma)+\Lambda(\sigma)]}c^{--+}(\xi,\eta,\sigma)\widehat{\overline{f}}(\xi-\eta,t)\widehat{\overline{f}}(\eta-\sigma,t)\widehat{f}(\sigma,t)\,d\eta d\sigma,\\
&I^{+++}(\xi,t):=\int_{\mathbb{R}\times\mathbb{R}}e^{it[\Lambda(\xi)-\Lambda(\xi-\eta)-\Lambda(\eta-\sigma)-\Lambda(\sigma)]}c^{+++}(\xi,\eta,\sigma)\widehat{f}(\xi-\eta,t)\widehat{f}(\eta-\sigma,t)\widehat{f}(\sigma,t)\,d\eta d\sigma,\\
&I^{---}(\xi,t):=\int_{\mathbb{R}\times\mathbb{R}}e^{it[\Lambda(\xi)+\Lambda(\xi-\eta)+\Lambda(\eta-\sigma)+\Lambda(\sigma)]}c^{---}(\xi,\eta,\sigma)\widehat{\overline{f}}(\xi-\eta,t)\widehat{\overline{f}}(\eta-\sigma,t)\widehat{\overline{f}}(\sigma,t)\,d\eta d\sigma.
\end{split}
\end{equation}
The symbols $c^{++-}, c^{--+}, c^{+++}, c^{---}$ can be calculated explicitly, using the formulas \eqref{p_3}, \eqref{q_3}, and \eqref{a_10}--\eqref{q_20}, see Appendix \ref{secsym}. For us it is important to notice that these symbols are real-valued, and satisfy the uniform bounds
\begin{equation}\label{csymbols0}
\big\|\mathcal{F}^{-1}[c^{\iota_1\iota_2\iota_3}(\xi,\eta,\sigma)\cdot\varphi_l(\xi)\varphi_{k_1}(\xi-\eta)\varphi_{k_2}(\eta-\sigma)\varphi_{k_3}(\sigma)]\big\|_{L^1(\mathbb{R}^3)}\lesssim 2^{l/2}2^{2\max(k_1,k_2,k_3)},
\end{equation}
for any $(\iota_1\iota_2\iota_3)\in\{(++-),(--+),(+++),(---)\}$ and $l,k_1,k_2,k_3\in\mathbb{Z}$. As a consequence,
\begin{equation}\label{csymbols}
\big\|c^{\iota_1\iota_2\iota_3}(\xi,\eta,\sigma)\cdot\varphi_l(\xi)\varphi_{k_1}(\xi-\eta)\varphi_{k_2}(\eta-\sigma)\varphi_{k_3}(\sigma) \big\|_{\mathcal{S}^\infty_{\eta,\sigma}} \lesssim 2^{l/2}2^{2\max(k_1,k_2,k_3)},
\end{equation}
for any $(\iota_1\iota_2\iota_3)\in\{(++-),(--+),(+++),(---)\}$, $\xi\in\mathbb{R}$, and $l,k_1,k_2,k_3\in\mathbb{Z}$. Moreover, for any $\mathbf{k}=(k_1,k_2,k_3),\mathbf{l}=(l_1,l_2,l_3)\in\mathbb{Z}^3$ let
\begin{equation*}
\begin{split}
&c^\ast_\xi(x,y):=c^{++-}(\xi,-x,-\xi-x-y),\\
&(\partial_xc^\ast_\xi)_{\mathbf{k},\mathbf{l}}(x,y):=(\partial_xc^\ast_\xi)(x,y)\cdot \varphi_{k_1}(\xi+x)\varphi_{k_2}(\xi+y)\varphi_{k_3}(\xi+x+y)\varphi_{l_1}(x)\varphi_{l_2}(y)\varphi_{l_3}(2\xi+x+y),\\
&(\partial_yc^\ast_\xi)_{\mathbf{k},\mathbf{l}}(x,y):=(\partial_yc^\ast_\xi)(x,y)\cdot \varphi_{k_1}(\xi+x)\varphi_{k_2}(\xi+y)\varphi_{k_3}(\xi+x+y)\varphi_{l_1}(x)\varphi_{l_2}(y)\varphi_{l_3}(2\xi+x+y).
\end{split}
\end{equation*}
Then, for any $\mathbf{k},\mathbf{l}\in\mathbb{Z}^3$, and $\xi\in\mathbb{R}$,
\begin{equation}\label{csymbols2}
\begin{split}
&\|(\partial_xc^\ast_\xi)_{\mathbf{k},\mathbf{l}}\|_{\mathcal{S}^\infty}\lesssim 2^{-\min(k_1,k_3)}2^{5\max(k_1,k_2,k_3)/2},\\
&\|(\partial_yc^\ast_\xi)_{\mathbf{k},\mathbf{l}}\|_{\mathcal{S}^\infty}\lesssim 2^{-\min(k_2,k_3)}2^{5\max(k_1,k_2,k_3)/2}.
\end{split}
\end{equation}
These bounds are proved in Lemma \ref{cprop}.

Let (compare with \eqref{c0}),
\begin{equation}\label{bn2}
\begin{split}
&\widetilde{c}(\xi):=-8\pi|\xi|^{3/2}c^\ast_\xi(0,0)=4\pi|\xi|^4,\\
&L(\xi,t):=\frac{\widetilde{c}(\xi)}{4\pi^2}\int_0^t|\widehat{f}(\xi,s)|^2 \frac{ds}{s+1},\\
&g(\xi,t):=e^{iL(\xi,t)}\widehat{f}(\xi,t).
\end{split}
\end{equation}
It follows from \eqref{fd3.5} that
\begin{equation}\label{bn3}
\begin{split}
(\partial_tg)(\xi,t)&=\frac{i}{(2\pi)^2}e^{iL(\xi,t)}\Big[I^{++-}(\xi,t)+\widetilde{c}(\xi)\frac{|\widehat{f}(\xi,t)|^2}{t+1}\widehat{f}(\xi,t)\Big]\\
&+\frac{i}{(2\pi)^2}e^{iL(\xi,t)}[I^{--+}(\xi,t)+I^{+++}(\xi,t)+I^{---}(\xi,t)]+e^{iL(\xi,t)}e^{it\Lambda(\xi)}\widehat{R}(\xi,t).
\end{split}
\end{equation}
Proposition \ref{proE4} clearly follows from Lemma \ref{bigbound2} below.

\begin{lem}\label{bigbound2}
With the same notation as before, recall that $f$ satisfies the bounds
\begin{equation}\label{bn7}
\sup_{t\in[0,T]}\big[(1+t)^{-p_0}\|f(t)\|_{H^{N_0-10}}+(1+t)^{-5p_0}\|(x\partial_xf)(t)\|_{H^{N_0/2-20}}+\|f(t)\|_{Z}\big]\leq\varepsilon_1.
\end{equation}
Then there is $p_1>0$ such that, for any $m\in\{1,2,\ldots\}$ and any $t_1\leq t_2\in[2^{m}-2,2^{m+1}]$,
\begin{equation}\label{bn8}
\|(|\xi|^\beta+|\xi|^{N_1+15})(g(\xi,t_2)-g(\xi,t_1))\|_{L^\infty_\xi}\lesssim\varepsilon_1^32^{-p_1m}.
\end{equation}
\end{lem}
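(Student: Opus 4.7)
The plan is to integrate the evolution equation \eqref{bn3} on $[t_1,t_2]\subset[2^m-2,2^{m+1}]$ term by term and show that each contribution, after multiplication by $|\xi|^\beta+|\xi|^{N_1+15}$, is bounded in $L^\infty_\xi$ by $\varepsilon_1^3\,2^{-p_1m}$ for some small $p_1>0$. The four groups to be estimated are: (a) the nearly-resonant interaction $I^{++-}$ together with the cancelling correction $\widetilde{c}(\xi)(t+1)^{-1}|\widehat{f}(\xi,t)|^2\widehat{f}(\xi,t)$; (b) the non-resonant interactions $I^{--+}$, $I^{+++}$, $I^{---}$; and (c) the remainder $R$, controlled by Proposition \ref{proE2} and Lemma \ref{Al30}.

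The heart of the argument is (a), a stationary-phase analysis of $I^{++-}$ at its unique space-time resonance. Setting $\Phi(\xi,\eta,\sigma):=\Lambda(\xi)-\Lambda(\xi-\eta)-\Lambda(\eta-\sigma)+\Lambda(\sigma)$, one checks $\Phi=\partial_\eta\Phi=\partial_\sigma\Phi=0$ at $(\eta,\sigma)=(0,-\xi)$, with Hessian determinant $-(\Lambda''(\xi))^2\sim-|\xi|^{-3}$ and signature zero. I would localize $(\eta,\sigma)$ dyadically by the distance $d\sim 2^p$ to that point and by the frequency sizes $2^{k_1},2^{k_2},2^{k_3}$ of $(\xi-\eta,\eta-\sigma,\sigma)$. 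In the regime $2^{2p}\gtrsim(1+t)^{-1+\delta}|\xi|^{-3/2}$, several integrations by parts in $(\eta,\sigma)$, using the symbol bounds \eqref{csymbols}--\eqref{csymbols2} and $\|x\partial_xf\|_{H^{N_0/2-20}}\lesssim\varepsilon_1(1+t)^{5p_0}$ from \eqref{bn7}, produce arbitrary polynomial decay. In the complementary regime, I would freeze $\widehat{f}$ at its value at the resonance $\xi$ (paying an $L^2$ error controlled by the weighted norm and a power of the localization scale) and apply the standard two-dimensional stationary-phase formula. A direct computation, using the identity $\widetilde{c}(\xi)=-8\pi|\xi|^{3/2}c^\ast_\xi(0,0)$ that defines the correction in \eqref{bn2}, shows that the leading term is exactly $-(2\pi)^2\,\widetilde{c}(\xi)(1+t)^{-1}|\widehat{f}(\xi,t)|^2\widehat{f}(\xi,t)$, so the correction cancels it and leaves an $O((1+t)^{-3/2+})$ residue whose time integral on $[t_1,t_2]$ is $\lesssim\varepsilon_1^3\,2^{-m/2+}$, with the weight $|\xi|^\beta+|\xi|^{N_1+15}$ absorbed by the factor $|\xi|^4$ in $\widetilde{c}$ at low frequencies and by the $H^{N_0-10}$-control of $f$ at high frequencies.

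For (b), none of the three phases $\Lambda(\xi)\mp\Lambda(\xi-\eta)\mp\Lambda(\eta-\sigma)\mp\Lambda(\sigma)$ vanishes on the relevant frequency support (at least one $\Lambda$ enters with a $+$ sign, hence $|\Phi^{\iota_1\iota_2\iota_3}|$ is bounded below by $\Lambda$ of the smallest frequency). I would integrate by parts in time, writing $\int_{t_1}^{t_2}e^{iH}I^{\iota_1\iota_2\iota_3}\,dt$ as boundary terms plus an interior term in which $\partial_t$ falls either on $H$, producing an extra $(1+t)^{-1}$, or on $\widehat{f}$, producing an effectively quintic expression via \eqref{fd3.5}. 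The boundary terms are trilinear in $f$ and gain an $L^\infty$-decay factor $(1+t)^{-1/2}$ from the dispersive estimate applied to one factor, giving bounds of type $\varepsilon_1^3(1+t)^{-1/2+}$; the interior contributions decay at least as $(1+t)^{-1+}$. Dyadic summation in $k_1,k_2,k_3$ is afforded by \eqref{csymbols} together with \eqref{bn7}. For (c), the difference $(h,\phi)-(H,\Psi)$ is quadratic of size $\varepsilon_1^2(1+t)^{-1/2+p_0}$ by Proposition \ref{proE2}, so $\mathcal{N}_3-\widetilde{\mathcal{N}}_3$ is quartic; combined with Lemma \ref{Al30} this gives $\|R\|_{H^{N_1+17}}\lesssim\varepsilon_1^4(1+t)^{-5/4+}$, and the Sobolev embedding $\||\xi|^{N_1+15}\widehat{R}\|_{L^\infty_\xi}\lesssim\|R\|_{H^{N_1+17}}$ yields the desired bound after time integration.

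The main obstacle is the stationary-phase step in (a): the leading constant must match $\widetilde{c}(\xi)$ \emph{exactly}, since any mismatch would leave a non-integrable $O((1+t)^{-1})$ contribution that destroys the bootstrap. A secondary difficulty is the uniform control as $|\xi|\to 0$, where the Hessian degenerates like $|\xi|^{-3/2}$ and the $|\xi|^\beta$ weight is active, and as $|\xi|\to\infty$, where the symbol factor $2^{2\max k_i}$ from \eqref{csymbols} must be balanced against the Sobolev bound $\|f\|_{H^{N_0-10}}\lesssim\varepsilon_1(1+t)^{p_0}$, with enough room left over to accommodate the target weight $|\xi|^{N_1+15}$.
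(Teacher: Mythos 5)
Your overall strategy mirrors the paper's: stationary-phase analysis at the unique space-time resonance of $I^{++-}$ with the leading term cancelled by the $\widetilde{c}(\xi)/(t+1)$ correction, integration by parts in $s$ for the three non-resonant interactions, and a direct bound on the quartic remainder $R$. Two steps, however, contain genuine gaps.

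\textbf{The remainder estimate is based on a false inequality.} You write ``the Sobolev embedding $\||\xi|^{N_1+15}\widehat{R}\|_{L^\infty_\xi}\lesssim\|R\|_{H^{N_1+17}}$''. There is no such estimate: $\|R\|_{H^s}$ for any $s$ controls $\widehat{R}$ only in weighted $L^2_\xi$, never in $L^\infty_\xi$ (a smooth, rapidly decaying function in $x$ can have $\widehat{R}$ with tall narrow spikes). Pointwise control of $\widehat{R}$ requires either $L^1_x$-type bounds on $R$, or an interpolation of $\|\widehat{R}\|_{L^2_\xi}$ against $\|\partial_\xi\widehat{R}\|_{L^2_\xi}$, i.e.\ a weighted $L^2$ bound. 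The paper supplies both: Proposition~\ref{proDNL^1} gives $L^1$-type estimates for the Dirichlet--Neumann expansion, used in Lemma~\ref{lemR} to produce a genuine $L^\infty_\xi$ bound on $\widehat{R}$; and Lemma~\ref{lemrem} implements the interpolation argument, splitting $\mathcal{F}^{-1}F$ spatially at scale $2^{m/2}$ and using the $L^2$ and $S$-weighted $L^2$ bounds of $R$ to estimate $\int_{t_1}^{t_2}e^{iH}e^{is\Lambda}\widehat{R}\,ds$ pointwise. Your route as written does not close.

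\textbf{The dichotomy ``near resonance: stationary phase; far: integration by parts in $(\eta,\sigma)$'' misses a regime.} When one of the inner frequencies is very small, say $\min(k_1,k_2,k_3)\leq -19m/20$, while the output $k$ is not small (the situation of Lemma~\ref{bb13}), the gradient $\nabla_{\eta,\sigma}\Phi$ is no longer controlled from below by a useful quantity and repeated integration by parts in the frequency variables does not produce ``arbitrary polynomial decay''. This case has no useful pointwise bound on $I^{++-}_{k_1,k_2,k_3}(\xi,s)$ at all; one must instead exploit that the phase $\Phi$ itself does not vanish (it satisfies $|\Phi|\gtrsim 2^{\min(k,\mathrm{med}(k_1,k_2,k_3))/2}$) and integrate by parts in $s$, tracking the $\partial_sH$ contribution and the $\partial_s\widehat{f}$ contribution via \eqref{touse4}. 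Similarly, the cases where the frequencies are pairwise comparable to one another but far from $k$ (Lemma~\ref{bb12}) and where they are well separated (Lemma~\ref{bb11}) each need a tailored choice of which variable to integrate by parts in, keyed to which derivative of the phase is nondegenerate; this is not a single uniform ``several integrations by parts'' argument.

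Two smaller points: the stationary-phase constant should be $-\widetilde{c}(\xi)(t+1)^{-1}|\widehat f|^2\widehat f$ with no extra $(2\pi)^2$, as the $(2\pi)^{-2}$ normalization is already carried by \eqref{fd5} in front of $I^{++-}$; and for part (b) the paper's ellipticity is $|\Phi^{\iota_1\iota_2\iota_3}|\gtrsim 2^{\mathrm{med}(k_1,k_2,k_3)/2}$, a stronger bound than $\Lambda$ of the smallest frequency, and this extra gain is used in the bookkeeping when the smallest frequency is very negative.
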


The rest of the section is concerned with the proof of Lemma \ref{bigbound2}. We will use the following dispersive linear estimate from \cite[Lemma 2.3]{FNLS}:

\begin{lem}\label{dispersive}
 For any $t\in\mathbb{R}$ we have
\begin{equation}\label{disperse}
 \|e^{i t\Lambda}h\|_{L^\infty}\lesssim (1+|t|)^{-1/2}\|\,|\xi|^{3/4}\widehat{h}(\xi)\|_{L^\infty_\xi}+(1+|t|)^{-5/8}\big[\|x\cdot\partial_x h\|_{L^2}+\|h\|_{H^2}\big].
\end{equation}
\end{lem}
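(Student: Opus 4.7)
The plan is to analyze the oscillatory integral
\[
(e^{it\Lambda}h)(x) \;=\; \frac{1}{2\pi}\int_\R e^{i\Phi(\xi)}\widehat{h}(\xi)\,d\xi,
\qquad \Phi(\xi) := x\xi + t|\xi|^{1/2},
\]
by a standard stationary phase and integration-by-parts decomposition in frequency. The case $|t|\leq 1$ is immediate from Sobolev embedding, since $\|e^{it\Lambda}h\|_{L^\infty}\lesssim \|e^{it\Lambda}h\|_{H^2}=\|h\|_{H^2}$, and this is absorbed by the second term on the right side of \eqref{disperse}. So I reduce to $|t|\geq 1$ and Littlewood--Paley decompose $h=\sum_{k\in\Z}P_k h$. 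The phase $\Phi$ has a unique nondegenerate critical point $\xi_c=\xi_c(x,t)$ with $|\xi_c|=t^2/(4x^2)$ and $|\partial_\xi^2\Phi(\xi_c)|\sim |t||\xi_c|^{-3/2}$. In particular $\xi_c$ lies in the shell $|\xi|\sim 2^k$ exactly when $|x|\sim |t|2^{-k/2}$, which singles out, for each $x$, the one ``geometric optics'' block contributing a stationary phase.

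Inside the critical block, a standard van der Corput / stationary phase expansion gives
\[
\Big|\int e^{i\Phi(\xi)}\varphi_k(\xi)\widehat{h}(\xi)\,d\xi\Big|\;\lesssim\; |t|^{-1/2}|\xi_c|^{3/4}|\widehat{h}(\xi_c)| \;+\; \text{error},
\]
and the leading term is bounded by $|t|^{-1/2}\||\xi|^{3/4}\widehat{h}\|_{L^\infty_\xi}$, accounting for the first contribution in \eqref{disperse}. For the off-critical blocks (and for the stationary phase error), I integrate by parts in $\xi$ against $(i\partial_\xi\Phi)^{-1}$, using that $|\partial_\xi\Phi|\gtrsim\max(|x|,|t|2^{-k/2})$ in those regions. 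Derivatives falling on $\widehat{h}$ become $\partial_\xi\widehat{h}$, which corresponds on the physical side to $xh$ via Plancherel.

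To convert the resulting $L^1$-type bounds into a weighted $L^2$ norm, I use Cauchy--Schwarz in the form $\|P_kh\|_{L^1}\lesssim \|P_kh\|_{L^2}^{1/2}\|xP_kh\|_{L^2}^{1/2}$ together with the commutator estimate $\|xP_kh\|_{L^2}\lesssim 2^{-k}(\|x\partial_xh\|_{L^2}+\|h\|_{L^2})$, obtained by moving $x$ across $P_k$ and using $\partial_x^{-1}P_k=O(2^{-k})$ on its range. Combining the integration-by-parts estimate with the trivial Bernstein bound $\|e^{it\Lambda}P_k h\|_{L^\infty}\lesssim 2^{k/2}\|P_kh\|_{L^2}$ in those shells where dispersion is inefficient (very small or very large $|\xi|$), and summing over $k\in\Z$ after optimizing the IBP order, recovers the second term $(1+|t|)^{-5/8}(\|x\partial_xh\|_{L^2}+\|h\|_{H^2})$, with $\|h\|_{H^2}$ absorbing the high-frequency tail and the commutator errors.

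The main obstacle is making the dyadic summation converge uniformly in $x$ without logarithmic losses while preserving the pointwise-in-$\xi$ character of the first bound. The specific exponent $5/8=1/2+1/8$ arises as the minimal concession above the sharp stationary phase rate $1/2$ needed to close the $k$-sum: a direct interpolation between the dispersive bound $|t|^{-1/2}$ and the one-integration-by-parts bound $|t|^{-1}$ produces an exponent strictly between $1/2$ and $1$, and the geometric constraint $|x|\sim|t|2^{-k/2}$ limits the admissible interpolation exponents to something of the form $1/2+\delta$ with $\delta$ small; the choice $\delta=1/8$ is simply a convenient value that makes the resulting geometric series summable. Aside from this calibration, the argument is a textbook van der Corput analysis and is expected to follow the template of \cite[Lemma 2.3]{FNLS}.
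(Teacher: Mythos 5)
The paper does not prove this lemma; it is quoted verbatim from \cite[Lemma 2.3]{FNLS}, so there is no in-text argument to compare against. Your sketch --- Littlewood--Paley decomposition, stationary phase in the critical shell $|\xi_c|\sim t^2/x^2$ with $|\Phi''(\xi_c)|\sim |t||\xi_c|^{-3/2}$ producing the $|t|^{-1/2}|\xi|^{3/4}$ leading term, non-stationary integration by parts off that shell, and the Cauchy--Schwarz step $\|f\|_{L^1}\lesssim\|f\|_{L^2}^{1/2}\|xf\|_{L^2}^{1/2}$ to convert $\|\partial_\xi\widehat h\|_{L^1}$-type errors into the weighted $L^2$ norm --- is exactly the strategy the cited reference follows, and all the essential ideas are present.

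The one place I would push back is the characterization of $\delta=1/8$ as ``simply a convenient value''; it is a forced balance, not a free parameter. Once the critical-shell error is bounded via van der Corput by an expression involving $\|\partial_\xi\widehat{P_kh}\|_{L^2}$ (which, after Cauchy--Schwarz over the shell of width $\sim2^k$, grows in $k$), while the trivial Bernstein bound $\|e^{it\Lambda}P_kh\|_{L^\infty}\lesssim 2^{k/2}\|P_kh\|_{L^2}\lesssim 2^{-3k/2}\|h\|_{H^2}$ decays in $k$, one must split the dyadic sum at a threshold $k^\ast(t)$ and equate the two contributions there; that computation determines both $k^\ast$ and the surplus $t^{-1/8}$, and it is also where the specific choice of $H^2$ (rather than $H^1$ or $L^2$) and the weight $x\partial_x$ actually earn their keep. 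A complete write-up needs to make that bookkeeping explicit, but there is no conceptual gap in your approach.
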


For any $k\in\mathbb{Z}$ let $f_k^+:=P_kf$, $f_k^-:=P_k\overline{f}$, and decompose, 
\begin{equation*}
I^{\iota_1\iota_2\iota_3}=\sum_{k_1,k_2,k_3\in\mathbb{Z}}I_{k_1,k_2,k_3}^{\iota_1\iota_2\iota_3},
\end{equation*}
for $(\iota_1\iota_2\iota_3)\in\{(++-),(--+),(+++),(---)\}$, where
\begin{equation}\label{bn9}
\begin{split}
I_{k_1,k_2,k_3}^{\iota_1\iota_2\iota_3}(\xi,t):=\int_{\mathbb{R}\times\mathbb{R}}&e^{it[\Lambda(\xi)-\iota_1\Lambda(\xi-\eta)-\iota_2\Lambda(\eta-\sigma)-\iota_3\Lambda(\sigma)]}\\
&\times c^{\iota_1\iota_2\iota_3}(\xi,\eta,\sigma)
\widehat{f_{k_1}^{\iota_1}}(\xi-\eta,t)\widehat{f_{k_2}^{\iota_2}}(\eta-\sigma,t)\widehat{f_{k_3}^{\iota_3}}(\sigma,t)\,d\eta d\sigma.
\end{split}
\end{equation}

Using \eqref{bn3}, for \eqref{bn8} it suffices to prove that if $k\in\mathbb{Z}$, $m\in\{1,2,\ldots\}$, $|\xi|\in[2^k,2^{k+1}]$, 
and $t_1\leq t_2\in[2^m-2,2^{m+1}]\cap[0,T]$ then
\begin{equation}\label{bn11}
\begin{split}
\sum_{k_1,k_2,k_3\in\mathbb{Z}}\Big|\int_{t_1}^{t_2} e^{iL(\xi,s)}\Big[I_{k_1,k_2,k_3}^{++-}(\xi,s)+\widetilde{c}(\xi)\frac{
\widehat{f_{k_1}^+}(\xi,s)\widehat{f_{k_2}^+}(\xi,s)\widehat{f_{k_3}^-}(-\xi,s)}{s+1}\Big]\,ds\Big|\\
\lesssim \varepsilon_1^32^{-p_1m}(2^{\beta k}+2^{(N_1+15)k})^{-1},
\end{split}
\end{equation}
\begin{equation}\label{bn12}
\sum_{k_1,k_2,k_3\in\mathbb{Z}}\Big|\int_{t_1}^{t_2} e^{iL(\xi,s)}I_{k_1,k_2,k_3}^{\iota_1\iota_2\iota_3}(\xi,s)\,ds\Big|
\lesssim \varepsilon_1^32^{-p_1m}(2^{\beta k}+2^{(N_1+15)k})^{-1}
\end{equation}
for any $(\iota_1,\iota_2,\iota_3)\in\{(--+),(+++),(---)\}$, and
\begin{equation}\label{bn12.5}
\Big|\int_{t_1}^{t_2} e^{iL(\xi,s)}e^{is\Lambda(\xi)}\widehat{R}(\xi,s)\,ds\Big|
\lesssim \varepsilon_1^32^{-p_1m}(2^{\beta k}+2^{(N_1+15)k})^{-1}.
\end{equation}

In view of \eqref{bn7} and Lemma \ref{disperse}, we have
\begin{equation}\label{bn13}
\begin{split}
\|\widehat{f_l^{\pm}}(s)\|_{L^2}&\lesssim \varepsilon_12^{p_0 m}2^{-(N_0-10) l_+},\\
\|(\partial\widehat{f_l^{\pm}})(s)\|_{L^2}&\lesssim \varepsilon_12^{5p_0 m}2^{-l}2^{-(N_0/2-20) l_+},\\
\|\widehat{f_l^{\pm}}(s)\|_{L^\infty}&\lesssim \varepsilon_1(2^{\beta l}+2^{(N_1+15)l})^{-1},\\
\|e^{\mp is\Lambda}f_l^{\pm}(s)\|_{L^\infty}&\lesssim \varepsilon_12^{-m/2}2^{-(N_0/2-20) l_+},
\end{split}
\end{equation}
for any $l\in\mathbb{Z}$ and $s\in[2^m-2,2^{m+1}]\cap[0,T]$. Using only the $L^2$ bounds in the first line of \eqref{bn13} it is easy to see that
\begin{equation}\label{bn14}
|I_{k_1,k_2,k_3}^{\iota_1\iota_2\iota_3}(\xi,s)|\lesssim \varepsilon_1^32^{3p_0 m}2^{\min(k_1,k_2,k_3)/2}(1+2^{\max(k_1,k_2,k_3)})^{-(N_0-20)},
\end{equation}
for any $(\iota_1\iota_2\iota_3)\in\{(++-),(--+),(+++),(---)\}$, $k_1,k_2,k_3\in\mathbb{Z}$. Moreover , using the $L^\infty$ bounds in \eqref{bn13},
\begin{equation*}
\Big|\widetilde{c}(\xi)\frac{\widehat{f_{k_1}^+}(\xi,s)\widehat{f_{k_2}^+}(\xi,s)\widehat{f_{k_3}^-}(\xi,s)}{s+1}\Big|
\lesssim 2^{-m}\varepsilon_1^32^{k}2^{-3N_1k_+}\mathbf{1}_{[0,4]}(\max(|k_1-k|,|k_2-k|,|k_3-k|)).
\end{equation*}
Using these two bounds it is easy to see that the sums in \eqref{bn11} and \eqref{bn12} over those $(k_1,k_2,k_3)$ for which 
$\max(k_1,k_2,k_3)\geq 3m/N_0-1000$ or $\min(k_1,k_2,k_3)\leq -4m$ are bounded by $C\varepsilon_1^32^{-p_1m}2^{-(N_1+15)k_+}$, as desired. 
The remaining sums have only $Cm^3$ terms. Therefore it suffices to prove the desired estimates for each $(k_1,k_2,k_3)$ fixed satisfying $k_1,k_2,k_3\in[-4m,3m/N_0-1000]$. 

At the same time, using \eqref{bn13}, together with the symbol estimates \eqref{csymbols}, it follows that 
\begin{equation}\label{bn14.5}
|I_{k_1,k_2,k_3}^{\iota_1\iota_2\iota_3}(\xi,s)|\lesssim \varepsilon_1^3|\xi|^{1/2}2^{\min(k_1,k_2,k_3)(1-\beta)}2^{\mathrm{med}(k_1,k_2,k_3)(1-\beta)}2^{-(N_1+10)\max(k_1,k_2,k_3,0)},
\end{equation}
for any $(\iota_1\iota_2\iota_3)\in\{(++-),(--+),(+++),(---)\}$, $k_1,k_2,k_3\in\mathbb{Z}$, and $\xi\in\mathbb{R}$,
where $\mathrm{med}(k_1,k_2,k_3)$ denotes the second largest frequency among $k_1,k_2,k_3$.

After these reductions, it suffices to prove the following lemma:

\begin{lem}\label{bb1}
Assume that $m\in\{1,2,\ldots\}$, $k\in\mathbb{Z}$, $|\xi|\in[2^k,2^{k+1}]$ and $t_1\leq t_2\in[2^{m}-2,2^{m+1}]\cap[0,T]$. Then, for any $k_1,k_2,k_3$ satisfying
\begin{equation}\label{ksassump}
k_1,k_2,k_3\in[-4m,3m/N_0-1000]\cap\mathbb{Z},\,\,\min(k_1,k_2,k_3)+\mathrm{med}(k_1,k_2,k_3)\geq -m(1+3\beta),
\end{equation}
we have 
\begin{equation}\label{bn20}
\Big|\int_{t_1}^{t_2} e^{iL(\xi,s)}\Big[I_{k_1,k_2,k_3}^{++-}(\xi,s)+\widetilde{c}(\xi)\frac{
\widehat{f_{k_1}^+}(\xi,s)\widehat{f_{k_2}^+}(\xi,s)\widehat{f_{k_3}^-}(-\xi,s)}{s+1}\Big]\,ds\Big|\\
\lesssim \varepsilon_1^32^{-2p_1m}(2^{\beta k}+2^{(N_1+15)k})^{-1},
\end{equation}
and, for any $(\iota_1\iota_2\iota_3)\in\{(--+),(+++),(---)\}$,
\begin{equation}\label{bn21}
\Big|\int_{t_1}^{t_2} e^{iL(\xi,s)}I_{k_1,k_2,k_3}^{\iota_1\iota_2\iota_3}(\xi,s)\,ds\Big|
\lesssim \varepsilon_1^32^{-2p_1m}(2^{\beta k}+2^{(N_1+15)k})^{-1}.
\end{equation}
Moreover
\begin{equation}\label{bn22}
\Big|\int_{t_1}^{t_2} e^{iL(\xi,s)}e^{is\Lambda(\xi)}\widehat{R}(\xi,s)\,ds\Big|
\lesssim \varepsilon_1^32^{-p_1m}(2^{\beta k}+2^{(N_1+15)k})^{-1}.
\end{equation}
\end{lem}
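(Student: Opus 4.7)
The plan is to treat the three bounds \eqref{bn20}, \eqref{bn21}, \eqref{bn22} separately, with the heart of the argument being the cancellation in \eqref{bn20} between the trilinear $(++-)$ integral and the subtracted diagonal term, whose weight $1/(s+1)$ and coefficient $\widetilde{c}(\xi)$ are precisely calibrated to the stationary--phase output at the resonant point $\eta=0$, $\sigma=-\xi$.

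For the non-resonant bound \eqref{bn21}, I would integrate by parts in $s$ using the identity $e^{is\Phi} = \partial_s(e^{is\Phi})/(i\Phi)$, where $\Phi = \Phi^{\iota_1\iota_2\iota_3}(\xi,\eta,\sigma) := \Lambda(\xi)-\iota_1\Lambda(\xi-\eta)-\iota_2\Lambda(\eta-\sigma)-\iota_3\Lambda(\sigma)$. For $(\iota_1\iota_2\iota_3)\in\{(--+),(+++),(---)\}$ the phase is bounded below by a positive power of the relevant frequencies (time non-resonance in the sense of \cite{GMS1,GMS2}), specifically by a quantity of order $2^{\mathrm{med}(k_1,k_2,k_3)/2}$ up to logarithmic losses, so the $1/\Phi$ is an acceptable symbol whose $\mathcal{S}^\infty$ norm can be controlled by a standard stationary phase lemma after dyadic localization. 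The resulting boundary terms are estimated using the trilinear $L^2\times L^2\times L^\infty$ bound of Lemma \ref{touse}(ii) with the $L^\infty$ decay from \eqref{bn13}; the bulk term containing $\partial_s\widehat{f}$ is replaced via \eqref{fd3.5} by $\widetilde{\N}_3+R$, producing a quintic-type expression whose total volume on $[t_1,t_2]$ of length $\lesssim 2^m$ gains a factor $\lesssim \varepsilon_1^2 2^{-m}$ relative to the cubic estimate, more than enough to give the required $2^{-2p_1m}$ provided $p_1$ is small relative to $p_0$ and $\beta$. The restrictions $k_i\leq 3m/N_0-1000$ and $\min+\mathrm{med}\geq -m(1+3\beta)$ in \eqref{ksassump} are exactly what one needs to render the loss from $1/\Phi$ harmless.

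The main obstacle is \eqref{bn20}. Here I would split the $(\eta,\sigma)$ integration into the resonant ball $\mathcal{A} := \{|\eta|+|\sigma+\xi|\leq 2^{-m/2+Dm p_0}\}$ and its complement $\mathcal{B}$, for a small absolute constant $D$. On $\mathcal{B}$ the phase $\Phi^{++-}$ has gradient bounded below in the variables $(\eta,\sigma)$ by a power of the distance from the critical point, so an $(\eta,\sigma)$-integration by parts (i.e. the standard non-stationary phase argument) gains a factor of $2^{-Nm}$ for any $N$, modulo derivative losses on $\widehat{f}$ and $c^{++-}$ that are controlled by \eqref{csymbols2} and the second line of \eqref{bn13}. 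On $\mathcal{A}$, I would write $\widehat{f_{k_j}^{\iota_j}}(\cdot,s) = \widehat{f_{k_j}^{\iota_j}}(\xi_j^\ast,s) + r_j$ and $c^{++-}(\xi,\eta,\sigma) = c^{++-}(\xi,0,-\xi) + r_c$, where $\xi_j^\ast$ are the arguments at the resonance. The leading constant piece gives an oscillatory integral $\int_{\mathcal{A}}e^{is[\Phi^{++-}]}\,d\eta\,d\sigma$ whose stationary phase asymptotics around the non-degenerate critical point yield precisely $\pi i\,|\xi|^{-3/2}/s$ times the constant (up to sign), so that the leading cubic contribution equals $\widetilde{c}(\xi)\widehat{f_{k_1}^+}(\xi,s)\widehat{f_{k_2}^+}(\xi,s)\widehat{f_{k_3}^-}(-\xi,s)/s$ with $\widetilde{c}(\xi) = 4\pi|\xi|^4$ exactly as in \eqref{bn2}, producing the required cancellation with the subtracted term modulo a $1/s$ vs $1/(s+1)$ discrepancy of size $\lesssim 2^{-2m}$. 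The Taylor remainders $r_j$ and $r_c$ supply derivatives of $\widehat{f}$ and of $c^{++-}$ times factors of $|\eta|+|\sigma+\xi|\lesssim 2^{-m/2+Dmp_0}$; using \eqref{csymbols2} and the weighted bound $\|\partial\widehat{f}\|_{L^2}\lesssim \varepsilon_1 2^{5p_0 m}2^{-k}$ from \eqref{bn13}, each such factor beats the volume $\mathrm{vol}(\mathcal{A})\sim 2^{-m+2Dmp_0}$ of the ball by a margin $2^{-\delta m}$ for some $\delta>0$.

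For \eqref{bn22}, I would use the quartic nature of $R = \mathcal{N}_3+\mathcal{N}_4-\widetilde{\mathcal{N}}_3$, which by the definitions \eqref{defN_3}, \eqref{defN_4}, \eqref{fd2} consists of contributions where at least one factor is itself quadratic or higher in $(h,\phi)$; combining the bounds \eqref{SN_4} and the difference estimate $\widetilde{\mathcal N}_3 - \mathcal N_3$ (which is a cubic error with at least one factor $A(h,h)$ or $B(h,\phi)$, controlled by Proposition \ref{proE2}) yields $\|\widehat{R}(\xi,s)\|_{L^\infty_\xi}\lesssim \varepsilon_1^4(1+s)^{-5/4}(2^{\beta k}+2^{(N_1+15)k})^{-1}$, so the $s$-integral over an interval of length $\lesssim 2^m$ gives $\lesssim \varepsilon_1^4 2^{-m/4}$, which is stronger than required.

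Throughout, the parameters will satisfy $0<p_1\ll p_0\ll \beta\ll 1/N_0$, chosen so that all the polynomial losses in $m$ (from $p_0$ in the Sobolev and $S$-bounds, from $\beta$ in the $Z$-bound, and from $Dm p_0$ in the cutoff) are absorbed into the final $2^{-p_1m}$.
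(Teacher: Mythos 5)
Your high-level structure matches the paper's: treat \eqref{bn20}, \eqref{bn21}, \eqref{bn22} separately, obtain \eqref{bn20} from a stationary-phase cancellation against the subtracted diagonal term, prove \eqref{bn21} by integrating by parts in $s$ using the weak ellipticity of $\Phi^{\iota_1\iota_2\iota_3}$, and prove \eqref{bn22} by bounding $\widehat{R}$. The sketch for \eqref{bn21} is essentially the paper's argument, and the sketch for the resonant piece of \eqref{bn20} (which is the paper's Lemma \ref{bb2}) is correct in outline, including the calibration $\widetilde{c}(\xi)=4\pi|\xi|^4$ and the harmless $1/s$ versus $1/(s+1)$ discrepancy. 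However there are two concrete gaps.

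\emph{Gap 1 (the non-resonant frequency regimes of \eqref{bn20}).} Your argument on $\mathcal{B}$ rests on the claim that non-stationary phase in $(\eta,\sigma)$ gains ``$2^{-Nm}$ for any $N$.'' This is not available: the weighted bound \eqref{bn13} controls only a single derivative $\partial\widehat{f_l}$, and each integration by parts both spends $\frac{1}{s\partial_\eta\Phi}$ and costs $\|\partial\widehat{f_l}\|_{L^2}\lesssim\varepsilon_1 2^{5p_0 m}2^{-l}$, so the number of admissible integrations by parts is bounded and the net gain per integration by parts is roughly $2^{-m/2}$ rather than arbitrarily large. More importantly, when $\max(|k_j-k|)\geq 21$ and $\min(k_j)\leq -19m/20$ with $k\geq -m/5$ --- a regime allowed by \eqref{ksassump} --- the subtracted diagonal term vanishes (the resonant point is outside the support), and a single $\eta$- or $\sigma$-integration by parts falls short: the derivative loss $2^{-\min(k_j)}$ at the very small frequency exceeds the gain $2^{\min(k_j)/2}$ coming from the phase gradient by a factor $2^{-\min(k_j)/2}\sim 2^{m(1+3\beta)/2}$, which eats most of the $2^{-m}$ you need. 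The paper handles this regime with a different mechanism: the phase $\Phi^{++-}$ is \emph{weakly elliptic} there, $|\Phi|\gtrsim 2^{\min(k,\mathrm{med}(k_1,k_2,k_3))/2}$, so one integrates by parts in $s$ and uses the bound \eqref{touse4} on $\partial_s\widehat{f_l}$ coming from the cubic equation. Your proposal recognizes this time-resonance mechanism for \eqref{bn21} but does not invoke it for the unbalanced $(++-)$ frequencies, where it is equally necessary.

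\emph{Gap 2 (the decay of $\widehat{R}$ for \eqref{bn22}).} You assert $\|\widehat{R}(\xi,s)\|_{L^\infty_\xi}\lesssim\varepsilon_1^4(1+s)^{-5/4}(2^{\beta k}+2^{(N_1+15)k})^{-1}$ and conclude that the $s$-integral over a time $\lesssim 2^m$ gains $2^{-m/4}$. This decay rate cannot be obtained. The dominant contribution $R_1=[G(h)\phi]_{\geq 4}$ is controlled in an $L^1$-based Sobolev space through Proposition \ref{proDNL^1}, which inevitably places \emph{two} of the four factors in $L^2$ (each contributing $(1+t)^{p_0}$) and only two in $L^\infty$ (each contributing $(1+t)^{-1/2}$); the resulting decay is $(1+t)^{-1+2p_0}$, not $(1+t)^{-5/4}$. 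This is exactly what the paper's bound \eqref{lemR1} says: $|\widehat{P_kR}|\lesssim\varepsilon_1^4(1+t)^{10p_0-1}2^{-(N_0-20)k_+}2^{-\beta k/4}$. Integrating over $[t_1,t_2]$ then gives $\approx 2^{10p_0 m}$, which is not small. The paper closes the estimate by exploiting the frequency factors $2^{-(N_0-20)k_+}$, $2^{-\beta k/4}$ when $k$ is large or very small, and by an additional $L^2$-based argument (Lemma \ref{lemrem}: split physical space into $|x|\lesssim 2^{m/2}$ and $|x|\gtrsim 2^{m/2}$, interpolating with the weighted $L^2$ and $\|SR\|_{L^2}$ bounds of \eqref{lemR2}) when $k$ is in an $O(p_0/\beta)$--sized middle range. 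Without such a mechanism your proof of \eqref{bn22} does not close.
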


We will prove this main lemma in several steps. The bounds \eqref{bn20} and \eqref{bn21} are proved in the remaining part of this section. 
The main ingredients are the bounds \eqref{bn13}. We will also use the following consequence of Lemma \ref{touse} (ii): if $(p,q,r)\in\{(2,2,\infty),(2,\infty,2),(\infty,2,2)\}$ then
\begin{equation}\label{touse2}
 \Big|\int_{\mathbb{R}\times\mathbb{R}}\widehat{f}(\eta)\widehat{g}(\sigma)\widehat{h}(-\eta-\sigma)m(\eta,\sigma)\,d\eta d\sigma\Big|
\lesssim \|m\|_{\mathcal{S}^\infty}\|f\|_{L^p}\|g\|_{L^q}\|h\|_{L^r}.
\end{equation}
We also need suitable $L^2$ bounds on the derivatives $(\partial_sf^{\pm}_l)$, in order to be able to integrate by parts in time. More precisely, we have
\begin{equation}\label{touse4}
\|(\partial_s\widehat{f^{\pm}_l})(s)\|_{L^2}\lesssim \varepsilon_12^{5p_0m-m}2^{-(N_0-20)l_+},
\end{equation}
which is a consequence of the \eqref{SN_3}--\eqref{SN_4} and the formula $\partial_t f=e^{it\Lambda}[\mathcal{N}_3(t)+\mathcal{N}_4(t)]$, see \eqref{fd3.5}.

The bound \eqref{bn22} is proved in appendix \ref{appR}.

\subsection{Proof of \eqref{bn20}} We divide the proof in several cases.

\begin{lem}\label{bb2}
The bounds \eqref{bn20} hold provided that \eqref{ksassump} holds and, in addition,
\begin{equation}\label{bn25}
k_1,k_2,k_3\in[k-20,k+20]\cap\mathbb{Z}.
\end{equation}
\end{lem}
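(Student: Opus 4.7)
The plan is to isolate and cancel the unique resonant contribution of $I^{++-}_{k_1,k_2,k_3}(\xi,s)$ against the explicit correction term $\tilde c(\xi)\widehat{f^+_{k_1}}(\xi,s)\widehat{f^+_{k_2}}(\xi,s)\widehat{f^-_{k_3}}(-\xi,s)/(s+1)$ using stationary phase. The phase $\Phi(\eta,\sigma) := \Lambda(\xi)-\Lambda(\xi-\eta)-\Lambda(\eta-\sigma)+\Lambda(\sigma)$ has a unique critical point at $(\eta,\sigma) = (0,-\xi)$ inside the joint support of the cutoffs $\varphi_{k_j}$ (which force the three internal frequencies to lie in $[2^{k-24},2^{k+24}]$), and a direct computation with $\Lambda''(y) = -\tfrac14|y|^{-3/2}$ gives the non-degenerate Hessian $\operatorname{Hess}\Phi(0,-\xi)$ with $|\det| = \tfrac{1}{16}|\xi|^{-3}$ and signature zero. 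Two-dimensional stationary phase then predicts the leading term $I^{++-}_{k_1,k_2,k_3}(\xi,s) \approx \tfrac{8\pi|\xi|^{3/2}}{s}\,c^{++-}(\xi,0,-\xi)\,\widehat{f^+_{k_1}}(\xi)\widehat{f^+_{k_2}}(\xi)\widehat{f^-_{k_3}}(-\xi) = -\tfrac{\tilde c(\xi)}{s}\,\widehat{f^+_{k_1}}(\xi)\widehat{f^+_{k_2}}(\xi)\widehat{f^-_{k_3}}(-\xi)$, which combines with the correction to leave only $\tilde c(\xi)\big(\tfrac{1}{s+1} - \tfrac{1}{s}\big) = O(s^{-2})$ times the profile product.

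To make this rigorous I would switch to the variables $x = -\eta,\,y = \eta-\sigma-\xi$ adapted to the symbol $c^\ast_\xi$ defined before \eqref{csymbols2}, centering the critical point at the origin, and then insert a dyadic partition $\sum_{l_1,l_2}\varphi_{l_1}(x)\varphi_{l_2}(y)$. On the outer shells where $\max(l_1,l_2) \geq l_\ast := \lceil -m/2 + 3k/4 + 100 \rceil$, the phase is non-stationary ($|\nabla\Phi| \gtrsim 2^{\max(l_1,l_2)}|\xi|^{-1/2}$), and I would integrate by parts $N$ times in whichever of $x,y$ corresponds to the larger shell, using \eqref{csymbols2} to bound the resulting symbols and the $L^\infty$/$L^2$ profile bounds \eqref{bn13}; this produces a pointwise bound of order $\varepsilon_1^3(s\,|\xi|^{-3/2}\,2^{2\max(l_1,l_2)})^{-N}$, summable in $(l_1,l_2)$ and integrable in $s$ to $O(\varepsilon_1^3 2^{-\delta m})$ for some $\delta>0$.

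On the complementary inner region, I would Taylor expand both $c^\ast_\xi(x,y)$ and each of the three factors $\widehat{f^{\pm}_{k_j}}$ around the critical point. The zeroth-order term is a Gaussian oscillatory integral to which exact 2D stationary phase applies and produces $-\tilde c(\xi)/s$ times $\widehat{f^+_{k_1}}(\xi)\widehat{f^+_{k_2}}(\xi)\widehat{f^-_{k_3}}(-\xi)$; combined with the correction in \eqref{bn20} this leaves only $-\tilde c(\xi)/[s(s+1)]$ times the same product, bounded via \eqref{bn13} and the constraint $k \leq 3m/N_0 - 1000$ and integrating to $O(\varepsilon_1^3 2^{-m})$. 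Each higher-order Taylor remainder costs one inner-shell factor $(s|\xi|^{-3/2})^{-1/2}$ and either picks up a symbol derivative bound from \eqref{csymbols2} or a factor $\partial_\xi\widehat{f^{\pm}_{k_j}}$ controlled through the weighted estimate $\|\partial\widehat{f^\pm_l}\|_{L^2} \lesssim \varepsilon_1 2^{5p_0 m}2^{-l}2^{-(N_0/2-20)l_+}$ in \eqref{bn13}; together these produce a pointwise $O(s^{-3/2})$ gain whose time integral is $O(\varepsilon_1^3 2^{-m/2+5p_0 m})$, also acceptable for $p_0$ small.

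The main obstacle I anticipate is bookkeeping rather than structural: I must track the $|\xi|^{3/2}$ scale of the Hessian through every step, balancing it against the $|\xi|^\beta$ low-frequency and $|\xi|^{N_1+15}$ high-frequency weights of the $Z$-norm, and against the $2^{p_0 m}$ and $2^{5p_0 m}$ growth factors in \eqref{bn13} that enter whenever a spatial derivative lands on a profile. The assumption $\min(k_1,k_2,k_3) + \mathrm{med}(k_1,k_2,k_3) \geq -m(1+3\beta)$ from \eqref{ksassump} is precisely what prevents the $|\xi|^{-3}$ Hessian determinant loss from overwhelming the available gain at extreme low frequencies, and it forces a quantitative choice $p_1 \in (0,\tfrac14-5p_0)$ for \eqref{bn20} to close — hence $p_0$ must have been chosen sufficiently small at the outset.
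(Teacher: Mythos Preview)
Your approach is essentially the paper's: change variables to center the unique stationary point at the origin (this is exactly the paper's $c^\ast_\xi$ substitution), dyadically localize in the distance to it with threshold $\approx 2^{3k/4-m/2}$, integrate by parts on the outer shells, and on the inner core Taylor-expand the phase to its quadratic part $-\eta\sigma/(4|\xi|^{3/2})$ and the amplitude to zeroth order, then compute the resulting Gaussian explicitly to match the correction. Two small corrections: your stated gradient lower bound $|\nabla\Phi|\gtrsim 2^{\max(l_1,l_2)}|\xi|^{-1/2}$ should read $|\xi|^{-3/2}$ (your subsequent gain factor $(s|\xi|^{-3/2}2^{2\max(l_1,l_2)})^{-N}$ is already consistent with the correct exponent, so this is just a slip), and the paper first disposes of the very low-frequency regime $k\leq -3m/5$ directly via the crude pointwise bound \eqref{bn14.5} before running the stationary-phase argument under the standing hypothesis $k\geq -m/2$; you should include that reduction rather than relying on \eqref{ksassump}, which in the present case only gives $k\gtrsim -m(1+3\beta)/2$.
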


\begin{proof}[Proof of Lemma \ref{bb2}]
 This is the main case, when the specific correction in the left-hand side of \eqref{bn20} is important. We will prove that
\begin{equation}\label{bn28}
 \Big|I_{k_1,k_2,k_3}^{++-}(\xi,s)+\widetilde{c}(\xi)\frac{\widehat{f_{k_1}^+}(\xi,s)\widehat{f_{k_2}^+}(\xi,s)
\widehat{f_{k_3}^-}(-\xi,s)}{s+1}\Big|\lesssim 2^{-m}\varepsilon_1^32^{-2p_1m}(2^{\beta k}+2^{(N_1+15)k})^{-1},
\end{equation}
for any $s\in [t_1,t_2]$, which is clearly stronger than the desired bound \eqref{bn20}.

The bound \eqref{bn28} follows easily from the bound \eqref{bn14.5} if $k\leq-m/2$. Therefore, in the rest of the proof 
of \eqref{bn28} we may assume that
\begin{equation}\label{bn28.5}
 k\geq -m/2.
\end{equation}

After changes of variables we rewrite\footnote{The point of this change of variables 
is to be able to identify $\eta=\sigma=0$ as the unique critical point of the phase $\Phi$ in \eqref{bn26}.} 
\begin{equation*}
I_{k_1,k_2,k_3}^{++-}(\xi,s)=\int_{\mathbb{R}^2}e^{is\Phi(\xi,\eta,\sigma)}
\widehat{f_{k_1}^+}(\xi+\eta,s)\widehat{f_{k_2}^+}(\xi+\sigma,s)\widehat{f_{k_3}^-}(-\xi-\eta-\sigma,s)c^\ast_\xi(\eta,\sigma)\,d\eta d\sigma,
\end{equation*}
where
\begin{equation}\label{bn26}
\Phi(\xi,\eta,\sigma):=\Lambda(\xi)-\Lambda(\xi+\eta)-\Lambda(\xi+\sigma)+\Lambda(\xi+\eta+\sigma).
\end{equation}
Let $\overline{l}$ denote the smallest integer with the property that $2^{\overline{l}}\geq 2^{3k/4}2^{-49m/100}$ 
(in view of \eqref{bn28.5} $\overline{l}\leq k-10$). For any $m,k\in\mathbb{Z}$, $m\leq k$, we define
\begin{equation}\label{disp0}
 \varphi^{(m)}_k(x):=
\begin{cases}
\varphi(x/2^k)-\varphi(x/2^{k-1}),\qquad &\text{ if }k\geq m+1,\\
\varphi(x/2^k),\qquad &\text{ if }k=m.
\end{cases}
\end{equation}
We decompose
\begin{equation}\label{bn27}
I_{k_1,k_2,k_3}^{++-}(\xi,s)=\sum_{l_1,l_2=\overline{l}}^{k+20}J_{l_1,l_2}(\xi,s),
\end{equation}
where, for any $l_1,l_2\geq \overline{l}$,
\begin{equation}\label{bn29}
J_{l_1,l_2}(\xi,s):=\int_{\mathbb{R}^2}e^{is\Phi(\xi,\eta,\sigma)}\widehat{f_{k_1}^+}(\xi+\eta,s)\widehat{f_{k_2}^+}(\xi+\sigma,s)
\widehat{f_{k_3}^-}(-\xi-\eta-\sigma,s)\varphi^{(\overline{l})}_{l_1}(\eta)\varphi^{(\overline{l})}_{l_2}(\sigma)c^\ast_\xi(\eta,\sigma)\,d\eta d\sigma.
\end{equation}

{\bf{Step 1.}} We show first that 
\begin{equation}\label{bn30}
|J_{l_1,l_2}(\xi,s)|\lesssim 2^{-m}\varepsilon_1^32^{-3p_1m}(2^{\beta k}+2^{(N_1+15)k})^{-1},\qquad\text{ if }l_2\geq\max(l_1,\overline{l}+1).
\end{equation}
For this we integrate by parts in $\eta$ in the formula \eqref{bn29}. Recalling that $\Lambda(\theta)=\sqrt{|\theta|}$, we observe that
\begin{equation}\label{bn30.5}
\big|(\partial_\eta\Phi)(\xi,\eta,\sigma)\big|=\big|\Lambda'(\xi+\eta+\sigma)-\Lambda'(\xi+\eta)\big|\gtrsim 2^{l_2}2^{-3k/2},
\end{equation}
provided that $|\xi+\eta|\approx 2^k, |\xi+\eta+\sigma|\approx 2^k, |\sigma|\approx 2^{l_2}$. After integration by parts in $\eta$ we see that
\begin{equation*}
|J_{l_1,l_2}(\xi,s)|\leq |J^1_{l_1,l_2,1}(\xi,s)|+|J^2_{l_1,l_2,1}(\xi,s)|+|F_{l_1,l_2,1}(\xi,s)|+|G_{l_1,l_2,1}(\xi,s)|,
\end{equation*}
where
\begin{equation}\label{bn31}
\begin{split}
&J^1_{l_1,l_2,1}(\xi,s):=\int_{\mathbb{R}^2}e^{is\Phi(\xi,\eta,\sigma)}\widehat{f_{k_1}^+}(\xi+\eta,s)\widehat{f_{k_2}^+}(\xi+\sigma,s)
\widehat{f_{k_3}^-}(-\xi-\eta-\sigma,s)(\partial_\eta r_1)(\eta,\sigma)c^\ast_\xi(\eta,\sigma)\,d\eta d\sigma,\\
&J^2_{l_1,l_2,1}(\xi,s):=\int_{\mathbb{R}^2}e^{is\Phi(\xi,\eta,\sigma)}\widehat{f_{k_1}^+}(\xi+\eta,s)\widehat{f_{k_2}^+}(\xi+\sigma,s)
\widehat{f_{k_3}^-}(-\xi-\eta-\sigma,s) r_1(\eta,\sigma)(\partial_\eta c^\ast_\xi)(\eta,\sigma)\,d\eta d\sigma,\\
&F_{l_1,l_2,1}(\xi,s):=\int_{\mathbb{R}^2}e^{is\Phi(\xi,\eta,\sigma)}(\partial\widehat{f_{k_1}^+})(\xi+\eta,s)\widehat{f_{k_2}^+}(\xi+\sigma,s)
\widehat{f_{k_3}^-}(-\xi-\eta-\sigma,s)r_1(\eta,\sigma)c^\ast_\xi(\eta,\sigma)\,d\eta d\sigma,\\
&G_{l_1,l_2,1}(\xi,s):=\int_{\mathbb{R}^2}e^{is\Phi(\xi,\eta,\sigma)}\widehat{f_{k_1}^+}(\xi+\eta,s)\widehat{f_{k_2}^+}(\xi+\sigma,s)
(\partial\widehat{f_{k_3}^-})(-\xi-\eta-\sigma,s)r_1(\eta,\sigma)c^\ast_\xi(\eta,\sigma)\,d\eta d\sigma,
\end{split}
\end{equation}
and
\begin{equation*}
r_1(\eta,\sigma):=\frac{\varphi^{(\overline{l})}_{l_1}(\eta)\varphi_{l_2}(\sigma)}{s(\partial_\eta\Phi)(\xi,\eta,\sigma)}\cdot 
\varphi_{[k_1-2,k_1+2]}(\xi+\eta)\varphi_{[k_3-2,k_3+2]}(\xi+\eta+\sigma).
\end{equation*}

To estimate $|F_{l_1,l_2,1}(\xi,s)|$ we recall that $\xi$ and $s$ are fixed and use \eqref{touse2} with
\begin{equation*}
\begin{split}
&\widehat{f}(\eta):=e^{-is\Lambda(\xi+\eta)}(\partial\widehat{f_{k_1}^+})(\xi+\eta,s),\\
&\widehat{g}(\sigma):=e^{-is\Lambda(\xi+\sigma)}\widehat{f_{k_2}^+}(\xi+\sigma,s)\cdot \varphi(\sigma/2^{l_2+4}),\\
&\widehat{h}(\theta):=e^{is\Lambda(\xi-\theta)}\widehat{f_{k_3}^-}(-\xi+\theta,s)\cdot \varphi(\theta/2^{l_2+4}).
\end{split}
\end{equation*}
It is easy to see, compare with \eqref{bn30.5}, that $r_1$ satisfies the symbol-type estimates
\begin{equation}\label{bn32}
|(\partial_\eta^a\partial_\sigma^br_1)(\eta,\sigma)|\lesssim (2^{-m}2^{-l_2}2^{3k/2})(2^{-al_1}2^{-bl_2})\cdot 
\mathbf{1}_{[0,2^{l_1+4}]}(|\eta|)\mathbf{1}_{[2^{l_2-4},2^{l_2+4}]}(|\sigma|),
\end{equation}
for any $a,b\in[0,20]\cap\mathbb{Z}$. It follows from \eqref{bn13}  that
\begin{equation*}
 \|f\|_{L^2}\lesssim \varepsilon_12^{-k}2^{5p_0m}2^{-(N_0/2-20)k_+},\quad \|g\|_{L^\infty}\lesssim\varepsilon_12^{-m/2}2^{-(N_0/2-20)k_+},\quad \|h\|_{L^2}\lesssim \varepsilon_12^{l_2/2}2^{-\beta k}.
\end{equation*}
It follows from \eqref{touse2}, \eqref{bn32}, and \eqref{csymbols} that
\begin{equation*}
 \|\mathcal{F}^{-1}(r_1\cdot c^\ast_\xi)\|_{L^1}\lesssim 2^{-m}2^{-l_2}2^{4k}.
\end{equation*}
Therefore, using \eqref{touse2} and recalling that $2^{-l_2/2}\lesssim 2^{m/4}2^{-3k/8}$ and that $k\leq m/10$,
\begin{equation*}
\begin{split}
 |F_{l_1,l_2,1}(\xi,s)|&\lesssim \varepsilon_1^32^{-k}2^{5p_0m}2^{-(N_0/2-20)k_+}\cdot 2^{-m/2}2^{-(N_0/2-20)k_+}\cdot 2^{l_2/2}2^{-\beta k}\cdot 2^{-m}2^{-l_2}2^{4k}\\
&\lesssim \varepsilon_1^32^{-(N_0/2+40)k_+}2^{-m}\cdot 2^{-m/8}.
\end{split}
\end{equation*}
Similar arguments show that $|G_{l_1,l_2,1}(\xi,s)|\lesssim \varepsilon_1^32^{-(N_0/2+40)k_+}2^{-9m/8}$ and, using also the bound \eqref{csymbols2}\footnote{The use of \eqref{csymbols2} requires additional dyadic decompositions in the variables $\eta$, $\sigma$, and $2\xi+\eta+\sigma$. This leads to an additional polynomial loss $\approx m^3$, which does not change the estimates.},  $|J^2_{l_1,l_2,1}(\xi,s)|\lesssim \varepsilon_1^32^{-(N_0/2+40)k_+}2^{-9m/8}$. Therefore, for \eqref{bn30} it suffices to prove that
\begin{equation}\label{bn35}
|J^1_{l_1,l_2,1}(\xi,s)|\lesssim 2^{-m}\varepsilon_1^32^{-3p_1m}(2^{\beta k}+2^{(N_1+15)k})^{-1}.
\end{equation}

For this we integrate by parts again in $\eta$ and estimate
\begin{equation*}
|J^1_{l_1,l_2,1}(\xi,s)|\leq |J^1_{l_1,l_2,2}(\xi,s)|+|J^2_{l_1,l_2,2}(\xi,s)|+|F_{l_1,l_2,2}(\xi,s)|+|G_{l_1,l_2,2}(\xi,s)|,
\end{equation*}
where
\begin{equation*}
\begin{split}
&J^1_{l_1,l_2,2}(\xi,s):=\int_{\mathbb{R}^2}e^{is\Phi(\xi,\eta,\sigma)}\widehat{f_{k_1}^+}(\xi+\eta,s)\widehat{f_{k_2}^+}(\xi+\sigma,s)
\widehat{f_{k_3}^-}(-\xi-\eta-\sigma,s)(\partial_\eta r_2)(\eta,\sigma)c^\ast_\xi(\eta,\sigma)\,d\eta d\sigma,\\
&J^2_{l_1,l_2,2}(\xi,s):=\int_{\mathbb{R}^2}e^{is\Phi(\xi,\eta,\sigma)}\widehat{f_{k_1}^+}(\xi+\eta,s)\widehat{f_{k_2}^+}(\xi+\sigma,s)
\widehat{f_{k_3}^-}(-\xi-\eta-\sigma,s)r_2(\eta,\sigma)(\partial_\eta c^\ast_\xi)(\eta,\sigma)\,d\eta d\sigma,\\
&F_{l_1,l_2,2}(\xi,s):=\int_{\mathbb{R}^2}e^{is\Phi(\xi,\eta,\sigma)}(\partial\widehat{f_{k_1}^+})(\xi+\eta,s)\widehat{f_{k_2}^+}(\xi+\sigma,s)
\widehat{f_{k_3}^-}(-\xi-\eta-\sigma,s)r_2(\eta,\sigma)c^\ast_\xi(\eta,\sigma)\,d\eta d\sigma,\\
&G_{l_1,l_2,2}(\xi,s):=\int_{\mathbb{R}^2}e^{is\Phi(\xi,\eta,\sigma)}\widehat{f_{k_1}^+}(\xi+\eta,s)\widehat{f_{k_2}^+}(\xi+\sigma,s)
(\partial\widehat{f_{k_3}^-})(-\xi-\eta-\sigma,s)r_2(\eta,\sigma)c^\ast_\xi(\eta,\sigma)\,d\eta d\sigma.
\end{split}
\end{equation*}
and
\begin{equation*}
 r_2(\eta,\sigma):=\frac{(\partial_\eta r_1)(\eta,\sigma)}{s(\partial_\eta\Phi)(\xi,\eta,\sigma)} .
\end{equation*}
It follows from \eqref{bn32} that $r_2$ satisfies the stronger symbol-type bounds
\begin{equation}\label{bn44}
|(\partial_\eta^a\partial_\sigma^br_2)(\eta,\sigma)|\lesssim (2^{-m}2^{-l_1-l_2}2^{3k/2})(2^{-m}2^{-l_2}2^{3k/2})(2^{-al_1}2^{-bl_2})\cdot 
\mathbf{1}_{[0,2^{l_1+4}]}(|\eta|)\mathbf{1}_{[2^{l_2-4},2^{l_2+4}]}(|\sigma|),
\end{equation}
for $a,b\in[0,19]\cap\mathbb{Z}$. Therefore, using Lemma \ref{touse} as before,
\begin{equation*}
 |F_{l_1,l_2,2}(\xi,s)|+|G_{l_1,l_2,2}(\xi,s)|+|J^2_{l_1,l_2,2}(\xi,s)|\lesssim  \varepsilon_1^32^{-(N_0/2+40)k_+}2^{-m}\cdot 2^{-m/8}.
\end{equation*}
Moreover, we can now estimate $|J^1_{l_1,l_2,2}(\xi,s)|$ using only \eqref{bn44} and the first $L^\infty$ bounds in \eqref{bn13},
\begin{equation*}
\begin{split}
|J_{l_1,l_2,2}(\xi,s)|&\lesssim 2^{l_1+l_2}\cdot\varepsilon_1^3(2^{\beta k}+2^{(N_1+15)k})^{-3}\cdot (2^{-m}2^{-l_1-l_2}2^{3k/2})^22^{5k/2}\\
&\lesssim \varepsilon_1^32^{-(N_0/2+40)k_+}2^{-m}\cdot 2^{-m/50}.
\end{split}
\end{equation*}
This completes the proof of \eqref{bn35} and \eqref{bn30}.

A similar argument shows that
\begin{equation*}
|J_{l_1,l_2}(\xi,s)|\lesssim 2^{-m}\varepsilon_1^32^{-3p_1m}(2^{\beta k}+2^{(N_1+15)k})^{-1},\qquad\text{ if }l_1\geq\max(l_2,\overline{l}+1).
\end{equation*}

{\bf{Step 2.}} Using the decomposition \eqref{bn27}, for \eqref{bn28} it suffices to prove that
\begin{equation}\label{bn50}
 \Big|J_{\overline{l},\overline{l}}(\xi,s)+\widetilde{c}(\xi)\frac{\widehat{f_{k_1}^+}(\xi,s)\widehat{f_{k_2}^+}(\xi,s)
\widehat{f_{k_3}^-}(-\xi,s)}{s+1}\Big|\lesssim 2^{-m}\varepsilon_1^32^{-2p_1m}(2^{\beta k}+2^{(N_1+15)k})^{-1}.
\end{equation}

To prove \eqref{bn50} we notice that
\begin{equation*}
\Big|\Phi(\xi,\eta,\sigma)+\frac{\eta\sigma}{4|\xi|^{3/2}}\Big|\lesssim 2^{-5k/2}(|\eta|+|\sigma|)^3,
\end{equation*}
as long as $|\eta|+|\sigma|\leq 2^{k-5}$. Therefore, using the $L^\infty$ bounds in \eqref{bn13}
\begin{equation}\label{bn51}
 \Big|J_{\overline{l},\overline{l}}(\xi,s)-J'_{\overline{l},\overline{l}}(\xi,s)\Big|\lesssim \varepsilon_1^3(2^{\beta k}+2^{(N_1+15)k})^{-3}\cdot 2^m2^{-5k/2}2^{5\overline{l}}\lesssim \varepsilon_1^3(2^{\beta k}+2^{(N_1+15)k})^{-1}2^{-5m/4},
 \end{equation}
where
\begin{equation}\label{bn52}
\begin{split}
J'_{\overline{l},\overline{l}}(\xi,s):=\int_{\mathbb{R}^2}e^{-is\eta\sigma/(4|\xi|^{3/2})}&\widehat{f_{k_1}^+}(\xi+\eta,s)\widehat{f_{k_2}^+}(\xi+\sigma,s)\\
&\times 
\widehat{f_{k_3}^-}(-\xi-\eta-\sigma,s)\varphi(2^{-\overline{l}}\eta)\varphi(2^{-\overline{l}}\sigma)c^\ast_\xi(\eta,\sigma)\,d\eta d\sigma.
\end{split}
\end{equation}
Using the second bound in \eqref{bn13}, we see that
\begin{align*}
| \what{f_l} (\xi+\rho,s) - \what{f_l} (\xi,s)| \lesssim \int_0^\rho | \partial \what{f_l}(\xi+\mu,s)| \,d\mu \lesssim
  {|\rho|}^{1/2} {\| \partial \what{f_l}(s) \|}_{L^2} \lesssim {|\rho|}^{1/2} \e_1 2^{5p_0m} 2^{-l} 
\end{align*}
whenever $|\rho| \lesssim 2^{l-10}$.
Then, using the third bound in \eqref{bn13} it follows that
\begin{equation*}
\begin{split}
|\widehat{f_{k_1}^+}(\xi+\eta,s)\widehat{f_{k_2}^+}(\xi+\sigma,s)\widehat{f_{k_3}^-}(-\xi-\eta-\sigma,s)&-
\widehat{f_{k_1}^+}(\xi,s)\widehat{f_{k_2}^+}(\xi,s)\widehat{f_{k_3}^-}(-\xi,s)|\\
&\lesssim \varepsilon_1^32^{\overline{l}/2}\cdot 2^{-N_0k_+}2^{5p_0m}2^{-k(1+2\beta)},
\end{split}
\end{equation*}
whenever $|\eta|+|\sigma|\leq 2^{\overline{l}+4}$. In addition, using \eqref{csymbols2}, 
\begin{equation*}
|c^\ast_\xi(\eta,\sigma)-c^\ast_\xi(0,0)|\lesssim 2^{3k/2}2^{\overline{l}},
\end{equation*}
provided that  $|\eta|+|\sigma|\leq 2^{\overline{l}+4}$. Therefore
\begin{equation}\label{bn53}
\begin{split}
\Big|J'_{\overline{l},\overline{l}}(\xi,s)-&\int_{\mathbb{R}^2}e^{-is\eta\sigma/(4|\xi|^{3/2})}\widehat{f_{k_1}^+}(\xi,s)\widehat{f_{k_2}^+}(\xi,s)
\widehat{f_{k_3}^-}(-\xi,s)\varphi(2^{-\overline{l}}\eta)\varphi(2^{-\overline{l}}\sigma)c^\ast_\xi(0,0)\,d\eta d\sigma\Big|\\
&\lesssim 2^{2\overline{l}}2^{3k/2}\cdot \varepsilon_1^32^{\overline{l}/2}2^{-N_0k_+}2^{5p_0m}2^{-k(1+2\beta)}\\
&\lesssim \varepsilon_1^3(2^{\beta k}+2^{(N_1+15)k})^{-1}2^{-9m/8}.
\end{split}
\end{equation}

Starting from the general formula
\begin{equation*}
\int_{\mathbb{R}}e^{-ax^2-bx}\,dx=e^{b^2/(4a)}\sqrt{\pi}/\sqrt{a},\qquad a,b\in\mathbb{C},\,\,\Re\,a>0,
\end{equation*}
we calculate, for any $N\geq 1$,
\begin{equation*}
\int_{\mathbb{R}\times\mathbb{R}}e^{-ixy}e^{-x^2/N^2}e^{-y^2/N^2}\,dxdy=\sqrt{\pi}N\int_{\mathbb{R}}e^{-y^2/N^2}e^{-N^2y^2/4}\,dy=2\pi+O(N^{-1}).
\end{equation*}
Therefore, for $N\geq 1$,
\begin{equation*}
\int_{\mathbb{R}\times\mathbb{R}}e^{-ixy}\varphi(x/N)\varphi(y/N)\,dxdy=2\pi+O(N^{-1/2}).
\end{equation*}
Recalling also that $2^{\overline{l}}\approx |\xi|^{3/4}2^{-49m/100}$, it follows that
\begin{equation*}
\Big|\int_{\mathbb{R}^2}e^{-is\eta\sigma/(4|\xi|^{3/2})}\varphi(2^{-\overline{l}}\eta)\varphi(2^{-\overline{l}}\sigma)\,d\eta d\sigma-\frac{4|\xi|^{3/2}}{s}(2\pi)\Big|\lesssim 2^{3k/2}2^{-(1+4p_1)m}.
\end{equation*}
Therefore, using also \eqref{bn13},
\begin{equation}\label{bn54}
\begin{split}
\Big|\int_{\mathbb{R}^2}e^{-is\eta\sigma/(4|\xi|^{3/2})}&\widehat{f_{k_1}^+}(\xi,s)\widehat{f_{k_2}^+}(\xi,s)
\widehat{f_{k_3}^-}(-\xi,s)\varphi(2^{-\overline{l}}\eta)\varphi(2^{-\overline{l}}\sigma)c^\ast_\xi(0,0)\,d\eta d\sigma\\
&-\frac{8\pi|\xi|^{3/2}c^\ast_\xi(0,0)\cdot \widehat{f_{k_1}^+}(\xi,s)\widehat{f_{k_2}^+}(\xi,s)
\widehat{f_{k_3}^-}(-\xi,s)}{s}\Big|\lesssim \varepsilon_1^32^{-(1+4p_1)m}2^{-N_0k_+}.
\end{split}
\end{equation}
and the bound \eqref{bn50} follows from \eqref{bn51}, \eqref{bn53}, and \eqref{bn54}. This completes the proof of the lemma.
\end{proof}

\begin{lem}\label{bb11}
The bounds \eqref{bn20} hold provided that \eqref{ksassump} holds and, in addition,
\begin{equation}\label{bn70}
\begin{split}
&\max(|k_1-k|,|k_2-k|,|k_3-k|)\geq 21,\\
&\min(k_1,k_2,k_3)\geq -19m/20,\qquad\max(|k_1-k_3|,|k_2-k_3|)\geq 5.
\end{split}
\end{equation}
\end{lem}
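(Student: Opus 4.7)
The first observation is that under the hypothesis $\max(|k_1-k|,|k_2-k|,|k_3-k|)\geq 21$ in \eqref{bn70}, the correction term in the left-hand side of \eqref{bn20} vanishes identically: for $|\xi|\in[2^k,2^{k+1}]$ each factor $\widehat{f^{\pm}_{k_j}}(\pm\xi)$ is supported where $|k_j-k|\leq 1$, so at least one of the three Littlewood--Paley pieces is zero. The task therefore reduces to
\begin{equation*}
\Big|\int_{t_1}^{t_2} e^{iH(\xi,s)}\,I^{++-}_{k_1,k_2,k_3}(\xi,s)\,ds\Big|\lesssim \varepsilon_1^3 2^{-2p_1m}(2^{\beta k}+2^{(N_1+15)k})^{-1},
\end{equation*}
which I plan to obtain from a pointwise bound $|I^{++-}_{k_1,k_2,k_3}(\xi,s)|\lesssim \varepsilon_1^3 2^{-(1+2p_1)m}(2^{\beta k}+2^{(N_1+15)k})^{-1}$ integrated over the time window of length $\approx 2^m$.

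The pointwise bound will be obtained by integrating by parts twice in the phase $\Phi(\xi,\eta,\sigma)=\Lambda(\xi)-\Lambda(\xi-\eta)-\Lambda(\eta-\sigma)+\Lambda(\sigma)$ of \eqref{bn9}. Since $\Lambda'(\theta)=\tfrac{1}{2}\mathrm{sgn}(\theta)|\theta|^{-1/2}$, a direct computation gives $|\partial_\sigma\Phi|=|\Lambda'(\eta-\sigma)+\Lambda'(\sigma)|\gtrsim 2^{-\min(k_2,k_3)/2}$ in the subcase $|k_2-k_3|\geq 5$. In the complementary subcase $|k_1-k_3|\geq 5$, the linear change of variables $(\eta,\sigma)\mapsto(\tau,\sigma)$ with $\tau=\eta-\sigma$ transforms $\Phi$ into $\Lambda(\xi)-\Lambda(\xi-\tau-\sigma)-\Lambda(\tau)+\Lambda(\sigma)$, so that $|\partial_\sigma\Phi|=|\Lambda'(\xi-\tau-\sigma)+\Lambda'(\sigma)|\gtrsim 2^{-\min(k_1,k_3)/2}$. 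Thus in either regime IBP in $\sigma$ is available and one may iterate it twice.

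Each IBP produces a factor $(s|\partial_\sigma\Phi|)^{-1}\approx 2^{-m}2^{\min(k_2,k_3)/2}$ (or $2^{\min(k_1,k_3)/2}$) and transfers a derivative onto one of the profiles $\widehat{f^{\pm}_{k_j}}$, onto the symbol $c^{++-}$, or onto the dyadic cutoffs. Derivatives on the profiles are handled via the weighted bound $\|\partial\widehat{f^{\pm}_l}\|_{L^2}\lesssim\varepsilon_1 2^{5p_0m}2^{-l}$ from \eqref{bn13}; derivatives on $c^{++-}$ are controlled by \eqref{csymbols2}; and cutoff losses of order $2^{-\min(k_1,k_2,k_3)}$ are absorbed using $\min(k_1,k_2,k_3)\geq -19m/20$ from \eqref{bn70}. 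The resulting trilinear form is then estimated by Hölder in an $L^2\cdot L^2\cdot L^\infty$ configuration through \eqref{touse2}, placing one factor in $L^\infty$ via the dispersive bound $\|e^{\mp is\Lambda}f^{\pm}_l\|_{L^\infty}\lesssim\varepsilon_1 2^{-m/2}$ of \eqref{bn13}. Summing the contributions yields the desired pointwise bound provided $p_1>0$ is chosen sufficiently small with respect to $p_0$ and $\beta$.

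The main obstacle I anticipate is the regime where only $|k_1-k_3|\geq 5$ holds while $|k_2-k_3|\leq 4$: IBP in the original $\sigma$ variable is then degenerate, so one must either change variables as outlined above (and verify that the localization cutoffs transform controllably) or exploit the symmetry in the two $+$-inputs. A secondary difficulty is verifying that the symbol $r=(\partial_\sigma\Phi)^{-1}\cdot(\text{cutoffs})$ produced after IBP lies in $\mathcal{S}^\infty$ with a controllable norm, which requires symbol-type derivative bounds analogous to \eqref{bn32} adapted to the separated-frequency regime of \eqref{bn70}.
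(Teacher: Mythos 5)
Your approach matches the paper's: you correctly observe that the correction term vanishes when $\max(|k_1-k|,|k_2-k|,|k_3-k|)\geq 21$, reduce to a pointwise bound on $I^{++-}_{k_1,k_2,k_3}$, and integrate by parts in frequency using the nondegeneracy of the phase derivative furnished by $\max(|k_1-k_3|,|k_2-k_3|)\geq 5$ together with the lower bound $\min(k_1,k_2,k_3)\geq -19m/20$ --- the paper folds the case split into a symmetry reduction (WLOG $|k_1-k_3|\geq 5$, then IBP in $\eta$ after the change of variables), while you treat the two subcases explicitly via IBP in $\sigma$ and a linear change of variables, but these are the same computation. One small remark: a single integration by parts already suffices here since $2^{\min(k_1,k_3)/2}2^{-m/2}\lesssim 2^{-m/40}$ under \eqref{bn70}; the double IBP you announce is what the paper uses in Lemma \ref{bb2} (where all $k_j\approx k$ and the stationary-phase main term must be extracted), not for this separated-frequency lemma.
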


\begin{proof}[Proof of Lemma \ref{bb11}] Recall the definition
\begin{equation}\label{bn71}
I_{k_1,k_2,k_3}^{++-}(\xi,s)=\int_{\mathbb{R}^2}e^{is\Phi(\xi,\eta,\sigma)}\widehat{f_{k_1}^+}(\xi+\eta,s)\widehat{f_{k_2}^+}(\xi+\sigma,s)
\widehat{f_{k_3}^-}(-\xi-\eta-\sigma,s)c^\ast_\xi(\eta,\sigma)\,d\eta d\sigma,
\end{equation}
where
\begin{equation*}
\Phi(\xi,\eta,\sigma)=\Lambda(\xi)-\Lambda(\xi+\eta)-\Lambda(\xi+\sigma)+\Lambda(\xi+\eta+\sigma).
\end{equation*}
It suffices to prove that, for any $s\in[2^m-1,2^{m+1}]$,
\begin{equation}\label{bn72}
\big|I_{k_1,k_2,k_3}^{++-}(\xi,s)\big|\lesssim 2^{-m}\varepsilon_1^32^{-2p_1m}2^{-(N_1+15)k_+}.
\end{equation}

By symmetry, we may assume that $|k_1-k_3|\geq 5$ and notice that
\begin{equation}\label{bn73}
|(\partial_\eta\Phi)(\xi,\eta,\sigma)|=|-\Lambda'(\xi+\eta)+\Lambda'(\xi+\eta+\sigma)|\gtrsim 2^{-\min(k_1,k_3)/2},
\end{equation}
provided that $|\xi+\eta|\in[2^{k_1-2},2^{k_1+2}]$, $|\xi+\eta+\sigma|\in[2^{k_3-2},2^{k_3+2}]$. As in the proof of Lemma \ref{bb2}, we integrate by parts in $\eta$ to
estimate
\begin{equation*}
|I_{k_1,k_2,k_3}^{++-}(\xi,s)|\leq |J^1_{1}(\xi,s)|+|J^2_{1}(\xi,s)|+|F_{1}(\xi,s)|+|G_{1}(\xi,s)|,
\end{equation*}
where
\begin{equation*}
\begin{split}
&J_{1}^1(\xi,s):=\int_{\mathbb{R}^2}e^{is\Phi(\xi,\eta,\sigma)}\widehat{f_{k_1}^+}(\xi+\eta,s)\widehat{f_{k_2}^+}(\xi+\sigma,s)
\widehat{f_{k_3}^-}(-\xi-\eta-\sigma,s)(\partial_\eta r_3)(\eta,\sigma)c^\ast_\xi(\eta,\sigma)\,d\eta d\sigma,\\
&J_{1}^2(\xi,s):=\int_{\mathbb{R}^2}e^{is\Phi(\xi,\eta,\sigma)}\widehat{f_{k_1}^+}(\xi+\eta,s)\widehat{f_{k_2}^+}(\xi+\sigma,s)
\widehat{f_{k_3}^-}(-\xi-\eta-\sigma,s) r_3(\eta,\sigma)(\partial_\eta c^\ast_\xi)(\eta,\sigma)\,d\eta d\sigma,\\
&F_{1}(\xi,s):=\int_{\mathbb{R}^2}e^{is\Phi(\xi,\eta,\sigma)}(\partial\widehat{f_{k_1}^+})(\xi+\eta,s)\widehat{f_{k_2}^+}(\xi+\sigma,s)
\widehat{f_{k_3}^-}(-\xi-\eta-\sigma,s)r_3(\eta,\sigma)c^\ast_\xi(\eta,\sigma)\,d\eta d\sigma,\\
&G_{1}(\xi,s):=\int_{\mathbb{R}^2}e^{is\Phi(\xi,\eta,\sigma)}\widehat{f_{k_1}^+}(\xi+\eta,s)\widehat{f_{k_2}^+}(\xi+\sigma,s)
(\partial\widehat{f_{k_3}^-})(-\xi-\eta-\sigma,s)r_3(\eta,\sigma)c^\ast_\xi(\eta,\sigma)\,d\eta d\sigma,
\end{split}
\end{equation*}
and
\begin{equation*}
 r_3(\eta,\sigma):=\frac{1}{s(\partial_\eta\Phi)(\xi,\eta,\sigma)}\cdot 
\varphi_{[k_1-1,k_1+1]}(\xi+\eta)\varphi_{[k_3-1,k_3+1]}(\xi+\eta+\sigma).
\end{equation*}
Using also \eqref{bn73}, it follows easily that
\begin{equation*}
 \|\mathcal{F}^{-1}(r_3)\|_{L^1}\lesssim 2^{-m}2^{\min(k_1,k_3)/2}, \qquad \|\mathcal{F}^{-1}(\partial_\eta r_3)\|_{L^1}\lesssim 2^{-m}2^{-\min(k_1,k_3)/2}.
\end{equation*}

We apply first \eqref{touse2} with
\begin{equation*}
\widehat{f}(\eta):=e^{-is\Lambda(\xi+\eta)}\widehat{f_{k_1}^+}(\xi+\eta,s),\,
\widehat{g}(\sigma):=e^{-is\Lambda(\xi+\sigma)}\widehat{f_{k_2}^+}(\xi+\sigma,s),\,
\widehat{h}(\theta):=e^{is\Lambda(\xi-\theta)}\widehat{f_{k_3}^-}(-\xi+\theta,s).
\end{equation*}
Using also \eqref{bn13}, \eqref{csymbols}, and \eqref{csymbols2}, we conclude that
\begin{equation*}
 |J^1_{1}(\xi,s)| +|J^2_{1}(\xi,s)|\lesssim \varepsilon_1^32^{-m/2}2^{3p_0m}2^{-(N_0-20)\max(k_1,k_2,k_3)_+}\cdot 2^{-m}2^{-\min(k_1,k_3)/2}.
\end{equation*}
Similarly, we apply \eqref{touse2} with 
\begin{equation*}
\widehat{f}(\eta):=e^{-is\Lambda(\xi+\eta)}(\partial\widehat{f_{k_1}^+})(\xi+\eta,s),\,
\widehat{g}(\sigma):=e^{-is\Lambda(\xi+\sigma)}\widehat{f_{k_2}^+}(\xi+\sigma,s),\,
\widehat{h}(\theta):=e^{is\Lambda(\xi-\theta)}\widehat{f_{k_3}^-}(-\xi+\theta,s),
\end{equation*}
and use \eqref{bn13} to conclude that
\begin{equation*}
 |F_{1}(\xi,s)|\lesssim \varepsilon_1^32^{-m/2}2^{8p_0m}2^{-k_1}2^{-(N_0/2-30)\max(k_1,k_2,k_3)_+}\cdot 2^{-m}2^{\min(k_1,k_3)/2}.
\end{equation*}
Finally, we apply \eqref{touse2} with 
\begin{equation*}
\widehat{f}(\eta):=e^{-is\Lambda(\xi+\eta)}\widehat{f_{k_1}^+}(\xi+\eta,s),\,
\widehat{g}(\sigma):=e^{-is\Lambda(\xi+\sigma)}\widehat{f_{k_2}^+}(\xi+\sigma,s),\,
\widehat{h}(\theta):=e^{is\Lambda(\xi-\theta)}(\partial\widehat{f_{k_3}^-})(-\xi+\theta,s),
\end{equation*}
and use \eqref{bn13} to conclude that
\begin{equation*}
 |G_{1}(\xi,s)|\lesssim \varepsilon_1^32^{-m/2}2^{8p_0m}2^{-k_3}2^{-(N_0/2-30)\max(k_1,k_2,k_3)_+}\cdot 2^{-m}2^{\min(k_1,k_3)/2}.
\end{equation*}
Therefore
\begin{equation*}
 |J_{1}(\xi,s)|+|F_{1}(\xi,s)|+|G_{1}(\xi,s)|\lesssim \varepsilon_1^32^{-m}2^{-(N_0/2-30)k_+}2^{-m/2+8p_0m}2^{-\min(k_1,k_3)/2},
\end{equation*}
and the desired bound \eqref{bn72} follows from the assumptions $-\min(k_1,k_3)/2\leq 19m/40$, see \eqref{bn70}, and $\max(k_1,k_2,k_3)_+\leq 3m/N_0$ (see the hypothesis of Lemma \ref{bb1}).
\end{proof}

\begin{lem}\label{bb12}
The bounds \eqref{bn20} hold provided that \eqref{ksassump} holds and, in addition,
\begin{equation}\label{bn80}
\begin{split}
&\max(|k_1-k|,|k_2-k|,|k_3-k|)\geq 21,\\
&\min(k_1,k_2,k_3)\geq -19m/20,\qquad\max(|k_1-k_3|,|k_2-k_3|)\leq 4.
\end{split}
\end{equation}
\end{lem}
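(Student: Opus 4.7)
The plan is to adapt the integration-by-parts argument of Lemma \ref{bb11} to the case when all three input frequencies are comparable. First I would observe that the hypothesis $\max(|k_1-k_3|,|k_2-k_3|)\leq 4$ forces $k_1,k_2,k_3$ to lie in a common dyadic band $[k'-4,k'+4]$, and then the condition $\max_j|k_j-k|\geq 21$ forces $k'\geq k+17$ (the alternative $k'\leq k-5$ is geometrically impossible: $|\xi+\sigma|\approx 2^{k_2}\ll|\xi|$ would force $|\sigma|\approx|\xi|\approx 2^k$, hence $|\xi+\eta+\sigma|\approx|\xi|\not\approx 2^{k'}$). In particular, on the support of the integrand in \eqref{bn71} we have $|\xi+\eta|\approx|\xi+\sigma|\approx|\xi+\eta+\sigma|\approx 2^{k'}$ and, since $|\xi|\ll 2^{k'}$, also $|\eta|\approx|\sigma|\approx 2^{k'}$.

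Next I would establish a robust lower bound $|\partial_\eta\Phi(\xi,\eta,\sigma)|\gtrsim 2^{-k'/2}$ on this support, which replaces \eqref{bn73}. Writing $\partial_\eta\Phi=\Lambda'(\xi+\eta+\sigma)-\Lambda'(\xi+\eta)$ with $\Lambda'(\theta)=\operatorname{sgn}(\theta)/(2\sqrt{|\theta|})$: when $\xi+\eta$ and $\xi+\eta+\sigma$ have opposite signs, the two terms add in absolute value, giving the lower bound directly; when they have the same sign, the mean value identity together with $|\Lambda''|\approx 2^{-3k'/2}$ and $|\sigma|\approx 2^{k'}$ again yields $|\partial_\eta\Phi|\approx 2^{-k'/2}$. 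No dyadic separation between $k_1$ and $k_3$ is required here, because the lower bound on $|\sigma|$ is supplied by the large gap between $|\xi|$ and $|\xi+\sigma|$.

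With this lower bound I would perform one integration by parts in $\eta$ in \eqref{bn71}, exactly as in the proof of Lemma \ref{bb11}, expressing $I^{++-}_{k_1,k_2,k_3}$ as a sum of four terms $J^1_1,J^2_1,F_1,G_1$ in which the multiplier $1/(s\,\partial_\eta\Phi)$ of size $\lesssim 2^{k'/2-m}$ appears together with a $\partial_\eta$ that lands on one of the $\widehat{f_{k_j}^\pm}$ (controlled in $L^2$ by $\varepsilon_1 2^{5p_0 m}2^{-k'}$ via \eqref{bn13}), on $1/\partial_\eta\Phi$ (gaining an extra $2^{-k'/2}$ from $|\partial_{\eta\eta}\Phi|/|\partial_\eta\Phi|^2$), or on $c^*_\xi$ (producing a factor $2^{-k/2-k'/2}$ relative to $\|c\|_{\mathcal{S}^\infty}\lesssim 2^{k/2+2k'}$, by \eqref{csymbols2}). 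Each piece is then bounded via the $L^2\cdot L^\infty\cdot L^2$ form of \eqref{touse2}, with the $L^\infty$ coming from the dispersive bound and the $L^2$ from the Sobolev bound in \eqref{bn13}. Combining, one obtains a pointwise bound $|I^{++-}_{k_1,k_2,k_3}(\xi,s)|\lesssim\varepsilon_1^3 2^{-m-2p_1 m}(2^{\beta k}+2^{(N_1+15)k})^{-1}$ for each $s\in[t_1,t_2]$, which upon time integration over an interval of length $\lesssim 2^m$ yields \eqref{bn20}. Note that the correction term in \eqref{bn20} vanishes identically here, since $\widehat{f_{k_j}^\pm}(\pm\xi)=0$ when $|\xi|\approx 2^k$ lies outside the support of $\varphi_{k_j}$.

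The main obstacle is verifying the closing estimate in the subcase where $k\geq 0$ (which forces $k'\geq 17$) and the IBP multiplier for the symbol-derivative term is only $2^{-k/2-m}$ rather than $2^{-k'/2-m}$. In this subcase the high-frequency Sobolev decay $2^{-(N_0/2-20)k'_+}$ coming from the dispersive $L^\infty$ factor, together with $2^{-(N_0-10)k'_+}$ from each of the two $L^2$ factors, dominates the loss $2^{2k'}$ from $\|c\|_{\mathcal{S}^\infty}$ and closes the estimate once $N_0=10^4$ is used. For $k<0$ the single-IBP bound closes using only $k'\leq 3m/N_0-1000$ together with $k'\geq -m(1+3\beta)/2$ coming from \eqref{ksassump}, and the smallness of $\beta$ and $p_0$.
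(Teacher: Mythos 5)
Your proposal matches the paper's proof: both reduce to the case $\min(k_1,k_2,k_3)\geq k+10$ (so the correction term vanishes and $|\sigma|\approx 2^{k_2}\approx 2^{k'}$ follows from $|\xi+\sigma|\approx 2^{k_2}\gg|\xi|$), establish the lower bound $|\partial_\eta\Phi|\gtrsim 2^{-k'/2}$ (the paper's \eqref{bn84}), and conclude by a single integration by parts in $\eta$ exactly as in Lemma~\ref{bb11}. The extra details you supply (the geometry ruling out $k'\leq k-17$, the sign case analysis for $\partial_\eta\Phi$, and the closing bookkeeping in the last paragraph) are all correct and simply fill in what the paper leaves to the reader; the ``obstacle'' you flag is harmless for exactly the reason you give.
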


\begin{proof}[Proof of Lemma \ref{bb12}] We may assume that
\begin{equation}\label{bn81}
 \min(k_1,k_2,k_3)\geq k+10,
\end{equation}
and rewrite
\begin{equation}\label{bn82}
\begin{split}
I_{k_1,k_2,k_3}^{++-}(\xi,s)=\int_{\mathbb{R}^2}e^{is\Phi(\xi,\eta,\sigma)}&\widehat{f_{k_1}^+}(\xi+\eta,s)\widehat{f_{k_2}^+}(\xi+\sigma,s)\\
&\times
\widehat{f_{k_3}^-}(-\xi-\eta-\sigma,s)\varphi_{[k_2-4,k_2+4]}(\sigma)c_\xi^\ast(\eta,\sigma)\,d\eta d\sigma,
\end{split}
\end{equation}
where, as before,
\begin{equation*}
\Phi(\xi,\eta,\sigma)=\Lambda(\xi)-\Lambda(\xi+\eta)-\Lambda(\xi+\sigma)+\Lambda(\xi+\eta+\sigma).
\end{equation*}
It suffices to prove that, for any $s\in[t_1,t_2]$,
\begin{equation}\label{bn83}
\big|I_{k_1,k_2,k_3}^{++-}(\xi,s)\big|\lesssim 2^{-m}\varepsilon_1^32^{-2p_1m}2^{-(N_1+15)k_+}.
\end{equation}

Notice that
\begin{equation}\label{bn84}
|(\partial_\eta\Phi)(\xi,\eta,\sigma)|=|-\Lambda'(\xi+\eta)+\Lambda'(\xi+\eta+\sigma)|\gtrsim 2^{-k_2/2},
\end{equation}
provided that $|\xi+\eta|\in[2^{k_1-2},2^{k_1+2}]$, $|\xi+\eta+\sigma|\in[2^{k_3-2},2^{k_3+2}]$, and $|\sigma|\approx 2^{k_2}$ (recall also that $2^{k_1}\approx 2^{k_2}\approx 2^{k_3}$). The bound \eqref{bn83} follows by integration by parts in $\eta$, as in the proof of Lemma \ref{bb11}. 
 \end{proof}
 
\begin{lem}\label{bb12.5}
The bounds \eqref{bn20} hold provided that \eqref{ksassump} holds and, in addition,
\begin{equation}\label{bn88}
\begin{split}
&\max(|k_1-k|,|k_2-k|,|k_3-k|)\geq 21,\\
&\min(k_1,k_2,k_3)\leq -19m/20,\qquad k\leq -m/5.
\end{split}
\end{equation}
\end{lem}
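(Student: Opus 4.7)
The plan is to exploit the fact that in the frequency regime \eqref{bn88} the correction term on the left-hand side of \eqref{bn20} vanishes identically, reducing the estimate to a pointwise-in-time bound on $I^{++-}_{k_1,k_2,k_3}(\xi,s)$ for which the $L^2$-based trilinear bound \eqref{bn14.5} already has enough room in both $k$ and in $\min+\mathrm{med}$ to absorb the trivial factor of $2^m$ coming from time integration.

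First I would observe that since $|\xi|\in[2^k,2^{k+1}]$ and $\widehat{f_{k_j}^\pm}(\pm\xi)$ is supported where $|\xi|\in[2^{k_j-1},2^{k_j+1}]$, the assumption that $|k_j-k|\geq 21$ for at least one $j$ forces the product $\widehat{f_{k_1}^+}(\xi,s)\widehat{f_{k_2}^+}(\xi,s)\widehat{f_{k_3}^-}(-\xi,s)$ to vanish identically. Hence \eqref{bn20} reduces to estimating $\int_{t_1}^{t_2}|I^{++-}_{k_1,k_2,k_3}(\xi,s)|\,ds$, and the other sign combinations $(--+),(+++),(---)$ that appear in \eqref{bn21} carry no correction to begin with.

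Second, I would apply the trilinear bound \eqref{bn14.5} and trivially integrate in $s$ over an interval of length $\leq 2^{m+1}$, obtaining
\begin{equation*}
\int_{t_1}^{t_2}|I^{\iota_1\iota_2\iota_3}_{k_1,k_2,k_3}(\xi,s)|\,ds\lesssim \varepsilon_1^3\, 2^{m}\cdot 2^{k/2}\cdot 2^{(\min+\mathrm{med})(1-\beta)}\cdot 2^{-(N_1+10)\max(k_1,k_2,k_3,0)}.
\end{equation*}
Since $k\in\mathbb{Z}$ with $k\leq -m/5$, one has $(2^{\beta k}+2^{(N_1+15)k})^{-1}\approx 2^{-\beta k}$, so it suffices to check $m+(1/2+\beta)k+(\min+\mathrm{med})(1-\beta)\leq -2p_1m$; the bonus factor $-(N_1+10)\max(k_1,k_2,k_3,0)\leq 0$ only helps.

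Third, I would verify this by plugging in $k\leq -m/5$, $\min\leq -19m/20$, and $\mathrm{med}\leq\max\leq 3m/N_0$ from the standing hypothesis \eqref{ksassump}; a short computation yields
\begin{equation*}
m+(1/2+\beta)k+(\min+\mathrm{med})(1-\beta)\leq -\tfrac{m}{20}+\tfrac{3\beta m}{4}+O(m/N_0),
\end{equation*}
which with $\beta=1/100$ and $N_0=10^4$ is bounded by $-17m/400$, and hence by $-2p_1m$ once $p_1<1/50$. There is no serious obstacle here: this regime is precisely the one where both the output frequency $2^k$ and at least one input frequency $2^{\min}$ are small enough for the trivial $L^2$ trilinear bound to close without any integration by parts or stationary-phase analysis, and the only thing to watch carefully is the low-frequency $\beta$-weight carried by the $Z$-norm.
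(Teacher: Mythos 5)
Your proof is correct and takes essentially the same approach as the paper's: observe that in this regime the correction term in \eqref{bn20} vanishes (since $\max|k_j-k|\geq 21$ kills the product $\widehat{f_{k_1}^+}(\xi)\widehat{f_{k_2}^+}(\xi)\widehat{f_{k_3}^-}(-\xi)$), prove a pointwise-in-$s$ bound on $I^{++-}_{k_1,k_2,k_3}(\xi,s)$ with no gain from the oscillatory phase, and then integrate in $s$ trivially. The only difference is cosmetic: you invoke the already-stated estimate \eqref{bn14.5}, whereas the paper re-derives a variant pointwise bound $\varepsilon_1^3|\xi|^{1/2}2^{(1-\beta)\min}2^{2p_0m}2^{-(N_0-20)\max(\cdot,0)}$ (using $L^2$ Sobolev rather than $Z$-norm bounds on the two non-minimal factors), and both close with comfortable margin given $k\leq -m/5$ and $\min\leq -19m/20$.
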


\begin{proof}[Proof of Lemma \ref{bb12.5}] It follows from the definition and the bounds \eqref{csymbols} and \eqref{bn13} that, for any $s\in[t_1,t_2]$,
\begin{equation*}
\begin{split}
|I_{k_1,k_2,k_3}^{++-}(\xi,s)| & \lesssim \varepsilon_1^3|\xi|^{1/2}2^{(1-\beta)\min(k_1,k_2,k_3)}2^{2p_0m}2^{-(N_0-20)\max(k_1,k_2,k_3,0)}\\
&\lesssim 2^{-m}\varepsilon_1^32^{-2p_1m}2^{-(N_1+15)k_+}.
\end{split}
\end{equation*}
The desired estimate \eqref{bn20} follows in this case.
\end{proof}
 
 \begin{lem}\label{bb13}
The bounds \eqref{bn20} hold provided that \eqref{ksassump} holds and, in addition,
\begin{equation}\label{bn90}
\begin{split}
&\max(|k_1-k|,|k_2-k|,|k_3-k|)\geq 21,\\
&\min(k_1,k_2,k_3)\leq -19m/20,\qquad k\geq -m/5.
\end{split}
\end{equation}
\end{lem}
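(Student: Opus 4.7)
First, the correction term in \eqref{bn20} vanishes identically in the regime \eqref{bn90}: its Littlewood--Paley support requires $|\xi|\sim 2^{k_i}$ for each $i$, which is incompatible with $\min(k_1,k_2,k_3)\leq -19m/20$ and $|\xi|\sim 2^k\gtrsim 2^{-m/5}$. Hence it suffices to bound $\int_{t_1}^{t_2}|I^{++-}_{k_1,k_2,k_3}(\xi,s)|\,ds$. By the symmetry of $I^{++-}$ under exchange of its two $+$-arguments, and by a parallel integration by parts in $\eta$ when the minimum occupies a $+$-slot, we may assume $k_3=\min(k_1,k_2,k_3)\leq -19m/20$.

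The plan is to integrate by parts in $\sigma$, exploiting $|\partial_\sigma\Phi|=|\Lambda'(\eta-\sigma)+\Lambda'(\sigma)|\sim|\Lambda'(\sigma)|\sim 2^{-k_3/2}$, which holds because $|\sigma|\sim 2^{k_3}$ is much smaller than $|\eta-\sigma|\sim 2^{k_2}\geq 2^{k-10}\gtrsim 2^{-m/5-10}$. Setting $r=1/(s\partial_\sigma\Phi)$ with $\|r\|_{\mathcal{S}^\infty}\lesssim 2^{-m+k_3/2}$, one integration by parts decomposes $I^{++-}$ into four pieces corresponding to the $\sigma$-derivative falling on $\widehat{f_{k_3}^-}(\sigma)$, on $\widehat{f_{k_2}^+}(\eta-\sigma)$, on the symbol $c^\ast_\xi$, or on $r$ itself. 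The convolution constraint forces the two non-minimum frequencies to satisfy either $k_1,k_2\in[k-10,k+10]$ (aligned case) or $k_1\approx k_2\geq k+10$ (cancellation case), with $\max(k_1,k_2)\leq 3m/N_0-1000$ throughout by \eqref{ksassump}.

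For the main piece (derivative on $\widehat{f_{k_3}^-}$) we apply \eqref{touse2} with $(p,q,r)=(2,\infty,2)$, using $\|f_{k_1}^+\|_{L^2}\lesssim\varepsilon_1 2^{p_0m-N_0k_{1+}}$, the dispersive bound $\|e^{-is\Lambda}f_{k_2}^+\|_{L^\infty}\lesssim\varepsilon_1 2^{-m/2-(N_0/2-20)k_{2+}}$, and $\|\partial\widehat{f_{k_3}^-}\|_{L^2}\lesssim\varepsilon_1 2^{5p_0m-k_3}$ from \eqref{bn13}. Together with $\|rc^\ast_\xi\|_{\mathcal{S}^\infty}\lesssim 2^{-m+k_3/2+k/2+2\max(k_1,k_2)}$ and $-k_3/2\leq 19m/40$, this gives $|I^{++-}_{\mathrm{main}}|\lesssim\varepsilon_1^3\,2^{-41m/40+6p_0m}\cdot 2^{k/2+2\max(k_1,k_2)-(3N_0/2-20)\max(k_{1+},k_{2+})}$, whose time integral over $[t_1,t_2]$ satisfies \eqref{bn20} with $p_1$ small after checking that the spatial factor is $\lesssim(2^{\beta k}+2^{(N_1+15)k})^{-1}$ in each subcase (using $N_0=10^4$). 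The three remaining pieces each acquire an extra factor $\lesssim 2^{-k_3}$ from the differentiated symbol or Fourier transform (via \eqref{csymbols2} or direct computation of $\partial_\sigma r$), but if one applies \eqref{touse2} with the improved $L^2$ bound $\|f_{k_3}^-\|_{L^2}\lesssim\varepsilon_1 2^{(1/2-\beta)k_3}$ (obtained by Bernstein from the $Z$-norm control of $\widehat{f_{k_3}^-}$) in place of $\|\partial\widehat{f_{k_3}^-}\|_{L^2}$, the resulting bound is $\lesssim\varepsilon_1^3\,2^{-3m/2+6p_0m-\beta k_3}$, which is even smaller. The main obstacle is the careful symbol bookkeeping---in particular establishing the $\mathcal{S}^\infty$-type bounds on $\partial^a_\eta\partial^b_\sigma r$ analogous to \eqref{bn32}---but this involves no new analytic idea beyond the techniques already developed in Lemmas \ref{bb2} and \ref{bb11}.
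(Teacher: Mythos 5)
The route you propose---integrate by parts in $\sigma$ (frequency), obtain a pointwise bound on $|I^{++-}_{k_1,k_2,k_3}(\xi,s)|$, and then pay the factor $2^m$ for the time integral---does not close in this regime; the paper's proof in fact opens by remarking that ``we cannot prove pointwise bounds on $|I^{++-}_{k_1,k_2,k_3}(\xi,s)|$ and we need to integrate by parts in $s$.'' The gap is in the piece where $\partial_\sigma$ hits $\widehat{f_{k_3}^-}$. You invoke ``$-k_3/2\leq 19m/40$,'' but the hypothesis $k_3=\min(k_1,k_2,k_3)\leq -19m/20$ gives $-k_3/2\geq 19m/40$, not $\leq$: it is a \emph{lower} bound. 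The only available \emph{upper} bound on $-k_3$ comes from \eqref{ksassump}, namely $\min+\mathrm{med}\geq -m(1+3\beta)$ together with $\mathrm{med}\leq 3m/N_0-1000$, which yield $-k_3\leq m(1+3\beta)+3m/N_0$; with $\beta=1/100$, $N_0=10^4$ this is roughly $1.03m$, strictly bigger than $m$. Now retrace your arithmetic: the piece in question costs $\|\partial\widehat{f_{k_3}^-}\|_{L^2}\lesssim 2^{5p_0m-k_3}$, gains $\|r\,c^\ast_\xi\|_{\mathcal{S}^\infty}\lesssim 2^{-m+k_3/2}\cdot 2^{k/2+2\max(k_1,k_2)}$ from the IBP, and $2^{-m/2}$ from the dispersive bound on the $k_2$-piece, giving (with the powers of $2^k$ and the high-Sobolev gain benign here) $|I_{\mathrm{main}}|\lesssim\varepsilon_1^3\,2^{-3m/2-k_3/2+6p_0m}$. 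After multiplying by the time-interval length $\lesssim 2^m$, this must be $\lesssim 2^{-2p_1 m}$; that requires $-k_3\leq m-12p_0m-4p_1m < m$. Since the hypotheses allow $-k_3$ as large as $m(1+3\beta)+3m/N_0>m$, your bound fails by a persistent margin of order $(3\beta+3/N_0)m\approx 0.03m$, which cannot be absorbed by choosing $p_1$ small. Your other three pieces are indeed fine, but that does not rescue the argument, and by symmetry the same obstruction appears if $\min$ occupies a $+$-slot and one does the parallel $\eta$-integration by parts.

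The paper sidesteps this by integrating by parts in $s$. The hypotheses force $\widetilde{k}:=\min(k,\mathrm{med}(k_1,k_2,k_3))\geq -m/5$, and since $\Lambda(a)+\Lambda(b)-\Lambda(a+b)\geq\Lambda(a)/2$ for $0\leq a\leq b$, the phase is weakly \emph{elliptic}: $|\Phi(\xi,\eta,\sigma)|\geq 2^{\widetilde{k}/2-10}\geq 2^{-m/10-10}$. Integrating by parts in $s$ replaces the length-$2^m$ time integral by boundary terms $B_j$ controlled by $1/|\Phi|\lesssim 2^{m/10}$ and a bulk term in which $\partial_s f$ and $\partial_s H$ each contribute a factor $\lesssim 2^{-m}$ (see \eqref{touse4} and \eqref{bn97.15}). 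The low-frequency smallness $\|f_{k_3}^-\|_{L^2}\lesssim 2^{(1/2-\beta)k_3}\lesssim 2^{-(1/2-\beta)\cdot 19m/20}$ from Bernstein and the $Z$-norm control then beats the $2^{m/10}$ loss. The ellipticity of $\Phi$ is the feature specific to $k\geq -m/5$ that makes the time integration by parts succeed where the frequency integration by parts, correct in spirit but arithmetically just short, does not.
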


\begin{proof}[Proof of Lemma \ref{bb13}] In this case we cannot prove pointwise bounds on $\big|I_{k_1,k_2,k_3}^{+,+,-}(\xi,s)|$ and we need to integrate by parts in $s$. For \eqref{bn20} it suffices to prove that
\begin{equation}\label{bn91}
\begin{split}
\Big|\int_{\mathbb{R}^2\times[t_1,t_2]}e^{iL(\xi,s)}e^{is\Phi(\xi,\eta,\sigma)}\widehat{f_{k_1}^+}(\xi+\eta,s)\widehat{f_{k_2}^+}(\xi+\sigma,s)
\widehat{f_{k_3}^-}(-\xi-\eta-\sigma,s)c^\ast_\xi(\eta,\sigma)\,d\eta d\sigma ds\Big|\\
\lesssim \varepsilon_1^32^{-2p_1m}2^{-(N_1+15)k_+},
\end{split}
\end{equation}
where
\begin{equation*}
\begin{split}
&\Phi(\xi,\eta,\sigma)=\Lambda(\xi)-\Lambda(\xi+\eta)-\Lambda(\xi+\sigma)+\Lambda(\xi+\eta+\sigma),\\
&L(\xi,s)=\frac{\widetilde{c}(\xi)}{4\pi^2}\int_0^s|\widehat{f}(\xi,r)|^2 \frac{dr}{r+1}.
\end{split}
\end{equation*}
The assumptions \eqref{bn90} and \eqref{ksassump} show that
\begin{equation}\label{bn91.1}
 \widetilde{k}:=\min(k,\mathrm{med}(k_1,k_2,k_3))\geq -m/5.
\end{equation}
Then we make the simple observation that
\begin{equation*}
 \Lambda(a)+\Lambda(b)-\Lambda(a+b)\geq \Lambda(a)/2\qquad\text{ if }\qquad 0\leq a\leq b
\end{equation*}
to conclude that $\Phi$ satisfies the weakly elliptic bound
\begin{equation}\label{bn91.2}
 |\Phi(\xi,\eta,\sigma)|\geq 2^{\widetilde{k}/2-10}
\end{equation}
provided that $|\xi+\eta|\in[2^{k_1-2},2^{k_1+2}]$, $|\xi+\sigma|\in[2^{k_2-2},2^{k_2+2}]$, $|\xi+\eta+\sigma|\in[2^{k_3-2},2^{k_3+2}]$. 
Letting $\dot{L}(\xi,s):=(\partial_sL)(\xi,s)$, where $L$ is defined in \eqref{bn2},
and using \eqref{bn13}, we notice that, for any $s\in[2^{m}-1,2^{m+1}]$,
\begin{equation}\label{bn93.5}
|\dot{L}(\xi,s)|\lesssim \varepsilon_1^22^{2k}2^{-N_0k_+}2^{-m}.
\end{equation}

We integrate by parts in $s$ to conclude that the integral in the left-hand side of \eqref{bn91} is dominated by
\begin{equation*}
B^0(\xi)+\sum_{j=1}^2B_j(\xi),
\end{equation*}
where
\begin{equation*}
\begin{split}
B^0(\xi):=\int_{t_1}^{t_2}\Big|\int_{\mathbb{R}^2} & e^{is\Phi(\xi,\eta,\sigma)} \frac{1}{\Phi(\xi,\eta,\sigma)}
  \frac{d}{ds} \Big[ e^{i L(\xi,s)}
\\
&\times\widehat{f_{k_1}^+}(\xi+\eta,s)\widehat{f_{k_2}^+}(\xi+\sigma,s)
\widehat{f_{k_3}^-}(-\xi-\eta-\sigma,s)c^\ast_\xi(\eta,\sigma)\Big]\,d\eta d\sigma\Big| ds
\end{split}
\end{equation*}
and, for $j=1,2$,
\begin{equation*}
\begin{split}
B_j(\xi):=\Big|\int_{\mathbb{R}^2}&e^{it_j\Phi(\xi,\eta,\sigma)}\frac{1}{\Phi(\xi,\eta,\sigma)}
\\
&\times\widehat{f_{k_1}^+}(\xi+\eta,t_j)\widehat{f_{k_2}^+}(\xi+\sigma,t_j)
\widehat{f_{k_3}^-}(-\xi-\eta-\sigma,t_j)c^\ast_\xi(\eta,\sigma)\,d\eta d\sigma\Big|
\end{split}
\end{equation*}

Let
\begin{equation*}
 r_4(\eta,\sigma):=\frac{1}{\Phi(\xi,\eta,\sigma)}\cdot 
\varphi_{[k_1-1,k_1+1]}(\xi+\eta)\varphi_{[k_2-1,k_2+1]}(\xi+\sigma)\varphi_{[k_3-1,k_3+1]}(\xi+\eta+\sigma).
\end{equation*}
Using \eqref{bn91.2}--\eqref{bn93.5} and integration by parts it is easy to see that, for any $s\in[t_1,t_2]$,
\begin{equation}\label{bn95}
\|\mathcal{F}^{-1}(r_4)\|_{L^1}\lesssim 2^{-\widetilde{k}/2}.
\end{equation}
Using the first $L^\infty$ bound in \eqref{bn13} we estimate, for $j\in\{1,2\}$,
\begin{equation}\label{bn96}
B_j(\xi)\lesssim \varepsilon_1^32^{(1-\beta)\min(k_1,k_2,k_3)}2^{-(N_0/2-20)k_+}\lesssim \varepsilon_1^32^{-3m/4}2^{-(N_0/2-20)k_+}.
\end{equation}

Expanding the $d/ds$ derivative we estimate
\begin{equation*}
\begin{split}
& B^0(\xi)\lesssim 2^m\sup_{s\in[t_1,t_2]}[B^0_0(\xi,s)+B^0_1(\xi,s)+B^0_2(\xi,s)+B^0_3(\xi,s)],
\\  
& B^0_0(\xi,s):=\int_{\mathbb{R}^2}
  \Big|\frac{\partial_s L(\xi,s)}{\Phi(\xi,\eta,\sigma)}\widehat{f_{k_1}^+}(\xi+\eta,s)\widehat{f_{k_2}^+}(\xi+\sigma,s)
\widehat{f_{k_3}^-}(-\xi-\eta-\sigma,s)c^\ast_\xi(\eta,\sigma)\Big|\,d\eta d\sigma,
\\
& B^0_1(\xi,s):=\Big|\int_{\mathbb{R}^2}e^{is\Phi(\xi,\eta,\sigma)}r_4(\eta,\sigma)(\partial_s\widehat{f_{k_1}^+})(\xi+\eta,s)\widehat{f_{k_2}^+}(\xi+\sigma,s)
\widehat{f_{k_3}^-}(-\xi-\eta-\sigma,s)c^\ast_\xi(\eta,\sigma)\,d\eta d\sigma\Big|,
\\
& B^0_2(\xi,s):=\Big|\int_{\mathbb{R}^2}e^{is\Phi(\xi,\eta,\sigma)}r_4(\eta,\sigma)\widehat{f_{k_1}^+}(\xi+\eta,s)(\partial_s\widehat{f_{k_2}^+})(\xi+\sigma,s)
\widehat{f_{k_3}^-}(-\xi-\eta-\sigma,s)c^\ast_\xi(\eta,\sigma)\,d\eta d\sigma\Big|,
\\
& B^0_3(\xi,s):=\Big|\int_{\mathbb{R}^2}e^{is\Phi(\xi,\eta,\sigma)}r_4(\eta,\sigma)\widehat{f_{k_1}^+}(\xi+\eta,s)\widehat{f_{k_2}^+}(\xi+\sigma,s)
(\partial_s\widehat{f_{k_3}^-})(-\xi-\eta-\sigma,s)c^\ast_\xi(\eta,\sigma)\,d\eta d\sigma\Big|.
\end{split}
\end{equation*}

As before, we combine \eqref{touse2}, \eqref{bn95}, and the bounds \eqref{bn13} and \eqref{touse4} to conclude that
\begin{equation}\label{bn97.1}
\begin{split}
\sup_{s\in[t_1,t_2]}[B^0_1(\xi,s)+B^0_2(\xi,s)+B^0_3(\xi,s)]\lesssim \varepsilon_1^3 2^{-5m/4} 2^{-(N_0/2-20)k_+}.
\end{split}
\end{equation}
In addition, using the definition of the function $L$, we have
\begin{equation}\label{bn97.15}
\sup_{s\in[t_1,t_2]}|\partial_s L(\xi,s)|\lesssim \varepsilon_1^2 2^{-m}.
\end{equation}
Therefore
\begin{equation}\label{bn97.2}
\sup_{s\in[t_1,t_2]}B^0_0(\xi,s)\lesssim \varepsilon_1^3 2^{-5m/4} 2^{-(N_0/2-20)k_+}.
\end{equation}
The desired bound \eqref{bn91} follows from \eqref{bn96}, \eqref{bn97.1}, and \eqref{bn97.2}.
\end{proof}

\subsection{Proof of \eqref{bn21}} After changes of variables, it suffices to prove that
\begin{equation}\label{bn141}
\begin{split}
\Big|\int_{\mathbb{R}^2\times[t_1,t_2]}&e^{iL(\xi,s)}e^{is\Phi^{\iota_1,\iota_2,\iota_3}(\xi,\eta,\sigma)}\widehat{f_{k_1}^{\iota_1}}(\xi+\eta,s)\widehat{f_{k_2}^{\iota_2}}(\xi+\sigma,s)\\
&\times\widehat{f_{k_3}^{\iota_3}}(-\xi-\eta-\sigma,s)c^{\iota_1\iota_2\iota_3}(\xi,-\eta,-\xi-\eta-\sigma)\,d\eta d\sigma ds\Big|\lesssim \varepsilon_1^32^{-2p_1m}2^{-(N_1+15)k_+},
\end{split}
\end{equation}
where $(\iota_1,\iota_2,\iota_3)\in\{(+,+,+),(-,-,+),(-,-,-)\}$ and 
\begin{equation*}
\Phi^{\iota_1,\iota_2,\iota_3}(\xi,\eta,\sigma)=\Lambda(\xi)-\iota_1\Lambda(\xi+\eta)-\iota_2\Lambda(\xi+\sigma)-\iota_3\Lambda(\xi+\eta+\sigma).
\end{equation*}
The main observation is that the phases $\Phi^{\iota_1,\iota_2,\iota_3}$ are weakly elliptic, i.e.
\begin{equation*}
|\Phi^{\iota_1,\iota_2,\iota_3}(\xi,\eta,\sigma)|\geq 2^{\mathrm{med}(k_1,k_2,k_3)/2-100},
\end{equation*}
provided that $|\xi+\eta|\in[2^{k_1-2},2^{k_1+2}]$, $|\xi+\sigma|\in[2^{k_2-2},2^{k_2+2}]$, 
$|\xi+\eta+\sigma|\in[2^{k_3-2},2^{k_3+2}]$, and $(\iota_1,\iota_2,\iota_3)\in\{(+,+,+),(-,-,+),(-,-,-)\}$. 
The proof then proceeds as in the proof of Lemma \ref{bb13}, using integration by parts in $s$.


\section{Lagrangian formulation and ``Wu's good coordinates''}\label{secL}
In this section we review the formulation of the problem used by Wu in \cite{WuAG}.
We use the same notations as in the cited paper to summarize the construction of Wu's modified Lagrangian variables.
We recall the cubic evolution equations associated to these good unknowns, some other (elliptic) relations between these variables, 
and the relative cubic energies.

Let $v$ be the fluid's velocity field, recall that we denote by $z: (t,\a) \in [0,T] \times \R  \rightarrow \C$ 
the Lagrangian map (restricted to the surface parametrized by $\alpha$),
that is the solutions of
\begin{equation}
z_t (t, \a) = v (t, z(\a,t)) \, , \quad z(0,\a) = \a + i y_0 (\a) \, .
\end{equation}
$z(t,\a) = x(t,\a) + i y(t,\a)$ is the equation of the free interface. The imaginary part of $z$,
$\Im z$, measures the height of the interface.

\subsection{The transformation $k$ and modified Lagrangian coordinates}
Define the change of coordinates $k$
\begin{equation}
\label{defk}
k (t,\a) := \bar{z} (t,\a) + \frac{1}{2} (I + \H_z) {(I + \K_z)}^{-1} (z (t,\a) - \bar{z} (t,\a) )  
\end{equation}
where $\H_\g$ denotes the Hilbert transform along the curve $\g$, see \eqref{HT},
and $\K_z = \Re \H_z$.
The explicit change of coordinates \eqref{defk} is the same as the one given by Totz and Wu in \cite[formula (2.3)]{WuNLS}.
$k$ will be shown to be a diffeomorphism on $\R$. 
%
%
Given the change of coordinates $k$ we can define the transformed Lagrangian unknowns as in \cite{WuAG}
\begin{equation*}
\wt{L} (t,\a) := ( \z_\a(t,\a) - 1, u(t,\a) , w(t,\a), \Im \z(t,\a) )
\end{equation*}
with
\begin{align}
\label{defzeta}
\z (t,\a) &:=  z (t,k^{-1}(t,\a))
\, , \quad u(t,\a) := z_t (t,k^{-1}(t,\a)) 
\, , \quad w(t,\a) := z_{tt}   (t,k^{-1}(t,\a)) \, .
\end{align}
Note that
\begin{align*}
w(t,\a) &:= z_{tt} (t,k^{-1}(t,\a)) = (\partial_t + b (t,\a) \partial_\a) u(t,\a) \, ,
\end{align*}
where $b (t,\a) = k_t (t, k^{-1}(t,\a))$.
Also, following \cite{WuAG} we define the ``good quantities'' in terms of the transformed Lagrangian unknowns
\begin{align}
\label{defchi}
\chi & := 2i(I-\H_\z) \Im \z = (I-\H_z)(z-\bar{z}) \circ k^{-1}
\\
\label{defl}
\l & := (I-\H_\z) ( \psi \circ k^{-1} ) = (I-\H_z)\psi \circ k^{-1}
\\
\label{defv}
v & := \partial_t (I-\H_z)(z-\bar{z}) \circ k^{-1} = (\partial_t + b \partial_\a )\chi \quad , \quad v_1 := (I-\H_\z) v \, .
\end{align}
In what follows we will denote $\H$ for $\H_\z$, where this creates no confusion.
We recall that $\psi$ is the trace of the velocity potential in Lagrangian coordinates:
$\psi(t,\a) = \Phi(t , z(t,\a))$. 
The quantities $\chi,\l$ and $v$ are those for which cubic equations are derived and the Energy argument is performed.
The relation between the Eulerian trace of the velocity potential $\phi$ and the surface elevation $h$ with
the Lagrangian quantities is given by the identities:
\begin{align}
\label{hEL}
h(t, \Re z(t,\a)) & = \Im z (t,\a)
\\
\label{phiEL}
\phi(t, \Re z(t,\a)) & = \psi (t,\a) \, .
\end{align}
Precomposing with $k^{-1}$ these become
\begin{align*}
h(t, \Re \z(t,\a)) = \Im \z (t,\a)
\quad , \quad \phi(t, \Re \z(t,\a)) & = \psi (t, k^{-1}(t,\a))  \, .
\end{align*}

\subsection{The cubic equations}\label{seccubiceq}
In \cite{WuAG} cubic equations are derived for the ``good quantities''  $\chi, \l$ and  $v$ given by \eqref{defchi}-\eqref{defv}.
We will not discuss the derivation of these equations here, 
but refer the reader to section 2 in \cite{WuAG} for the details.
In the cited work it is shown that for $F = \chi, \l$ or $v$
\begin{equation}
\label{cubiceq0}
{(\partial_t + b \partial_\a)}^2 F - i A \partial_\a F = G(\wt{L})
\end{equation}
where the right-hand side $G(\wt{L})$ can be thought of as a cubic expression in the variables 
\begin{equation}
\label{vectorwtL}
\wt{L} = (u,w, \Im \z, \z_\a - 1) \, ,
\end{equation}
which involves singular integrals related to the Cauchy integral and to Calder\'{o}n commutators.
The functions $b$ and $A$ are defined by
\begin{align}
\label{defb}
b (t,\a) & = k_t \left( t , k^{-1}(t,\a) \right)
\\
\label{defA}
A (t, \a) & = (a(t, \cdot) k_\a(t, \cdot) ) \circ k^{-1}(t,\a)
\end{align}
where $a(t,\a)$ is the Rayleigh-Taylor coefficient appearing in \eqref{WWL}.
As shown in \cite[Proposition 2.1]{WuAG}, both $b$ and $A-1$ are real valued and quadratically small if $\wt{L}$ is small.
More precisely:
\begin{lem}\label{lemenergy10c}
Let $b$ and $A$ be given by \eqref{defb} and \eqref{defA}. 
Under the a priori assumptions \eqref{aprioriL1} we have
\begin{align}
\label{boundA-1}
{\| A(t)-1 \|}_{H^{N_1}} + {\| b(t) \|}_{H^{N_1}} & \leq \e_1 \, ,
\end{align}
and for $0\leq k\leq N_0$
\begin{align}
\label{boundA-11}
& {\| A(t)-1 \|}_{X_k} \lesssim \e_1 {\| \wt L(t) \|}_{X_k} \, ,
\\
\label{boundbX_k}
& {\| b(t) \|}_{X_k} \lesssim \e_1 {\| \wt L(t) \|}_{X_k} \, .
\end{align}
\end{lem}

\begin{proof}
The identities in (2.30) of \cite{WuAG} (see Proposition 2.4 in \cite{WuAG} for their derivation) read
\begin{align}
\label{formulaA}
& (I-\H)(A-1) = i [u,\H] \frac{\bar{u}_\a}{\z_\a} + i [w,\H] \frac{\bar{\z}_\a - 1}{\z_\a} 
\\
\label{formulab}
& (I-\H)b = - [u,\H] \frac{\bar{\z}_\a - 1}{\z_\a} \, .
\end{align}
The above right-hand sides are all of the form $Q_0(\wt{L}, \wt{L})$ or $Q_0(\wt{L}, \partial_\a \wt{L})$, 
where $Q_0$ is defined in \eqref{Q_0}.
\eqref{formulaA} and \eqref{formulab} then follow from the bounds for operators of the type $Q_0$ given in \eqref{estQ_0L^2},
Sobolev's embedding, the estimate \eqref{estHlow} for $\H$, and the a priori  smallness assumptions on $\wt{L}$.
The bounds \eqref{boundA-11} and \eqref{boundbX_k} follow by combining \eqref{formulaA} and \eqref{formulab} with 
Lemma \ref{lemI-Hf}.
\end{proof}

\vskip10pt
The cubic equations verified by $\chi, \l$ and $v$ are (see formulas (2.27)-(2.29) and (4.33) in \cite{WuAG})
\begin{align}
\label{cubiceqchi}
&  {(\partial_t + b \partial_\a)}^2 \chi - i A \partial_\alpha \chi = G^\chi (u,\Im \z)
\\
\label{cubiceqv}
&  {(\partial_t + b \partial_\a)}^2 v - i A \partial_\alpha v = G^v (u,w,\z_\a - 1,\Im\z,\chi_\a)
\\
\label{cubiceql}
&  {(\partial_t + b \partial_\a)}^2 \l - i A \partial_\alpha \l = G^\l (u,w, \Im \z)
\end{align}
where
\begin{align}
\label{Gchi}
\begin{split}
G^\chi (u,\Im \z) & := \frac{4}{\pi} \int \frac{ (u(\a) - u(\b)) (\Im\z(\a) - \Im\z(\b)) }{ {|\z(\a)-\z(\b)|}^2 } u_\b (\b) \, d\b 
\\
& + \frac{2}{\pi} \int { \left( \frac{ u(\a) - u(\b) }{ \z(\a)-\z(\b) } \right) }^2 \Im \z_\b (\b) \, d\b \, ,
\end{split}
\end{align}
\begin{align}
\label{Gv}
\begin{split}
G^v (u,w,\z_\a - 1,\Im\z, \chi_\a) & := 
  \frac{4}{\pi} \int \frac{ (w(\a) - w(\b)) (\Im\z(\a) - \Im\z(\b)) }{ {|\z(\a)-\z(\b)|}^2 } u_\b (\b) \, d\b 
\\
& + \frac{4}{\pi} \int \frac{ (u(\a) - u(\b)) (\Im\z(\a) - \Im\z(\b)) }{ {|\z(\a)-\z(\b)|}^2 } w_\b (\b) \, d\b 
\\
& +  \frac{2}{i\pi}  \int { \left( \frac{ u(\a) - u(\b) }{ \z(\a)-\z(\b) } \right) }^2 u_\b (\b) \, d\b
- \frac{2}{i\pi}  \int \frac{ {|u(\a) - u(\b)|}^2 }{ {(\bar{\z}(\a)- \bar{\z}(\b))}^2 }  u_\b (\b) \, d\b
\\
& + \frac{2}{i\pi} \int \frac{ (u(\a) - u(\b)) (w(\a) - w(\b)) }{ {(\z(\a)-\z(\b))}^2 } (\z_\b(\b) - \bar{\z}_\b(\b)) \, d\b 
\\
& +  \frac{1}{i\pi}  \int { \left( \frac{ u(\a) - u(\b) }{ \z(\a)-\z(\b) } \right) }^2 (u_\b(\b) - \bar{u}_\b(\b)) \, d\b
\\
& -  \frac{2}{i\pi}  \int { \left( \frac{ u(\a) - u(\b) }{ \z(\a)-\z(\b) } \right) }^3 (\z_\b(\b) - \bar{\z}_\b(\b)) \, d\b
+ i \frac{a_t}{a} \circ k^{-1} A \partial_\a \chi \, ,
\end{split}
\end{align}
\begin{align}
\label{Gl}
\begin{split}
G^\l (u,w,\Im \z) & := 
 \frac{2}{\pi} \int \frac{ (u(\a) - u(\b)) (\Im\z(\a) - \Im\z(\b)) }{ {|\z(\a)-\z(\b)|}^2 } \bar{\z}_\b(\b) w(\b) \, d\b 	 
\\
& + [u,\bar{\H}] \left( \bar{u} \frac{u_\a }{\bar{\z}_\a} \right) +  u [u,\H] \frac{\bar{u}_\a }{\z_\a}
   - 2 [u,\H] \frac{u \cdot u_\a}{\z_\a}
\\
& + \frac{1}{i\pi} \int { \left( \frac{ u(\a) - u(\b) }{ \z(\a)-\z(\b) } \right) }^2 u (\b) \cdot \z_\b(\b) \, d\b \, .
\end{split}
\end{align}
A fourth additional equation is also derived in \cite[formula (4.32)]{WuAG} for the quantity
\begin{align}
\label{defv_1}
 v_1 := (I-\H_\z) v
\end{align}
and has the same form as above
\begin{align} 
\label{cubiceqv_1}
&  {(\partial_t + b \partial_\a)}^2 v_1 - i A \partial_\alpha v_1 = G^{v_1} (u,w,\z_\a - 1, \Im \z, v, \chi)
\end{align}
with
\begin{align}
\label{Gv_1}
\begin{split}
G^{v_1} \left( u, w, \z_\a - 1, \Im \z, v, \chi \right)
& : =  (I-\H) \P v - 2 [u,\H] \frac{\partial_\a}{\z_\a} \P \chi 
- 2[u,\H] \frac{\partial_\a}{\z_\a} \left( w \frac{\partial_\a}{\z_\a} \chi \right)
\\
& -i [(\H + \bar{\H})u, \H] {\left( \frac{\partial_\a}{\z_\a} \right)}^2 \chi
+  \frac{1}{i\pi}  \int { \left( \frac{ u(\a) - u(\b) }{ \z(\a)-\z(\b) } \right) }^2 v_\b (\b) \, d\b \, ,
\end{split}
\end{align}
where $\P := {(\partial_t + b \partial_\a)}^2 - i A \partial_\alpha$.
The above cubic equations allow the construction of energy functionals, see section \ref{secE} below, 
which suitably control the variables $\chi,v$ and $\l$. 
The quantities appearing on the right-hand sides of these cubic equations, such as $u,w$ and $\z_\a-1$, 
are in turned controlled by the energies, see Proposition \ref{proenergy1}, 
using the identities \eqref{u=}--\eqref{I-HImz=}.

In order to simplify our presentation and the estimates performed on the above equations, 
we define the following types of trilinear operators:
\begin{subequations}
\label{opC}
 \begin{align}
\label{C^1} 
T^1 (f,g,h) & :=  \int \frac{ (f(\a) - f(\b)) (g(\a) - g(\b)) }{ {|\z(\a)-\z(\b)|}^2 } h (\b) \, d\b 
\\
\label{C^2} 
T^2 (f,g,h) & :=  \int \frac{ (f(\a) - f(\b)) (g(\a) - g(\b)) }{ {(\z(\a)-\z(\b))}^2 } h (\b) \, d\b
\\
\label{C^3} 
T^3 (f,g,h) & :=  \int \frac{ (f(\a) - f(\b)) (g(\a) - g(\b)) }{ {(\bar{\z}(\a)-\bar{\z}(\b))}^2 } h (\b) \, d\b \, ,
\end{align}
\end{subequations}
and denote by $\bC$ any scalar multiple of them:
\begin{align}
\label{C}
\bC (f,g,h) = c_i T_i (f,g,h) \, 
\end{align}
for some constant $c_i \in \C$, $i=1,2,3$.
We can then write the nonlinearity \eqref{Gchi} appearing in the equation \eqref{cubiceqchi} as:
\begin{align}
\label{Gchi0}
 G^\chi (u,\Im \z) = \bC (u,\Im \z, u_\a) + \bC (u,u,\Im \z_\a) \, .
\end{align}
Similarly we have
\begin{align}
\begin{split}
\label{Gv0}
 G^v (u,w,\z_\a - 1,\Im \z) & = \bC (w,\Im \z, u_\a) + \bC (u,\Im \z, w_\a) + \bC (u,u, u_\a) + \bC (u,w,\Im \z_\a)
\\
& + \bC (u,u, \bar{u}_\a) + u \bC (u,u, \Im \z_\a) + \bC (u,u, u \Im \z_\a) + i \frac{a_t}{a} \circ k^{-1} A \partial_\a \chi \, , 
\end{split}
\end{align}
and
\begin{align}
\begin{split}
\label{Gl0}
G^\l (u,w, \Im\z) & = \bC (u,\Im \z, \bar{\z}_\a w) + \bC (u,u, u \cdot \z_\a)
\\
& + [u,\bar{\H}] \left( \bar{u} \frac{u_\a }{\bar{\z}_\a} \right) +  u [u,\H] \frac{\bar{u}_\a }{\z_\a}
   - 2 [u,\H] \frac{u \cdot u_\a}{\z_\a} \, ,
\end{split}
\end{align}
and
\begin{align}
\begin{split}
\label{Gv_10}
G^{v_1} \left( u, w, \z_\a - 1, \Im \z, v, \chi \right)
& : =  (I-\H) \P v - 2 [u,\H] \frac{\partial_\a}{\z_\a} \P \chi 
- 2[u,\H] \frac{\partial_\a}{\z_\a} \left( w \frac{\partial_\a}{\z_\a} \chi \right)
\\
& -i [(\H + \bar{\H})u, \H] {\left( \frac{\partial_\a}{\z_\a} \right)}^2 \chi
+ \bC (u,u, v_\a) \, .
\end{split}
\end{align}
By writing the nonlinearities \eqref{Gchi},\eqref{Gv},\eqref{Gl} and \eqref{Gv_1}
in terms of operators of the type \eqref{opC} above, we will be able to efficiently estimate them 
by making use of a general Proposition giving $L^2$-type bounds of such operators.
These estimates are given in Proposition \ref{proCCmain}, and are obtained by improving two statements contained in \cite{WuAG} 
(which in turn rely in part on the work of Coifman, McIntosh and Meyer \cite{CMM}).
All the terms that cannot be written as a trilinear operators of the form \eqref{opC}, acting on the components of $\wt{L}$,
need to be treated separately. To bound these remaining nonlinear terms, namely
\begin{align*}
 & i \frac{a_t}{a} \circ k^{-1} A \partial_\a \chi 
\quad , \quad  
[u,\bar{\H}] \left( \bar{u} \frac{u_\a }{\bar{\z}_\a} \right) +  u [u,\H] \frac{\bar{u}_\a }{\z_\a}
\quad , \quad
[u,\H] \frac{u \cdot u_\a}{\z_\a}
\\
& [(\H + \bar{\H})u, \H] {\left( \frac{\partial_\a}{\z_\a} \right)}^2 \chi
\quad , \quad
[u,\H] \frac{\partial_\a}{\z_\a} \left( w \frac{\partial_\a}{\z_\a} \chi \right)
\end{align*}
we will make use of some additional special structure present in them.

\subsection{The Energy}\label{secE}
The total energy for the system is given by the sum of three energies naturally associated 
to the equations \eqref{cubiceqchi}, \eqref{cubiceql} and \eqref{cubiceqv_1}.
Let us define 
\begin{equation}
\label{S_k}
S_k := D^{k} S \, .
\end{equation}
The first term in the energy is given by 
\begin{align}
\label{energychi}
\begin{split}
E^\chi(t) & = 
	\sum_{k=0}^{N_0}  \int_\R \frac{1}{A} {\left| (\partial_t + b \partial_\a) D^k \chi \right|}^2
	+ i {\left(D^k \chi \right)}^h \partial_\alpha \overline{{\left( D^k \chi\right)}^h} \, d\a 
\\
 & + 	\sum_{k=0}^{N_0/2} \int_\R \frac{1}{A} {\left| (\partial_t + b \partial_\a) S_k \chi \right|}^2
	+ i {\left(S_k \chi \right)}^h \partial_\alpha \overline{{\left( S_k \chi\right)}^h} \, d\a \, ,
\end{split}
\end{align}
where $f^h$ denotes the anti-holomorphic part of a function $f$:
\begin{equation}
\label{f^h}
f^h := \frac{I - \H}{2} f \, .
\end{equation}
By considering only the anti-holomorphic parts of $D^k \chi$ and $S_k \chi$ in \eqref{energychi}, 
one obtains that all summands in $E^\chi$ are non-negative, see Lemma \eqref{proevolEf}.
%
%
Similarly one constructs the energy associated to \eqref{cubiceql}:
\begin{align}
\label{energyl}
\begin{split}
E^\l(t) & = 
	\sum_{k=0}^{N_0-2}  \int_\R \frac{1}{A} {\left| (\partial_t + b \partial_\a) D^k \l \right|}^2
	+ i {\left(D^k \l \right)}^h  \partial_\alpha \overline{{\left( D^k\l \right)}^h} \, d\a 
\\
 & + \sum_{k=0}^{N_0/2} \int_\R \frac{1}{A} {\left| (\partial_t + b \partial_\a) S_k \l \right|}^2
	+ i {\left(S_k \l \right)}^h \partial_\alpha \overline{{\left( S_k \l \right)}^h} \, d\a \, ,
\end{split}
\end{align}
%
%
The energy controlling $v$ instead is given in terms of $v_1$, and based on the equation \eqref{cubiceqv_1}:
\begin{align}
\label{energyv_1}
\begin{split}
E^{v} (t) & 
	= \sum_{k=0}^{N_0} \int_\R \frac{1}{A} {\left| (\partial_t + b \partial_\a) D^k v_1 \right|}^2
	+ i D^k v_1 \partial_\alpha D^k \overline{v}_1 \, d\a
\\
& + \sum_{k=0}^{N_0/2} \int_\R \frac{1}{A} {\left| (\partial_t + b \partial_\a) S_k v_1 \right|}^2
	+ i S_k v_1 \partial_\alpha S_k \overline{v}_1 \, d\a \, .
\end{split}
\end{align}
Here in the second summand there is no restriction to $(D^k v_1)^h$ or $(S v_1)^h$.
Therefore the Energy $E^v$ has no definite sign.
Nevertheless, it can be shown that this Energy controls the norms of $v$ and $v_1$ up to cubic lower order contributions.
The total energy is then given by
\begin{equation}
\label{Etot}
E(t) = E^{\chi}(t) + E^{v}(t) + E^{\l}(t) \, .
\end{equation}


\section{Proof of Proposition \ref{proenergy}: energy estimates}\label{secproenergy}

Once the energy $E(t)$ has been defined, we can proceed with the proof of Proposition \ref{proenergy}. 
The main ideas are essentially the same as those used in Wu's paper \cite{WuAG}.
In fact, we use a similar procedure and borrow several identities and estimates from this paper.
However, some arguments there need to be adjusted in order to make the energy estimates
valid for all times, and compatible with the growth of the highest Sobolev and weighted norms. 
In particular, we need to pay special attention to certain dangerous nonlinear terms that can potentially create logarithmic losses, 
compare with Wu's energy estimate \eqref{EinWu},
and show how these losses can be avoided.

Recall that we denoted $\wt{L} = (u,w, \Im \z, \z_\a - 1)$, and define the vectors
\begin{equation*}
L^- := (\z_\a - 1, u, w, \partial_\a \chi, v) \quad , \quad L := (L^-, \Im\z) \, .
\end{equation*}
We separate the proof of Proposition \ref{proenergy} into three main steps.
We first show how $E(t)$ controls the $X_{N_0}$-norm of $\wt{L}(t)$:

\begin{pro}\label{proenergy1}
Under the a priori assumptions \eqref{aprioriL1}, that is
\begin{align}
\label{aprioriL13}
\sup_{t \in [0,T]}  \left( (1+t)^{-p_0} {\| \wt{L}(t) \|}_{X_{N_0}} +  {\| \wt{L}(t) \|}_{H^{N_1+5}} 
    + {\| \wt{L}(t) \|}_{W^{N_1,\infty}}\sqrt{1+t} \right) \leq \e_1 \ll 1 \, ,
\end{align}
 we have
\begin{equation}
\label{estproenergy1}
 {\| \wt{L} (t) \|}_{X_{N_0}} \lesssim \sqrt{E(t)}  \, ,
\end{equation}
for any $t\in[0,T]$ and $\e_1$ sufficiently small.
\end{pro}

This is proven in section \ref{secproenergy1}. Our next proposition shows how to estimate the energy increment.

\begin{pro}\label{proenergy2}
Assume again that \eqref{aprioriL13} holds for $\e_1$ small enough. Then
\begin{align}
\label{estproenergy2}
\frac{d}{dt} \sqrt{E(t)} \lesssim
  \left(  {\| L (t) \|}_{W^{N_1,\infty}}  +  {\| \H L^- (t) \|}_{W^{N_1,\infty}}  \right)^2  \sqrt{E(t)} \, ,
\end{align}
for any $t\in[0,T]$.
\end{pro}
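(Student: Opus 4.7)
The plan is a direct energy estimate: differentiate each summand of $E(t)$ in $t$, substitute in the cubic equations \eqref{cubiceqchi}, \eqref{cubiceql}, \eqref{cubiceqv_1}, and show that the quadratic structure forces exact (or essentially exact) cancellation, leaving only a cubic remainder that we control by the claimed right-hand side. Concretely, for $F\in\{\chi,\ell,v_1\}$ and an operator $\D_k\in\{D^k,S_k\}$, I would begin from the standard identity
\begin{equation*}
\frac{d}{dt}\int_{\R}\frac{1}{A}|(\partial_t+b\partial_\a)\D_k F|^2+i\partial_\a(\D_k F)^h(\D_k\bar F)^h\,d\a
=2\,\Re\int_{\R}\frac{1}{A}\big[(\partial_t+b\partial_\a)^2\D_k F\big]\overline{(\partial_t+b\partial_\a)\D_k F}\,d\a + \mathcal R_k,
\end{equation*}
where $\mathcal R_k$ collects the terms coming from $\partial_t(1/A)$, $\partial_\a b$, and the integration-by-parts of the $i\partial_\a$-piece. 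The algebraic miracle that makes the energy method work here (and which is already built into the definition \eqref{energychi}, \eqref{energyl}, \eqref{energyv_1}, following Wu) is that the quadratic principal part $(\partial_t+b\partial_\a)^2\D_k F+iA\partial_\a \D_k F$ equals $\D_k G^F$ plus commutators of $\D_k$ with $(\partial_t+b\partial_\a)^2$ and with $iA\partial_\a$, and these commutators combine with $\mathcal R_k$ in such a way that everything is (at worst) cubic in $\wt L$.

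The first main step, then, is to expand the commutators $[\D_k,(\partial_t+b\partial_\a)^2]F$ and $[\D_k,iA\partial_\a]F$ using $b,A-1=O(\wt L\cdot\wt L)$ and the identities of Lemma \ref{lemenergy10c} type: each such commutator produces at most $N_0$ derivatives landing on one factor (absorbed into $\|\wt L\|_{X_{N_0}}$) with the remaining factors in $L^\infty$; the $S$-commutators use $[S,\partial_\a]=-\partial_\a$ and $[S,\partial_t]=-\partial_t/2$ so that $S$ of the equation reduces to $\D_k G$ plus controllable lower-order junk. The second main step is to rewrite the cubic right-hand sides $G^\chi,G^\ell,G^v,G^{v_1}$ using the decompositions \eqref{Gchi0}--\eqref{Gv_10} in terms of the trilinear operators $\bC(\cdot,\cdot,\cdot)$ of type $T^1,T^2,T^3$ and the residual Calderon-commutator pieces; for these I would invoke Proposition \ref{proCCmain} (and its $S$-acting variant) to obtain the bilinear-type estimate
\begin{equation*}
\|\D_k \bC(f,g,h)\|_{L^2}\lesssim \big(\|f\|_{W^{N_1,\infty}}+\|\H f\|_{W^{N_1,\infty}}\big)\big(\|g\|_{W^{N_1,\infty}}+\|\H g\|_{W^{N_1,\infty}}\big)\|h\|_{X_{N_0}},
\end{equation*}
together with symmetric versions. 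Pairing this with the factor $\overline{(\partial_t+b\partial_\a)\D_k F}$ (bounded by $\sqrt{E}\lesssim\|\wt L\|_{X_{N_0}}$ via Proposition \ref{proenergy1}) yields precisely the claimed bound.

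The third step handles the non-$\bC$ terms in the nonlinearities, namely $i(a_t/a)\circ k^{-1}A\partial_\a\chi$ in $G^v$, the three commutators $[u,\bar\H](\bar u\,u_\a/\bar\z_\a)$, $u[u,\H](\bar u_\a/\z_\a)$ and $[u,\H](u\,u_\a/\z_\a)$ in $G^\ell$, and the analogous pieces $(I-\H)\P v$, $[u,\H]\partial_\a/\z_\a\,\P\chi$, $[u,\H]\partial_\a/\z_\a(w\,\partial_\a/\z_\a\,\chi)$, $[(\H+\bar\H)u,\H](\partial_\a/\z_\a)^2\chi$ in $G^{v_1}$. For each of these I would use the special null/antiholomorphic structure (for instance, $(I-\H)\P v$ produces an extra $\H$-action on $v$, which is why the norm $\|\H L^-\|_{W^{N_1,\infty}}$ appears in the right-hand side), combined with $L^2$ boundedness of Calderon commutators on curves with small chord-arc (Theorem \ref{theoCMM}-type bounds), to estimate them in $L^2$ with one high-derivative factor and two low-frequency-in-$L^\infty$ factors.

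The main obstacle I expect is the weighted part of the energy, i.e.\ the $S_k$-terms with $k\leq N_0/2$, and in particular the $v$-energy \eqref{energyv_1}, which lacks the anti-holomorphic restriction and therefore has no definite sign. For positivity-less pieces one must trace carefully how the $i\partial_\a$-structural integration by parts interacts with $[S,\H_\z]$ and with $[S,b\partial_\a+iA\partial_\a/\text{etc.}]$; these $S$-commutators are not automatic and must be absorbed into the cubic bound by exploiting that $S(b),S(A-1)$ are again quadratic in $\wt L$ with one factor involving $S\wt L$, the latter bounded by $\|\wt L\|_{X_{N_0}}$. Once these $S$-commutators are shown to satisfy the same trilinear estimate as the $D^k$-commutators, summing over $k=0,\dots,N_0$ and $k=1,\dots,N_0/2$ and using $\sqrt E\sim\|\wt L\|_{X_{N_0}}$ on both sides yields \eqref{estproenergy2}.
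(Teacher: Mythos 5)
Your overall plan — differentiate the Wu-type energy via the identity of Lemma \ref{proevolEf}, expand the $[\D_k,\P]$ commutators, estimate the $\bC$-type trilinear pieces by Calderon-commutator bounds, and treat the residual non-$\bC$ terms separately — is the same skeleton the paper uses, and you correctly locate the remaining bookkeeping ($b$, $A-1$, $a_t/a$ quadratic, $S$-commutators, etc.). There is, however, a concrete gap in your treatment of the $\l$-nonlinearity that is precisely the point of the proposition. The terms $G_2^\l,G_3^\l$ are \emph{not} expressible as trilinear operators of the $\bC$-form, and if one just throws $\G^{N_0}$ at them and applies Calderon bounds naively, one hits expressions like $\H\frac{1}{\z_\a}(\bar u\, \G^{N_0} u_\a)$, whose $L^\infty$ estimate is only available at the cost of a logarithmic (or polynomial-in-$p$) loss — exactly the loss Wu's original estimate \eqref{EinWu} suffers, and exactly what this proposition must remove. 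The paper's proof eliminates the loss by a specific algebraic recombination: after distributing $\G^{N_0}$, the seven ``dangerous'' terms $A_1,\dots,A_7$ are paired as $A_2+A_3$, $A_5+A_7$, $A_1+A_6$, so that each pair either telescopes into a $Q_0(\cdot,\partial_\a\cdot)$-type commutator (for which $\partial_\a$ appears naturally and \eqref{estQ_0L^2b}/\eqref{estpartialQ_00} apply), or else the Hilbert transform acts on a \emph{perfect derivative} $\partial_\a(\bar u u)$ so that Lemma \ref{lemHinfty} gives a loss-free $L^\infty$ bound. Your appeal to ``special null/antiholomorphic structure'' names the phenomenon but does not supply the recombination, and without it the argument would not improve Wu's bound.

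A smaller point: your displayed bilinear estimate for $\|\D_k\bC(f,g,h)\|_{L^2}$ with factors $\|f\|_{W^{N_1,\infty}}+\|\H f\|_{W^{N_1,\infty}}$ does not match what Proposition \ref{proCCmain} actually provides — item (3) bounds $\bC(f,g,h)$ by $\|(f,g,h)\|_{W^{N_0/2+3,\infty}}^2$ without $\H$-norms. The $\|\H L^-\|_{W^{N_1,\infty}}$ factors enter the final estimate \eqref{estproenergy2} not through the $\bC$-terms but through the $L^\infty$ bounds on $\partial_\a b$, $\partial_\a A$, $(\partial_t+b\partial_\a)\partial_\a b$ (cf.\ \eqref{dbLinfty1}, \eqref{D_tdbLinfty}, \eqref{dALinfty}) and through the $\partial_\a Q_0$ estimates \eqref{estpartialQ_0}, which are precisely what is used on the recombined $G^\l$ and $G^{v_1}$ pieces. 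Also, the sign-indefiniteness of $E^v$ that you flag as the main obstacle is actually dispatched in Proposition \ref{proenergy1} (the coercivity step); in the evolution estimate the absence of the $h$-projection makes \eqref{d_tE_0^F} available directly, so the extra remainder terms \eqref{d_tE_j^F2} do not even appear for $E^v$.
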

The above Proposition will be proven in section \ref{secproenergy2}.
We eventually need to bound the $L^\infty$-norms on the right hand side of \eqref{estproenergy2} in terms of the $Z^\p$-norm of $h$ and $\phi$:
\begin{pro}\label{proenergy3}
Under the a priori assumption \eqref{aprioriL13} and \eqref{apriori0} we have
\begin{align}
\label{estproenergy3}
{\| L (t) \|}_{W^{N_1,\infty}}  +  {\| \H L^- (t) \|}_{W^{N_1,\infty}} 
  \lesssim  {\| (h,\phi) \|}_{Z^\p} = {\| (h, \Lambda \phi) \|}_{W^{N_1+4,\infty}} \, .
\end{align}
\end{pro}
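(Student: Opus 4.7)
The approach mirrors that of the proof of Lemma~\ref{lemk_t} (Section~\ref{seclemk_t}): the plan is to approximate each component of $L$ by a linear Eulerian expression of the form $F(h,\Lambda\phi)\circ \Re\z$, plus a remainder that is quadratic in $\wt L$ and decays faster than $(1+t)^{-1/2}$. Under the a priori hypothesis \eqref{aprioriL12}, the map $\Re\z$ is a bi-Lipschitz diffeomorphism close to identity, so composition with it preserves $W^{N_1,\infty}$-norms up to a universal constant. The proof then reduces to identifying the leading Eulerian operator $F$ for each component, bounding ${\|F(h,\Lambda\phi)\|}_{W^{N_1,\infty}} \lesssim {\|(h,\phi)\|}_{Z^\p}$, and absorbing the quadratic remainder using $\varepsilon_1 \ll 1$. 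The operators $F$ arising will be either identity, $(I\pm H_0)$ applied to $h_x$ or $\phi_x$, or simple combinations; via the spectral identity $(I-H_0)\partial_x = -\Lambda (I+H_0)\Lambda$, derivatives can be transferred onto $\Lambda\phi$ or $h$, losing a fixed number of derivatives absorbed by the four-derivative cushion between $W^{N_1,\infty}$ (for $L$) and $W^{N_1+4,\infty}$ (appearing in $Z^\p$).

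\textbf{Paragraph 2 (individual components of $L$).} Four components are handled by extending the approximations of Section~\ref{seclemk_t} to $W^{N_1,\infty}$: $\Im\z = h\circ\Re\z$ by \eqref{hEwtL} giving ${\|\Im\z\|}_{W^{N_1,\infty}}\lesssim {\|h\|}_{W^{N_1,\infty}}$; $u = (I-H_0)\phi_x\circ\Re\z + O_{W^{N_1,\infty}}(\varepsilon_1^2)$ via \eqref{appu1}; $\z_\a-1 = i(I-H_0)h_x\circ\Re\z\cdot \Re\z_\a + O_{W^{N_1,\infty}}(\varepsilon_1^2)$ via \eqref{appz_a-1a}; and $\partial_\a\chi$ by differentiating $\chi = 2i(I-\H_\z)\Im\z$ and using \eqref{estHdfL^infty}, which reduces matters to ${\|\Im\z\|}_{W^{N_1+1,\infty}}$. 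For $w$, the Lagrangian Euler equation $z_{tt} = -i + i\mathbf{a}\, z_\a$ transcribes after composition with $k^{-1}$ to $w = -i(1-A) + iA(\z_\a-1)$; the identity \eqref{formulaA}, together with invertibility of $I-\H_\z$ on $W^{N_1,\infty}$ via \eqref{est1-Hfinfty}, yields ${\|A-1\|}_{W^{N_1,\infty}}\lesssim \varepsilon_1^2 (1+t)^{-1}$, so $w$ is controlled by $\z_\a-1$ modulo quadratic corrections. For $v = (\partial_t + b\partial_\a)\chi$, the kinematic boundary condition $\partial_t h = G(h)\phi = \Lambda\phi + O(\mathrm{quadratic})$ identifies the leading term as $2i(I-H_0)\Lambda\phi\circ\Re\z$, bounded by ${\|\Lambda\phi\|}_{W^{N_1+2,\infty}}$, while the commutator $[(\partial_t + b\partial_\a),\H_\z]$ applied to $\Im\z$ is cubic and estimated via the Calder\'on-commutator bounds of Section~\ref{secop}.

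\textbf{Paragraph 3 (Hilbert-transform piece and main obstacle).} The bound on $\H L^-$ combines the same approximations with two algebraic identities. First, the involutivity $\H_\z^2 = I$ gives $\H_\z \chi = -\chi$ directly (since $\chi = 2i(I-\H_\z)\Im\z$), and hence $\H_\z\partial_\a\chi = -\partial_\a\chi + [\H_\z,\partial_\a]\chi$ where the commutator is cubic. Second, the anti-holomorphicity $\bar z_t = \H_z \bar z_t$ from \eqref{WWL}, after composition with $k^{-1}$, relates $\H_\z u$ to $u$ modulo quadratic terms; analogous identities obtained by substitution of the Paragraph-2 approximations handle $\H_\z w$, $\H_\z v$, and $\H_\z(\z_\a-1)$, reducing matters to $W^{N_1,\infty}$-control of $H_0$ applied to Eulerian expressions. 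The main technical obstacle is that $H_0$ is not bounded on $L^\infty$: this is overcome by working dyadically, using that each Littlewood--Paley piece satisfies ${\|P_k H_0 f\|}_{L^\infty}\lesssim {\|P_k f\|}_{L^\infty}$, and summing with the weights $2^{N_1 k_+}$ at the price of a fixed number of derivatives which is accommodated by the cushion built into the $Z^\p$-norm. The systematic bookkeeping of all commutators $[\partial_t,\H_\z]$ and $[\partial_\a,\H_\z]$, which are shown to be cubic through the operator estimates of Section~\ref{secop}, is the other delicate point; it is done along the lines already established in the proof of Lemma~\ref{lemk_t}.
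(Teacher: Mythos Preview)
Your approach is broadly correct but takes a different route from the paper. The paper proceeds in two structural steps: Lemma~\ref{lemenergy31} uses Wu's algebraic identities \eqref{id10}--\eqref{id40} to write every component of $L^-$ as a linear combination of $\partial_\a\chi$ and $\partial_\a\l$ plus a quadratic remainder $Q$; then Lemma~\ref{lemenergy32} bounds $\partial_\a\chi$, $\partial_\a\l$, $\H\partial_\a\chi$, $\H\partial_\a\l$ in $W^{N_1,\infty}$ by ${\|(h,\phi)\|}_{Z'}$. The point of this reduction is that $\chi = (I-\H_\z)(\ldots)$ and $\l = (I-\H_\z)(\ldots)$ by construction, so $\H\chi=-\chi$, $\H\l=-\l$ exactly, and the $\H L^-$ estimate comes essentially for free once $\partial_\a\chi$, $\partial_\a\l$ are controlled. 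Your direct component-by-component approximation works too, but with more bookkeeping and a few imprecisions: the commutator $[\H_\z,\partial_\a]\chi$ in your Paragraph~3 is quadratic (see \eqref{[D,H]}), not cubic, though this is harmless; and the anti-holomorphicity $\bar z_t = \H_z\bar z_t$ yields $\bar\H_\z u = -u$, not a statement about $\H_\z u$ directly---you need the extra step $\H_\z u = -\bar\H_\z u + (\H_\z+\bar\H_\z)u$ together with \eqref{estH+barHlow} to conclude. Also, the approximations \eqref{appu1}, \eqref{appz_a-1a} you invoke are stated at the $W^{N_0/2+3,\infty}$ level, one derivative below $W^{N_1,\infty}$; the extension is routine given the $H^{N_1+5}$ reserve in \eqref{aprioriL12}, but should be noted. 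Both approaches ultimately rely on the same mechanism for the $H_0$-on-$L^\infty$ obstacle (paying derivatives via the four-derivative cushion in $Z'$), which the paper implements through the estimate $\|\partial_x H_0 f\|_{L^\infty}\lesssim\|\Lambda f\|_{W^{1,\infty}}$ rather than an explicit dyadic sum.
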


The proof of this Proposition is in section \ref{secproenergy3}.

In view of the initial assumptions \eqref{initdataa}--\eqref{initdatab} and the discussion in \cite[sec 5.1]{WuAG},
one has $E(0) \lesssim \e_0^2$.
It is then clear that Propositions \ref{proenergy1}--\ref{proenergy3} imply Proposition \ref{proenergy}.

\subsection{Proof of Proposition \ref{proenergy1}: Energy bounds}\label{secproenergy1}
The estimate \eqref{estproenergy1} will be achieved through a sequence of Lemmas which we state below 
and prove in the remaining of this section.
We start by using some formulae derived in \cite{WuAG} to bound ${\| \wt L(t) \|}_{X_{N_0}}$
by the $X_{N_0}$-norms of $(\partial_t + b \partial_\a) \chi$, $(\partial_t + b \partial_\a) v$ and $(\partial_t + b \partial_\a) \l$:

\begin{lem}\label{lemenergy11}
Under the assumption \eqref{aprioriL13} it is possible to write
\begin{align}
\label{utochi}
& 2 u = (\partial_t + b \partial_\a) \chi + Q
\\
\label{wtov}
& 2 w = (\partial_t + b \partial_\a) v + Q
\\
\label{z_a-1tov}
& 2 (\z_\a - 1) = -i (\partial_t + b \partial_\a) v + Q
\\
\label{Imztol}
& (I-\H) \Im\z = - (\partial_t + b \partial_\a) \l + Q
\end{align}
where $Q$ denotes a generic term which is at least quadratic in $\wt{L}$ and satisfies
\begin{align}
\label{estQ}
& {\| Q(t) \|}_{X_{N_0}} \lesssim  \e_1 {\| \wt L(t) \|}_{X_{N_0}}
\\
\label{estQinfty}
& {\| Q(t) \|}_{W^{N_1,\infty}} \lesssim  \e_1 {\| \wt L(t) \|}_{W^{N_1,\infty}}
\end{align}
for $\e_1$ small enough. In particular we see that
\begin{align}
\label{estuwz_a-1}
{\| \wt L(t) \|}_{X_{N_0}} \sim {\| (\partial_t + b\partial_\a) \chi \|}_{X_{N_0}} + {\| (\partial_t + b\partial_\a) v \|}_{X_{N_0}}
  + {\| (\partial_t + b\partial_\a) \l \|}_{X_{N_0}}
\end{align}
and
\begin{align}
\label{estuwz_a-1infty}
{\| \wt{L}(t) \|}_{W^{N_1,\infty}}  \lesssim 
  {\| (\partial_t + b\partial_\a) \chi \|}_{W^{N_1,\infty}} + {\| (\partial_t + b\partial_\a) v \|}_{W^{N_1,\infty}}  
  +  {\| (\partial_t + b\partial_\a) \l \|}_{W^{N_1,\infty}} \, .
\end{align}
\end{lem}
The proof of this Lemma is given in section \ref{prlemenergy11}.
We then establish the following commutators estimates:
\begin{lem}\label{lemenergy12}
Assume again \eqref{aprioriL13} holds for $\e_1$ small enough.
Then, for $Q$ as in \eqref{estQ} above, we have
\begin{align}
\label{chi_atoz_a}
& \partial_\a \chi = (I-\H) (\z_\a - \bar{\z}_\a) + Q
\\
\label{vtou}
& v = 2u +  Q
\\
\label{l_atou}
& \partial_\a \l = u + Q \, .
\end{align}
%
%
Moreover, denoting $D$ for $\partial_\a$, we have for all $0\leq k \leq N_0$ and $f = \chi,v$ or $\l$
\begin{align}
\label{commD}
& {\| (\partial_t + b\partial_\a) D^k f -  D^k (\partial_t + b\partial_\a) f \|}_{L^2} \lesssim  \e_1 {\| \wt L(t) \|}_{X_{N_0}} \, ,
\end{align}
and for all $0\leq k \leq \frac{N_0}{2}$
\begin{align}
\label{commS}
& {\| (\partial_t + b\partial_\a) D^k S f -  D^k S (\partial_t + b\partial_\a) f \|}_{L^2} \lesssim  \e_1 {\| \wt L(t) \|}_{X_{N_0}}
  + {\| (\partial_t + b\partial_\a) f \|}_{H^k} \, .
\end{align}
\end{lem}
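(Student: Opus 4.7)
\textbf{Plan for the proof of Lemma \ref{lemenergy12}.} I will treat the three approximate identities first, and then the two commutator bounds.

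For \eqref{chi_atoz_a}--\eqref{l_atou} the idea is to differentiate the definitions of $\chi,\,v,\,\l$ and isolate the commutator $[\partial_\a,\H_\z]$, which by a direct computation satisfies $[\partial_\a,\H_\z]f=[\z_\a-1,\H_\z](f_\a/\z_\a)=\tfrac{1}{i\pi}Q_0(\z_\a-1,f_\a)$ with $Q_0$ the bilinear operator in \eqref{Q_0}. Since $\chi=(I-\H_\z)(\z-\bar\z)$, differentiating yields
\begin{equation*}
\partial_\a\chi=(I-\H_\z)(\z_\a-\bar\z_\a)-\tfrac{1}{i\pi}Q_0(\z_\a-1,\z_\a-\bar\z_\a),
\end{equation*}
and the last term is $Q$ because it is a $Q_0$ acting on two components of $\wt L$, so the bound \eqref{estQ_0L^2} of Proposition \ref{proCCmain} gives ${\|Q\|}_{X_{N_0}}\lesssim \e_1{\|\wt L\|}_{X_{N_0}}$ together with ${\|Q\|}_{W^{N_1,\infty}}\lesssim \e_1{\|\wt L\|}_{W^{N_1,\infty}}$ via \eqref{estbQL^inftyfg}. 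For \eqref{vtou} I combine $v=(\partial_t+b\partial_\a)\chi$ (which is immediate from \eqref{defv}) with the relation \eqref{utochi} of Lemma \ref{lemenergy11}. For \eqref{l_atou} I differentiate $\l=(I-\H_\z)(\psi\circ k^{-1})$ to obtain $\partial_\a\l=(I-\H_\z)\partial_\a(\psi\circ k^{-1})+\tfrac{1}{i\pi}Q_0(\z_\a-1,\partial_\a(\psi\circ k^{-1}))$, then use the standard Lagrangian identity $\partial_\a\psi=\Re(\overline{z_t}\,z_\a)$ to express, after composition with $k^{-1}$, the leading part of $\partial_\a(\psi\circ k^{-1})$ as $\Re(\bar u\,\z_\a)$; the anti-holomorphic projection $(I-\H_\z)$ applied to this expression reproduces $u$ modulo a quadratic expression bilinear in $\wt L$ and $\H_\z\wt L$, which is again absorbed into $Q$.

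For the commutator estimates \eqref{commD}--\eqref{commS} the only genuine commutator is with $b$, since $\partial_t$ and $\partial_\a$ both commute with $D^k$. Writing
\begin{equation*}
(\partial_t+b\partial_\a)D^kf-D^k(\partial_t+b\partial_\a)f=-[D^k,b]\partial_\a f,
\end{equation*}
a standard Moser--Leibniz decomposition $[D^k,b]g=\sum_{k_1+k_2=k,\,k_1\geq 1}\binom{k}{k_1}D^{k_1}b\,D^{k_2}g$ reduces \eqref{commD} to the product inequality
\begin{equation*}
{\|[D^k,b]\partial_\a f\|}_{L^2}\lesssim {\|b\|}_{W^{\lceil k/2\rceil,\infty}}\|\partial_\a f\|_{H^{k-1}}+\|b\|_{H^k}\|\partial_\a f\|_{W^{\lceil k/2\rceil,\infty}}.
\end{equation*}
Using Lemma \ref{lemenergy10c} (${\|b\|}_{H^{N_0/2+3}}\lesssim\e_1$ and ${\|b\|}_{X_{N_0}}\lesssim\e_1{\|\wt L\|}_{X_{N_0}}$) together with the a priori $L^\infty$ decay \eqref{aprioriL13} this gives the bound $\e_1{\|\wt L\|}_{X_{N_0}}$. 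Here I exploit that ${\|\partial_\a f\|}_{W^{\lceil k/2\rceil,\infty}}$ is controlled through \eqref{utochi}--\eqref{Imztol} by ${\|\wt L\|}_{W^{N_1,\infty}}\lesssim \e_1$.

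For \eqref{commS}, the extra ingredient is the commutator between $S$ and the material derivative. A direct computation, using $[S,\partial_t]=-\tfrac12\partial_t$ and $[S,\partial_\a]=-\partial_\a$, gives
\begin{equation*}
[S,\partial_t+b\partial_\a]=-\tfrac12(\partial_t+b\partial_\a)+(Sb-\tfrac12 b)\partial_\a.
\end{equation*}
Therefore
\begin{equation*}
(\partial_t+b\partial_\a)D^kSf-D^kS(\partial_t+b\partial_\a)f=-[D^k,b]\partial_\a Sf+\tfrac12 D^k(\partial_t+b\partial_\a)f-D^k\!\left((Sb-\tfrac12 b)\partial_\a f\right)\!.
\end{equation*}
The first piece is estimated as above (now with $Sf$ in place of $f$, which is fine because $k\leq N_0/2$ so Sobolev embeds $Sf\in H^{k+2}\subset W^{k+1,\infty}$ for $N_0$ large); the middle piece supplies exactly the allowed $\|(\partial_t+b\partial_\a)f\|_{H^k}$ on the right of \eqref{commS}; for the last piece I expand by Leibniz and control the factors ${\|Sb\|}_{H^k}\lesssim {\|b\|}_{X_{N_0}}\lesssim \e_1{\|\wt L\|}_{X_{N_0}}$ (again from Lemma \ref{lemenergy10c}) together with ${\|\partial_\a f\|}_{W^{\lceil k/2\rceil,\infty}}\lesssim\e_1$. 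The only place where some care is needed is the term ${\|b\|}_{H^k}\|\partial_\a Sf\|_{W^{\lceil k/2\rceil,\infty}}$, which I control by Sobolev ${\|\partial_\a Sf\|}_{W^{\lceil k/2\rceil,\infty}}\lesssim {\|Sf\|}_{H^{\lceil k/2\rceil+2}}\leq{\|Sf\|}_{H^{N_0/2}}\lesssim{\|f\|}_{X_{N_0}}$, which by \eqref{utochi}--\eqref{Imztol} is controlled by ${\|\wt L\|}_{X_{N_0}}$. I expect the trickiest bookkeeping to be this last step: keeping the $S$--$\partial_\a$--$\partial_t$ commutators tracked so that the $b$ factor is always placed in a norm where Lemma \ref{lemenergy10c} yields an $\e_1$ gain, without losing a derivative on $f$ beyond those encoded in $\|f\|_{X_{N_0}}$.
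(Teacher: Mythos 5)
Your overall strategy matches the paper's: reduce the approximate identities to quadratic remainders controlled by Proposition~\ref{proCCmain}, then control the commutators $[D^k,\partial_t+b\partial_\a]$ and $[S,\partial_t+b\partial_\a]$ via the Moser--Leibniz expansion together with the bounds on $b$ and $Sb$ from Lemma~\ref{lemenergy10c} and the bounds on $\partial_\a\chi,\partial_\a\l,v$ furnished by the identities themselves (\eqref{chivl1}--\eqref{chivl2} in the paper). The commutator formula $[S,\partial_t+b\partial_\a]=-\tfrac12(\partial_t+b\partial_\a)+(Sb-\tfrac12 b)\partial_\a$ agrees exactly with the paper's \eqref{commD_tS}, and the middle term $\tfrac12 D^k(\partial_t+b\partial_\a)f$ produces the allowed $\|(\partial_t+b\partial_\a)f\|_{H^k}$ in \eqref{commS}, just as in the paper.

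Where you differ is in the derivation of \eqref{chi_atoz_a}--\eqref{l_atou}. The paper simply quotes Wu's closed-form identities \eqref{chi_a=}, \eqref{v=}, \eqref{l_a=} and reads off the quadratic remainder by inspection, whereas you derive the identities by differentiating the definitions $\chi=(I-\H_\z)(\z-\bar\z)$, $\l=(I-\H_\z)(\psi\circ k^{-1})$ and commuting $\partial_\a$ with $\H_\z$ via $[\partial_\a,\H_\z]f=[\z_\a-1,\H_\z](f_\a/\z_\a)$. For $\chi$ this is arguably cleaner than the paper's route, and gives exactly the claimed form $(I-\H)(\z_\a-\bar\z_\a)+Q$ with $Q=-\tfrac{1}{i\pi}Q_0(\z_\a-1,\z_\a-\bar\z_\a)$. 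For $\l$, however, your claim that ``$(I-\H_\z)$ applied to $\Re(\bar u\,\z_\a)$ reproduces $u$ modulo a quadratic expression'' skips a step that is not merely bookkeeping. One has $(I-\H_\z)\Re(\bar u\,\z_\a)=\tfrac12(I-\H_\z)(u+\bar u)+Q$; to conclude this equals $u+Q$ you need $(I-\H_\z)\bar u=0$ (the anti-holomorphicity of $\bar z_t$ passed through $k^{-1}$) and then $(I-\H_\z)u=2u-(\H_\z+\bar\H_\z)u$ with the operator $\H_\z+\bar\H_\z$ being quadratically small in the sense of \eqref{H+barH}--\eqref{estH+barH}. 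You should make both ingredients explicit; without them a reader cannot see why the factor works out to $u$ rather than $\Re u$. The paper sidesteps this by citing \eqref{l_a=}, where $\partial_\a\l=u\bar\z_\a+(\z_\a\H\tfrac1{\z_\a}+\bar\z_\a\bar\H\tfrac1{\bar\z_\a})(u\bar\z_\a)$, and the quadratic nature of the second term is visible via the same $\H+\bar\H$ observation. A small further remark: when you write $\|b\|_{W^{\lceil k/2\rceil,\infty}}$ in the product estimate you could replace it by $\|\partial_\a b\|_{W^{\lceil k/2\rceil,\infty}}$, since $[D^k,b]$ always differentiates $b$ at least once; this is what the paper uses and it is what Lemma~\ref{lemenergy10c} (\eqref{estQA-1b}, \eqref{dbLinfty1}) directly bounds, so it streamlines the invocation of the $\e_1$-smallness.
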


The above Lemma is proven in section \ref{prlemenergy12}. Eventually we show how $E(t)$ controls 
$\chi$ and $v$:
\begin{lem}\label{lemenergy13}
Under the assumption \eqref{aprioriL13}, we have
\begin{align}
\label{D_tchi}
& {\| (\partial_t + b\partial_\a) D^k \chi \|}_{L^2}^2 \lesssim E(t) \, ,
\\
\label{D_tv}
& {\| (\partial_t + b\partial_\a) D^k v \|}_{L^2}^2 \lesssim E(t)  + \e_1 {\| \wt L(t) \|}_{H^{N_0}}^2 
\end{align}
for all $0 \leq k \leq N_0$. Also
\begin{align}
\label{D_tl}
& {\| (\partial_t + b \partial_\a) D^k \l \|}^2_{L^2} \lesssim  E(t) \, ,
\end{align}
for all $0 \leq k \leq N_0-2$.
Moreover, for $0 \leq k \leq \frac{N_0}{2}$
\begin{align}
\label{D_tSchi}
& {\| (\partial_t + b\partial_\a) D^k S \chi \|}_{L^2}^2 \lesssim E(t) \, ,
\\
\label{D_tSv}
& {\| (\partial_t + b\partial_\a) D^k S v \|}_{L^2}^2  \lesssim E(t)  + \e_1 {\| \wt L(t) \|}_{X_{N_0}}^2 \, ,
\\
\label{D_tSl}
& {\| (\partial_t + b \partial_\a) D^k S \l \|}^2_{L^2} \lesssim  E(t) \, .
\end{align}
\end{lem}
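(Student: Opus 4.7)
\medskip

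\noindent\textbf{Proof plan for Lemma \ref{lemenergy13}.}

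The plan rests on two structural ingredients: (i) the coefficient $A$ satisfies $\|A-1\|_{L^\infty}\lesssim \e_1$ by Lemma \ref{lemenergy10c}, so the ``kinetic'' integrand $\int \frac{1}{A}|F|^2\,d\a$ is equivalent to $\|F\|_{L^2}^2$ up to a multiplicative factor $1+O(\e_1)$; and (ii) for any $g$ that is $\H_\z$-anti-holomorphic (i.e.\ $(I+\H_\z)g=0$) the Dirichlet-type form
\begin{equation*}
\int_\R i\partial_\a g\,\overline{g}\,d\a \;\geq\; 0,
\end{equation*}
since on the flat interface this reduces to $\|\Lambda^{1/2}g\|_{L^2}^2$ and on the curve the difference is a lower-order quadratic correction controlled by $\|\Im\z_\a\|_{W^{1,\infty}}\lesssim \e_1$ via the $(\H_\z+\bar\H_\z)$-identity \eqref{H+barH} and the operator estimates of Section \ref{secop}. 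Throughout, all commutator constants come out depending only on lower Sobolev norms (bounded a priori), thanks to Lemma \ref{lemenergy10c} together with Proposition \ref{proCCmain}.

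For the estimates on $\chi$ and $\l$, I exploit that by construction $\chi=2i(I-\H_\z)\Im\z$ and $\l=(I-\H_\z)(\psi\circ k^{-1})$ are $\H_\z$-anti-holomorphic, so $(D^k\chi)^h=D^k\chi$ and $(D^k S\chi)^h=D^kS\chi$ modulo quadratic $Q$-terms arising from $[D,\H_\z]$ and $[S,\H_\z]$, whose formulas \eqref{[D,H]} and \eqref{[S,H]} place them in the scope of the $Q_0$-estimates. Consequently the gradient summand in each term of $E^\chi$ and $E^\l$ is non-negative, and combined with (i) this yields
\begin{equation*}
\|(\partial_t+b\partial_\a)D^k\chi\|_{L^2}^2\;\lesssim\; E^\chi\;\leq\; E,
\end{equation*}
and identically for the $S_k$ versions and for $\l$ (up to $k\leq N_0-2$). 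This proves \eqref{D_tchi}, \eqref{D_tl}, \eqref{D_tSchi}, \eqref{D_tSl}.

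The estimates for $v$ are the delicate case because $E^v$ is built from $v_1=(I-\H_\z)v$, not $v$ itself. Using Plemelj ($\H_\z^2=I$) one has $\H_\z v_1=-v_1$, so $v_1$ is $\H_\z$-anti-holomorphic and the argument above gives $\|(\partial_t+b\partial_\a)D^k v_1\|_{L^2}^2\lesssim E(t)$, and similarly with $S$. To transfer to $v$, I write $v=v_1+\H_\z v$ and invoke the contour identity $\bar z_t=\H_z\bar z_t$ (i.e.\ $\bar\H_\z u=-u$) combined with $v=2u+Q$ from \eqref{vtou}: this expresses $\H_\z v-2u$ as a quadratic $Q$-term. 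Commuting $(\partial_t+b\partial_\a)D^k$ past $\H_\z$ via the standard material-derivative commutator (whose kernel is again bilinear in $(u,\z_\a-1)$) and using \eqref{commD}, I reduce the bound for $\|(\partial_t+b\partial_\a)D^k v\|_{L^2}^2$ to $E(t)$ plus $\|(\partial_t+b\partial_\a)D^k u\|_{L^2}^2=\|D^k w\|_{L^2}^2$ multiplied by the small factor $\e_1$ coming from the quadratic nature of the correction; this is exactly the extra term $\e_1\|\wt L\|_{H^{N_0}}^2$ in \eqref{D_tv}. The analogous argument, now invoking \eqref{commS} and Proposition \ref{proCCmain}(3) for the $S$-commutators, produces \eqref{D_tSv} with $\|\wt L\|_{X_{N_0}}^2$ on the right.

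The main obstacle I foresee is bookkeeping the commutator errors so that no constant depends on the highest Sobolev norm $\|\wt L\|_{X_{N_0}}$ (which grows like $(1+t)^{p_0}$); every commutator with $\H_\z$, $(\partial_t+b\partial_\a)$, $D^k$ or $S$ must be written as an operator of type $Q_0$, $\bQ$, $C_1$ or $C_2$ acting on $\wt L$, and estimated via Proposition \ref{proCCmain} paying out only the low-norm factor $\e_1$ and the high-norm factor $\|\wt L\|_{X_{N_0}}$. A secondary point is the quantitative positivity of $\int i\partial_\a g\bar g$ on the curve for $g$ anti-holomorphic: one needs a lower bound by $c\|\Lambda^{1/2} g\|_{L^2}^2$ with $c>0$ independent of time, which follows since the curvature correction is bounded by $\e_1$ times the same Dirichlet form and can be absorbed.
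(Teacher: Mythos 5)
The estimates \eqref{D_tchi}, \eqref{D_tl}, \eqref{D_tSchi}, \eqref{D_tSl} you get essentially right: in $E^\chi$ and $E^\l$ the Dirichlet-type summand is written with the explicit projections $(D^k\chi)^h$, $(S_k\chi)^h$, etc., so it is non-negative as a direct consequence of Lemma \ref{proevolEf} (the boundary value of a holomorphic function in $\Omega_t^c$ satisfies $i\int\Theta\partial_\a\bar\Theta\geq 0$). A small conceptual correction: this positivity is exact, a Green's-identity statement, not a flat-interface approximation with a small curvature correction as you frame it, and it does not rely on $\chi$ itself being anti-holomorphic — only on the $h$-projection that is already built into $E^\chi$. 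So the mechanism is cleaner than you make it sound, but the conclusion stands.

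The genuine gap is in \eqref{D_tv} and \eqref{D_tSv}. You write ``$v_1$ is $\H_\z$-anti-holomorphic and the argument above gives $\|(\partial_t+b\partial_\a)D^kv_1\|_{L^2}^2\lesssim E(t)$,'' but that inference does not follow. Look at how $E^v$ is defined: the second summand is $i\int\partial_\a D^k v_1\,D^k\bar v_1\,d\a$, with \emph{no} $h$-projection — the paper points this out explicitly (``Here in the second summand there is no restriction to $(D^kv_1)^h$\ldots Therefore the Energy $E^v$ has no definite sign''). Although $\H_\z v_1=-v_1$, the commutators $[D^k,\H_\z]$ and $[S_k,\H_\z]$ mean $D^k v_1$ is not anti-holomorphic, so the Dirichlet form can be negative and cannot simply be discarded. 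This is precisely where the content of \eqref{D_tv}–\eqref{D_tSv} lives: one must decompose $\G^k v_1=\eta_k^v+R_k^v$ into anti- and holomorphic parts, keep the genuinely non-negative piece $i\int\eta_k^v\partial_\a\bar\eta_k^v$, and estimate the cross terms $\eta\partial_\a\bar R$, $R\partial_\a\bar\eta$, $R\partial_\a\bar R$ and the error from replacing $v_1$ by $2v+Q_0(u,\partial_\a\chi)$ (an averaging step producing \emph{both} the $v$ and $v_1$ bounds). These estimates are exactly the source of the correction terms $\e_1 E^\chi+\e_1\|\wt L\|^2_{X_{N_0}}$, which your proposal attributes instead to a vague ``quadratic $Q$-term from $\H_\z v-2u$.'' Your reduction from $v_1$ to $v$ via the contour identity $\bar z_t=\H_z\bar z_t$ and $v=2u+Q$ is also not made concrete; what actually gets used is $v_1=2v+[u,\H]\tfrac{\partial_\a}{\z_\a}\chi$, i.e.\ a $Q_0(u,\partial_\a\chi)$ correction to $v_1$, not a correction to $\H_\z v$. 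As written, your plan does not produce \eqref{D_tv} or \eqref{D_tSv} and would need the sign-analysis of $E^v$ supplied from scratch.
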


This Lemma is proven in \ref{prlemenergy13}. One can actually obtain the stronger bound
\begin{align*}
{\| (\partial_t + b\partial_\a) \G v \|}_{L^2}^2 + {\| (\partial_t + b\partial_\a) \G v_1 \|}_{L^2}^2
     \lesssim E(t)  + \e_1 {\| \wt L(t) \|}_{X_{N_0}}^2 \, .
\end{align*}

The bound \eqref{estproenergy1}, and hence Proposition \ref{proenergy1}, follow easily from the above three lemmas.

\subsubsection{Proof of Lemma \ref{lemenergy11}}\label{prlemenergy11}
Equations (2.35), (2.43), (3.38) and (2.51) in \cite{WuAG} respectively read
\begin{align}
\label{u=}
& 2 u  = (\partial_t + b \partial_\a) \chi + (\H + \bar{\H}) u + [u,\H] \frac{\z_\a - \bar{\z}_\a}{\z_\a}
\\
\nn
& 2 w  = (\partial_t + b \partial_\a) v  +  [\bar{u}, \bar{\H}] \frac{u_\a}{\bar{\z}_\a} +
			[u, \H] \frac{2 u_\a - \bar{u}_\a}{\z_\a} +  (\H + \bar{\H}) w
\\ 
\label{w=}
&  \qquad + [w,\H] \frac{\z_\a - \bar{\z}_\a}{\z_\a} 
	- \frac{1}{i \pi} \int {\left( \frac{u(\a) - u(\b)}{\z(\a) - \z(b)} \right)}^2 (\z_\b(\b) - \bar{\z}_\b(\b)) \, d\b \, 
\\
\label{z_a-1=}
& \z_\a - 1  = \frac{w}{i A} - \frac{A - 1}{A} 
\\
\label{I-HImz=}
& (I-\H) \Im \z  = - (\partial_t + b \partial_\a ) \l  -\frac{1}{2}  [u,\H] \frac{\bar{\z}_\a u}{\z_\a} \, .
\end{align}

\paragraph{{\it Proof of \eqref{utochi}}}
To show \eqref{utochi} it is enough to prove that $(\H + \bar{\H}) u$ and $[u,\H] \frac{\z_\a - \bar{\z}_\a}{\z_\a}$
are quadratic terms satisfying \eqref{estQ}.
Estimate \eqref{estH+barH} and the a priori assumption \eqref{aprioriL13} give
\begin{align*}
{\| (\H + \bar{\H}) u \|}_{X_{N_0}} & \lesssim {\| \z_\a - 1 \|}_{H^{\frac{N_0}{2} + 1}} {\| u \|}_{X_{N_0}} 
  +  {\| \z_\a - 1 \|}_{X_{N_0}} {\| u \|}_{H^{\frac{N_0}{2} + 1}}
\lesssim \e_1 {\| \wt{L} \|}_{X_{N_0}} \, .
\end{align*}
Similarly, under the a priori assumptions \eqref{aprioriL13}, estimate \eqref{estQ_0L^2b} implies
\begin{align*}
{\left\| [u,\H] \frac{\z_\a - \bar{\z}_\a}{\z_\a} \right\|}_{X_{N_0}}
  \leq \e_1 {\| u \|}_{X_{N_0}} + \e_1 {\| \Im \z \|}_{X_{N_0}} + \e_1 {\| \z_\a - 1 \|}_{X_{N_0}}   
  \lesssim \e_1 {\| \wt{L} \|}_{X_{N_0}} \, .
\end{align*}

\paragraph{{\it Proof of \eqref{wtov}}}
We need to estimate all of the terms in the difference $2w - (\partial_t + b \partial_\a) v$ from \eqref{w=}.
The terms
\begin{align*}
[w, \H] \frac{\z_\a - \bar{\z}_\a}{\z_\a} \qquad \mbox{and} \qquad (\H + \bar{\H} )w
\end{align*}
can be estimated as above, the only difference being the appearance of $w$ instead of $u$.
The last term in \eqref{w=} is of the form $\bC (u,u, \Im \z_\a)$, where $\bC$ is defined in \eqref{opC}. 
This can be directly estimated using \eqref{estCCmain0}.
The remaining terms are
\begin{align*}
I_1 := [\bar{u}, \bar{\H}] \frac{u_\a}{\bar{\z}_\a} \qquad \mbox{and} \qquad I_2 := [u, \H] \frac{2 u_\a - \bar{u}_\a}{\z_\a} \, .
\end{align*}
These are terms of the form $Q_0 (\wt{L}, \partial_\a \wt{L})$ and can be bounded by making use of \eqref{estQ_0L^2}:
\begin{align*}
{\left\| I_1 \right\|}_{X_{N_0}} + {\left\| I_2 \right\|}_{X_{N_0}} & \lesssim {\left\| Q_0 (u,\partial_\a u) \right\|}_{X_{N_0}} 
  \lesssim \e_1 {\| \wt{L} \|}_{X_{N_0}} \, .
\end{align*}

\paragraph{{\it Proof of \eqref{z_a-1tov}}}
We start from \eqref{z_a-1=} and rewrite as:
\begin{align}
\label{z_a-1=1}
\z_\a - 1  & = -iw + i w \left(1 - \frac{1}{A} \right) - \frac{A - 1}{A}  \, . 
\end{align}
Since $w$ satisfies \eqref{wtov}, in order to show \eqref{z_a-1tov} is suffices to verify that
\begin{align*}
{\left\| w \left(1 - \frac{1}{A} \right) \right\|}_{X_{N_0}} + {\left\| \frac{A - 1}{A} \right\|}_{X_{N_0}} 
    \lesssim \e_1 {\| \wt{L} \|}_{X_{N_0}} \, .
\end{align*}
This follows from the bounds on $A-1$ \eqref{boundA-1} and \eqref{boundA-11}, and the a priori assumptions \eqref{aprioriL13}.

\paragraph{{\it Proof of \eqref{Imztol}}} 
This follows directly from \eqref{I-HImz=} and arguments similar to the ones above.

\paragraph{{\it Proof of \eqref{estuwz_a-1} and \eqref{estuwz_a-1infty}}}
Using \eqref{est1-Hf} in combination with \eqref{Imztol} one can deduce that
\begin{align*}
{\| \Im \z \|}_{X_{N_0}} & \lesssim {\| (\partial_t + b\partial_\a) \l \|}_{X_{N_0}} + \e_1 {\| \wt{L} \|}_{X_{N_0}} \, .
\end{align*}
In view of \eqref{utochi}-\eqref{estQ} 
we have then obtained \eqref{estuwz_a-1}.

From the a priori bound on the $H^{N_1 + 5}$-norm of $\wt{L}$ it is not hard to see that \eqref{estQinfty} holds true
by using similar arguments as above. 
\eqref{estuwz_a-1infty} immediately follows, concluding the proof of Lemma \ref{lemenergy11}. $\hfill \Box$

\subsubsection{Proof of Lemma \ref{lemenergy12}}\label{prlemenergy12}
Identities (2.50), (2.35) and (2.36) in \cite{WuAG} respectively read:
\begin{align}
\nn
\frac{i}{2} \partial_\a \chi  & = w \bar{\z}_\a - \frac{1}{2}  \H \left( u_\a \frac{u \bar{\z}_\a}{\z_\a} \right) + 
  \frac{1}{2} [u,\H] \left( \frac{\partial_\a (u \bar{\z}_\a)}{\z_\a} \right)
\\
\nn 
& - \frac{1}{2\pi i} \int \frac{ (u(\a) - u(\b)) (\z_\a(\a) - \z_\b(\b)) }{ {(\z(\a) - \z(\b))}^2 } u(\b) \bar{\z}_\b(\b) \, d\b 
  + \frac{1}{2}(\z_\a - \bar{\z}_\a) w +\frac{1}{2} \bar{u}_\a u
\\ 
\label{chi_a=}
& -\frac{1}{2} \z_\a \left( \H \frac{1}{\z_\a} + \bar{\H} \frac{1}{\bar{\z}_\a} \right) (w\bar{\z}_\a + u\bar{u}_\a)
  + \frac{\z_\a}{\pi} \int \Im \left( \frac{ u(\a) - u(\b) }{ {(\z(\a) - \z(\b))}^2 } \right) u(\b) \bar{\z}_\b(\b) \, d\b \, , 
\\
\label{v=}
v & = 2u - (\H + \bar{\H}) u - [u, \H] \frac{\z_\a - \bar{\z}_\a}{\z_\a}  \, ,
\\
\label{l_a=}
\partial_\a \l & =  u \bar{\z}_\a - \frac{1}{2} \left( \z_\a \H \frac{1}{\z_\a} + \bar{\z}_\a \bar{\H} \frac{1}{\bar{\z}_\a} \right) (u \bar{\z}_\a) \, .
\end{align}
From these, Proposition \ref{proCCmain} and the a priori assumption \eqref{aprioriL13}, it is not hard to see that 
\begin{align}
\label{chivl1}
& {\| \partial_\a \chi \|}_{H^{N_1}} + {\| v \|}_{H^{N_1}} + {\|  \partial_\a \l \|}_{H^{N_1}}
  \lesssim \e_1 \, ,
\\
\label{chivl2}
& {\| \partial_\a \chi \|}_{X_{N_0}} + {\| v \|}_{X_{N_0}} + {\|  \partial_\a \l \|}_{X_{N_0}}
  \lesssim {\| \wt{L} \|}_{X_{N_0}} \, .
\end{align}
We are now going to use these bounds to control the commutators $[\partial_t + b\partial_\a, D^k] f$ and $[\partial_t + b\partial_\a, S] f$ 
for $f = \chi,v,\l $.
By direct computation one sees that
\begin{align}
\label{commD_tD^k}
& [D^k, \partial_t + b\partial_\a]  = [ D^k, b\partial_\a] = \sum_{j=1}^k \partial^j b \partial^{k-j} \partial_\a
\\
\label{commD_tS}
& [S, \partial_t + b\partial_\a] = \left( Sb -\frac{1}{2}b \right) \partial_\a - \frac{1}{2}(\partial_t + b\partial_\a) \, . 
\end{align}
Moreover, for $S_k = D^k S$, we have
\begin{align}
\label{commD_tS_k}
& [S_k, \partial_t + b\partial_\a] f = D^k [S, \partial_t + b\partial_\a] f + [D^k, \partial_t + b\partial_\a] S f \, . 
\end{align}
From \eqref{commD_tD^k} it follows that for any $0\leq k \leq N_0$
\begin{align*}
{\left\| D^k (\partial_t + b\partial_\a) f - (\partial_t + b\partial_\a) D^k f \right\|}_{L^2} & \lesssim 
  \sum_{j=1}^{N_0/2} {\| \partial^j b \|}_{L^\infty} {\| \partial_\a f \|}_{H^{k-1}}
  + {\| \partial_\a b \|}_{H^{k-1}} \sum_{j=1}^{N_0/2} {\| \partial_\a f \|}_{L^\infty} 
\\
& \lesssim {\| \partial_\a b \|}_{H^{\frac{N_0}{2}+1}} {\| \partial_\a f \|}_{H^{k-2}}
  + {\| \partial_\a b \|}_{H^{k-1}} {\| \partial_\a f \|}_{H^{\frac{N_0}{2}+1}}  \, .
\end{align*}
Using \eqref{chivl1} and \eqref{chivl2} we see that
for $f = \chi,v$ and $0\leq k \leq N_0$, or $f=\l$ and $0\leq k \leq N_0 - 2$, we have
\begin{align*}
{\left\| D^k (\partial_t + b\partial_\a) f - (\partial_t + b\partial_\a) D^k f \right\|}_{L^2} 
& \lesssim  {\| \partial_\a b \|}_{H^{\frac{N_0}{2}+1}} {\| \wt{L} \|}_{X_{N_0}} +  \e_1 {\| \partial_\a b \|}_{H^{N_0-1}} \, .
\end{align*}
This would conclude the proof of \eqref{commD} provided one has
\begin{align*}
& {\| \partial_\a b \|}_{H^{\frac{N_0}{2}+1}} \lesssim \e_1
\qquad \mbox{and} \qquad {\| \partial_\a b \|}_{H^{N_0-1}} \lesssim {\| \wt{L} \|}_{X_{N_0}} \, .
\end{align*}
These two estimates follow from Lemma \ref{lemenergy10c}.

To show the commutator estimate \eqref{commS} we start by using \eqref{commD_tS_k}:
\begin{align}
\label{S_kD_t1}
& {\| [S_k, \partial_t + b\partial_\a] f \|}_{L^2} \lesssim 
  {\| [D^k, \partial_t + b\partial_\a] S f \|}_{L^2} + {\| [S, \partial_t + b\partial_\a] f \|}_{H^k} \, . 
\end{align}
From \eqref{commD_tD^k}, and for $0\leq k \leq \frac{N_0}{2}$,  we get
\begin{align*} 
{\| [D^k, \partial_t + b\partial_\a] S f \|}_{L^2} \lesssim {\| \partial_\a b \|}_{H^{\frac{k}{2}+1}} {\| \partial_\a S f \|}_{H^{k-1}}
  + {\| \partial_\a b \|}_{H^{k-1}} {\| \partial_\a S f \|}_{H^{\frac{k}{2}+1}}  \, . 
\end{align*}
Using \eqref{chivl2} we can bound the above right-hand side to obtain
\begin{align} 
\label{D_kD_tS}
&1 {\| [D^k, \partial_t + b\partial_\a] S f \|}_{L^2} \lesssim {\| \partial_\a b \|}_{H^{\frac{N_0}{2}}} {\| \wt{L} \|}_{X_{N_0}} 
\end{align}
for $f = \chi$, $v$ or $\l$.
Moreover, from \eqref{commD_tS} we see that
\begin{align*}
{\| [S, \partial_t + b\partial_\a] f \|}_{H^k} \lesssim 
  {\| S b \partial_\a f \|}_{H^k} + {\| b \partial_\a f  \|}_{H^k} + {\| (\partial_t + b \partial_\a)f \|}_{H^k}
  \, . 
\end{align*}
With $f = \chi$, $v$ or $\l$, and using \eqref{chivl1} and \eqref{chivl2} we deduce 
\begin{align}
\label{D^kSD_t}
{\| [S, \partial_t + b\partial_\a] f \|}_{H^k} \lesssim  \e_1 \left( {\| S b \|}_{H^k} + {\| b \|}_{H^k} \right)
  + {\| (\partial_t + b \partial_\a)f \|}_{H^k} \, . 
\end{align}

Putting together \eqref{S_kD_t1} with \eqref{D_kD_tS} and \eqref{D^kSD_t} we get 
\begin{align*}
& {\| [S_k, \partial_t + b\partial_\a] f \|}_{L^2} \lesssim {\| \partial_\a b \|}_{H^{\frac{N_0}{2}}} {\| \wt{L} \|}_{X_{N_0}}
 + \e_1 \left( {\| S b \|}_{H^{\frac{N_0}{2}}} + {\| b \|}_{H^{\frac{N_0}{2}}} \right) + {\| (\partial_t + b \partial_\a)f \|}_{H^k} \, . 
\end{align*}
To obtain \eqref{commS} it then suffices to have
\begin{align*}
& {\| S b \|}_{H^{\frac{N_0}{2}}} \lesssim  {\| \wt{L} \|}_{X_{N_0}}
  \qquad \mbox{and} \qquad {\| b \|}_{H^{\frac{N_0}{2}+1}} \lesssim \e_1 \, .
\end{align*}
These two estimates are again direct consequence of Lemma \ref{lemenergy10c}. $\hfill \Box$

\subsubsection{Proof of Lemma \ref{lemenergy13}}\label{prlemenergy13}
Recall the definition of $E^\chi$ given in \eqref{energychi}:
\begin{align}
\label{energychi1a}
E^\chi(t) & = \sum_{k=0}^{N_0}  \int_\R \frac{1}{A} {\left| (\partial_t + b \partial_\a) D^k \chi \right|}^2
  + i {\left(D^k \chi \right)}^h \partial_\alpha  \overline{{\left( D^k\chi \right)}^h} \, d\a 
\\
\label{energychi1b}
& + \sum_{k=0}^{N_0/2} \int_\R \frac{1}{A} {\left| (\partial_t + b \partial_\a) S_k \chi \right|}^2
  + i {\left(S_k \chi \right)}^h \partial_\alpha \overline{ {\left( S_k \chi \right)}^h} \, d\a \, \,
\end{align}
where
$ f^h := (I-\H)f/2$.
As in Lemma 4.1 of \cite{WuAG} (see Lemma \ref{proevolEf} below) 
we know that if $\Theta$ is the boundary value of an holomorphic function in $\Omega(t)^c$, such as $f^h$, then
\begin{align*}
i \int \Theta \partial_\a \bar{\Theta} \, d\a \geq 0 \, .
\end{align*}
Therefore both summands in \eqref{energychi1a} and \eqref{energychi1b} are nonnegative.
In particular
\begin{align}
E^\chi(t) \gtrsim \sum_{k=0}^{N_0}  \int_\R \frac{1}{A} {\left| (\partial_t + b \partial_\a) D^k \chi \right|}^2
  + \, \sum_{k=0}^{N_0/2} \int_\R \frac{1}{A} {\left| (\partial_t + b \partial_\a) S_k \chi \right|}^2 \, .
\end{align}
From this, and since $| A-1 | \leq 1/2$, see \eqref{boundA-11}, we get the desired bounds \eqref{D_tchi} and \eqref{D_tSchi}.
The exact same argument can be used to show \eqref{D_tl} and \eqref{D_tSl}.

To prove \eqref{D_tv} and \eqref{D_tSv} the argument is more complicated since $E^v$ is not nonnegative,
but we just need to adapt the proof of Lemma 4.2 in \cite[pp. 89-92]{WuAG} to see that
\begin{align*}
E^v (t) \geq \frac{1}{4} {\| (\partial_t + b \partial_\a) \G v \|}^2_{L^2} + \frac{1}{8} {\| (\partial_t + b \partial_\a) \G v_1 \|}^2_{L^2}  
	- \e_1 E^\chi - \e_1 {\| \wt{L} \|}^2_{X_{N_0}}
\end{align*}
where $\G = D^k$ for $0 \leq k \leq N_0$, or $\G= D^k S$ for $0\leq k \leq \frac{N_0}{2}$.  $\hfill \Box$

\subsection{Proof of Proposition \ref{proenergy2}: Evolution of the Energy}\label{secproenergy2}
We want to show, under the a priori assumptions \eqref{aprioriL13}, the following bound for the evolution of the Energy
\begin{equation}
\label{EE}
\frac{d}{dt} \sqrt{E(t)} \lesssim
    {\left( {\| L(t) \|}_{W^{N_1,\infty}} + {\| \H L^-(t) \|}_{W^{N_1,\infty}} \right)}^2  \sqrt{E(t)} \, 
\end{equation}
for any $t\in[0,T]$.
From the definition of $E$ in \eqref{Etot}, and the bound \eqref{estproenergy1}, we see that it suffices to prove
\begin{equation}
\label{EEf}
\frac{d}{dt} E^f(t) \lesssim 
    {\left( {\| L(t) \|}_{W^{N_1,\infty}} + {\| \H L^-(t)\|}_{W^{N_1,\infty}} \right)}^2 \left( {\| \wt{L}(t) \|}_{X_{N_0}}^2 + E(t) \right) \, 
\end{equation}
for any $t\in[0,T]$ and $f = \chi,\l$ and $v$.

\subsubsection{Basic energy equality}\label{secproenergy2pre}
Assume that $F$ is a smooth function vanishing sufficiently fast at infinity and satisfying the equation 
\begin{align}
\label{PF=}
\P F := {(\partial_t + b(t,\a) \partial_\a)}^2 F(t,\a) - i A(t,\a) \partial_\a F(t,\a) = G(t,\a) \, .
\end{align}
Define the zero-th energy associated to \eqref{PF=} by
\begin{align}
\label{E_0^F}
E_0^F (t) & = \int_\R \frac{1}{A(t,\a)} {\left| (\partial_t + b(t,\a) \partial_\a) F(t,\a) \right|}^2
  + i F(t,\a) \partial_\alpha \bar{F}(t,\a)  \, d\a \, .
\end{align}
Also define higher order energies
\begin{align}
\label{E_j^F}
E_j^F (t) & = \int_\R \frac{1}{A(t,\a)} {\left| (\partial_t + b(t,\a) \partial_\a) \G^j F(t,\a) \right|}^2
	+ i {(\G^j F)}^h (t,\a) \partial_\alpha \bar{{(\G^j F)}^h} (t,\a)  \, d\a \, .
\end{align}
where $j\geq 1$, $\G^j=D^j$ for $j \leq N_0$, or $\G^j=S_j$ for $j \leq N_0/2$,
and  $f^h$ is defined in \eqref{f^h}.
Then the following holds:

\begin{lem}[\cite{WuAG}]\label{proevolEf}
Assume that $F$ and $E_0$ are as above, then
\begin{align}
\label{d_tE_0^F}
\frac{d}{dt} E_0^F (t) & = \int_\R \frac{2}{A} \Re \left( (\partial_t + b \partial_\a) F \, \bar{G} \right)
	- \frac{1}{A}  \frac{a_t}{a} \circ k^{-1}    {\left| (\partial_t + b \partial_\a) F \right|}^2 \, d\a \, .
\end{align}

Furthermore, if $\Theta$ is the boundary value of an holomorphic function in $\Omega_t^c$, that is $\Theta=\Theta^h$, then
\begin{align*}
i \int_\R \Theta(t,\a) \partial_\alpha \bar{\Theta}(t,\a) \, d\a \geq 0 \, .
\end{align*}

Let $E_j^F (t)$ be as in \eqref{E_j^F}, then
\begin{align}
\label{d_tE_j^F1}
\frac{d}{dt} E_j^F (t) & = \int_\R \frac{2}{A} \Re \left( (\partial_t + b \partial_\a) \G^j F  \bar{G_j} \right)
	- \frac{1}{A}  \frac{a_t}{a} \circ k^{-1}    {\left| (\partial_t + b \partial_\a) \G^j F \right|}^2 \, d\a 
\\
	& - 2\Re \int_\R i\partial_t {(\G^j F)}^h \partial_\a \bar{{(\G^j F)}^r} + i \partial_t {(\G^j F)}^r \partial_\a \bar{{(\G^j F)}^h} 
		+  \partial_t {(\G^j F)}^r \partial_\a \bar{{(\G^j F)}^r} \, d\a \, ,
\label{d_tE_j^F2}
\end{align}
where
\begin{align*}
G_j := \G^j G + [\P,\G^j]F
\end{align*}
and $f^r$ is defined by 
$f^r := (1/2)(I+\H)f$.

\end{lem}
The proof of the above Lemma can be found in \cite[pp. 83-85]{WuAG}.

\subsubsection{Evolution of $E^\chi$}\label{secevolchi}
We want to show
\begin{equation}
\label{evolchi}
\frac{d}{dt} E^\chi(t) \lesssim  {\left( {\| L \|}_{W^{N_1,\infty}} + {\| \H L^- \|}_{W^{N_1,\infty}} \right)}^2  E(t) \, .
\end{equation}
We recall that the energy $E^\chi$ is given by
\begin{align*}
E^\chi (t) & = 
	\sum_{k=0}^{N_0}  \int_\R \frac{1}{A} {\left| (\partial_t + b \partial_\a) D^k \chi \right|}^2
	+ i {\left( D^k \chi \right)}^h \partial_\alpha  \overline{{\left(D^k \chi \right)}^h} \, d\a
\\
& + \sum_{k=0}^{N_0/2} \int_\R \frac{1}{A} {\left| (\partial_t + b \partial_\a) S_k \chi \right|}^2
	+ i {\left( S_k \chi \right)}^h \partial_\alpha \overline{{\left( S_k \chi \right)}^h} \, d\a \, .
\end{align*}
From \eqref{cubiceqchi} and \eqref{Gchi0} we know that $\chi$ satisfies an equation of the form
$\P \chi = G^\chi$ with a cubic nonlinearity $G^\chi = \bC (u,\Im \z, u_\a) + \bC (u,u,\Im \z_\a)$,
where $\bC$ are operators of the type defined in \eqref{C}.
This nonlinearity can be schematically rewritten as
\begin{align}
\label{Gchi01}
G^\chi = \bC (\wt{L}, \wt{L}, \wt{L}_\a) \, .
\end{align}

By using  Lemma \ref{proevolEf} one obtains
\begin{align}
\label{dtEchi1}
\frac{d}{dt} E^\chi(t) & = \sum_{k=0}^{N_0} \int \frac{2}{A} \Re \left( (\partial_t + b \partial_\a) D^k \chi \, \P D^k \chi \right) 
  -  \frac{1}{A} \frac{a_t}{a} \circ k^{-1} {\left|  (\partial_t + b \partial_\a) D^k \chi \right|}^2  \, d\a
  \\
\label{dtEchi2}
& - 2 \sum_{k=0}^{N_0} \Re \int  i\partial_t (D^k \chi)^h \partial_\a \bar{(D^k \chi)^r} 
  +  i\partial_t (D^k \chi)^r \partial_\a \bar{(D^k \chi)^h}  
  + i \partial_t (D^k \chi)^r \partial_\a \bar{(D^k \chi)^r} \, d\a
\\
\label{dtEchi3}
& + \sum_{k=0}^{N_0/2} \int \frac{2}{A} \Re \left( (\partial_t + b \partial_\a) S_k \chi \, \P S_k \chi \right) 
  -  \frac{1}{A} \frac{a_t}{a} \circ k^{-1} {\left|  (\partial_t + b \partial_\a) S_k \chi \right|}^2  \, d\a 
\\ \label{dtEchi4}
& - 2 \sum_{k=0}^{N_0/2} \Re \int  i\partial_t (S_k \chi)^h \partial_\a \bar{(S_k \chi)^r} 
  +  i\partial_t (S_k \chi)^r \partial_\a \bar{(S_k \chi)^h}  
  + i \partial_t (S_k \chi)^r \partial_\a \bar{(S_k \chi)^r} \, d\a \, .
\end{align}
Since ${\| A -1 \|}_{L^\infty} \leq \frac{1}{2}$, we have
\begin{align*}
\eqref{dtEchi1} + \eqref{dtEchi3} & \lesssim 
\sqrt{E^\chi (t)} \sum_{k=0}^{N_0} \left( {\|  D^k G^\chi \|}_{L^2} +  {\|  [\P, D^k] \chi \|}_{L^2}  \right)
\\
& + \sqrt{E^\chi (t)} \sum_{k=0}^{N_0/2} \left( {\| S_k G^\chi \|}_{L^2} +  {\|  [\P, S_k] \chi \|}_{L^2}  \right)
  +  E^\chi (t)  { \left\| \frac{a_t}{a} \circ k^{-1}  \right\| }_{L^\infty} \, .
\end{align*}
The terms in \eqref{dtEchi2} and \eqref{dtEchi4} are remainder terms. 
Since there is no logarithmic loss in the estimates for those terms in \cite[p. 94-98]{WuAG},
they can be estimated exactly as in the cited paper, and therefore we skip them. 
Thus, to obtain the desired bound \eqref{evolchi} on the evolution of $E^\chi$ it suffices to show
\begin{align}
\label{estevolchi1}
& {\| G^\chi \|}_{X_{N_0}} \lesssim {\| L \|}^2_{W^{N_1+3,\infty}}  \sqrt{E}
\\
\label{estevolchi2}
& \sum_{k=0}^{N_0} {\|  [\P, D^k] \chi \|}_{L^2} 
  \lesssim {\left( {\| L \|}_{W^{N_1,\infty}} + {\| \H L^- \|}_{W^{N_1,\infty}} \right)}^2  \sqrt{E}
\\
\label{estevolchi3}
& \sum_{k=0}^{N_0/2} {\|  [\P, S_k] \chi \|}_{L^2} 
  \lesssim {\left( {\| L \|}_{W^{N_1,\infty}} + {\| \H L^- \|}_{W^{N_1,\infty}} \right)}^2  \sqrt{E}
\\
\label{estevolchi4}
& {\left\| \frac{a_t}{a} \circ k^{-1}  \right\|}_{L^\infty} \lesssim {\| \wt{L} \|}^2_{W^{N_1,\infty}} \, .
\end{align}
These estimates are performed in the next four subsections.

\vskip5pt
\paragraph{\it{Proof of \eqref{estevolchi1}}}
The bound \eqref{estevolchi1} follows directly from \eqref{Gchi01} and Proposition \ref{proCCmain}.
Indeed 
applying \eqref{estCCmain0} we see that
\begin{align*}
{\| G^\chi \|}_{X_{N_0}} =  {\| \bC (\wt{L}, \wt{L}, \wt{L}_\a) \|}_{X_{N_0}}
  \lesssim {\| \wt{L} \|}^2_{W^{N_1,\infty}} {\| \wt{L} \|}_{X_{N_0}} \lesssim {\| L \|}^2_{W^{N_1,\infty}} \sqrt{E} \, ,
\end{align*}
having used \eqref{estproenergy1} in the last inequality.

\vskip5pt
\paragraph{\it{Proof of \eqref{estevolchi2}}}\label{secevolchi2}
Recall 
the definition $\P = {(\partial_t + b \partial_\a)}^2 - i A \partial_\a$.
By direct computation we see that
\begin{align}
\label{commD^kP}
\begin{split}
& [D^k, \P ] f =  [D^k, \partial_t + b\partial_\a ] (\partial_t + b\partial_\a )f  + 
   (\partial_t + b\partial_\a ) [D^k, \partial_t + b\partial_\a ] f - i [D^k, A \partial_\a] f
\\
& = [D^k, b\partial_\a ] (\partial_t + b\partial_\a )f  + 
   (\partial_t + b\partial_\a ) [D^k, b\partial_\a ] f - i [D^k, A \partial_\a] f
\\
& = \sum_{j=1}^k c_{k,j} D^j b D^{k-j} \partial_\a (\partial_t + b\partial_\a )f
  + (\partial_t + b\partial_\a ) \sum_{j=1}^k c_{k,j} D^j b D^{k-j} \partial_\a f
  - i \sum_{j=1}^k c_{k,j}  D^j A D^{k-j} \partial_\a  f \, ,
\end{split}
\end{align}
for some coefficients $c_{k,j}$.
It follows that for any $0 \leq k \leq N_0$
\begin{subequations}
\begin{align}
\nn
& {\| [\P, D^k] \chi \|}_{L^2}  \lesssim
\\
\nn
\\
\label{CE1}
& \lesssim  {\| (\partial_t + b\partial_\a) \chi \|}_{H^{N_0}} {\| \partial_\a b \|}_{W^{N_0/2,\infty}}
 + {\| \partial_\a b \|}_{H^{N_0-1}} {\| (\partial_t + b\partial_\a) \chi \|}_{W^{N_0/2,\infty}}
\\
\label{CE2}
& + \sum_{j=0}^{N_0-1} {\| (\partial_t + b\partial_\a) D^j \partial_\a b \|}_{L^2} {\| \partial_\a \chi \|}_{W^{N_0/2,\infty}}
  + {\| \partial_\a b \|}_{H^{N_0-1}}  \sum_{j=0}^{N_0/2}  {\| (\partial_t + b\partial_\a)  D^j \partial_\a \chi \|}_{L^\infty}
\\
\label{CE3}
& + \sum_{j=0}^{N_0/2} {\| (\partial_t + b\partial_\a) D^j \partial_\a b \|}_{L^\infty} {\| \partial_\a \chi \|}_{H^{N_0-1}}
  + {\| \partial_\a b \|}_{W^{N_0/2,\infty}}  \sum_{j=0}^{N_0-1}  {\| (\partial_t + b\partial_\a)  D^j \partial_\a \chi \|}_{L^2}
\\
\label{CE4}
& + {\| \partial_\a \chi \|}_{H^{N_0}} {\| \partial_\a A \|}_{W^{N_0/2,\infty}} 
  + {\| \partial_\a A \|}_{H^{N_0}} {\| \partial_\a \chi \|}_{W^{N_0/2,\infty}} \, .
\end{align}
\end{subequations}

Combining \eqref{estuwz_a-1}, commutator estimates for $[\partial_t + b\partial_\a, D^j]$,  \eqref{chi_atoz_a}
and \eqref{estproenergy1}, it follows that
\begin{align}
\label{CEL2}
{\| (\partial_t + b \partial_\a) \chi \|}_{H^{N_0}} +
  \sum_{j=0}^{N_0-1} {\| (\partial_t + b \partial_\a) D^j \partial_\a \chi \|}_{L^2} + 
  {\| \partial_\a \chi \|}_{H^{N_0}} \lesssim \sqrt{E(t)} \, . 
\end{align}
From Lemma \ref{lemenergy11}, and commutation estimates for $[\partial_t + b\partial_\a, D^j]$, we also control the following $L^\infty$ norms:
\begin{align}
\label{CELinfty}
{\| (\partial_t + b \partial_\a) \chi \|}_{W^{N_0/2,\infty}} + {\| \partial_\a \chi \|}_{W^{N_0/2,\infty}} +
  \sum_{j=0}^{N_0/2} {\| (\partial_t + b \partial_\a) D^j \partial_\a \chi \|}_{L^\infty}
  \lesssim  {\| L \|}_{W^{N_1,\infty}}  \, .
\end{align}

\paragraph{{\it Estimate of \eqref{CE1}}}
Using \eqref{CEL2} and \eqref{CELinfty} we can bound
\begin{align*}
\eqref{CE1} \lesssim \sqrt{E(t)} {\| \partial_\a b \|}_{W^{N_0/2,\infty}}
 + {\| \partial_\a b \|}_{H^{N_0-1}} {\| L \|}_{W^{N_1,\infty}} \, .
\end{align*}
To obtain the desired bound in then suffices to show
\begin{align}
\label{dbLinfty} 
& {\| \partial_\a b \|}_{W^{N_0/2,\infty}} \lesssim \left( {\| \H L^- \|}_{W^{N_1,\infty}} + {\| L \|}_{W^{N_1,\infty}} \right)^2
\\
\label{dbL2} 
& {\| \partial_\a b \|}_{H^{N_0-1}} \lesssim {\| L \|}_{W^{N_1,\infty}} \sqrt{E} \, .
\end{align}
From formula \eqref{formulab} we see that $(I-\H)b = g$ with
\begin{equation}
\label{I-Hb=}
g = - [u,\H] \frac{\bar{\z}_\a - 1}{\z_\a} \, .
\end{equation}
Using \eqref{est1-Hfinfty2} in Lemma \ref{lemI-Hf} we get
\begin{align*}
{\| \partial_\a b \|}_{W^{N_0/2,\infty}} & \lesssim {\| \partial_\a g \|}_{W^{N_0/2,\infty}} 
  + {\| \wt{L} \|}_{W^{N_0/2+1,\infty}} {\| \partial_\a g \|}_{H^{N_0/2+1}} \, .
\end{align*}
Since $g$ above is an operator of the form $Q_0(\wt{L} ,\wt{L})$, with $Q_0$ defined in \eqref{Q_0}, 
we can use the $L^\infty$ bound provided by \eqref{estpartialQ_0} to deduce
\begin{align}
\label{dbLinfty1} 
{\| \partial_\a b \|}_{W^{N_0/2,\infty}} & \lesssim {\| L \|}_{W^{N_0/2+2,\infty}} 
  {\left( {\| \H L^- \|}_{W^{N_0/2+2,\infty}} + {\| L \|}_{W^{N_0/2+2,\infty}} \right)}  \, .
\end{align}
Here we have also used the $L^2$-bounds from \ref{theoCCL^2} to estimate ${\| \partial_\a g \|}_{H^{N_0/2+1}}$, 
and the a priori assumption ${\| \wt{L} \|}_{H^{N_1+5}} \lesssim \e_1$. \eqref{dbLinfty} is proven.
Using again Lemma \ref{lemI-Hf} we can estimate
\begin{align}
\label{dbL21} 
& {\| \partial_\a b \|}_{H^{N_0-1}} \lesssim  {\| \partial_\a g \|}_{H^{N_0-1}} 
  \lesssim {\| \wt{L} \|}_{H^{N_0}} {\| \wt{L} \|}_{W^{N_0/2+2,\infty}} \, ,
\end{align}
which in light of \eqref{estproenergy1} suffices to obtain \eqref{dbL2}.

\paragraph{{\it Estimate of \eqref{CE2}}}
Using \eqref{CEL2} and \eqref{CELinfty} we see that
\begin{align*}
\eqref{CE2} \lesssim \sum_{j=0}^{N_0-1} {\| (\partial_t + b\partial_\a) D^j \partial_\a b \|}_{L^2} {\| L \|}_{W^{N_0/2+2,\infty}}
  + {\| \partial_\a b \|}_{H^{N_0-1}} {\| L \|}_{W^{N_0/2+2,\infty}} \, .
\end{align*}
Since we already have the bound \eqref{dbL2} for ${\| \partial_\a b \|}_{H^{N_0-1}}$,  in order to estimate \eqref{CE2} it suffices to show
\begin{align}
\label{D_tdbL2} 
\sum_{j=0}^{N_0-1} {\| (\partial_t + b\partial_\a) D^j \partial_\a b \|}_{L^2}  
  \lesssim {\| \wt{L} \|}_{W^{N_0/2+2,\infty}}  {\| \wt{L} \|}_{X_{N_0}} \, .
\end{align}
To establish this estimate we use the following identity derived in \cite[formula (2.52)]{WuAG}:
\begin{equation}
 \label{formulaD_tb}
\begin{split}
& (I - \H) (\partial_t + b\partial_\a )b = B_3
\\
& B_3 := [u,\H] \frac{\partial_\a( 2b - \bar{u}) }{\z_\a} - [w,\H] \frac{\bar{\z}_\a - 1}{\z_\a}
+ \frac{1}{i\pi}  \int {\left( \frac{u(\a) - u(\b)}{\z(\a) - \z(\b)} \right)}^2 (\bar{\z}_\b(\b) - 1) \, d\b \, .
\end{split}
\end{equation}
Using this formula, \eqref{est1-Hf0} in Lemma \ref{lemI-Hf}, and the fact that $[\partial_\a, \partial_t + b\partial_\a]b$ 
gives higher order quartic terms, it is not hard to see that \eqref{D_tdbL2} holds


\paragraph{{\it Estimate of \eqref{CE3}}}
Using again \eqref{CEL2} and \eqref{CELinfty} we can bound
\begin{align*}
\eqref{CE3} \lesssim \sum_{j=0}^{N_0/2} {\| (\partial_t + b\partial_\a) D^j \partial_\a b \|}_{L^\infty} \sqrt{E(t)}
  + {\| \partial_\a b \|}_{W^{N_0/2,\infty}} \sqrt{E(t)} \, .
\end{align*}
By virtue of \eqref{dbLinfty} 
it suffices to establish the bound
\begin{align}
\label{D_tdbLinfty} 
\sum_{j=0}^{N_0/2} {\| (\partial_t + b\partial_\a) D^j \partial_\a b \|}_{L^\infty}  
  \lesssim {\left( {\| \H L^- \|}_{W^{N_0/2+2,\infty}} + {\| L \|}_{W^{N_0/2+2,\infty}} \right)}^2 \, .
\end{align}
Commuting $\partial_t + b\partial_\a$ and $\partial_\a^j$, using \eqref{formulaD_tb} and \eqref{est1-Hfinfty2} in Lemma \ref{lemI-Hf},
and \eqref{dbLinfty1},
one can see that for any $0 \leq j \leq N_0/2$
\begin{align*}
{\| (\partial_t + b\partial_\a) D^j \partial_\a b \|}_{L^\infty} 
  & \lesssim {\| \partial_\a (\partial_t + b\partial_\a) b \|}_{W^{j,\infty}} + {\| L \|}_{W^{N_0/2+2,\infty}}^2
\\
& \lesssim {\| \partial_\a B_3 \|}_{W^{N_0/2,\infty}} + {\| L \|}_{W^{N_0/2+2,\infty}} {\| \partial_\a B_3 \|}_{H^{N_0/2+1}} 
  + {\| L \|}_{W^{N_0/2+2,\infty}}^2 \, .
\end{align*}
Since ${\| \partial_\a B_3 \|}_{H^{N_0/2+1}}$ can be estimated by means of Corollary \ref{theoCCL^2},
we only need to bound ${\| \partial_\a B_3 \|}_{W^{N_0/2,\infty}}$.
$B_3$ contains potentially dangerous terms, but since they are of the form $Q_0(L^-, L^-)$
we can use again \eqref{estpartialQ_0} in Proposition \ref{proCCmain} to obtain
\begin{align*}
{\| \partial_\a B_3 \|}_{W^{N_0/2,\infty}} & \lesssim 
  {\left( {\| \H L^- \|}_{W^{N_0/2+2,\infty}} + {\| L \|}_{W^{N_0/2+2,\infty}} \right)}^2 \, .
\end{align*}
This gives \eqref{D_tdbLinfty}, which in turn allows to bound \eqref{CE3} by the right-hand side of \eqref{estevolchi2} as desired.

\paragraph{{\it Estimate of \eqref{CE4}}}
From \eqref{CEL2} and \eqref{CELinfty} we see that
\begin{align*}
\eqref{CE4} & \lesssim \sqrt{E(t)} {\| \partial_\a A \|}_{W^{N_0/2,\infty}} 
  + {\| \partial_\a A \|}_{H^{N_0}} {\| L \|}_{W^{N_0/2+2,\infty}} \, .
\end{align*}
To obtain the desired bound it is sufficient to show the following two estimates:
\begin{align} 
\label{dAL2}
& {\| \partial_\a A \|}_{H^{N_0}} \lesssim \sqrt{E(t)} {\| L \|}_{W^{N_0/2+2,\infty}}
\\
\label{dALinfty} 
& {\| \partial_\a (A-1) \|}_{W^{N_0/2,\infty}}
  \lesssim {\left( {\| \H L^- \|}_{W^{N_0/2+2,\infty}} + {\| L \|}_{W^{N_0/2+2,\infty}} \right)}^2 \, .
\end{align}
Recalling the identity \eqref{formulaA} for $(I-\H)(A-1)$
we see that the two terms in the right hand side of that formula 
are of the same type of the one appearing in the formula \eqref{I-Hb=} for $(I-\H)b$. 
Therefore, in order to show \eqref{dAL2} and \eqref{dALinfty}, one can proceed in the exact same fashion as was done 
before to obtain \eqref{dbLinfty} and \eqref{dbL2}.
Also in this case the presence of a derivative acting on $A-1$ in \eqref{dALinfty} plays a crucial role,
allowing us to use the bound \eqref{estpartialQ_0} on operators of the type $\partial_\a Q_0$.

\vskip5pt
\paragraph{\it{Proof of \eqref{estevolchi3}}}
Since $S_k = D^k S$ for any $0 \leq k \leq N_0/2$, we can write
\begin{align*}
[\P, S_k] f = [\P, D^k] S + D^k [\P, S] \, .
\end{align*}
Thus, to prove \eqref{estevolchi3} it is enough to show
\begin{align}
\label{estevolchi31} 
{\| [\P, D^k] S \chi\|}_{L^2} & 
  \lesssim {\left( {\| L \|}_{W^{N_1,\infty}} + {\| \H L^- \|}_{W^{N_1,\infty}} \right)}^2  \sqrt{E}
\\
\label{estevolchi32}
{\| [\P, S] \chi \|}_{H^k} & 
  \lesssim {\left( {\| L \|}_{W^{N_1,\infty}} + {\| \H L^- \|}_{W^{N_1,\infty}} \right)}^2  \sqrt{E}
\end{align}
for any $0 \leq k \leq N_0/2$.
Recall that the commutation $[\P, D^k]$ is explicitely given in \eqref{commD^kP}, whereas
a direct computation shows that
\begin{equation}
 \label{commS2}
\begin{split}
 [\P,S] & = \P + \left\{ \left( S b - \frac{1}{2}b \right) b_\a - S(\partial_t+b\partial_\a)b  \right\} \partial_\a
\\ 
& - \left( S b - \frac{1}{2}b \right) \left\{ (\partial_t+b\partial_\a)\partial_\a + \partial_\a (\partial_t+b\partial_\a)  \right\}
  + i S A \partial_\a \, . 
\end{split}
\end{equation}

\vskip5pt
\paragraph{\it Proof of \eqref{estevolchi31}}
For $0 \leq k \leq N_0/2$, we first use \eqref{commD^kP} to obtain
\begin{align}
\label{estevolchi311}
\begin{split}
{\| [\P, D^k] S \chi \|}_{L^2}
& \lesssim {\| \partial_\a b \|}_{W^{k-1,\infty}} {\| \partial_\a (\partial_t + b\partial_\a) S \chi \|}_{H^{k-1}} 
+ \sum_{j=1}^{k} {\| (\partial_t + b\partial_\a) D^j b \|}_{L^\infty} {\| \partial_\a S \chi \|}_{H^{k-1}}
\\
& + {\| \partial_\a b \|}_{W^{k-1,\infty}} \sum_{j=1}^{k} {\| (\partial_t + b\partial_\a) D^j S \chi \|}_{L^2}
+ {\| \partial_\a A \|}_{W^{k-1,\infty}} {\| \partial_\a S \chi \|}_{H^{k-1}} \, .
\end{split}
\end{align}
Commuting $\partial_t + b\partial_\a$ with $\partial_\a$ and $S$ in the appropriate fashion,
using \eqref{D_tSchi} to control the $L^2$ norm of $\partial_\a (\partial_t + b\partial_\a) S_k \chi$, 
and \eqref{chi_atoz_a} to control ${\| \partial_\a S \chi \|}_{H^{k-1}}$,
we see that all of the $L^2$-based norms in \eqref{estevolchi311} 
are controlled by $\sqrt{E}$.
To obtain \eqref{estevolchi31} We are then left with proving that for $0 \leq k \leq N_0/2$
\begin{align}
\label{estevolchi321}
& {\| \partial_\a b \|}_{W^{k-1,\infty}} 
  \lesssim {\left( {\| L \|}_{W^{N_1,\infty}} + {\| \H L^- \|}_{W^{N_1,\infty}} \right)}^2
\\
\label{estevolchi322}
& {\| (\partial_t + b\partial_\a) \partial_\a b \|}_{W^{k-1,\infty}} 
  \lesssim {\left( {\| L \|}_{W^{N_1,\infty}} + {\| \H L^- \|}_{W^{N_1,\infty}} \right)}^2
\\
\label{estevolchi323}
& {\| \partial_\a A \|}_{W^{k-1,\infty}} 
  \lesssim {\left( {\| L \|}_{W^{N_1,\infty}} + {\| \H L^- \|}_{W^{N_1,\infty}} \right)}^2 \, .
\end{align}
The bound \eqref{estevolchi321} is implied by \eqref{dbLinfty1} which has been already proven.
Up to commuting $\partial_t + b\partial_\a$ and $\partial_\a^j$, for $0\leq j \leq k-1$, we see that \eqref{estevolchi322} 
would follow from obtaining the same bound for  $(\partial_t + b\partial_\a) \partial_\a^j  \partial_\a b$.
Such an estimate has been already obtained in \eqref{D_tdbLinfty}.
Since also \eqref{estevolchi323} has been shown to hold true before, see \eqref{dALinfty},
we have completed the proof of \eqref{estevolchi31}.

\vskip5pt
\paragraph{\it Proof of \eqref{estevolchi32}}
Using \eqref{commS2}, for any $0 \leq k \leq N_0/2$, we can estimate
\begin{align*}
{\| [\P,S] \chi \|}_{H^k} & \lesssim {\| \P \chi \|}_{H^k} 
  + \left( {\left\| S b - \frac{1}{2}b \right\|}_{H^k} {\|b_\a\|}_{H^{k+1}} +  {\| S(\partial_t+b\partial_\a)b  \|}_{H^k} \right) 
  {\| \partial_\a \chi \|}_{W^{\frac{N_0}{2}, \infty}}
\\ 
& + {\left\| S b - \frac{1}{2}b \right\|}_{H^k} {\| \partial_\a (\partial_t + b\partial_\a) \chi \|}_{W^{\frac{N_0}{2}, \infty}}
  + {\| S (A-1) \|}_{H^k} {\| \partial_\a \chi \|}_{W^{\frac{N_0}{2}, \infty}} \, .
\end{align*}
From \eqref{estuwz_a-1infty} we know that
\begin{align*}
 {\| \partial_\a \chi \|}_{W^{\frac{N_0}{2}, \infty}} +  {\| \partial_\a (\partial_t+b\partial_\a) \chi \|}_{W^{\frac{N_0}{2}, \infty}}
  \lesssim {\| L \|}_{W^{\frac{N_0}{2}, \infty}} \, .
\end{align*}
From Lemma \ref{lemenergy10c} we see that ${\| b \|}_{H^{k+1}} \lesssim 1$. 
To conclude the desired bound it then suffices to show the following $L^2$-estimates:
\begin{align}
\label{estevolchi32a}
& {\| b \|}_{H^k} + {\| S b \|}_{H^k} \lesssim \left( {\| L \|}_{W^{N_1,\infty}} + {\| \H L^- \|}_{W^{N_1,\infty}} \right)  \sqrt{E}
\\ 
\label{estevolchi32b}
& {\| S(\partial_t+b\partial_\a) b \|}_{H^k}  \lesssim 
  \left( {\| L \|}_{W^{N_1,\infty}} + {\| \H L^- \|}_{W^{N_1,\infty}} \right)  \sqrt{E}
\\
\label{estevolchi32c}
& {\| S (A-1) \|}_{H^k} \lesssim \left( {\| L \|}_{W^{N_1,\infty}} + {\| \H L^- \|}_{W^{N_1,\infty}} \right)  \sqrt{E}
\end{align}
for any $0 \leq k \leq N_0/2$.

The $L^2$ estimates above can all be proven in the same fashion, so we just give details for the first one.
As before, from \eqref{formulab} we know that $(I-\H) b = Q_0(\wt{L},\wt{L})$, where $Q_0$ is as in\eqref{Q_0}.
From \eqref{est1-Hf} we then have
\begin{align*}
{\| b \|}_{H^k} + {\| S b \|}_{H^k} & \lesssim {\| Q_0(\wt{L},\wt{L}) \|}_{X_k} 
  + {\| \z_\a -1 \|}_{X_k} \big( {\| Q_0(\wt{L},\wt{L}) \|}_{W^{\frac{k}{2},\infty}}  
  + {\| \Im \z_\a \|}_{W^{k,\infty}} {\| Q_0(\wt{L},\wt{L}) \|}_{H^{\frac{k}{2}+1}} \big) \, .
\end{align*}
Using the estimate \eqref{estQ_0L^2}, and the energy bounds \eqref{estproenergy1}, we see that
${\| Q_0(\wt{L},\wt{L}) \|}_{X_k}$ is bounded by the right-hand side of \eqref{estevolchi32a}.
Moreover 
we can use \eqref{estHlow}, and the a priori assumptions, to deduce that
\begin{align*}
{\| Q_0(\wt{L},\wt{L}) \|}_{H^{\frac{k}{2}+1}} \lesssim  
  {\| L \|}_{W^{\frac{N_0}{2} + 2,\infty}} {\| L \|}_{H^{\frac{N_0}{2} + 3}} \lesssim {\| L \|}_{W^{\frac{N_0}{2} + 2,\infty}} \, .
\end{align*}
This, and ${\| \z_\a -1 \|}_{X_k} \lesssim \sqrt{E}$, suffice to obtain \eqref{estevolchi32a}.
One can easily see that \eqref{estevolchi32b} and \eqref{estevolchi32c} follow analogously, 
by using  respectively the identities \eqref{formulaD_tb} and \eqref{formulaA}.

\vskip5pt
\paragraph{\it{Proof of \eqref{estevolchi4}}}\label{secevolchi4}
In order to complete the Energy estimate for $E^\chi$ we want to prove the $L^\infty$ bound
\begin{align*}
{ \left\|  \frac{a_t}{a} \circ k^{-1}  \right\| }_{L^\infty} \lesssim  {\| \wt{L} \|}_{W^{N_1,\infty}}^2 \, .
\end{align*}
In what follows we are going to establish the stronger bound
\begin{align}
\label{a_t1}
{ \left\|  \frac{a_t}{a} \circ k^{-1}  \right\| }_{W^{\frac{N_0}{2},\infty}} \lesssim  {\| \wt{L} \|}_{W^{N_1,\infty}}^2
\end{align}
and the additional estimate
\begin{align}
\label{a_t2}
{ \left\|  \frac{a_t}{a} \circ k^{-1}  \right\| }_{X_{N_0}} \lesssim  {\| \wt{L} \|}_{W^{N_1,\infty}} {\| \wt{L} \|}_{X_{N_0}} \, .
\end{align}
The above bounds will also be useful later on.
To prove \eqref{a_t1}-\eqref{a_t2} we will use the formula (2.32) from \cite{WuAG}, which reads
\begin{align}
\label{formulaa_t}
\begin{split}
(I - \H) & \left( \frac{a_t}{a} \circ k^{-1} A \bar{\z}_\a \right) = I_1 + I_2
\\
I_1 & := 2i [w, \H] \frac{\bar{u}_\a}{\z_\a} + 2i [u,\H] \frac{\bar{w}_\a}{\z_\a}
\\
I_2 & := - \frac{1}{\pi} \int { \left( \frac{ u(\a) - u(\b) }{ \z(\a)-\z(\b) } \right) }^2 \bar{u}_\b (\b) \, d\b \, ,
\end{split}
\end{align}
in combination with the following Lemma:
\begin{lem}\label{lemI-HfA}
Let $f \in X_k$, $0 \leq k \leq N_0$, let $A$ be as in \eqref{formulaA}, and $w$ as defined in \eqref{defzeta}.
Assume that $f$ and $g$ are related by
\begin{align*}
(I - \H) (f A \bar{\z}_\a) = g \, .
\end{align*}
Then, for any $0 \leq k \leq N_0$
\begin{align}
\label{est1-HfA}
{\| f \|}_{X_k} \lesssim  {\| g \|}_{X_k} +  \left( {\| w \|}_{X_k} + {\| \z_\a - 1 \|}_{X_k} \right)
    \left( {\| g \|}_{W^{\frac{k}{2},\infty}} + {\| \Im \z_\a \|}_{W^{\frac{k}{2}+1,\infty}} {\| (g,w) \|}_{H^{\frac{k}{2}+1,\infty}} \right) \, ,
\end{align}
and for $0 \leq k \leq \frac{N_0}{2} + 2$
\begin{align}
\label{est1-HfAinfty}
{\| f \|}_{W^{k,\infty}} \lesssim  {\| \Re \, g \|}_{W^{k,\infty}}  +  {\| \Im \z_\a \|}_{W^{k+1,\infty}}  {\| g \|}_{H^{k + 1}} 
  +  {\| g \|}_{H^{k+1}} {\| w \|}_{W^{k+1,\infty}} \, .
\end{align}
\end{lem}

\proof From the identity \eqref{id30} we see that $A \bar{\z_\a} = 1 - i w$,  
and therefore
\begin{align*}
(I -\H )f = g + (I-\H) (i f w) \, .
\end{align*}
\eqref{est1-HfA} can then be derived by applying \eqref{est1-Hf} to the above identity, using \eqref{estHb} and the a priori bounds on $w$.
The estimate \eqref{est1-HfAinfty} follows similarly from \eqref{est1-Hfinfty}, together with \eqref{est1-Hflow} and \eqref{estHlow}. 
$\hfill \Box$

Notice that $I_2$ in \eqref{formulaa_t} is of the form $\bC(u,u,\bar{u}_\a)$,
and therefore is easier to estimate, so that we can skip its treatment and focus on $I_1$. 
From Lemma \ref{lemI-HfA} we see that in order to prove \eqref{a_t1} it suffices to obtain the bounds
\begin{align}
\label{a_t11}
& {\| \Re I_1 \|}_{W^{\frac{N_0}{2}+1,\infty}} \lesssim {\| \wt{L} \|}_{W^{N_1,\infty}}^2 
\\
\label{a_t12}
& {\| I_1 \|}_{H^{\frac{N_0}{2}+1}} \lesssim {\| \wt{L} \|}_{W^{N_1,\infty}} \, .
\end{align}
For \eqref{a_t2} it is enough to prove
\begin{align}
\label{a_t13}
{\| I_1 \|}_{X_{N_0}} & \lesssim  {\| \wt{L} \|}_{W^{N_1,\infty}} {\| \wt{L} \|}_{X_{N_0}} \, .
\end{align}

To show \eqref{a_t11} we write explicitely $\Re I_1$ as follows:
\begin{align}
\label{a_t20}
\begin{split}
\Re I_1 & = \Re \left( \frac{2}{\pi} \int \frac{ w(\a) - w(\b) }{ \z(\a)-\z(\b) } \bar{u}_\b (\b) \, d\b 
  + \frac{2}{\pi} \int \frac{ u(\a) - u(\b) }{ \z(\a)-\z(\b) } \bar{w}_\b (\b) \, d\b \right)
\\
& =  \Re \left( w 2i \H \frac{\bar{u}_\a}{\z_\a} + u  2i \H \frac{\bar{w}_\a}{\z_\a} \right)
  - \Re \left( \frac{2}{\pi} \int \frac{w(\b) \bar{u}_\b (\b) + u(\b) \bar{w}_\b (\b)}{ \z(\a)-\z(\b) } \, d\b \right) \, .
\end{split}
\end{align}
The first contribution above is estimated using \eqref{estHdfL^infty}:
\begin{align}
{\left\| w 2i \H \frac{\bar{u}_\a}{\z_\a} + u  2i \H \frac{\bar{w}_\a}{\z_\a}  \right\|}_{W^{\frac{N_0}{2}+1,\infty}}
\lesssim {\| w \|}_{W^{N_1,\infty}} {\| u \|}_{W^{N_1,\infty}}  \, .
\end{align}
To bound the second summand in \eqref{a_t20} we use the identity
\begin{align*}
2 \Re \int \frac{f(\b)}{\z(\a) - \z(\b) }\, d\b = \int \frac{2 \Re f(\b)}{\z(\a) - \z(\b) }\, d\b
 + 2i \int \frac{\bar{f}(\b) (\Im \z(\a) - \Im \z(\b)) }{{|\z(\a) - \z(\b)|}^2}\, d\b
\end{align*}
and notice that
\begin{align*}
2 \Re \left( w \bar{u}_\b + u \bar{w}_\b \right) =  \partial_\b (w \bar{u} + u \bar{w} ) \, .
\end{align*}
It follows that
\begin{align}
\Re \left( \frac{2}{\pi} \int \frac{w(\b) \bar{u}_\b (\b) + u(\b) \bar{w}_\b (\b)}{ \z(\a)-\z(\b) } \, d\b \right)
= 2 i \H \frac{ \partial_\a (w \bar{u} + u \bar{w} ) }{\z_\a} + I_r
\end{align}
where $I_r$ is cubic remainder which can be easily estimated.
Since
\begin{align}
 {\left\| \H \frac{ \partial_\a (w \bar{u} + \bar{u} w ) }{\z_\a} \right\|}_{W^{\frac{N_0}{2}+1,\infty}}
\lesssim {\| w \|}_{W^{\frac{N_0}{2}+3,\infty}} {\| u \|}_{W^{\frac{N_0}{2}+3,\infty}}
\end{align}
we have concluded the proof of \eqref{a_t11}.
The estimates \eqref{a_t12} and \eqref{a_t13} follow respectively from \eqref{estHlow},
and the bound \eqref{estQ_0L^2b} provided by Proposition \ref{proCCmain}.


\subsubsection{Evolution of $E^\l$}\label{secevoll}
The energy associated with $\l$ is given by
\begin{align*}
E^{\l} (t) 
    & = \sum_{k=0}^{N_0-2}  \int \frac{1}{A} {\left| (\partial_t + b \partial_\a) D^k \l \right|}^2
    + i  {\left( D^k \l \right)}^h \partial_\alpha \overline{{\left( D^k \l \right)}^h} \, d\a 
\\
& + \sum_{k=0}^{N_0/2}  \int \frac{1}{A} {\left| (\partial_t + b \partial_\a) S_k \l \right|}^2
    + i{\left( S_k \l \right)}^h \partial_\alpha \overline{{\left( S_k \l \right)}^h} \, d\a  \, .
\end{align*}
From \eqref{cubiceql} and \eqref{Gl} we have
${(\partial_t + b \partial_\a)}^2 \l + i A \partial_\alpha \l = \sum_{j=1}^4 G^{\l}_j$,
where
\begin{align}
\label{cubicl1}
G^{\l}_1 & = - \left[u, \H\frac{1}{\z_\a} + \bar{\H}\frac{1}{\bar{\z}_\a} \right] (\bar{\z}_\a w )
\\
\label{cubicl2}
G^{\l}_2 & = [u,\bar{\H}] \left( \bar{u} \frac{u_\a }{\bar{\z}_\a} \right) +  u [u,\H] \frac{\bar{u}_\a }{\z_\a}
\\
\label{cubicl3}
G^{\l}_3 & = - 2 [u,\H] \frac{u \cdot u_\a}{\z_\a} 
\\
\label{cubicl4}
G^{\l}_4 & = \frac{1}{i\pi} \int { \left( \frac{ u(\a) - u(\b) }{ \z(\a)-\z(\b) } \right) }^2 u (\b) \cdot \z_\b(\b) \, d\b \, .
\end{align}
As already done for $\chi$ before, we use Proposition \ref{proevolEf} to compute 
\begin{align}
\label{dtEl1}
\frac{d}{dt} E^\l (t) & = \sum_{k=0}^{N_0-2} \int \frac{2}{A} \Re \left( (\partial_t + b \partial_\a) D^k \l \, \P D^k \l \right) 
  -  \frac{1}{A} \frac{a_t}{a} \circ k^{-1} {\left|  (\partial_t + b \partial_\a) D^k \l \right|}^2  \, d\a
\\
\label{dtEl2}
& - 2 \sum_{k=0}^{N_0-2} \Re \int  i\partial_t (D^k \l)^h \partial_\a\bar{(D^k \l)^r}
  +  i\partial_t (D^k \l)^r \partial_\a \bar{(D^k \l)^h}  
  + i \partial_t (D^k \l)^r \partial_\a \bar{(D^k \l)^r} \, d\a
\\
\label{dtEl3}
& + \sum_{k=0}^{N_0/2}  \int \frac{2}{A} \Re \left( (\partial_t + b \partial_\a) S_k \l \, \P S_k \l \right) 
  -  \frac{1}{A} \frac{a_t}{a} \circ k^{-1} {\left|  (\partial_t + b \partial_\a) S_k \l \right|}^2  \, d\a 
\\ \label{dtEl4}
& - 2 \sum_{k=0}^{N_0/2}  \Re \int  i\partial_t (S_k \l)^h \partial_\a \bar{(S_k \l)^r}
  +  i\partial_t (S_k \l)^r \partial_\a \bar{(S_k \l)^h}  
  + i \partial_t (S_k \l)^r \partial_\a \bar{(S_k \l)^r} \, d\a \, .
\end{align}
Since ${\| A-1 \|}_{L^\infty} \leq \frac{1}{2}$ we see that
\begin{align}
\eqref{dtEl1} + \eqref{dtEl3} & \lesssim 
\sqrt{E^\l} (t) \sum_{k=0}^{N_0-2} \left( {\|  D^k G^{\l} \|}_{L^2} + {\|  [\P,D^k] \l \|}_{L^2}  \right) 
\\
& + \sqrt{E^\l} (t) \sum_{k=0}^{N_0/2} \left( {\|  S_k G^{\l} \|}_{L^2} + {\|  [\P,S_k] \l \|}_{L^2}  \right) 
\\ 
& +  E^\l (t)  { \left\| \frac{a_t}{a} \circ k^{-1}  \right\| }_{L^\infty}  \, .
\end{align}
As before, the terms in \eqref{dtEl2} and \eqref{dtEl4} are remainder terms: 
there is no logarithmic loss in estimating them already in \cite{WuAG}, so we can disregard them.
Moreover ${ \| a_t/a \circ k^{-1}  \| }_{L^\infty}$ has been already estimated in section \ref{secevolchi4}, see \eqref{a_t1}.
Also, the terms ${\|  [\P,D^k] \l \|}_{L^2}$ and ${\|  [\P,S_k] \l \|}_{L^2}$ can be treated exactly as done in section \ref{secevolchi2}.
Therefore, to control the time evolution of $E^\l$ by the right-hand side of \eqref{EEf} it suffices to show
\begin{align}
\label{estG^l}
{\| G^\l \|}_{X_{N_0}} \lesssim 
    {\| L(t) \|}_{W^{N_1,\infty}}^2  
    {\| \wt{L} \|}_{X_{N_0}} \, .
\end{align}
This is done in the following sections by estimating each of the terms in \eqref{cubicl1}-\eqref{cubicl4}. 

\vskip5pt
\paragraph{\it {Estimate of \eqref{cubicl1}}}
Observe that
\begin{align*}
G_1^\l = - \frac{2}{\pi} \int \frac{ (u(\a) - u(\b)) (\Im\z(\a) - \Im\z(\b)) }{ {|\z(\a)-\z(\b)|}^2 } w(\b) \bar{\z}_\b (\b) \, d\b 
\end{align*}
and therefore it is an operator of the form $\bC(u,\Im \z, w \z_\a)$ which can be estimated by means of Proposition \ref{proCCmain}.

\vskip5pt
\paragraph{\it {Estimate of \eqref{cubicl2}}}
The term $G_2^\l$ is more delicate. In order to estimate it we need to exploit its special structure,
which allows the appearence of the Hilbert transform acting on products 
only when the arguments are perfect derivative of functions that we can control.
Let us start by explicitely rewriting $G_2^\l$ as
\begin{align}
 G_2^\l & = u \bar{\H} \left( \bar{u} \frac{u_\a }{\bar{\z}_\a} \right) - \bar{\H} \left(u \bar{u} \frac{u_\a }{\bar{\z}_\a} \right)
  + u^2 \H \frac{\bar{u}_\a }{\z_\a} -  u \H \left( u \frac{\bar{u}_\a }{\z_\a} \right) \, .
\end{align}
We can then apply $D^k$ for $k=1,\dots,N_0$, and $S_k$, for $k=0,\dots,N_0/2$, 
to the above expression and use the commutation identities to distribute them.
This procedure will give many terms, most of which can be estimated directly using the $L^2$-bounds given by Proposition \ref{proCCmain}.
There will only two be types of dangerous terms:

\noindent
1. Terms for which $D^{N_0}$ or $S_{N_0/2}$ fall on $u_\a$;

\noindent
2. Terms that require the estimate in $L^\infty$ of an Hilbert transform of a product, such as $\H \frac{1}{\z_\a} u \bar{u}_\a$.

\noindent
More precisely, denoting by $\G^{N_0}$ either $D^{N_0}$ or $S_{N_0/2}$,
all the dangerous terms are:
\begin{align*}
A_1 & = \G^{N_0} u \bar{\H} \left( \bar{u} \frac{u_\a }{\bar{\z}_\a} \right) 
\,\, , \,\,
A_2 = u \bar{\H} \left( \bar{u} \frac{\G^{N_0} u_\a }{\bar{\z}_\a} \right)
\,\, , \,\,
A_3  = - \bar{\H} \left( u \bar{u} \frac{\G^{N_0} u_\a }{\bar{\z}_\a} \right)
\\
A_4 & = 2 u \G^{N_0} u \bar{\H} \frac{u_\a }{\bar{\z}_\a} 
\,\, ,\,\,
A_5 = u^2 \H \frac{\G^{N_0} \bar{u}_\a }{\z_\a}
\,\, , \,\,
A_6 = - \G^{N_0} u  \H \left( u \frac{\bar{u}_\a }{\z_\a} \right)
\,\, , \,\,
A_7 = - u \H \left( u \frac{\G^{N_0} \bar{u}_\a }{\z_\a} \right) .
\end{align*}
In particular, using Propositions \ref{proCCmain} and \ref{theoCCL^2}, one can verify that
\begin{align*}
& {\left\| \G^{N_0} \left( u \bar{\H} \left( \bar{u} \frac{u_\a }{\bar{\z}_\a} \right) \right) - (A_1 + A_2) \right\|}_{L^2}
  + {\left\| - \G^{N_0} \bar{\H} \left(u \bar{u} \frac{u_\a }{\bar{\z}_\a} \right) - A_3 \right\|}_{L^2} 
\\
& + {\left\| \G^{N_0} \left( u^2 \H \frac{\bar{u}_\a }{\z_\a} \right) - (A_4 + A_5) \right\|}_{L^2}
  + {\left\| - \G^{N_0} \left( u \H \left( u \frac{\bar{u}_\a }{\z_\a} \right) \right) - (A_6 + A_7) \right\|}_{L^2}
\\
& \lesssim {\| u \|}_{W^{N_1,\infty}}^2 {\| u \|}_{X_{N_0}} \, .
\end{align*}
It follows that
\begin{align*}
{\Big\|  \G^{N_0} G_2^\l \Big\|}_{L^2} \lesssim \Big\| \sum_{j=1}^7 A_j {\Big\|}_{L^2} + {\| u \|}_{W^{N_1,\infty}}^2 {\| u \|}_{X_{N_0}} \, .
\end{align*}
To estimate the terms $A_j$  we need to combine them appropriately. More precisely we look at the combinations
\begin{align}
\label{El21}
A_2 + A_3 & =  -\frac{1}{i\pi} \int \frac{(u(\a) - u(\b)) \bar{u}(\b)}{ \bar{\z}(\a)-\bar{\z}(\b) } \G^{N_0} u_\b (\b) \, d\b 
\\ 
\label{El22}
A_5 + A_7 & =  \frac{1}{i\pi} u(\a)  \int \frac{u(\a) - u(\b)}{ \z(\a)-\z(\b) } \G^{N_0} \bar{u}_\b (\b) \, d\b 
\\
\label{El23}
A_1 + A_6 & = \G^{N_0}  u \left(\bar{\H} \left( \bar{u} \frac{u_\a }{\bar{\z}_\a} \right)
    - \H \left( u \frac{\bar{u}_\a }{\z_\a} \right) \right) \, .
\end{align}
The remaining term $A_4$ can be directly bounded using Lemma \ref{lemHinfty}
\begin{align*}
{\left\| A_4 \right\|}_{L^2} \lesssim {\left\| u \G^{N_0} u \bar{\H} \frac{u_\a }{\bar{\z}_\a} \right\|}_{L^2} 
  \lesssim {\| u \|}_{L^\infty}  {\| \G^{N_0} u \|}_{L^2} {\left\| \bar{\H} \frac{u_\a }{\bar{\z}_\a} \right\|}_{L^\infty}
  \lesssim {\| u \|}_{W^{N_1,\infty}}^2 {\| u \|}_{X_{N_0}} \, .
\end{align*}
The terms \eqref{El21} and \eqref{El22} can be estimated by means of \eqref{estQ_0L^2b} (case $k=0$) in Proposition \ref{proCCmain}
since, upon commuting $\G^{N_0}$ and $\partial_\a$, they are linear combinations of terms of the form
\begin{align*}
Q_0 (u, \partial_\a \bar{u}\G^{N_0} u) \quad , \quad  Q_0 \left(u, \partial_\a(\bar{u}\G^{N_0}u) \right) \quad , \quad Q_0 (u,\bar{u} \G^{N_0} u) 
  \quad \mbox{or} \quad u Q_0 (u, \partial_\a \G^{N_0} u) \, .
\end{align*} 
Notice that there are no singular integrals that need to be estimated in $L^\infty$ here.

To bound \eqref{El23} we rewrite it as:
\begin{align}
\nn
\eqref{El23} & = \G^{N_0}  u\left( \bar{\H} \frac{1}{\bar{\z}_\a} (\bar{u} u_\a) +  \H \frac{1}{\z_\a} ( \bar{u} u_\a ) \right)
  - \G^{N_0}  u\left( \H \frac{1}{\z_\a} (\bar{u} u_\a) +  \H \frac{1}{\z_\a} ( u \bar{u}_\a ) \right)
\\
\label{El31}
& = \G^{N_0} u \left( \bar{\H} \frac{1}{\bar{\z}_\a} +  \H \frac{1}{\z_\a} \right) ( \bar{u} u_\a )
 - \G^{N_0} u \, \H \frac{1}{\z_\a} \partial_\a ( \bar{u} u ) =: B_1 + B_2 \, .
\end{align}
The first term $B_1$ in \eqref{El31} is quartic and can therefore be easily estimated.
The second term $B_2$ can instead be bounded by means of Lemma \ref{lemHinfty}
using the fact that the argument of the Hilbert transform is a perfect derivative:
\begin{align*}
{\| B_2 \|} & \lesssim {\| \G^{N_0} u \|}_{L^2} {\left\| \H \frac{1}{\z_\a} \partial_\a ( \bar{u} u ) \right\|}_{L^\infty}
  \lesssim {\| u \|}_{X_{N_0}} {\| u^2 \|}_{W^{N_1,\infty}} \, .
\end{align*}

\vskip5pt
\paragraph{\it {Estimate of \eqref{cubicl3}}}
The term $G_3^\l$ can be treated similarly to $G_2^\l$.
Using the same notation as above, and indicating with ``$+ \cdots$'' harmless terms
that can be controlled directly by means of Propositions \ref{proCCmain} or \ref{theoCCL^2}, we can write
\begin{align}
\label{El41}
& \G^{N_0} G_3^\l = - 2 \G^{N_0} \left( [u,\H] \frac{u \cdot u_\a}{\z_\a} \right) = 
  - 2 \G^{N_0} u  \H \frac{u \cdot u_\a}{\z_\a}  - 2 [u,\H] \frac{u \cdot \G^{N_0} u_\a}{\z_\a} + \cdots \, .
\end{align}
Since $u \cdot u_\a = \Re( u \bar{u}_\a)$, the first summand in \eqref{El41} is like $B_2$ above.
The second summand in \eqref{El41} 
is of the same form as \eqref{El21} above, and therefore can be estimated similarly.

\vskip5pt
\paragraph{\it {Estimate of \eqref{cubicl4}}}
The term $G_4^\l$ is of the type $\bC(\wt{L},\wt{L},\wt{L})$ and can therefore be treated directly using \eqref{estCCmain00} 
in Proposition \ref{proCCmain}. This concludes the proof of \eqref{estG^l}, hence of \eqref{EEf} for $f = \l$.



\subsubsection{Evolution of $E^v$}\label{secevolv}
The energy associated with $v$ is given by
\begin{align*}
E^{v} (t) & = 
  \sum_{k=0}^{N_0} \int \frac{1}{A} {\left| (\partial_t + b \partial_\a) D^k v_1 \right|}^2 + i D^k v_1 \partial_\alpha \overline{D^k v_1} \, d\a 
  \\
& + \sum_{k=0}^{N_0/2}  \int \frac{1}{A} {\left| (\partial_t + b \partial_\a) S_k v_1 \right|}^2
    + i S_k v_1 \, \partial_\alpha \overline{S_k v_1} \, d\a  \, .
\end{align*}
Here $v_1 = (I-\H) v$ and from \eqref{cubiceqv}, \eqref{Gv}, \eqref{Gv_1} we have
${(\partial_t + b \partial_\a)}^2 v_1 + i A \partial_\alpha v_1 = \sum_{j=1}^5 G^{v_1}_j$,
with
\begin{align}
\label{cubicv1}
G^{v_1}_1 & =  (I-\H) \P v 
\\
\label{cubicv2}
G^{v_1}_2 & = - 2 [u,\H] \frac{\partial_\a}{\z_\a} \P \chi 
\\
\label{cubicv3}
G^{v_1}_3 & = - 2[u,\H] \frac{\partial_\a}{\z_\a} \left( w \frac{\partial_\a}{\z_\a} \chi \right)
\\
\label{cubicv4}
G^{v_1}_4 & = -i [(\H + \bar{\H})u, \H] {\left( \frac{\partial_\a}{\z_\a} \right)}^2 \chi
\\
\label{cubicv5}
G^{v_1}_5 & = \frac{1}{i\pi}  \int { \left( \frac{ u(\a) - u(\b) }{ \z(\a)-\z(\b) } \right) }^2 v_\b (\b) \, d\b \, .
\end{align}

Using \eqref{d_tE_0^F} we compute 
\begin{align*}
\frac{d}{dt} E^v (t) & = \sum_{k=0}^{N_0} \int \frac{2}{A} \Re \left( (\partial_t + b \partial_\a) D^k v_1 \, \P D^k v_1 \right) 
  -  \frac{1}{A} \frac{a_t}{a} \circ k^{-1} {\left|  (\partial_t + b \partial_\a) D^k v_1 \right|}^2  \, d\a
\\
& + \sum_{k=0}^{N_0/2}  \int \frac{2}{A} \Re \left( (\partial_t + b \partial_\a) S_k v_1 \, \P S_k v_1 \right) 
  -  \frac{1}{A} \frac{a_t}{a} \circ k^{-1} {\left|  (\partial_t + b \partial_\a) S_k v_1 \right|}^2  \, d\a \, .
\end{align*}
Since ${\| A-1 \|}_{L^\infty} \leq \frac{1}{2}$ we see that
\begin{align*}
\frac{d}{dt} E^v (t) & \lesssim 
\sqrt{E^{v_1}} (t) \sum_{k=0}^{N_0}  \left( {\|  D^k G^{v_1} \|}_{L^2} + {\|  [\P,D^k] v_1 \|}_{L^2}  \right) 
\\
& + \sqrt{E^{v_1}} (t) \sum_{k=0}^{N_0/2} \left( {\|  S_k G^{v_1} \|}_{L^2} + {\|  [\P,S_k] v_1 \|}_{L^2}  \right) 
  +  E^{v_1} (t)  { \left\| \frac{a_t}{a} \circ k^{-1}  \right\| }_{L^\infty}  \, .
\end{align*}
Since ${ \| a_t/a \circ k^{-1} \| }_{L^\infty}$ has been already estimated in \eqref{a_t1},
and ${\|  [\P,D^k] v_1 \|}_{L^2}$ and ${\|  [\P,S] {v_1} \|}_{L^2}$ can be bounded as in section \ref{secevolchi2},
we only need to suitably control ${\| G^{v_1} \|}_{X_{N_0}}$ by showing
\begin{align}
\label{estG^v}
{\| G^{v_1} \|}_{X_{N_0}} \lesssim {\left( {\| L(t) \|}_{W^{N_1,\infty}} + {\| \H L^-(t) \|}_{W^{N_1,\infty}} \right)}^2
    {\| L \|}_{X_{N_0}} \, .
\end{align}
This bound is proven below by separately estimating each of the terms \eqref{cubicv1}-\eqref{cubicv5}.

\vskip5pt
\paragraph{\it {Estimate of \eqref{cubicv1}}}
From \eqref{estHb} we see that
\begin{align}
{\| G^{v_1}_1 \|}_{X_{N_0}} \lesssim {\| \P v \|}_{X_{N_0}} +  {\| \wt{L} \|}_{X_{N_0}}  {\|\P v\|}_{H^{{N_0/2}+1}} \, .
\end{align}
Since the second summand above is easier to estimate (it is a quartic expression) 
we just show how to control the $X_{N_0}$-norm of $\P v$ in \eqref{Gv}.
With the exception of the last term, $i (a_t/a) \circ k^{-1} \, A \partial_\a \chi$,
these are all terms of the form $\bC(L,L,L_\a)$ for which \eqref{estCCmain0} applies directly.
To estimate the remaining term we first notice that we can essentially replace $A$ with $1$ in view of Lemma \ref{lemenergy10c}
and the estimates \eqref{a_t1} and \eqref{a_t2}.
We then have
\begin{align*}
{\left\| \frac{a_t}{a} \circ k^{-1} \, \partial_\a \chi \right\|}_{X_{N_0}} \lesssim 
  {\left\| \frac{a_t}{a} \circ k^{-1} \right\|}_{X_{N_0}}  {\|  \partial_\a \chi  \|}_{W^{\frac{N_0}{2},\infty}}
  + {\left\| \frac{a_t}{a} \circ k^{-1} \right\|}_{W^{\frac{N_0}{2},\infty}}  {\|  \partial_\a \chi  \|}_{X_{N_0}} \, .
\end{align*}
Using \eqref{a_t1}-\eqref{a_t2} to bound the norms of $(a_t/a) \circ k^{-1}$,
and \eqref{chivl1}-\eqref{chivl2} 
to control the norms of $\partial_\a \chi$ we see
\begin{align*}
{\left\| \frac{a_t}{a} \circ k^{-1} \, A \partial_\a \chi \right\|}_{X_{N_0}} \lesssim 
   {\| L(t) \|}_{W^{N_1,\infty}}^2 {\| L \|}_{X_{N_0}}
\end{align*}
as desired.

\vskip5pt
\paragraph{\it {Estimate of \eqref{cubicv2}}}
Since $P \chi$ is a cubic term, \eqref{cubicv2} is a quartic term. Moreover it is of the form $Q_0(u, \partial_\a \P \chi)$.
Thus we can use \eqref{estQ_0L^2b} to obtain
\begin{align*}
 {\left\|  \eqref{cubicv2} \right\|}_{X_{N_0}} & \lesssim 
  {\| u \|}_{W^{\frac{N_0}{2}+2,\infty}} {\| \P \chi \|}_{X_{N_0}} + {\| \P \chi \|}_{W^{\frac{N_0}{2}+2,\infty}} {\| u \|}_{X_{N_0}} 
\\
& \lesssim \e_1 {\| \P \chi \|}_{X_{N_0}} + {\| \P \chi \|}_{W^{\frac{N_0}{2}+3,\infty}} {\| u \|}_{X_{N_0}} \, .
\end{align*}
One can then use bounds obtained previously on ${\| \P \chi \|}_{X_{N_0}}$ and the estimate
\begin{align*}
{\| \P \chi \|}_{W^{\frac{N_0}{2}+3,\infty}} \lesssim \e_1 {\| L \|}_{W^{\frac{N_0}{2},\infty}}^2 \, ,
\end{align*}
which is easy to derive, to deduce the desired bound for \eqref{cubicv2}.

\vskip5pt
\paragraph{\it {Estimate of \eqref{cubicv3}}}
This term is of the form 
\begin{align*}
\eqref{cubicv3} = Q_0 \left(u, \partial_\a \left( w \frac{\partial_\a}{\z_\a} \chi \right) \right) \, .
\end{align*}
Applying \eqref{estQ_0L^2b}, followed by product estimates, \eqref{chivl1}-\eqref{chivl2}, and the use of \eqref{chi_atoz_a}, 
we see that
\begin{align*}
{\left\|  \eqref{cubicv3} \right\|}_{X_{N_0}} & \lesssim 
  {\| u \|}_{W^{\frac{N_0}{2}+2,\infty}} {\left\|  w \frac{\partial_\a}{\z_\a} \chi \right\|}_{X_{N_0}} 
  + {\left\| w \frac{\partial_\a}{\z_\a} \chi \right\|}_{W^{\frac{N_0}{2}+2,\infty}} {\| u \|}_{X_{N_0}}
\\
& + {\left\| u \right\|}_{W^{\frac{N_0}{2}+1,\infty}} {\left\| w \frac{\partial_\a}{\z_\a} \chi \right\|}_{W^{\frac{N_0}{2}+1,\infty}} 
  {\| \z_\a-1 \|}_{X_{N_0}}
  \lesssim {\| L \|}_{X_{N_0}} {\| L \|}_{W^{N_1,\infty}}^2 \, .
\end{align*}

\vskip5pt
\paragraph{\it {Estimate of \eqref{cubicv4}}}
This contribution can also be written in terms of the operator $Q_0$ as
\begin{align*}
\eqref{cubicv4} = Q_0 \left( (\H + \bar{\H}) u, \partial_\a \left( \frac{\partial_\a}{\z_\a} \chi \right) \right) \, .
\end{align*}
We can then use \eqref{estQ_0L^2b} to obtain
\begin{align*}
{\left\|  \eqref{cubicv4} \right\|}_{X_{N_0}} & \lesssim 
  {\| \partial_\a (\H + \bar{\H}) u \|}_{W^{\frac{N_0}{2}+1,\infty}} {\left\|  \frac{\partial_\a}{\z_\a} \chi \right\|}_{X_{N_0}} 
  + {\left\| \frac{\partial_\a}{\z_\a} \chi \right\|}_{W^{\frac{N_0}{2}+1,\infty}} {\| (\H + \bar{\H}) u \|}_{X_{N_0}} 
\\
& + {\left\| (\H + \bar{\H}) u \right\|}_{H^{\frac{N_0}{2}+2,\infty}} 
  {\left\| \frac{\partial_\a}{\z_\a} \chi \right\|}_{W^{\frac{N_0}{2}+2,\infty}} {\| \z_\a-1 \|}_{X_{N_0}} \, .
\end{align*}
From \eqref{H+barH}, \eqref{estbQL^inftyfg} and \eqref{estHdfL^infty}, and \eqref{estH+barH}, we deduce
\begin{align*}
& {\| \partial_\a (\H + \bar{\H}) u \|}_{W^{\frac{N_0}{2}+1,\infty}} \lesssim {\| L \|}^2_{W^{\frac{N_0}{2}+3,\infty}}
\\
& {\| (\H + \bar{\H}) u \|}_{X_{N_0}} \lesssim  {\| L \|}_{W^{\frac{N_0}{2}+1,\infty}} {\| L \|}_{X_{N_0}} \, .
\end{align*}
Using these bounds we eventually see that
\begin{align*}
{\left\|  \eqref{cubicv4} \right\|}_{X_{N_0}} & \lesssim {\| L \|}_{X_{N_0}} {\| L \|}_{W^{\frac{N_0}{2}+3,\infty}}^2 \, .
\end{align*}

\vskip5pt
\paragraph{\it {Estimate of \eqref{cubicv5}}}
This term is of the form $\bC(L,L,L_\a)$ and therefore \eqref{estCCmain0} can be applied directly.

\subsection{Proof of Proposition \ref{proenergy3}: Control in terms of the $Z^\p$ norm}\label{secproenergy3}
Recall the definitions 
\begin{equation}
\label{vectorL2}
\wt{L} := (\z_\a - 1, u, \Im \z, w) \quad , \quad 
 L := (\z_\a - 1, u, w, \Im\z, \partial_\a \chi, v)
\end{equation}
and 
\begin{equation}
\label{vectorL-2}
 L^- := (\z_\a - 1, u, w, \partial_\a \chi, v)  \, .
\end{equation}
We want to show
\begin{align}
\label{proenergy31}
{\| L (t) \|}_{W^{N_1,\infty}} \lesssim {\| (h(t),\phi(t)) \|}_{Z^\p}
\end{align}
and
\begin{align}
\label{proenergy32}
{\left\| \H  L^- (t) \right\|}_{W^{N_1,\infty}}  \lesssim {\| (h(t),\phi(t)) \|}_{Z^\p} 
\end{align}
where the $Z^\p$ norm is defined in \eqref{defZp}.
These estimates rely on the following Lemmas:

\begin{lem}\label{lemenergy31}
Assume that the a priori estimate \eqref{aprioriL1} holds. 
Then there exists constants $c_i,d_i$, for $i=1,\dots,5$ such that
\begin{align}
\label{appL-}
L_i^-(t,\a) - c_i \partial_\a \chi (t,\a) - d_i \partial_\a \l (t,\a) = Q (t,\a)
\end{align}
where $Q$ denotes a quadratic expression in $\wt{L}$ satisfying
\begin{align}
\label{Qlemenergy31}
{\| Q(t) \|}_{H^{k}} \lesssim  {\| \wt{L}(t) \|}_{H^{k+1}} {\| \wt{L}(t) \|}_{W^{\frac{k}{2}+1,\infty}} \, .
\end{align}
\end{lem}

\begin{lem}\label{lemenergy32}
Assume that the a priori estimates \eqref{apriori0} and \eqref{aprioriL1} hold,
and that
\begin{align}
\label{lemenergy32hyp}
\sup_{[0,T]} {(1+t)}^\frac{1}{8} {\| \phi(t) \|}_{L^\infty} \leq \e_1 \, ,
\end{align}
as guranteed by \eqref{estcorproE4}.
Then for any $0 \leq k \leq N_1$ and any $t \in [0,T]$ we have,
\begin{align}
\label{est1lemenergy3}
{\| \partial_\a \chi (t) \|}_{W^{k,\infty}} & \lesssim {\| (h(t),\phi(t)) \|}_{Z^\p}
\\
\label{est2lemenergy3}
{\| \partial_\a \l (t) \|}_{W^{k,\infty}} & \lesssim {\| (h(t),\phi(t)) \|}_{Z^\p}
\end{align}
and
\begin{align}
\label{est3lemenergy3}
{\| \H \partial_\a \chi (t) \|}_{W^{k,\infty}} & \lesssim {\| (h(t),\phi(t)) \|}_{Z^\p}
\\
\label{est4lemenergy3}
{\| \H \partial_\a \l (t) \|}_{W^{k,\infty}} & \lesssim {\| (h(t),\phi(t)) \|}_{Z^\p} \, .
\end{align}
\end{lem}


\vskip5pt
\begin{proof}[Proof of Proposition \ref{proenergy3}]
The estimate
\begin{align*} 
{\| L^- \|}_{W^{N_1,\infty}} + {\| \H L^- \|}_{W^{N_1,\infty}} \lesssim  {\| (h,\phi) \|}_{Z^\p}
\end{align*}
clearly follows by combining the above Lemmas.
To obtain \eqref{proenergy31} we need to use in addition the identity
\begin{align*} 
\Im \z (t,\a)= h(t ,\Re \z(t,\a))
\end{align*}
to estimate $\Im \z$ in $W^{N_1,\infty}$:
\begin{align*} 
{\| \Im \z \|}_{W^{N_1,\infty}} \lesssim  {\| h \|}_{W^{N_1,\infty}} {\| \Re \z_\a \|}_{W^{N_1,\infty}}
  \lesssim  {\| h \|}_{W^{N_1,\infty}} \left( 1 + {\| \Re \z_\a - 1\|}_{H^{N_1+1}} \right)
  \lesssim  {\| h \|}_{W^{N_1,\infty}} \, .
\end{align*}
\end{proof}

\subsubsection{Proof of Lemma \ref{lemenergy31}}
To prove Lemma \ref{lemenergy31} we use the identities (2.44), (2.50), (3.38) and (2.35) derived by Wu \cite{WuAG},
which relate the components of $L^-$, $\z_\a-1,u,w$ and $v$, to $\partial_\a \chi$ and $\partial_\a \l$.
These are given resepctively by
\begin{align}
\label{id10}
\bar{u} & = \partial_\a \bar{\l} + \bar{u} (1 - \z_\a) + \frac{1}{2} (\z_\a - \bar{\z}_\a) \bar{u}
  + \frac{1}{2} \bar{\z}_\a  \left(  \H \frac{1}{\z_\a}  +  \bar{\H} \frac{1}{\bar{\z}_\a} \right) (\bar{u} \z_\a)
\\
\nn
\bar{w} \z_\a  & = -\frac{i}{2} \partial_\a \bar{\chi} + \frac{1}{2} \bar{\H} \left( \bar{u}_\a \frac{\bar{u}\z_\a}{\bar{\z_\a}} \right)
  - \frac{1}{2} [\bar{u}, \bar{\H}] \frac{\partial_\a ( \bar{u}\z_\a )}{\bar{\z_\a}}
  \\ 
\nn
& - \frac{1}{2\pi i}  \int \frac{ (\bar{u}(\a) - \bar{u}(\b) ) (\z_\a(\a) - \z_\b(\b)) } {{ (\bar{\z}(\a) - \bar{\z}(\b)) }^2} \bar{u}(\b) \z_\b(\b) \,d\b
+ \frac{1}{2} (\z_\a - \bar{\z}_\a) \bar{w} 
\\ \nn
&  - \frac{1}{2} u_\a \bar{u} 
  + \frac{1}{2} \bar{\z}_\a  \left(  \H \frac{1}{\z_\a}  +  \bar{\H} \frac{1}{\bar{\z}_\a} \right) (\bar{w} \z_\a + \bar{u} u_\a)
\\
& -\frac{\bar{\z}_\a}{\pi}  \int \Im {\left( \frac{u(\a) - u(\b)} { \bar{\z}(\a) - \bar{\z}(\b) } \right)}^2 \bar{u}(\b) \z_\b(\b) \, d\b
\label{id20}
\\
\label{id30}
\z_\a - 1  & = \frac{w}{i A} - \frac{A-1}{A} 
\\
\label{id40}
v & = 2u - (\H + \bar{\H}) u - [u,\H] \frac{\z_\a - \bar{\z}_\a}{\z_\a} \, .
\end{align}
From \eqref{id10} we can schematically write
\begin{align*}
u - \partial_\a \l = \wt{L} \cdot \wt{L} + \bQ( \wt{L}, \wt{L} )
\end{align*}
up to cubic and higher order terms whose arguments have the same regularity of $\wt{L}$.
Commuting derivatives via \eqref{commK2} and using \eqref{proCCmain11}, it is then easy to verify that $u - \partial_\a \l = Q$,
where $Q$ is a quadratic term satisfying the estimate \eqref{Qlemenergy31} in the statement.
Similarly, from \eqref{id20} we deduce that up to cubic terms
\begin{align*}
w - \frac{i}{2} \partial_\a \chi =  \H( \wt{L} \cdot \wt{L}_\a ) + Q_0 (\wt{L}, \wt{L}_\a ) +  \bQ (\wt{L}, \wt{L}) \, .
\end{align*}
Arguing as above using the bounds \eqref{estHlow}, \eqref{estQ_0L^2} and \eqref{proCCmain11}, it follows that 
$w - \frac{i}{2} \partial_\a \chi = Q$, for some $Q$ satisfying \eqref{Qlemenergy31}.
Using \eqref{id30}, the last equality above, and the quadratic bounds on $A-1$ given by \eqref{formulaA}, we can write
\begin{align*}
\z_\a - 1  & = \frac{w}{i A} - \frac{A-1}{A} = - i w + i w \frac{A-1}{A} - \frac{A-1}{A} = \frac{1}{2} \partial_\a \chi + Q \, ,
\end{align*}
so that \eqref{appL-} is verified also for the component $\z_\a - 1$.
Combining \eqref{id10} with the identity $u - \partial_\a \l = Q$, and the quadratic bounds on $\H + \bar{\H}$ given in \eqref{H+barH},
we see that $v - 2 \partial_\a \l = Q$, for $Q$ as above.
Thus we have checked that \eqref{appL-} holds true for all $i=1,\dots,5$. $\hfill \Box$

\subsubsection{Proof of Lemma \ref{lemenergy32}}\label{seclemenergy32}
Let $\H = \H_\z$ and let $H_0$ be the flat Hilbert transform. We start by establishing the following estimate:
\begin{align}
 \label{est0lemenergy32}
{\left\| (I-\H) (f \circ \Re \z) - \left[ (I-H_0) f(1+h^\p) \right] \circ \Re \z \right\|}_{W^{k,\infty}} 
  \lesssim {\| f \|}_{W^{k+1,\infty}} {\| h \|}_{W^{k+2,\infty}} \, .
\end{align}
for any $0 \leq k \leq N_1 + 4$.
Notice that since $\Im \z(t,\a) = h (t, \Re \z (t,\a))$ we have $\z  = ( x + i h(x) ) \circ \Re \z$.
Thus by a change of variables one has
\begin{align*}
(I-\H) (f \circ \Re \z) =  (I-\H_{x+ih(x)}) f \circ \Re \z  \, .
\end{align*}
Expanding out the denominator in the expression for $\H_{x+ih(x)} f$ we see that
\begin{align}
\nn
& (I-\H_{x+ih(t,x)}) f = (I-H_0) \left[ f(t,\cdot) (1+ h^\p(t,\cdot)) \right] (x) + R_f (t,x) \, 
\\
\label{R_f}
& \mbox{with} \qquad
R_f (t,x) := \frac{1}{i\pi} \int H\left( \frac{h(x) - h(y)}{x-y} \right) \frac{h(x) - h(y)}{ {(x-y)}^2 } f (y)(1+ h^\p(y)) \, dy
\end{align}
for some smooth function $H$. 
To prove \eqref{est0lemenergy32} it then suffices to show that $R_f$ in $W^{k,\infty}$ is bounded by the right-hand side of \eqref{est0lemenergy32}.
Applying the commutation identity \eqref{commK2} in order to distribute derivatives, 
and the $L^\infty$ estimate \eqref{estinfty1}, 
it is not hard to see that
\begin{align*}
{\| R_f \|}_{W^{k,\infty}} & \lesssim {\| h \|}_{W^{k+2,\infty}}  {\| f(1+h^\p) \|}_{W^{k+1,\infty}} 
  \lesssim {\| f \|}_{W^{k+1,\infty}} {\| h \|}_{W^{k+2,\infty}} \, .
\end{align*}

We will now show the estimates \eqref{est2lemenergy3} and \eqref{est4lemenergy3}.
The estimates \eqref{est1lemenergy3} and \eqref{est3lemenergy3} can be proven in a similar fashion, so we will not detail the proof here.

\vskip5pt
\paragraph{\it {Proof of \eqref{est2lemenergy3}}}
Recall the definition of $\l$
\begin{align*}
\l = (I - \H)\psi = (I - \H) ( \phi \circ \Re \z) \, .
\end{align*}
Applying \eqref{est0lemenergy32} with $f = \phi$, and using the assumption \eqref{lemenergy32hyp}, we see that
\begin{align}
\label{est01lemenergy32}
{\left\| \partial_\a \l - \partial_\a \left[ (I-H_0) \phi (1+h^\p) \circ \Re \z \right] \right\|}_{W^{k,\infty}} 
  \lesssim {\| \phi \|}_{W^{k+1,\infty}} {\| h \|}_{W^{k+2,\infty}} \lesssim  {\| (h,\phi) \|}_{Z^\p}
\end{align}
for any $0 \leq k \leq N_1 + 2$.
Moreover, using ${\| \partial_x H_0 f \|}_{L^\infty} \lesssim {\| \Lambda f \|}_{W^{1,\infty}}$, 
standard product estimates, and the hypothesis \eqref{lemenergy32hyp}, one can estimate
\begin{align}
\label{est02lemenergy32}
\begin{split}
 & {\left\| \partial_\a \left[ (I-H_0) \phi(1+h^\p) \right] \circ \Re \z \right\|}_{W^{k,\infty}} 
  \lesssim {\left\| \partial_x \left[ (I-H_0) \phi(1+h^\p) \right] \right\|}_{W^{k,\infty}} 
\\
  & \lesssim {\| \partial_x (I-H_0) \phi \|}_{W^{k+1,\infty}} + {\| \phi h^\p \|}_{W^{k+2,\infty}} 
  \lesssim  {\| (h,\phi) \|}_{Z^\p} \, .
\end{split}
\end{align}
This and \eqref{est01lemenergy32} give us for all $0 \leq k \leq N_1 + 2$
\begin{align}
\label{est03lemenergy32}
{\left\| \partial_\a \l \right\|}_{W^{k,\infty}} & \lesssim  {\| (h,\phi) \|}_{Z^\p} \, ,
\end{align}
which in particular implies \eqref{est2lemenergy3}.

\vskip5pt
\paragraph{\it {Proof of \eqref{est4lemenergy3}}}
Let us write
\begin{align}
\label{Flemenergy32}
\l = F \circ \Re \z
\qquad & \mbox{with} \qquad
F(t,x) = \left( I - \H_{x+ih(t,x)} \right) \phi(t,\cdot) \, . 
\end{align}
Then
$\partial_\a \l = \partial_x F \circ \Re \z \, \partial_\a \Re \z$,  
and since we know that $\z_\a -1$ has uniformly bounded $H^{N_1+5}$ norm, 
using also \eqref{est03lemenergy32}, we can easily deduce that
\begin{align}
 \label{est10lemenergy32}
{\| \partial_x  F\|}_{W^{k,\infty}} \lesssim  {\| \partial_\a  \l  \|}_{W^{k,\infty}} \lesssim  {\| (h,\phi) \|}_{Z^\p} \, ,
\end{align}
for any $0 \leq k \leq N_1 + 2$.
Using the definition of $\H$, and making a change of variables, we see that
\begin{align*}
\H \frac{1}{\z_\a}  \partial_\a \l(t,\a) 
  & = \frac{1}{i\pi} \int \frac{\partial_x F(t, \Re \z(t,\b))}{\z(\a)-\z(\b)} \partial_\b \Re \z(t,\b) \, d\b  
\\
& = \frac{1}{i\pi} \left. \int \frac{\partial_y F(t,y)}{ x + ih(x) -(y + ih(y)) } \, dy  \right|_{x = \Re \z(t,\a)}  \, .
\end{align*}
Using also \eqref{est03lemenergy32}, it follows that
\begin{align*}
{\left\| \H \partial_\a \l(t,\a) \right\|}_{W^{N_1,\infty}} 
  & \lesssim {\left\| \H \frac{1}{\z_\a}  \partial_\a \l(t,\a) \right\|}_{W^{N_1,\infty}} + {\| \partial_\a  \l \|}_{W^{N_1+1,\infty}}
\\
&  \lesssim {\left\| \int \frac{\partial_y F(t,y)}{ x + ih(x) -(y + ih(y)) } \, dy  \right\|}_{W^{N_1,\infty}} + 
  {\| (h,\phi) \|}_{Z^\p}\, . 
\end{align*}
Setting
\begin{align}
\label{Glemenergy32}
G(t,x) = \int \frac{\partial_y F(t,y)}{ x + ih(x) -(y + ih(y)) } \, dy  \, ,
\end{align}
we see that in order to obtain \eqref{est4lemenergy3} it suffices to show
\begin{align}
\label{est30lemenergy32}
 {\| G \|}_{W^{N_1,\infty}} \lesssim {\| (h,\phi) \|}_{Z^\p} \, .
\end{align}
Expanding the denominator in \eqref{Glemenergy32}, we can write
\begin{align}
\nn
& G(t,x) = (I - H_0) \partial_x F (t,\cdot) (x) + G_1 (t,x) \, 
\\
\label{Gexp}
& \mbox{with} \qquad
G_1(t,x) := \frac{1}{i\pi} \int H\left( \frac{h(x) - h(y)}{x-y} \right) \frac{h(x) - h(y)}{ {(x-y)}^2 } \partial_y F (t,y) \, dy
\end{align}
for some smooth function $H$.
Expanding the denominator in the expression for $F$ we can write:
\begin{align}
\nn
& F(t,x) = (I - H_0) [(1+h^\p(t,\cdot))\phi(t,\cdot)] (x) + R_\phi (t,x) \,
\end{align}
where $R_\phi$ is given by \eqref{R_f}. 
It follows that
\begin{align}
G & = (I - H_0) \partial_x (I-H_0) [\phi (1+h^\p)] + (I - H_0) \partial_x R_\phi + G_1 \, .
\end{align}
To obtain \eqref{est30lemenergy32} it is then enough to have
\begin{align}
\label{est31lemenergy32}
& {\| H_0 \partial_x [ \phi (1+h^\p) ] \|}_{W^{N_1,\infty}} \lesssim {\| (h,\phi) \|}_{Z^\p} \, , 
\\
\label{est32lemenergy32}
& {\| (I-H_0) \partial_x R_\phi \|}_{W^{N_1,\infty}} \lesssim {\| (h,\phi) \|}_{Z^\p} \, ,
\\
\label{est33lemenergy32}
& {\| G_1 \|}_{W^{N_1,\infty}} \lesssim {\| (h,\phi) \|}_{Z^\p} \, .
\end{align}
The bound \eqref{est31lemenergy32} has been already shown to hold true in the above paragraph, see \eqref{est02lemenergy32}. 
We also have
\begin{align*}
 {\| R_\phi \|}_{W^{N_1+2,\infty}} \lesssim {\| \phi \|}_{W^{N_1+3,\infty}} {\| h \|}_{W^{N_1+4,\infty}} \lesssim {\| (h,\phi) \|}_{Z^\p}
\end{align*}
which is stronger than \eqref{est32lemenergy32}.
Using again commutation identities and \eqref{estinfty1}, together with \eqref{est10lemenergy32}, we get
\begin{align*}
{\| G_1 \|}_{W^{N_1,\infty}} \lesssim {\| h \|}_{W^{N_1+2,\infty}} {\| \partial_x F \|}_{W^{N_1+1,\infty}}
  \lesssim {\| h \|}_{W^{N_1+2}} {\| (h,\phi) \|}_{Z^\p}
\end{align*}
which is enough for \eqref{est33lemenergy32}. $\hfill \Box$


\section{Proof of Proposition \ref{prok}: the diffeomorphism $k$}
\label{secprok}

This section contains the proof of Proposition \ref{prok}. 
The main issue is to show that the change of coordinates $k$ is a uniformly controlled diffeomorphism for all times.
This is a substantial improvement of the analogous analysis performed by Wu in \cite[p. 124-127]{WuAG},
and relies on a special null structure present in the transport equation \eqref{formulak_t} for $k$.

We start by assuming a priori that
\begin{align}
\label{apriorik1}
\sup_{t\in [0,T]}  {\| k_\a(t) - 1 \|}_{W^{N_0/2+3,\infty}} \leq 1/2 \, .
\end{align}
Furthermore we assume, see \eqref{apriori0},
\begin{align}
\label{apriori01}
\sup_{t\in [0,T]} \left[(1+t)^{-p_0} {\| (h(t), \partial_x \phi(t)) \|}_{X_{N_0}} 
  + \sqrt{1+t} {\| (h(t), \phi(t)) \|}_{Z^\p}\right] \leq \e_1 \, ,
\end{align}
and \eqref{aprioriL1}, that is
\begin{align}
\label{aprioriL11}
& \sup_{t \in [0,T]} \left[ (1+t)^{-p_0} {\| \wt{L}(t) \|}_{X_{N_0}} 
    + {\| \wt{L}(t) \|}_{H^{N_1+5}} + \sqrt{1+t}  {\| \wt{L}(t) \|}_{W^{N_1,\infty}} \right] \leq \e_1 \, .
\end{align}
We then aim to conclude
\begin{align}
\label{apriorikcon1}
\sup_{[0,T]}  {\| k_\a(t) - 1 \|}_{W^{N_0/2+3,\infty}} \lesssim \e_0 + \e_1^2
\end{align}
as a consequence of the following Lemmas:

\begin{lem}[Approximation of $k_t$]\label{lemk_t}
Let $k$ be defined as in \eqref{defk} then the following formula holds:
\begin{equation}
\label{formulak_t}
(I-\H_z) k_t = - [z_t,\H_z] \frac{\bar{z}_\a - k_\a}{z_\a} \, .
\end{equation}
Under the assumptions \eqref{apriori01} and \eqref{aprioriL11}
there exists $\g>0$, such that for any $t\in[0,T]$
\begin{align}
\label{k_t1}
{\left\| \partial_\a [u,\H_\z] \frac{\bar{\z}_\a - 1}{\z_\a} - T_0 (h,\phi) \circ \Re \z \right\|}_{W^{N_0/2+3,\infty}}
  & \lesssim \e_1^2 {(1+t)}^{-1-\g}
\end{align}
and
\begin{align}
\label{k_t2}
{\left\| \partial_\a  [u,\H_\z] \frac{\bar{\z}_\a - 1}{\z_\a} - T_0 (h,\phi) \circ \Re \z \right\|}_{H^{N_0/2+4}} & \lesssim 
    \e_1^2 {(1+t)}^{-1/2-\g}  \, ,
\end{align}
with $T_0$ given by
\begin{align}
\label{T_0}
 T_0 (f,g) := \partial_x  [(I-H_0) g_x, H_0] (I-H_0) f_x \, ,
\end{align}
where $H_0$ is the flat Hilbert transform, $H_0 = \H_\mathrm{id}$ according to \eqref{HT}.
\end{lem}

\begin{lem}[Estimate for $T_0$]\label{lemT_0}
Under the a priori assumptions \eqref{apriori01}, there exists $\g>0$ such that
\begin{align}
\label{boundT_01}
& {\left\| T_0 (h,\phi) \right\|}_{W^{N_0/2+3,\infty}} \lesssim \e_1^2 {(1+t)}^{-1-\g}  \, ,
\\
\label{boundT_02}
& {\left\| T_0 (h,\phi) \right\|}_{H^{N_0/2+4}} \lesssim \e_1^2 {(1+t)}^{-1/2-\g} \, .
\end{align}
\end{lem}

The proofs of Lemma \ref{lemk_t} and \ref{lemT_0} are in section \ref{seclemk_t} and \ref{seclemT_0} respectively.
We now show how Proposition \ref{prok} follows from them.

\vskip5pt
\begin{proof}[Proof of Proposition \ref{prok}]
Since we know by our a priori assumption that $k$ is diffeomorphism, we can define
\begin{align}
\label{defKk_t}
K(t,\a) := [z_t,\H_z] \frac{\bar{z}_\a - k_\a}{z_\a} \circ k^{-1} \, .
\end{align}
From the properties of the Hilbert transform and the definition of $\z$ and $u$ in \eqref{defzeta}, we see that
\begin{align}
\label{idK}
K(t,\a) = [u,\H_\z] \frac{\bar{\z}_\a - 1}{\z_\a} \, 
\end{align}
and 
\begin{align}
\label{k_t10}
(I-\H_\z) (k_t \circ k^{-1}) = - K(t,\a) \, .
\end{align}
Applying the estimate \eqref{est1-Hfinfty2} for the inversion of $I-\H$, with $f = k_t \circ k^{-1}$ and $g = - K$, we see that
\begin{align*}
{\| \partial_\a (k_t \circ k^{-1}) \|}_{W^{N_0/2+3,\infty}} \lesssim  
  {\| \partial_\a K(t,\a) \|}_{W^{N_0/2+3,\infty}} + {\| \z_\a - 1 \|}_{W^{N_0/2+4,\infty}} {\| \partial_\a K(t,\a) \|}_{H^{N_0/2+4}} \, .
\end{align*}
From the a priori assumption \eqref{aprioriL11} and \eqref{apriorik1} it follows that
\begin{align}
\label{estk_t1}
{\| \partial_\a k_t \|}_{W^{N_0/2+3,\infty}} \lesssim
  {\| \partial_\a K(t,\a) \|}_{W^{N_0/2+3,\infty}} + \e_1 {(1+t)}^{-1/2} {\| \partial_\a K(t,\a) \|}_{H^{N_0/2+4}} \, .
\end{align}

Applying successively \eqref{k_t1} and \eqref{boundT_01} we see that
\begin{align}
\label{estk_t2}
{\| \partial_\a K(t,\a) \|}_{W^{N_0/2+3,\infty}} \lesssim  \e_1^2 {(1+t)}^{-1-\g} + {\| T_0 (h,\phi) \|}_{W^{N_0/2+3,\infty}}
  \lesssim  \e_1^2 {(1+t)}^{-1-\g} \, .
\end{align}
Similarly, from \eqref{k_t2} and \eqref{boundT_02} we have
\begin{align}
\label{estk_t3}
{\| \partial_\a K(t,\a) \|}_{H^{N_0/2+4}} \lesssim  
  \e_1^2 {(1+t)}^{-1/2-\g} + {\| T_0 (h,\phi) \|}_{H^{N_0/2+4}} \lesssim  \e_1^2 {(1+t)}^{-1/2-\g} \, .
\end{align}
Plugging \eqref{estk_t2} and \eqref{estk_t3} into \eqref{estk_t1} gives
\begin{align*}
{\| \partial_\a k_t (t) \|}_{W^{N_0/2+3,\infty}} \lesssim  \e_1^2 {(1+t)}^{-1-\g}
\end{align*}
whence
\begin{align*}
{\| k_\a(t) - 1 \|}_{W^{N_0/2+3,\infty}} \lesssim  {\| k_\a(0) - 1 \|}_{W^{N_0/2+3,\infty}}
  + \int_0^t {\| \partial_s k_\a(s) \|}_{W^{N_0/2+3,\infty}} \, ds \lesssim \e_0 + C_\g \e_1^2 \, .
\end{align*}
\end{proof}

\subsection{Proof of Lemma \ref{lemk_t}}\label{seclemk_t}
The identity \eqref{formulak_t} is proven by Wu in Proposition 2.4 of \cite{WuAG}.
Let $K$ be given by \eqref{defKk_t}:
\begin{align}
\label{KQ_0}
K(t,\a) = [u,\H_\z] \frac{\bar{\z}_\a - 1}{\z_\a} = Q_0 (u, \bar{\z}_\a - 1) \, ,
\end{align}
where $Q_0$ is the bilinear operator defined in \eqref{Q_0}.
We aim to approximate $\partial_\a K$ by $T_0(h,\phi)$ showing
\begin{align}
\label{k_t11}
& {\left\| \partial_\a K - T_0 (h,\phi) \circ \Re \z \right\|}_{W^{N_0/2+3,\infty}} \lesssim  \e_1^2 {(1+t)}^{-1-\g}
\\
\label{k_t21}
& {\left\| \partial_\a  K - T_0 (h,\phi) \circ \Re \z \right\|}_{H^{N_0/2+4}} \lesssim \e_1^2 {(1+t)}^{-1/2-\g} \, .
\end{align}

\vskip5pt
\paragraph{\it{Step 1: Approximation of $u$}.}
Let $H_0$ denote the flat Hilbert transform, $H_0 = \H_\mathrm{id}$ according to \eqref{HT}.
We start by showing
\begin{align}
\label{appu1}
{\left\| u - (I-H_0) \phi_x \circ \Re \z \right\|}_{W^{N_0/2+5,\infty}} & \lesssim  \e_1^2 {(1+t)}^{-1/2-\g} \, .
\end{align}
Using the identity \eqref{id10} as in the proof of Lemma \ref{lemenergy31}, we can schematically write 
\begin{align*}
u - \partial_\a \l = \wt{L} \cdot \wt{L} + \bQ( \wt{L}, \wt{L} ) \, ,
\end{align*}
so that using \eqref{estQL^infty2} to estimate $\bQ$, and interpolating bewtween the a priori decay assumption and the Sobolev bounds, 
we get
\begin{align*}
{\| u - \partial_\a \l \|}_{W^{N_0/2+5,\infty}}
  \lesssim {\| \wt{L} \|}^2_{W^{N_0/2+7,\infty}} \lesssim \e_1^2 {(1+t)}^{-3/4} \, .
\end{align*}
To obtain \eqref{appu1} it then suffices to show
\begin{align}
\label{appu2}
{\left\| \partial_\a \l - (I-H_0) \phi_x \circ \Re \z \right\|}_{W^{N_0/2+5,\infty}} & \lesssim  \e_1^2 {(1+t)}^{-1/2-\g} \, .
\end{align}
Looking at \eqref{est01lemenergy32} and \eqref{est02lemenergy32} in the proof of Lemma \ref{lemenergy32} one can see that
\begin{align*}
{\left\| \partial_\a \l - \partial_\a \left[ (I-H_0) \phi \circ \Re \z \right] \right\|}_{W^{N_0/2+5,\infty}} & 
  \lesssim  {\left\| \phi \right\|}_{W^{N_0/2+7,\infty}} 
  {\left\| h \right\|}_{W^{N_0/2+8,\infty}} \lesssim  \e_1^2 {(1+t)}^{-5/8} \, ,
\end{align*}
having used \eqref{estcorproE4} in the last inequality.
Since we also have
\begin{align*}
& {\left\| \partial_\a \left[ (I-H_0) \phi \circ \Re \z \right] - (I-H_0) \phi_x \circ \Re \z \right\|}_{W^{N_0/2+5,\infty}}
\\
& = {\left\| (I-H_0) \phi_x \circ \Re \z \, (\Re \z_\a - 1) \right\|}_{W^{N_0/2+5,\infty}}
  \lesssim {\left\| \Lambda \phi \right\|}_{W^{N_0/2+6,\infty}} {\| \z_\a - 1 \|}_{W^{N_0/2+5,\infty}} 
  \lesssim \e_1^2 {(1+t)}^{-1} \, ,
\end{align*}
we have verified \eqref{appu2}, hence \eqref{appu1}
with $\b = 1/8$.

\vskip5pt
\paragraph{\it{Step 2: Approximation of $\z_\a - 1$}.}
We want to show that $\z_\a -1$ can be approximated as follows:
\begin{align}
\label{appz_a-1a}
& {\left\| \z_\a - 1 - i(I-H_0) h_x \circ \Re \z \, \partial_\a \Re \z \right\|}_{W^{N_0/2+5,\infty}}  
  \lesssim  \e_1^2 {(1+t)}^{-1/2-\g} 
\\
\label{appz_a-1b}
& {\left\| \H_\z \left( \z_\a - 1 - i(I-H_0) h_x \circ \Re \z \, \partial_\a \Re \z \right) \right\|}_{W^{N_0/2+3,\infty}}
  \lesssim  \e_1^2 {(1+t)}^{-1/2-\g} \, ,
\end{align}
for some $\g>0$.
Putting together the identities \eqref{id30} and \eqref{id20}, as in the the proof of Lemma \ref{lemenergy31}, we can write
\begin{align}
\label{appz_a-12}
\z_\a - 1 - \frac{1}{2} \partial_\a \chi = \H(\wt{L} \cdot \wt{L}) + \wt{L} \cdot \H \wt{L}_\a + \wt{L} \cdot \wt{L}_\a
  + \bQ ( \wt{L}, \wt{L} ) + A-1 + \cdots \,
\end{align}
where operators of the type $\bQ$ are defined by \eqref{Q_1}-\eqref{opQ},
$A$ is defined in \eqref{defA}, and ``$\cdots$'' denotes cubic or higher order terms which are more easily estimated,
and we will therefore disregard.
Notice that interpolating between the bounds provided by the a priori assumptions \eqref{aprioriL11},
for large enough $p$ one has
\begin{align}
\label{appz_a-13}
{\| \wt{L}(t) \|}_{W^{N_0/2+7,p}} \lesssim \e_1 {(1+t)}^{-2/5} \, .
\end{align}
Combining this with the estimates \eqref{estHL^infty} and \eqref{estHdfL^infty} for $\H$ in $W^{k,\infty}$, we see that
\begin{align}
\label{appz_a-14a}
\begin{split}
& {\left\| \H(\wt{L} \cdot \wt{L}) + \wt{L} \cdot \H \wt{L}_\a + \wt{L} \cdot \wt{L}_\a \right\|}_{W^{N_0/2+5,\infty}}
  \\
& \lesssim \left( {\| \wt{L} \|}_{W^{N_0/2+6,p}} + {\| \wt{L} \|}_{W^{N_0/2+6,\infty}} \right) {\| \wt{L} \|}_{W^{N_0/2+6,\infty}}
  \lesssim \e_1^2 {(1+t)}^{-4/5}
\end{split}
\end{align}
and
\begin{align}
\label{appz_a-14b}
\begin{split}
& {\left\| \H \left( \H(\wt{L} \cdot \wt{L}) + \wt{L} \cdot \H \wt{L}_\a + \wt{L} \cdot \wt{L}_\a \right) \right\|}_{W^{N_0/2+3,\infty}}
\\
& \lesssim \left( {\| \wt{L} \|}_{W^{N_0/2+6,p}} + {\| \wt{L} \|}_{W^{N_0/2+5,\infty}} \right) {\| \wt{L} \|}_{W^{N_0/2+6,\infty}}
  \lesssim \e_1^2 {(1+t)}^{-4/5}
 \end{split}
\end{align}
having chosen $p$ large enough so that \eqref{appz_a-13} holds.

We now want to obtain similar bounds for $\bQ ( \wt{L}, \wt{L} )$ and $A-1$, and more precisely show
\begin{align}
\label{appz_a-15a}
& {\big\| \bQ ( \wt{L}, \wt{L} ) \big\|}_{W^{N_0/2+5,\infty}} \lesssim \e_1^2 {(1+t)}^{-4/5}
\\
\label{appz_a-15b}
& {\big\| \H \bQ ( \wt{L}, \wt{L} ) \big\|}_{W^{N_0/2+3,\infty}} \lesssim \e_1^2 {(1+t)}^{-3/5}
\end{align}
and
\begin{align}
\label{appz_a-16a}
& {\left\| A-1 \right\|}_{W^{N_0/2+5,\infty}} \lesssim \e_1^2 {(1+t)}^{-4/5}
\\
\label{appz_a-16b}
& {\left\| \H (A-1) \right\|}_{W^{N_0/2+3,\infty}} \lesssim \e_1^2 {(1+t)}^{-3/5} \, .
\end{align}
The first bound \eqref{appz_a-15a} follows directly from \eqref{estQL^infty} and \eqref{appz_a-13}.
To obtain \eqref{appz_a-15b} first notice that the inequality in \eqref{proCCmain11}, 
which is an application of the $L^2$ estimates in Corollary \ref{theoCCL^2}, gives
\begin{align*}
{\big\| \bQ ( \wt{L}, \wt{L} ) \big\|}_{H^{N_0/2+5}} & \lesssim {\| \wt{L} \|}_{H^{N_0/2+6}} {\| \wt{L} \|}_{W^{N_0/2+5,\infty}} 
  \lesssim \e_1^2 {(1+t)}^{-2/5} \, .
\end{align*}
Interpolating this and the $L^\infty$ bound \eqref{appz_a-15a} gives
\begin{align*}
{\big\| \bQ ( \wt{L}, \wt{L} ) \big\|}_{W^{N_0/2+3,p}} & \lesssim \e_1^2 {(1+t)}^{-3/5} \, ,
\end{align*}
provided $p$ is large enough. \eqref{appz_a-15b} then follows by applying \eqref{estHL^infty}.

Both \eqref{appz_a-16a} and \eqref{appz_a-16b} rely on the identity \eqref{formulaA}, which we can schematically write as 
\begin{align}
\label{appz_a-1A}
(I - \H)(A-1) = [\wt{L}, \H] \wt{L}_\a +  [\wt{L}, \H] \wt{L} + \cdots
\end{align}
where once again ``$\cdots$''stands for cubic order terms which we are going to disregard.
Applying \eqref{est1-Hfinfty} we get
\begin{align*}
{\| A-1 \|}_{W^{N_0/2+5,\infty}} \lesssim {\big\| [\wt{L}, \H] \wt{L}_\a +  [\wt{L}, \H] \wt{L} \big\|}_{W^{N_0/2+5,\infty}}
  + \e_1 {(1+t)}^{-2/5} {\big\| [\wt{L}, \H] \wt{L}_\a +  [\wt{L}, \H] \wt{L} \big\|}_{H^{N_0/2+6}} \, .
\end{align*}
We can then bound the above right-hand side by using \eqref{estHL^infty}, \eqref{appz_a-13}, 
and the boundedness of $\H$ in \eqref{estHlow}:
\begin{align*}
{\| A-1 \|}_{W^{N_0/2+5,\infty}} \lesssim  {\big( {\| \wt{L} \|}_{W^{N_0/2+7,p}} + {\| \wt{L} \|}_{W^{N_0/2+7,\infty}} \big)}^2 
  + \e_1^2 {(1+t)}^{-1+p_0} \lesssim \e_1^2 {(1+t)}^{-4/5} \, .
\end{align*}
We have therefore obtained \eqref{appz_a-16a}.
Since \eqref{appz_a-1A} holds true also for $\H (A-1)$, up to a sign, the estimate \eqref{appz_a-16b} follows as above.

Putting together \eqref{appz_a-12}-\eqref{appz_a-16b} we have
\begin{align*}
& {\Big\| \z_\a - 1 - \frac{1}{2} \partial_\a \chi \Big\|}_{W^{N_0/2+5,\infty}}  \lesssim  \e_1^2 {(1+t)}^{-1/2-\g}
\\
& {\Big\| \H_\z \Big( \z_\a - 1 - \frac{1}{2} \partial_\a \chi \Big) \Big\|}_{W^{N_0/2+3,\infty}} \lesssim \e_1^2 {(1+t)}^{-1/2-\g} \, ,
\end{align*}
for some $\g >0$. To obtain \eqref{appz_a-1a} and \eqref{appz_a-1b} it is then enough to show
\begin{align*}
& {\left\| \partial_\a \chi - 2 i (I-H_0) h_x \circ \Re \z \, \partial_\a \Re \z \right\|}_{W^{N_0/2+5,\infty}}
  \lesssim  \e_1^2 {(1+t)}^{-1/2-\g}
\\
& {\left\| \H_\z \left( \partial_\a \chi - 2 i (I-H_0) h_x \circ \Re \z  \, \partial_\a \Re \z \right) \right\|}_{W^{N_0/2+3,\infty}}
  \lesssim  \e_1^2 {(1+t)}^{-1/2-\g} \, .
\end{align*}
In light of \eqref{estHdfL^infty} both bounds would follow from
\begin{align}
\label{appz_a-17}
& {\left\| \chi - 2 i (I-H_0) h \circ \Re \z \right\|}_{W^{N_0/2+6,\infty}} \lesssim  \e_1^2 {(1+t)}^{-1/2-\g} \, .
\end{align}
From the definition of $\chi$ in \eqref{defchi} and \eqref{hEL} we see that 
\begin{align*}
\chi = 2i(I-\H) (h \circ \Re \z) \, .
\end{align*}
Applying the inequality \eqref{est0lemenergy32} with $f= 2i h$, one gets
\begin{align*}
& {\left\| \chi - 2i (I-H_0) h \circ \Re \z \right\|}_{W^{N_0/2+6,\infty}} 
  \lesssim {\| h \|}_{W^{N_0/2+7,\infty}} {\| h \|}_{W^{N_0/2+8,\infty}} + {\| (I-H_0) (h h^\p) \|}_{W^{N_0/2+8,\infty}} 
\\
& \lesssim {\| h \|}^2_{W^{N_0/2+9,\infty}} + {\| \partial_x H_0 h^2 \|}_{W^{N_0/2+8,\infty}}
  \lesssim {\| h \|}^2_{W^{N_0/2+10,\infty}} \lesssim \e_1^2 {(1+t)}^{-4/5} \, .
\end{align*}
This gives us \eqref{appz_a-17} and concludes the proof of \eqref{appz_a-1a}-\eqref{appz_a-1b}.

\vskip5pt
\paragraph{\it{Step 3: First approximation of $\partial_\a K$}.}
We now want to show
\begin{align}
\label{Kapp1a}
& {\left\| \partial_\a K - \partial_\a Q_0 \left( (I-H_0)\phi_x \circ \Re \z, (I-H_0) h_x \circ \Re \z \,  \Re \z_\a \right)  
  \right\|}_{W^{N_0/2+3,\infty}} \lesssim  \e_1^2 {(1+t)}^{-1-\g}
\\
\label{Kapp1b}
& {\left\| \partial_\a K - \partial_\a Q_0 \left( (I-H_0)\phi_x \circ \Re \z, (I-H_0) h_x \circ \Re \z \,  \Re \z_\a  \right) 
  \right\|}_{H^{N_0/2+4}} \lesssim \e_1^2 {(1+t)}^{-1/2-\g}  \, .
\end{align}

Let us denote
\begin{align*}
A_1 & := u - (I-H_0)\phi_x \circ \Re \z
\\
A_2 & := \z_\a - 1 - i(I-H_0) h_x \circ \Re \z \,  \partial_\a \Re \z \, .
\end{align*}
Using \eqref{KQ_0} we can write
\begin{align*}
\partial_\a K - \partial_\a Q_0 \left( (I-H_0)\phi_x \circ \Re \z, (I-H_0) h_x \circ \Re \z \,  \partial_\a \Re \z \right) 
  = K_1 + K_2
\end{align*}
where
\begin{align*}
K_1 & := \partial_\a Q_0 \left( A_1, \z_\a -1 \right) 
\\
K_2 & := \partial_\a Q_0 \left( (I-H_0)\phi_x \circ \Re \z, A_2 \right) \, .
\end{align*}
From \eqref{appu1}, \eqref{appz_a-1a} and \eqref{appz_a-1b} we know that there exists $\g>0$ such that
\begin{align}
\label{estA1}
& {\left\| A_1 \right\|}_{W^{N_0/2+5,\infty}} \lesssim  \e_1^2 {(1+t)}^{-1/2-\g} \, ,
\\
\label{estA2a}
& {\left\| A_2 \right\|}_{W^{N_0/2+4,\infty}} \lesssim  \e_1^2 {(1+t)}^{-1/2-\g} \, ,
\\
\label{estA2b}
& {\left\| \H A_2 \right\|}_{W^{N_0/2+3,\infty}} \lesssim  \e_1^2 {(1+t)}^{-1/2-\g} \, .
\end{align}
Using the $L^\infty$ type bound \eqref{estpartialQ_00} for operators of the type $\partial_\a Q_0$, 
\eqref{estA1} above, and the bound in Proposition \ref{proenergy3} together with the a priori decay assumption in \eqref{apriori01},
we see that
\begin{align*}
{\left\| K_1 \right\|}_{W^{N_0/2+3,\infty}} & 
  \lesssim {\left\| A_1 \right\|}_{W^{N_0/2+5,\infty}} \left( {\| \H(\z_\a - 1) \|}_{W^{N_0/2+3,\infty}}
  + {\| \z_\a - 1 \|}_{W^{N_0/2+4,\infty}} \right) \lesssim  \e_1^3 {(1+t)}^{-1-\g} \, .
\end{align*}
Similarly, in view of \eqref{estA2a} and \eqref{estA2b} above, one has
\begin{align*}
{\left\| K_2 \right\|}_{W^{N_0/2+3,\infty}}
  & \lesssim {\left\| (I-H_0)\phi_x \circ \Re \z \right\|}_{W^{N_0/2+5,\infty}} \left( {\| \H A_2 \|}_{W^{N_0/2+3,\infty}} 
  + {\| A_2 \|}_{W^{N_0/2+4,\infty}} \right) 
\\ &\lesssim  \e_1^3 {(1+t)}^{-1-\g} \, .
\end{align*}
We have therefore obtained \eqref{Kapp1a}.
The $H^{N_0/2+4}$ estimate \eqref{Kapp1b} can be obtained similarly,
estimating $K_1$ and $K_2$ in $H^{N_0/2+4}$, by using the bounds on $A_1$ and $A_2$ given by \eqref{appu1}, \eqref{appz_a-1a},
\eqref{estA1}, \eqref{estA2a} and \eqref{estA2b},
and boundedness properties of the Cauchy integral on Sobolev spaces, see \eqref{estHlow}.

\vskip5pt
\paragraph{\it{Step 4: Approximation by $T_0$}.}
Let us denote
\begin{align}
\label{phi_0}
\phi_0 & := \partial_x (I-H_0) \phi
\\
\label{h_0}
h_0 & := \partial_x (I-H_0) h \, .
\end{align}
To eventually obtain \eqref{k_t11}-\eqref{k_t21} it suffices to combine \eqref{Kapp1a}-\eqref{Kapp1b} with
\begin{align}
\label{Kapp2a}
{\left\| \partial_\a Q_0 \left( \phi_0 \circ \Re \z, h_0 \circ \Re \z \,  \Re \z_\a \right)  
  - T_0 (h,\phi) \circ \Re \z \right\|}_{W^{N_0/2+3,\infty}} & \lesssim  \e_1^2 {(1+t)}^{-1-\g}
\\
\label{Kapp2b}
{\left\| \partial_\a Q_0 \left( \phi_0 \circ \Re \z, h_0 \circ \Re \z \,  \Re \z_\a  \right) 
  - T_0 (h,\phi) \circ \Re \z  \right\|}_{H^{N_0/2+4}} & \lesssim  \e_1^2 {(1+t)}^{-1/2-\g}  \, .
\end{align}
With the notation \eqref{phi_0}-\eqref{h_0} we can write $T_0(h,\phi)$ in \eqref{T_0} as
\begin{align}
  \label{T_01}
T_0(h,\phi) \circ \Re \z (\a)  = \partial_x [ \phi_0, H_0 ] h_0 \circ \Re \z (\a) 
  = \frac{1}{i\pi} \partial_x  \int \frac{ \phi_0 (x) - \phi_0 (y) }{ x - y} h_0(y) \, dy \circ \Re \z (\a)  \, ,
\end{align}
whereas writing explicitely $Q_0$ and changing variables we can write
\begin{align}
\nn
 \partial_\a Q_0 \left( \phi_0 \circ \Re \z, h_0 \circ \Re \z \,  \Re \z_\a \right)
  & = \frac{1}{i\pi} \partial_\a  \int \frac{ \phi_0 \circ \Re \z(\a) - \phi_0 \circ \Re \z(\b) }{\z(\a) - \z(\b)} 
  h_0 \circ \Re \z(\b) \,  \Re \z_\b(\b) \, d\b
\\
\label{T_02}
  & = \frac{1}{i\pi} \partial_\a  \left( \int \frac{ \phi_0 (x) - \phi_0 (y) }{ x+ih(x) - (y+ih(y))} h_0(y) \, dy  \circ \Re \z(\a) \right) \, .
\end{align}
Then the difference we are interested in is given by
\begin{align*}
 \partial_\a Q_0 \left( \phi_0 \circ \Re \z, h_0 \circ \Re \z \,  \Re \z_\a \right) - T_0(h,\phi) \circ \Re \z (\a)
  = T_1 \circ \Re \z \, (\Re \z_\a-1) + T_2 \circ \Re \z \, \Re \z_\a \, ,
\end{align*}
where
\begin{align}
\label{T_1}
T_1 & := \frac{1}{i\pi} \partial_x \int \frac{ \phi_0 (x) - \phi_0 (y) }{x - y} h_0(y) \, dy
\\
\label{T_2}
T_2 & := \frac{1}{i\pi} \partial_x  \int H \left( \frac{ h(x)-h(y) }{x-y} \right) 
  \frac{ (h(x)-h(y))(\phi_0 (x) - \phi_0 (y)) }{ {(x - y)}^2 } h_0(y) \, dy \, ,
\end{align}
for some smooth function $H$.
We have expanded the denominator in \eqref{T_02} to obtain the above identity.
Then, since $\Re \z_\a-1$ decays like ${(1+t)}^{-1/2}$ in $L^\infty$, in order to get \eqref{Kapp2a} it suffices to prove
\begin{align}
\label{estT_1}
{\| T_1 \|}_{W^{N_0/2+3,\infty}} & \lesssim \e_1^2 {(1+t)}^{-1/2-\g}
\\
\label{estT_2}
{\| T_2 \|}_{W^{N_0/2+3,\infty}} & \lesssim \e_1^2 {(1+t)}^{-1-\g} \, .
\end{align}
Applying $\partial_x$ to the integrand in \eqref{T_1}, and using \eqref{estinfty1}, we see that
\begin{align*}
{\| T_1 \|}_{W^{N_0/2+3,\infty}} & \lesssim {\left\| \int \frac{ \phi_0 (x) - \phi_0 (y) }{{(x - y)}^2} h_0(y) \, dy \right\|}_{W^{N_0/2+3,\infty}}
    + {\left\| \partial_x \phi_0 \, H_0 h_0 \right\|}_{W^{N_0/2+3,\infty}}
\\
& \lesssim  {\left\| \phi_0 \right\|}_{W^{N_0/2+5,\infty}} 
    \left( {\left\| h_0 \right\|}_{W^{N_0/2+4,\infty}} + {\left\| H_0 h_0 \right\|}_{W^{N_0/2+4,\infty}} \right)
\\
& \lesssim {\left\| \Lambda \phi \right\|}_{W^{N_0/2+6,\infty}} {\left\| h \right\|}_{W^{N_0/2+5,\infty}} \lesssim \e_1^2 {(1+t)}^{-1} \, .
\end{align*}
Similarly, again using \eqref{estinfty1}, it is not hard to see that
\begin{align*}
{\| T_2 \|}_{W^{N_0/2+3,\infty}} & 
  \lesssim  {\left\| h \right\|}_{W^{N_0/2+5,\infty}} {\left\| \phi_0 \right\|}_{W^{N_0/2+5,\infty}} 
    {\left\| h_0 \right\|}_{W^{N_0/2+4,\infty}}
  \lesssim \e_1^3 {(1+t)}^{-3/2} \, .
\end{align*}
This gives us \eqref{estT_2} and concludes the proof of \eqref{Kapp2a}.

The remaining estimate \eqref{Kapp2b} can be obtained similarly, 
using the $L^2$ bounds of Theorem \ref{theoCMM} instead of the $L^\infty$ bound \eqref{estinfty1}.
In particular it suffices to show
\begin{align*}
{\| T_1 \|}_{H^{N_0/2+4}} \lesssim \e_1^2 {(1+t)}^{-\g}
\qquad \mbox{and} \qquad
{\| T_2 \|}_{W^{N_0/2+3,\infty}} \lesssim \e_1^2 {(1+t)}^{-1/2-\g} \, .
\end{align*}
We only detail the bound for $T_1$ as the one for $T_2$ can proved similarly 
(notice that $T_2$ is a cubic term and therefore its bounds are ${(1+t)}^{-1/2}$ better than those of $T_1$).
Applying $\partial_x$ to the integrand in \eqref{T_1}, commuting derivatives via \eqref{commK2},
and using the estimates in Theorem  \ref{theoCMM}, one sees that
\begin{align*}
{\| T_1 \|}_{H^{N_0/2+4}} & 
  \lesssim {\left\| \int \frac{ \phi_0 (x) - \phi_0 (y) }{{(x - y)}^2} h_0(y) \, dy \right\|}_{H^{N_0/2+4}} 
   + {\left\| \partial_x \phi_0 \, H_0 h_0 \right\|}_{H^{N_0/2+4}} 
\\
& \lesssim {\left\| \phi_0 \right\|}_{H^{N_0/2+5}} 
  \left( {\left\| h_0 \right\|}_{W^{N_0/2+4,\infty}} + {\left\| H_0 h_0 \right\|}_{W^{N_0/2+4,\infty}} \right)
  \lesssim \e_1^2 {(1+t)}^{-1/2}
\end{align*}
which is more than sufficient.  $\hfill \Box$

\subsection{Proof of Lemma \ref{lemT_0}}\label{seclemT_0}
The proof proceeds in several steps.

\subsubsection{Step 1: The operator in Fourier space}
Introduce the notations
\begin{align*}
\wt{\phi} & := (I-H_0) \phi
\, , \quad 
\wt{h} := (I-H_0) h
\end{align*}
so that from \eqref{T_0} we can write
\begin{align*}
 \partial_x T_0(h,\phi) = \partial_x [ \wt{\phi}_x ,H_0] \wt{h}_x \, .
\end{align*}
By taking Fourier transform, and using the notation \eqref{al0}, we see that
\begin{align}
\label{T_0T_m}
 \partial_x T_0(h,\phi) = M \big( \wt{h}, \wt{\phi} \big) 
\end{align}
where the symbol of the operator $M$ is given by
\begin{align}
\label{symT_0}
m(\xi,\eta) = \xi\eta|\xi-\eta| -|\xi| \eta (\xi-\eta) \, .
\end{align}
We then want to show
\begin{align}
\label{boundT_m1}
& {\big\| M \big( \wt{h}, \wt{\phi} \big) \big\|}_{W^{N_0/2+3,\infty}} \lesssim  \e_1^2 {(1+t)}^{-1-\g}
\\
\label{boundT_m2}
& {\big\| M \big( \wt{h}, \wt{\phi} \big) \big\|}_{H^{N_0/2+4}} \lesssim  \e_1^2 {(1+t)}^{-1/2-\g}
\end{align}
for some $\g>0$, under the a priori assumptions \eqref{apriori01} and \eqref{aprioriL11}.

\subsubsection{Step 2: Approximation}
Let $H$ and $\Psi$ be the functions defined in \eqref{HPsi0} in Proposition \ref{proE1}.
Define
\begin{align*}
\wt{H} := (I-H_0) H
\, , \quad
\wt{\Psi} := (I-H_0) \Psi \, .
\end{align*}
We then claim that the following hold true:
\begin{align}
\label{T_mHPsi1}
& {\big\| M \big( \wt{h}, \wt{\phi} \big) - M \big( \wt{H}, \wt{\Psi} \big) \big\|}_{W^{N_0/2+3,\infty}} \lesssim \e_1^2 {(1+t)}^{-1-\g}  \, ,
\\
 \label{T_mHPsi2}
& {\big\| M \big( \wt{h}, \wt{\phi} \big) - M \big( \wt{H}, \wt{\Psi} \big) \big\|}_{H^{N_0/2+4}} \lesssim \e_1^2 {(1+t)}^{-1/2-\g} \, .
\end{align}
Using \eqref{HPsi0} we see that
\begin{align}
\label{T_mHPsi4}
M \big( \wt{H}, \wt{\Psi} \big) - M \big( \wt{h}, \wt{\phi} \big) & 
  = M \big( (I-H_0)A, \wt{\phi} \big) + M \big( \wt{H}, (I-H_0) B \big) \, .
\end{align}
From the definition of $m$ in \eqref{symT_0}, the definitions \eqref{Al4} and \eqref{al11},
we see that 
\begin{equation}
\label{descm}
{\|m^{k,k_1,k_2}\|}_{\mathcal{S}^\infty} \lesssim 2^k 2^{k_1} 2^{k_2} \, .
\end{equation}
Combining this with the $L^2$ and $L^\infty$ bounds on $A$ and $B$ in \eqref{Al1000} and \eqref{Al1001}, 
the a priori bounds \eqref{Al32},
and using \eqref{mk6} in Lemma \ref{touse}, one can easily verify that
\begin{align*}
& {\big\| M \big( (I-H_0)A, \wt{\phi} \big) \big\|}_{W^{N_0/2+3,\infty}} \lesssim 
  \e_1^3 {(1+t)}^{-5/4}
\\
& {\big\| M \big( \wt{H}, (I-H_0) B \big) \big\|}_{W^{N_0/2+4,\infty}} \lesssim 
  \e_1^3 {(1+t)}^{-5/4} \, .
\end{align*}
These and \eqref{T_mHPsi4} give us \eqref{T_mHPsi1}.
Similarly one can show \eqref{T_mHPsi2} again by using \eqref{descm}, Lemma \ref{touse}, \eqref{Al32}
and the bounds on the Sobolev norms of $A$ and $B$ provided by \eqref{Al1000}.

\subsubsection{Step 3: Reduction to bilinear estimates}
We are left with proving
\begin{align}
\label{T_mHPsi10}
& {\big\| M \big( \wt{H}, \wt{\Psi} \big) \big\|}_{W^{N_0/2+3,\infty}} \lesssim \e_1^2 {(1+t)}^{-1-\g}  \, ,
\\
\label{T_mHPsi11}
& {\big\| M \big( \wt{H}, \wt{\Psi} \big) \big\|}_{H^{N_0/2+4}} \lesssim \e_1^2 {(1+t)}^{-1/2-\g} \, .
\end{align}
Recalling the definition of $V = H+i\Lambda\Psi$ from \eqref{V0},
we have $\widetilde{V}_\pm = \wt{H} \pm i\Lambda \wt{\Psi}$,
so that $\wt{H} = (\wt{V}_+ + \wt{V}_-)/2$ and $\wt{\Psi} = (i\Lambda)^{-1} (\wt{V}_+ - \wt{V}_-)/2$.
We then have
\begin{align}
M \big( \wt{H},\wt{\Psi} \big) = \sum_{\e_1,\e_2 \in \{+,-\}} c_{\e_1,\e_2} Q \big( \wt{V}_{\e_1}, \wt{V}_{\e_2} \big) 
\end{align}
where, again according to the notation \eqref{al0},
\begin{align}
\label{symq}
 q (\xi,\eta) = {|\eta|}^{-1/2} \big[ \xi\eta|\xi-\eta| -|\xi| \eta (\xi-\eta) \big] \, ,
\end{align}
and $c_{\e_1,\e_2}$ are some constants.
With the notation \eqref{Al4} and \eqref{al11} we have
\begin{equation}
\label{descq}
{\|q^{k,k_1,k_2}\|}_{\mathcal{S}^\infty} \lesssim 2^k 2^{k_1} 2^{k_2/2} \, .
\end{equation}

From the definition of $\wt{V}$ and the bounds provided by \eqref{vhphiZp}, \eqref{vhphiSobolev} and  \eqref{xd_xF}
on $V = H + i \Lambda \Psi$, we see that
the desired bounds \eqref{T_mHPsi10}-\eqref{T_mHPsi11} reduce to showing the following bilinear estimates:
\begin{align}
\label{T_mHPsi20}
& {\left\| Q (v_\pm(t), v_\pm(t)) \right\|}_{W^{N_0/2+3,\infty}} \lesssim  \e_1^2 {(1+t)}^{-1-\g}  \, ,
\\
\label{T_mHPsi21}
& {\left\| Q (v_\pm(t), v_\pm(t)) \right\|}_{H^{N_0/2+4}} \lesssim  \e_1^2 {(1+t)}^{-1/2-\g} \, ,
\end{align}
where $q$ is as in \eqref{symq}, and $v_\pm = e^{\mp it\Lambda}f_\pm$ satisfies, for all $k \in \Z$,
\begin{align}
 \label{boundf1}
& {\| P_k v_\pm(t) \|}_{W^{N_1,\infty}} \lesssim \e_1 {(1+t)}^{-1/2} \, ,
\\
 \label{boundf2}
& {\| P_k f_\pm(t) \|}_{H^{N_0-10}} \lesssim \e_1 {(1+t)}^{5p_0} \, ,
\\
 \label{boundf3}
& {\| x \partial_x P_k f_\pm(t) \|}_{H^{\frac{N_0}{2}-20}} \lesssim \e_1 {(1+t)}^{5p_0} 
\, .
\end{align}

\subsubsection{Step 4: Proof of the bilinear estimates \eqref{T_mHPsi20}-\eqref{T_mHPsi21}}

\vskip5pt
\paragraph{{\it Step 1: Frequency decomposition}}
It suffices to show that for all $t \in [2^m-2, 2^{m+1}]$ and $m \in \{1,2,\dots\}$, there exists a constant $\g>0$ such that
\begin{align}
\label{T_mHPsi100}
& \sum_{k,k_1,k_2} {\left\| P_k Q (P_{k_1} v_\pm(t), P_{k_2} v_\pm(t)) \right\|}_{W^{N_0/2+3,\infty}} \lesssim \e_1^2 2^{-(1+\g)m} \, ,
\\
\label{T_mHPsi101}
& \sum_{k,k_1,k_2} {\left\| P_k Q (P_{k_1} v_\pm(t), P_{k_2} v_\pm(t)) \right\|}_{H^{N_0/2+4}} \lesssim \e_1^2 2^{-(1/2+\g)m} \, .
\end{align}
By symmetry and conjugation it is clear that we can reduce matters to 
proving the estimates \eqref{T_mHPsi100}-\eqref{T_mHPsi101} for the two bilinear operators
\begin{align}
\label{B_+}
 T_+(f,f)(t) & := \F^{-1} \int_{\R} e^{it\Phi_+} q(\xi,\eta) \what{f}_+(t,\xi-\eta) \what{f}_+(t,\eta) \, d\eta \, ,
\\
\label{B_-}
 T_-(f,f)(t) & := \F^{-1} \int_{\R} e^{it\Phi_-} q(\xi,\eta) \what{f}_+(t,\xi-\eta) \what{f}_-(t,\eta) \, d\eta \, ,
\end{align}
where $f = (f_+, f_-)$, and 
\begin{align}
\label{Phiq}
\Phi_\pm (\xi,\eta) = {|\eta|}^{1/2} \pm  {|\xi - \eta|}^{1/2} 
  \, , \quad  q (\xi,\eta) = {|\eta|}^{-1/2} \left[ \xi\eta|\xi-\eta| -|\xi| \eta (\xi-\eta) \right] .
\end{align}
Notice that under the a priori assumptions these bilinear terms have decay rates which barely fail to give \eqref{T_mHPsi100} and \eqref{T_mHPsi101}.
The key to obtaining the extra necessary decay is the vanishing of the symbol $q(\xi,\eta)$ on the space-resonant sets, 
i.e. for those $(\xi,\eta)$ such that $\partial_\eta \Phi_\pm(\xi,\eta) = 0$.
One can then use integration by parts in frequency, and the weighted bound \eqref{boundf3}, to derive the desired estimate.

First let us observe that using \eqref{descq}, Lemma \ref{touse}, and \eqref{boundf1}-\eqref{boundf3},
one can bound as desired the sums in \eqref{T_mHPsi100} and \eqref{T_mHPsi101}, for all those frequencies
$(k,k_1,k_2)$ such that $\min(k,k_1,k_2) \leq -m/N_0$ and $\max(k,k_1,k_2) \geq 3m/N_0$.
The remaining sums have only $C m^3$ terms.
Therefore it suffices to show the estimates for each $(k,k_1,k_2)$ fixed satisfying
\begin{align}
\label{freqQ}
k,k_1,k_2 \in [-m/N_0,3m/N_0] \cap \Z \quad , \quad \max(k_1, k_2) \geq k - 10 \, . 
\end{align}

\vskip5pt
\paragraph{{\it Step 2: Spatial decomposition}} 
Let us define $\rho:\mathbb{R}\to[0,1]$ to be an even compactly supported function which equals $1$ on $[0,1]$ and vanishes on $[2,\infty)$. 
Let $R := 2^{3m/4}$. We decompose the profiles $f = (f_+,f_-)$ into two pieces: 
$f = f_{\geq R} + f_{\leq R} = (f_{+,\geq R}, f_{-,\geq R}) + (f_{+,\leq R}, f_{-,\leq R})$ where
\begin{align*}
 f_{\leq R}(x) = f(x) \rho \left( \frac{x}{R} \right) \quad , \quad f_{\geq R}(x) = f(x) - f_{\leq R}(x) \, .
\end{align*}
We then want to show that for all $t \in [2^m-2, 2^{m+1}]$, $m \in \{1,2,\dots\}$,
and $k,k_1,k_2 \in [-m/N_0,3m/N_0] \cap \Z$
\begin{align}
\label{Tpm1} 
& 
{\left\| P_k T_\pm (P_{k_1} f_{\geq R}(t), P_{k_2} f(t)) \right\|}_{L^\infty} + 
{\left\| P_k T_\pm (P_{k_1} f_{\leq R}(t), P_{k_2} f_{\geq R}(t)) \right\|}_{L^\infty} \lesssim  \e_1^2 2^{-(1+\g)m} 2^{-(N_0/2+3)k_+}
  \, ,
\\
\label{Tpm2}
& {\left\| P_k T_\pm (P_{k_1} f_{\leq R}(t), P_{k_2} f_{\leq R}(t)) \right\|}_{L^\infty} \lesssim  \e_1^2 2^{-(1+\g)m} 2^{-(N_0/2+3)k_+} \, .
\end{align}
We also need to prove the $H^{N_0/2+4}$ versions of the above estimates
corresponding to \eqref{T_mHPsi101}, but since those can be obtained analogously we will skip them.

\vskip5pt
\paragraph{{\it Step 3: Proof of \eqref{Tpm1}}}
First notice that both terms in \eqref{Tpm1} have (at least) one profile supported at a distance $R$ from the origin. 
Since this is the only important aspect that we will use to gain the necessary decay,
we only show how to estimate one of the terms, the other being analogous.
We then want to prove
\begin{align*}
{\left\| P_k T_\pm (P_{k_1} f(t), P_{k_2} f_{\geq R}(t)) \right\|}_{L^\infty} \lesssim \e_1^2 2^{-9m/8} 2^{-(N_0/2+3)k_+}
\end{align*}
for any $k,k_1,k_2 \in [-m/N_0, 3m/N_0] \cap \mathbb{Z}$, and any $t \in [2^m-2, 2^{m+1}]$.
Using \eqref{descq}, Sobolev's embedding, and the bounds \eqref{boundf1} and \eqref{boundf3}, we see that
\begin{align*}
{\left\| P_k T_\pm (P_{k_1} f(t), P_{k_2} f_{\geq R}(t)) \right\|}_{L^\infty} 
  & \lesssim 2^k 2^{k_1} 2^{k_2/2} {\left\| P_{k_1} v_\pm(t) \right\|}_{L^\infty} {\left\| e^{\pm it\Lambda} P_{k_2} f_{\geq R}(t) \right\|}_{L^\infty}  
\\
& \lesssim 2^k 2^{k_1} \e_1 2^{-m/2} 2^{-N_1 \max(k_1,0)} {\left\| \partial_x f_{\geq R}(t) \right\|}_{L^2}  
\\
& \lesssim \e_1^2 2^k 2^{k_1} 2^{-m/2} R^{-1} 2^{-(N_0/2-20) \max(k_1,k_2,0)} 2^{mp_0}
\\
& \lesssim \e_1^2 2^{-9m/8} 2^{-(N_0/2+3)k_+} \, ,
\end{align*}
in view of the frequency constraints \eqref{freqQ}, $p_0 \leq 1/1000$ and $R = 2^{3m/4}$.

\vskip5pt
\begin{proof}[Step 4: Proof of \eqref{Tpm2}]
It suffices to show
\begin{align*}
{\left\| P_k T_\pm (P_{k_1} f_{\leq R}(t), P_{k_2} f_{\leq R}(t)) \right\|}_{L^\infty} \lesssim \e_1^2 2^{-9m/8} 2^{-(N_0/2+3)k_+} \, ,
\end{align*}
for any $k,k_1,k_2$ as in \eqref{freqQ} and $t \in [2^m-2, 2^{m+1}]$.
We distinguish the two cases of $T_+$ and $T_-$.
In the first case we introduce and extra cutoff in $\xi - 2\eta$ by writing
\begin{align}
\begin{split}
 \label{T+l} 
& P_k T_+ ( P_{k_1} f_{\leq R}(t), P_{k_2} f_{\leq R}(t) )
  = \sum_{l \in \Z} P_k T^l_+ ( P_{k_1} f_{\leq R}(t), P_{k_2} f_{\leq R}(t) )
\\
  & := \sum_{l \in \Z} \F^{-1} \int_{\R} e^{it\Phi_+(\xi,\eta)} \varphi_{l}(\xi-2\eta) 
    \varphi_{k}(\xi) \varphi_{k_1}(\xi-\eta) \varphi_{k_2}(\eta) q(\xi,\eta) \what{f_{+,\leq R}}(t,\xi-\eta) \what{f_{+,\leq R}}(t,\eta) \, d\eta
\end{split}
\end{align}
where $\Phi_+$ is as in \eqref{Phiq}.
First notice that the contribution in the summation over $l$ in \eqref{T+l} is zero if $l \geq 3m/N_0 + 100$.
\eqref{T+l} also vanishes if $l \leq -m/N_0 - 100$, because in this case $|\xi - 2\eta| \leq |\eta|/10$,
which implies that $\xi - \eta$ and $\xi$ have the same sign, and therefore $q(\xi,\eta) = 0$, see \eqref{Phiq}.
The summation over $l$ can then be disregarded and it is enough to show
\begin{align*}
{\left\| P_k T_+^l (P_{k_1} f_{\leq R}(t), P_{k_2} f_{\leq R}(t)) \right\|}_{L^\infty} \lesssim \e_1^2 2^{-5m/4} 2^{-(N_0/2+3)k_+}
\end{align*}
for any $l,k,k_1,k_2 \in [-m/N_0 - 100, 3m/N_0 + 100] \cap \mathbb{Z}$ and $t \in [2^m-2, 2^{m+1}]$.
Observe that for any integer $j$ we have
\begin{align*}
{\big| \partial_\eta^j \partial_\eta \Phi_+(\xi,\eta) \big|} 
  \gtrsim 2^{- j m/10}
\end{align*}
and
\begin{align*}
{\big\|\partial_\eta^j \varphi_{k_i}(\cdot) \what{f_{\leq R}}(t,\cdot) \big\|}_{L^1} \lesssim R^j 2^{k_i/2} 2^{mp_0} 
\quad , \quad \left|\partial_\eta^j ( q(\xi,\eta)  \varphi_{l}(\xi-2\eta) )\right| \lesssim 2^{(j + 5/2)m/N_0} \, .
\end{align*}
Integrating by parts $L$ times in $\eta$ in the integral in \eqref{T+l} we see that
\begin{align*}
{\big\| P_k T_+^l (P_{k_1} f_{\leq R}(t), P_{k_2} f_{\leq R}(t)) \big\|}_{L^\infty} 
  & \lesssim {\left( \frac{1}{2^m 2^{-m/10}} \right)}^{L} R^{L} \, 2^{k_1/2} 2^{k_2/2} 2^{2mp_0} 2^{- (N_0 - 10)\max(k_1,k_2,0)} \e_1^2
\\
& \lesssim \e_1^2 2^{-3m/2} 2^{- (N_0/2 + 3)k_+} \, ,
\end{align*}
where the second inequality holds provided $L$ is large enough.

In the case of the operator $T_-$ it is enough to observe that $\partial_\eta \Phi_-$ vanishes (linearly) only when $\xi = 0$, 
and, also in this case, we have
\begin{align*}
{|\partial_\eta^j \partial_\eta \Phi_-|} 
  \gtrsim 2^{- j m/10} \, ,
\end{align*}
under the frequency constraints \eqref{freqQ}.
One can then use the same integration by parts argument as above (without the need to resort to a further splitting) and obtain
\begin{align*}
{\left\| P_k T_- (P_{k_1} f_{\leq R}(t), P_{k_2} f_{\leq R}(t)) \right\|}_{L^\infty} \lesssim \e_1^2 2^{-3m/2} 2^{- (N_0/2+3)k_+} \, .
\end{align*}
This concludes the proof of \eqref{Tpm2} and therefore of Lemma \ref{lemT_0}.
\end{proof}


\section{Proof of Proposition \ref{proLE}: transition to Eulerian coordinates}\label{secproLE}
Here we want to transfer the a priori bounds from the modified Lagrangian coordinates to Eulerian coordinates.
Recall that
\begin{equation*}
 \wt{L}(t,\a) = ( \z_\a(t,\a) - 1, u(t,\a), w(t,\a), \Im \z(t,\a) ) \, .
\end{equation*}
Also, recall the a priori assumptions \eqref{aprioriL1} and \eqref{apriori0}, that is
\begin{align}
 \label{aprioriLE1}
\sup_{t\in[0,T]} \left(  (1+t)^{-p_0} {\| \wt{L}(t) \|}_{X_{N_0}} +  {\| \wt{L}(t) \|}_{H^{N_1+5}} + 
   \sqrt{1+t} {\| \wt{L}(t) \|}_{W^{N_1,\infty} } \right) \leq \e_1 \, ,
\end{align}
and
\begin{align}
\label{aprioriLE2}
\sup_{[0,T]} \left( (1+t)^{-p_0} {\| (h(t), \phi_x(t)) \|}_{X_{N_0}} + {\| h(t) + i\Lambda \phi(t)  \|}_{H^{N_1 + 10}}
  + \sqrt{1+t} {\| (h(t), \Lambda \phi(t)) \|}_{W^{N_1+4,\infty}} \right) \leq \e_1 \, ,
\end{align}
where
${\| f \|}_{X_{N_0}} := {\| f \|}_{H^{N_0}} + {\| Sf \|}_{H^{N_0/2}}$.
To prove Proposition \ref{proLE} we need to show, under the above a priori assumptions, that
\begin{align}
\label{LE3}
& {\| \wt{L}(t) \|}_{W^{N_1,\infty}} \lesssim {\| (h(t) , \partial_x \phi(t)) \|}_{Z^\p}
\\
\label{LE2}
& {\| \wt{L}(t) \|}_{H^{N_1+5}}  \lesssim {\| (h(t) , \partial_x \phi(t)) \|}_{H^{N_1+7}} + \e_1^2
\\
\label{LE1}
& {\| (h(t) , \partial_x \phi(t)) \|}_{X_{N_0}}  \lesssim {\| \wt{L}(t) \|}_{X_{N_0}} \, .
\end{align}

\subsection{Proof of \eqref{LE3}}
From Proposition \ref{proenergy3} we have
\begin{align*}
{\| L(t) \|}_{W^{N_1,\infty}} \lesssim  {\| (h(t),\partial_x \phi(t)) \|}_{Z^\p}
\end{align*}
which is stronger than \eqref{LE3}.

\subsection{Proof of \eqref{LE2}}
Using $h (\Re \z) = \Im \z$ it is clear that
\begin{align*}
{\| \Im \z \|}_{H^{N_1+5}} & \lesssim {\| h \|}_{H^{N_1+5}}  \, .
\end{align*}
Also, from \eqref{appL-}-\eqref{Qlemenergy31} in Lemma \ref{lemenergy31}, and the a priori assumptions \eqref{aprioriL13}, we see that
\begin{align*}
{\| (\z_\a(t)-1,u(t),w(t)) \|}_{H^{N_1+5}} & \lesssim {\| (\partial_\a \chi(t), \partial_\a \l(t)) \|}_{H^{N_1+5}} 
  + {\| \wt{L} (t) \|}_{H^{N_1+6}} {\| \wt{L} (t) \|}_{W^{\frac{N_1+5}{2},\infty}} 
\\
& \lesssim {\| (\partial_\a \chi(t), \partial_\a \l(t)) \|}_{H^{N_1+5}} + \e_1 {(1+t)}^{p_0} \e_1 {(1+t)}^{-1/2}
\\
&	\lesssim {\| (\partial_\a \chi(t), \partial_\a \l(t)) \|}_{H^{N_1+5}} + \e_1^2 \, .
\end{align*}
Thus, to obtain \eqref{LE2} it is enough to show
\begin{align}
\label{LE2chil}
{\| (\partial_\a \chi(t), \partial_\a \l(t)) \|}_{H^{N_1+5}} & \lesssim {\| (h(t) , \partial_x \phi(t)) \|}_{H^{N_1+7}} + \e_1^2  \, .
\end{align}

From the definition of $\chi$ in \eqref{defchi}, and $\Im \z = h \circ \Re \z$, we see that 
\begin{align*}
\chi = (I-\H) (\z-\bar{\z}) = 2i (I-\H) \Im \z = 2i (I-\H) (h \circ \Re\z) \, .
\end{align*}
Using the bounds on the Hilbert transform \eqref{estH} with the a priori assumptions \eqref{aprioriLE1}-\eqref{aprioriLE2}, we get
\begin{align*}
{\| \partial_\a \chi(t) \|}_{H^{N_1+5}} & 
  \lesssim {\| h \circ \Re \z(t) \|}_{H^{N_1+6}} + \e_1^2  \lesssim {\| h(t) \|}_{H^{N_1+6}} + \e_1^2 \,
\end{align*}
which gives the bound \eqref{LE2chil} for the component $\chi_\a$.

From \eqref{Flemenergy32} and \eqref{R_f} in the proof of Lemma \ref{lemenergy32},
we see that
\begin{align*}
{\| \partial_\a \l \|}_{H^{N_1+5}} \lesssim {\| \partial_x [\phi(1+h^\p)] \|}_{H^{N_1+5}} + {\| R_\phi \|}_{H^{N_1+6}} 
  \lesssim {\| \partial_x \phi \|}_{H^{N_1+5}} + {\| \phi h^\p \|}_{H^{N_1+6}} + {\| R_\phi \|}_{H^{N_1+6}} \, ,
\end{align*}
where we recall
\begin{align*}
R_\phi (t,x)  := \frac{1}{i\pi} \int H\left( \frac{h(x) - h(y)}{x-y} \right) \frac{h(x) - h(y)}{ {(x-y)}^2 } \phi(t,y) \, dy  \, .
\end{align*}
Applying $N_1 + 6$ derivatives to the above expression, commuting them via \eqref{commK2}, and using Theorem \ref{theoCMM}, one can obtain
\begin{align*}
{\| R_\phi \|}_{H^{N_1+6}} \lesssim  {\| h \|}_{H^{N_1+7}} {\| \phi \|}_{W^{N_1+4,\infty}}
  +  {\| h \|}_{W^{N_1+4,\infty}} {\| \phi_x \|}_{H^{N_1+5}} \lesssim  \e_1^2  \, .
\end{align*}
Here we have used the a priori assumptions \eqref{aprioriLE2}
and the bound \eqref{estcorproE4} in Corollary \ref{corproE4}. 
A similiar estimate can be easily obtained for ${\| \phi h^\p \|}_{H^{N_1+6}}$.
It follows that 
\begin{align*}
{\| \partial_\a \l \|}_{H^{N_1+6}} \lesssim  {\| (h, \phi _x) \|}_{H^{N_1+7}} + \e_1^2 \, ,
\end{align*}
which gives \eqref{LE2chil}, completing the proof of \eqref{LE2}.

\subsection{Proof of \eqref{LE1}}
To show \eqref{LE1} we will exploit the identities
\begin{align*}
h(t, \Re \z(t,\a)) = \Im \z (t,\a)
\qquad \mbox{and}
\qquad (I-\H_\z) \phi(t, \Re \z(t,\a)) = \l (t,\a)
\end{align*}
to prove the following:
\begin{lem}\label{lemLE1}
Let $S$ be the scaling vector field. Then
\begin{align}
\label{LE10}
& {\| h(t) \|}_{X_{N_0}}  \lesssim {\| \Im\z(t) \|}_{X_{N_0}} + {\| \z_\a(t) - 1 \|}_{X_{N_0}}
\end{align}
and
\begin{align}
\label{LE20}
& {\| \partial_x \phi(t) \|}_{X_{N_0}}  \lesssim {\| \l_\a(t) \|}_{X_{N_0}} + {\| \z_\a(t) - 1 \|}_{X_{N_0}} \, .
\end{align}
\end{lem}
The estimates in the above Lemma, together with \eqref{chivl2}, imply \eqref{LE1}.
To prove the estimates involving $S$ we will need the two auxiliary Lemmas below:

\begin{lem}
\label{lemcomp}
Let $S$ be the scaling vector field. 
Then for any two functions $f: \R_t \times \R_x \rightarrow \R$ and $g: \R_t \times \R_\alpha \rightarrow \R_x $, the following formula holds:
\begin{equation}
\label{Sformula}
S f \circ g  =  S (f \circ g) - (f^\p \circ g) (S g - g) \, .
\end{equation}
Here, for a function $c:\R_t \times \R_x \rightarrow \R$, the operation $c \circ g$ is to be understood as composition in the space variable:
$$ (c \circ g) (t,\alpha) = c(t,g(t,\alpha)) \, ,$$
and $f^\p$ denotes the derivative with respect to the space variable.
\end{lem}
The proof of the above statement is by direct computation.

\begin{lem}
\label{lemRez}
Let $\z$ be the Lagrangian map in the modified Lagrangian coordinates, then
\begin{equation}
\label{Rez-a}
(I-\H)(\Re \z - \a ) = i (I-\H) \Im \z \, . 
\end{equation}
Under the a priori assumptions \eqref{aprioriLE1}-\eqref{aprioriLE2}, it follows that
\begin{align}
\label{Rez-aX}
{\| \Re \z - \a  \|}_{X_{N_0}} &  \lesssim  {\| \Im \z \|}_{X_{N_0}} + {\| \z_\a-1 \|}_{X_{N_0}} \, .
\end{align}
In particular
\begin{align}
\label{SRez-RezL^2}
{\| S \Re \z - \Re \z  \|}_{H^{\frac{N_0}{2}}} &  \lesssim  {\| \Im \z \|}_{X_{N_0}} + {\| \z_\a-1 \|}_{X_{N_0}} \lesssim {\| \wt{L} \|}_{X_{N_0}} \, .
\end{align}
\end{lem}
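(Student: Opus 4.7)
The plan is to reduce identity \eqref{Rez-a} to the assertion $(I-\H_\z)(\bar\z-\a)=0$, which follows from writing $\bar\z = \Re\z - i\Im\z$, and then establish this assertion algebraically from the definition of $k$. The key ingredient is the orthogonality relation $\H_z^2 = I$, valid because $\H_z$ is essentially the difference of the two Plemelj projections onto traces of holomorphic functions in $\Omega_t$ and its complement. Writing $k - \bar z = \frac{1}{2}(I+\H_z)(I+\K_z)^{-1}(z-\bar z)$, the quantity $\bar z - k$ lies in the range of $I+\H_z$, hence $(I-\H_z)(\bar z - k) = 0$ since $(I-\H_z)(I+\H_z) = I-\H_z^2 = 0$. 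Composing with $k^{-1}$ and using $\bar z \circ k^{-1} = \bar\z$ and $k \circ k^{-1} = \a$ translates this into $(I-\H_\z)(\bar\z-\a)=0$, which is equivalent to \eqref{Rez-a}.

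For \eqref{Rez-aX}, the plan is to take real parts of \eqref{Rez-a}. Since $\Re\z-\a$ and $\Im\z$ are real-valued and $\Im(\Im\z)=0$, this produces the reduced identity
\begin{equation*}
(I - \Re\H_\z)(\Re\z-\a) \,=\, \Im\H_\z\Im\z.
\end{equation*}
The crucial observation is that on real-valued functions, $\Re H_0 = 0$: by direct inspection of \eqref{HT}, $H_0 f$ is purely imaginary when $f$ is real. Consequently $\Re\H_\z = \Re(\H_\z - H_0)$ is a Calder\'on-commutator-type operator whose operator norm on $X_{N_0}$ is of order $O(\|\z_\a - 1\|_{W^{N_1,\infty}}) = O(\e_1)$, using the commutator $L^2$ estimates supplied by Proposition \ref{proCCmain} (together with standard commutations of the scaling vector field $S$ through the integral operator). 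A Neumann-series inversion then gives $\|\Re\z-\a\|_{X_{N_0}} \lesssim \|\Im\H_\z\Im\z\|_{X_{N_0}}$. Splitting $\Im\H_\z\Im\z = \Im H_0\Im\z + \Im(\H_\z - H_0)\Im\z$, the first term is a bounded (real) Hilbert transform of $\Im\z$, controlled by $\|\Im\z\|_{X_{N_0}}$, while the commutator term is bounded by $\e_1(\|\z_\a-1\|_{X_{N_0}} + \|\Im\z\|_{X_{N_0}})$. This gives \eqref{Rez-aX}.

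Finally, \eqref{SRez-RezL^2} reduces to \eqref{Rez-aX} by the algebraic identity $S\Re\z - \Re\z = S(\Re\z-\a) - (\Re\z-\a)$, valid since $S\a = \a$. The definition of $X_{N_0}$ and the embedding $H^{N_0}\hookrightarrow H^{N_0/2}$ give
\begin{equation*}
\|S\Re\z - \Re\z\|_{H^{N_0/2}} \,\leq\, \|S(\Re\z-\a)\|_{H^{N_0/2}} + \|\Re\z-\a\|_{H^{N_0/2}} \,\lesssim\, \|\Re\z-\a\|_{X_{N_0}},
\end{equation*}
which combined with \eqref{Rez-aX} yields \eqref{SRez-RezL^2}, since $\Im\z$ and $\z_\a - 1$ are components of $\wt L$. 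The main technical obstacle in this plan is the careful commutation of $S$ through the Neumann series expansion of $(I - \Re\H_\z)^{-1}$: each commutator $[S, \Re\H_\z]$ produces error terms involving $S\z$ (controlled by $\|\wt L\|_{X_{N_0}}$) which must be absorbed without spoiling the quadratic smallness that underlies the invertibility.
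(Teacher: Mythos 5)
Your proposal follows essentially the same route as the paper: the identity $(I-\H_\z)(\bar\z-\a)=0$, the observation that $\Re\z-\a$ is real-valued so that taking real parts reduces the problem to inverting $I-\K_\z$ with $\K_\z = \Re\H_\z$, and the algebraic manipulation $S\Re\z - \Re\z = S(\Re\z-\a) - (\Re\z-\a)$. One genuine addition: the paper asserts $(I-\H_z)(k-\bar z)=0$ as known (citing Wu), whereas you supply a short derivation via $(I-\H_z)(I+\H_z)=0$ and the explicit form of $k$. That is a useful clarification.

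However, the middle step contains an imprecision that would mislead a reader trying to execute the plan. You claim that $\K_\z = \Re(\H_\z - H_0)$ has \emph{operator norm on $X_{N_0}$} of order $O(\|\z_\a-1\|_{W^{N_1,\infty}}) = O(\e_1)$, and then invoke a Neumann-series inversion to get $\|\Re\z-\a\|_{X_{N_0}} \lesssim \|\Im\H_\z\Im\z\|_{X_{N_0}}$. This is false as stated: when you commute $\partial_\a^j$ (or $S$) through $\K_\z$ for $j$ near $N_0$, the commutators produce terms involving up to $N_0$ derivatives on $\z_\a-1$, which are controlled by $\|\z_\a-1\|_{X_{N_0}}$ (growing like $(1+t)^{p_0}$), not by $\e_1$. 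What is $O(\e_1)$ is the $L^2$ operator norm plus the ``low-on-$\z$, high-on-$f$'' portion of each commutator, and only that portion can be absorbed. The remaining ``high-on-$\z$, low-on-$f$'' portion necessarily appears as the correction term $\|\z_\a-1\|_{X_{N_0}}\cdot(\text{low norm of }g)$ in the final bound, which is exactly why \eqref{Rez-aX} has $\|\z_\a-1\|_{X_{N_0}}$ on its right-hand side rather than disappearing into an absorbable error. The paper packages this absorption carefully in Lemma \ref{lemI-Hf} (estimate \eqref{est1-Hf}) and then just applies it with $f=\Re\z-\a$, $g=i(I-\H)\Im\z$, together with Lemma \ref{lemHf} for $\|(I-\H)\Im\z\|_{X_{N_0}}$. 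Your final paragraph does acknowledge that commutators with $S$ are an obstacle, but the claim of an $X_{N_0}$-operator-norm bound of $O(\e_1)$ is the wrong way to describe what is going on and should be replaced by an appeal to (or a careful rederivation of) the two-tier estimate of Lemma \ref{lemI-Hf}.
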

\begin{proof}
The identity \eqref{Rez-a} follows from the fact that $(I-\H_\z)(\bar{\z}-\a) = 0$, which
comes from the identity $(I-\H_z)(k-\bar{z}) = 0$ upon composition with $k^{-1}$.

The estimate \eqref{Rez-aX} follows from an application of Lemma \ref{lemI-Hf} with $f = \Re\z -\a$ and $g = i (I-\H) \Im \z$,
Sobolev's embedding, and from the boundedness properties of $\H$ in Lemma \ref{lemHf}:
\begin{align*}
& {\| \Re \z - \a \|}_{X_{N_0}} \lesssim  {\| (I-\H) \Im \z \|}_{X_{N_0}} + {\| \z_\a - 1 \|}_{X_{N_0}} {\| (I-\H) \Im \z \|}_{H^{N_1}}
\\
& \lesssim  {\| \Im \z \|}_{X_{N_0}} + {\| \z_\a - 1 \|}_{X_{N_0}} ( {\|\Im \z \|}_{H^{N_1}} + {\| (I-\H) \Im \z \|}_{H^{N_1}} )
  \lesssim {\| \Im \z \|}_{X_{N_0}} + {\| \z_\a - 1 \|}_{X_{N_0}} \, .
\end{align*}
\eqref{SRez-RezL^2} immediately follows since $S\Re\z - \Re \z = S(\Re\z - \a) + (\a -\Re\z)$.
\end{proof}

\vskip15pt
\begin{proof}[Proof of Lemma \ref{lemLE1}]
Recall that $h(t, \Re \z(t,\a)) = \Im \z (t,\a)$.
Since ${\| \z_\a - 1 \|}_{H^{N_1+5}} \leq \e_1$, in particular we see that for $\e_1 \leq 1/2$, 
the map $\Re \z$ is a diffeomorphism with  
\begin{equation*}
\big| \partial_\a \Re \z \big| \geq 1/2 \, , \quad \big| \partial_\a^k \Re \z \big| \leq 3/2 
\end{equation*} 
for $1 \leq k \leq N_1$.
It immediately follows that
\begin{align*}
{\| h \|}_{H^{N_0}} = {\| \Im \z \circ \Re \z^{-1} \|}_{H^{N_0}} 
  & \lesssim {\| \Im \z(t) \|}_{H^{N_0}} + {\| \Re \z_{\a\a}(t) \|}_{H^{N_0-1}} 
  \lesssim {\| \Im \z \|}_{H^{N_0}} + {\| \z_\a (t) - 1\|}_{H^{N_0}} \, .
\end{align*}
This takes care of the Sobolev component of the norm to bound in \eqref{LE10}.
To estimate the weighted component we apply Lemma \ref{lemcomp} to get
\begin{equation}
S h \circ \Re \z  =  S \Im \z - (h^\p \circ \Re\z) (S \Re\z - \Re\z) \, .
\end{equation}
Using \eqref{SRez-RezL^2} we have
\begin{align*}
{\| S h \|}_{H^\frac{N_0}{2}} & 
  \lesssim  {\| S \Im \z \|}_{H^\frac{N_0}{2}} + {\| h^\p \|}_{H^\frac{N_0}{2}} {\| S \Re\z - \Re\z \|}_{H^\frac{N_0}{2}} 
  \lesssim {\| \Im \z \|}_{X_{N_0}} + {\| \z_\a-1 \|}_{X_{N_0}} \, .
\end{align*}
This gives us \eqref{LE10}.

Recall the relation between the trace of the velocity potential in Eulerian variables and in modified Lagrangian variables $\l$:
\begin{align}
\label{phiEL1}
(I-\H_\z) \phi(t, \Re \z(t,\a)) & = \l (t,\a) \, .
\end{align}
Since
\begin{align*}
[\partial_\a, \H] f = [\z_\a,\H] \frac{f_\a}{\z_\a} = \frac{1}{i\pi} Q_0 (\z_\a -1, f_\a) \, ,
\end{align*}
where $Q_0$ is defined in \eqref{Q_0}, it follows that
\begin{align}
\label{phi_xEL1}
(I-\H) \partial_\a (\phi \circ \Re \z) & = \partial_\a \l + Q_0 \left(\z_\a-1,  \partial_\a (\phi \circ \Re \z) \right) \, .
\end{align}
Denoting $f = \phi \circ \Re \z$ and $g = \partial_\a \l + \frac{1}{i\pi} Q_0 \left(\z_\a-1,  \partial_\a (\phi \circ \Re \z) \right)$ we have
$(I-\H) \partial_\a f = g$,  so that \eqref{est1-Hflow} gives
\begin{align}
\label{lemLE2est1}
{\| \partial_\a f \|}_{H^{N_1}} & \lesssim {\| g \|}_{H^{N_1}}
  \lesssim  {\| \partial_\a \l \|}_{H^{N_1}} + {\| Q_0 (\z_\a-1, \partial_\a f) \|}_{H^{N_1}} \, ,
\end{align}
while \eqref{est1-Hf} gives
\begin{align}
\label{lemLE2est2}
\begin{split}
{\| \partial_\a f \|}_{X_{N_0}} & \lesssim {\| g \|}_{X_{N_0}} + {\| \z_\a -1 \|}_{X_{N_0}} {\| g \|}_{H^{N_1}}
  \lesssim  {\| \partial_\a \l \|}_{X_{N_0}} + {\| Q_0 (\z_\a-1, \partial_\a f) \|}_{X_{N_0}}
\\ 
& + {\| \z_\a -1 \|}_{X_{N_0}} \left( {\| \partial_\a \l \|}_{H^{N_1}} + {\| Q_0 (\z_\a-1, \partial_\a f) \|}_{H^{N_1}} \right) \, .
\end{split}
\end{align}
From \eqref{chivl1}-\eqref{chivl2} we have
\begin{align*}
{\| \partial_\a \l \|}_{H^{N_1}} \lesssim {\| \wt{L} \|}_{H^{N_1}} \lesssim \e_1
\quad , \quad 
{\| \partial_\a \l \|}_{X_{N_0}} \lesssim {\| \wt{L} \|}_{X_{N_0}} \, .
\end{align*}
From estimate \eqref{estQ_0L^2b} we get
\begin{align*}
{\| Q_0 (\z_\a-1, \partial_\a f) \|}_{X_{N_0}} & \lesssim {\| \z_\a - 1 \|}_{W^{\frac{N_0}{2}+1,\infty}} {\| \partial_\a f \|}_{X_{N_0}}
  + {\| (\partial_\a f, \H \partial_\a f) \|}_{W^{\frac{N_0}{2}+1,\infty}} {\| \z_\a - 1 \|}_{X_{N_0}} 
\\
& + {\| \z_\a - 1 \|}_{W^{\frac{N_0}{2}+1,\infty}} {\| \partial_\a f \|}_{W^{\frac{N_0}{2}+1,\infty}} {\| \z_\a-1 \|}_{X_{N_0}} 
\\
& \lesssim \e_1 {\| \partial_\a f \|}_{X_{N_0}} + {\| (\partial_\a f, \H \partial_\a f) \|}_{W^{\frac{N_0}{2}+1,\infty}} 
  {\| \wt{L} \|}_{X_{N_0}} \, .
\end{align*}
Next we claim that
\begin{align} 
\label{lemLE2est10}
{\| (\partial_\a f, \H \partial_\a f) \|}_{W^{\frac{N_0}{2}+1,\infty}} \lesssim \e_1 \, .
\end{align}
Assuming this estimate for now, 
and using the fact that ${\| Q_0 (\z_\a-1, \partial_\a f) \|}_{H^{N_1}}$ can be bounded uniformly in time in a straightforward fashion,
we can use the last four bounds in \eqref{lemLE2est2} to deduce that
\begin{align}
\label{lemLE2est20}
& {\| \partial_\a f \|}_{X_{N_0}} \lesssim  {\| \wt{L}\|}_{X_{N_0}} \, .
\end{align}
Them, since $\phi_x = (\partial_\a f/\Re \z_\a) \circ \Re \z^{-1}$, we have
\begin{align}
& {\| \phi_x \|}_{H^{N_0}} \lesssim  {\| \partial_\a f \|}_{H^{N_0}} +  {\| \Re \z_{\a\a} \|}_{H^{N_0-1}}  
  \lesssim  {\| \wt{L}\|}_{X_{N_0}} \, .
\end{align}
This takes care of the Sobolev component of the $X_{N_0}$-norm. 
To bound the weighted component we use $\phi_x \circ \Re \z = \partial_\a f / \Re \z_\a$ in combination with \eqref{Sformula}, 
estimate \eqref{SRez-RezL^2}, and \eqref{lemLE2est20}, to get
\begin{align*}
{\| S \phi_x \|}_{H^\frac{N_0}{2}} & \lesssim  {\| (S \phi_x) \circ \Re \z \|}_{H^\frac{N_0}{2}}
  \lesssim  {\left\| S (f_\a /\Re \z_\a) \right\|}_{H^\frac{N_0}{2}} 
  + {\| \phi_x \|}_{H^{\frac{N_0}{2}+2}} {\| S \Re\z - \Re\z \|}_{H^\frac{N_0}{2}} \lesssim {\| \wt{L} \|}_{X_{N_0}}.
\end{align*}
This shows \eqref{LE1} provided we verify \eqref{lemLE2est10}.
Observe that
\begin{align*} 
{\| \partial_\a f \|}_{W^{\frac{N_0}{2}+1,\infty}} \lesssim {\| \partial_x \phi \|}_{W^{\frac{N_0}{2}+1,\infty}} \lesssim \e_1 \, ,
\end{align*}
directly from the a priori assumptions.
To bound $\H \partial_\a f$ instead, we use Lemma \ref{lemHinfty}, the a priori assumptions and \eqref{estcorproE4} to obtain
\begin{align*} 
{\| \H \partial_\a f \|}_{W^{\frac{N_0}{2}+1,\infty}} \lesssim {\| f \|}_{W^{\frac{N_0}{2}+3,\infty}} 
  \lesssim {\| \phi \|}_{W^{\frac{N_0}{2}+3,\infty}} \lesssim \e_1 \, .
\end{align*}
This shows \eqref{lemLE2est10} and conlcudes the proof of \eqref{LE1}, hence of Proposition \ref{proLE}.
\end{proof}

\appendix
\section{Supporting material}\label{appWu}

In this first appendix we gather some useful Lemmas that are used several times in sections \ref{secprok} and \ref{secproLE}
and in the course of the energy estimates.
First, in section \ref{appWu1} we give some variants of estimates proven in \cite{WuAG} related to the Hilbert transform on curves.
In section \ref{secop} we first recall some Theorems about multilinear operators of ``Calder\'{o}n's commutators''-type,
and then prove some additional bounds on them that are used for the energy estimates.

\subsection{Estimates for the Cauchy integral}\label{appWu1}
In what follows we will always be under the a priori assumption that \eqref{aprioriL1} holds. 

\begin{lem}\label{lemHf}
Let $\H = \H_\z$. 
Then, for any $f$ in $X_k$ with $0 \leq k \leq N_0$, we have
\begin{align}
\label{estH}
{\| \H f \|}_{H^k} + {\left\| \H \frac{1}{\z_\a} f \right\|}_{H^k} & 
  \lesssim {\| f \|}_{H^k} + {\| \z_\a - 1 \|}_{H^k} {\| f \|}_{W^{\frac{k}{2},\infty}} \, ,
\\
\label{estHb}
{\| \H f \|}_{X_k} + {\left\| \H \frac{1}{\z_\a} f \right\|}_{X_k} & 
  \lesssim {\| f \|}_{X_k} + {\| \z_\a - 1 \|}_{X_k} {\| f \|}_{W^{\frac{k}{2},\infty}} \, .
\end{align}
In particular, if $k \leq N_1+5$, one has
\begin{equation}
\label{estHlow}
{\| \H f \|}_{H^k}  + {\left\| \H \frac{1}{\z_\a} f \right\|}_{H^k} \lesssim  {\| f \|}_{H^k} \, .
\end{equation}
Furthermore, for any $0 \leq k \leq N_0$
\begin{align}
\label{estH+barH}
{\| (\H + \bar{\H})  f \|}_{X_k} & \lesssim {\| \z_\a - 1 \|}_{W^{\frac{k}{2},\infty}} {\| f \|}_{X_k} 
    + {\| \z_\a - 1 \|}_{X_k} {\| f \|}_{W^{\frac{k}{2}, \infty}} \, ,
\end{align}
and for $0 \leq k \leq N_1$
\begin{align}
\label{estH+barHlow}
{\| (\H + \bar{\H})  f \|}_{H^k} & \lesssim {\| \Im \z_\a  \|}_{W^{k,\infty}} {\| f \|}_{H^k}   \, .
\end{align}
\end{lem}

\proof
The $L^2$ case in \eqref{estH} follows directly from Theorem \ref{theoCMM}.
The $H^k$, respectively $X_k$, estimates can be proven by induction using the commutation identities \eqref{commK2} to distribute derivatives, 
respectively \eqref{commK3} to distribute the vector field $S$, 
and the bounds given in Theorem \ref{theoCMM} for operators of the type $C_1$, as defined in \eqref{C_11}.




To prove \eqref{estH+barH} one notices that
\begin{equation}
\label{H+barH}
(\H + \bar{\H})  f  =  -\frac{2}{\pi} \int \frac{\Im\z(\a) - \Im\z(\b)}{ {|\z(\a) - \z(\b)|}^2 } \, f(\b) \z_\b(\b) d\b 
  + \frac{2}{\pi} \int \frac{f(\b) \Im\z_\b(\b) }{\bar{\z}(\a) - \bar{\z}(\b)} d\b \, ,
\end{equation}
which is the sum of two operator of the form $C_1(H, \Im \z, f\z_\a)$ and $C_1(H, \id, f\Im \z_\a)$, for some smooth $H$,
see \eqref{C_11} below.
Applying the commutation identities \eqref{commK2} and \eqref{commK3}, followed by the $L^2$-estimates of Theorem \ref{theoCMM},
one can then verify the validity of \eqref{estH+barH} and \eqref{estH+barHlow}. $\hfill \Box$

The next Lemma is a variant of Lemma 3.8 in \cite{WuAG} and gives estimates of real valued functions $f$ in terms of the norms of $(I-\H)f$.

\begin{lem}\label{lemI-Hf}
Let $f \in X_k$, $0 \leq k \leq N_0$, be real-valued with 
\begin{align*}
(I - \H) f = g \, . 
\end{align*}
Then, for $0 \leq k \leq N_1+5$, one has
\begin{align}
\label{est1-Hflow}
{\| f \|}_{H^k} &  \lesssim {\| g \|}_{H^k} \, .
\end{align}
Furthermore, for $0 \leq k \leq N_0$
\begin{align}
\label{est1-Hf0}
{\| f \|}_{H^k} & \lesssim {\| g \|}_{H^k}  
  +  {\| \z_\a - 1 \|}_{H^k} \left( {\| g \|}_{ W^{\frac{k}{2},\infty}} + {\| \Im \z_\a \|}_{W^{\frac{k}{2}+1,\infty}}  
  {\| g \|}_{H^{\frac{k}{2}+1}} \right)
\\
\label{est1-Hf}
{\| f \|}_{X_k} & \lesssim {\| g \|}_{X_k}  
  +  {\| \z_\a - 1 \|}_{X_k} \left( {\| g \|}_{ W^{\frac{k}{2},\infty}} + {\| \Im \z_\a \|}_{W^{\frac{k}{2}+1,\infty}}  
  {\| g \|}_{H^{\frac{k}{2}+1}} \right) \, .
\end{align}
Moreover, for $0 \leq k \leq N_1$, we have
\begin{align}
\label{est1-Hfinfty}
{\| f \|}_{W^{k,\infty}} \lesssim  {\| \Re \, g \|}_{W^{k,\infty}}   & +  {\| \Im \z_\a \|}_{W^{k+1,\infty}}  {\| g \|}_{H^{k + 1}} \, ,
\end{align}
and a similar estimate for $\partial_\a f$:
\begin{align}
\label{est1-Hfinfty2}
{\| \partial_\a f \|}_{W^{k,\infty}} \lesssim  {\| \Re \, \partial_\a g \|}_{W^{k,\infty}}  
  +  {\| \z_\a - 1\|}_{W^{k+1,\infty}}  {\| \partial_\a g \|}_{H^{k + 1}} \, .
\end{align}
\end{lem}

\proof
Since $f$ is real-valued we have $(I - \K) f = \Re g$, where $\K = \Re \H$. Then
\begin{align*}
(I - \K) \partial_\a^j f = \Re \partial_\a^j g - \left[ \K, \partial_\a^j \right] f 
  =  \Re \partial_\a^j g - \sum_{k=1}^j \partial_\a^{j-k} \left[ \K, \partial_\a \right] \partial_\a^{k-1} f \, .
\end{align*}
Notice that 
\begin{align}
\label{[D,H]}
[\partial_\a,\H] f =  [\z_\a, \H] \frac{f_\a}{\z_\a} = C_2(H, \z_\a-1, f)  \, ,
\end{align}
for some smooth $H$, and where $C_2$ is defined in \eqref{C_21}.
We can then use the fact that the inverse of $I - \K$ is bounded on $L^2$ with an operator norm depending only on $\e_1$,
\eqref{[D,H]}, and Theorem \ref{theoCMM}, to obtain
\begin{align*}
{\| \partial_\a^j f \|}_{L^2} & \lesssim {\left\| \partial_\a^j g \right\|}_{L^2} 
  +  {\| \z_\a - 1 \|}_{ H^{\frac{j}{2} + 1} }  {\| f \|}_{H^j}
  +  {\| \z_\a - 1 \|}_{H^j} {\| f \|}_{W^{\frac{j}{2},\infty}}
\\ 
& \lesssim {\left\| \partial_\a^j g \right\|}_{L^2} + \e_1 {\| f \|}_{H^j}  + {\| \z_\a - 1 \|}_{H^j} {\| f \|}_{W^{\frac{j}{2},\infty}} \, .
\end{align*}
After summing over $j$, the second term in the right-hand side above can be absorbed to the left hand-side for $\e_1$ small enough.
We have therefore obtained that for any $0 \leq k \leq N_0$
\begin{align}
\label{A1}
{\| f \|}_{H^k} & \lesssim {\left\| g \right\|}_{H^k} + {\| \z_\a - 1 \|}_{H^k} {\| f \|}_{W^{\frac{k}{2}, \infty}} \, .
\end{align}
If $k \leq N_1+5$ the last term above can be also absorbed to the left hand-side thus yielding \eqref{est1-Hflow}.

In order to prove \eqref{est1-Hf0} let us focus on the term ${\| f \|}_{W^{\frac{k}{2}, \infty}}$.
Using the identity $f = \K f + \Re g$, Sobolev's embedding, the estimate \eqref{estH+barHlow}, and \eqref{estHlow}, we get
\begin{align*}
{\| f \|}_{W^{\frac{k}{2}, \infty}} \leq {\| \K f \|}_{H^{\frac{k}{2} + 1}} + {\| \Re g \|}_{W^{\frac{k}{2}, \infty}}
& \lesssim {\| \Im \z_\a \|}_{W^{\frac{k}{2}+1,\infty}} {\| f \|}_{H^{\frac{k}{2} + 1}} + {\| \Re g \|}_{W^{\frac{k}{2}, \infty}}
\\
& \lesssim {\| \Im \z_\a \|}_{W^{\frac{k}{2}+1,\infty}} {\| g \|}_{H^{\frac{k}{2} + 1}} + {\| \Re g \|}_{W^{\frac{k}{2}, \infty}} \, .
\end{align*}
Plugging this last inequality into \eqref{A1} gives \eqref{est1-Hf0}.
Substituting $k$ with $2k$ we obtain \eqref{est1-Hfinfty}. \eqref{est1-Hfinfty2} can be obtained similarly.

From above we see that \eqref{est1-Hf} would follow if we show
\begin{align}
\label{A2}
{\| S f \|}_{H^k} & \lesssim {\left\| S g \right\|}_{H^k} + {\| S(\z_\a - 1) \|}_{H^k} {\| f \|}_{W^{k, \infty}} \, . 
\end{align}
Starting from $(I-\H)f = g$ one can commute derivatives using \eqref{[D,H]} and commute $S$ by using
\begin{align}
\label{[S,H]}
[S,\H] f =  [S\z - \z, \H] \frac{f_\a}{\z_\a} = C_2(H, S\z - \z, f)  \, .
\end{align}
Applying Theorem \ref{theoCMM} one can then obtain
\begin{align*}
{\| \partial_\a^j S f \|}_{L^2} & \lesssim {\left\| \partial_\a^j S g \right\|}_{L^2} 
  +  {\| \z_\a - 1 \|}_{H^j}  {\| S f \|}_{H^j} +  {\| S(\z_\a - 1) \|}_{H^j} {\| f \|}_{W^{j,\infty}}
\\ 
& \lesssim {\left\| S g \right\|}_{H^j} + \e_1 {\| S f \|}_{H^j}  + {\| S(\z_\a - 1) \|}_{H^j} {\| f \|}_{W^{j,\infty}} \, .
\end{align*}
Summing over $j$ and absorbing the second summand above in the left-hand side, we obtain \eqref{A2} and hence \eqref{est1-Hf}.$\hfill \Box$


\subsection{Estimates for Multilinear Operators}\label{secop}

In this section we study some singular integrals that appear when performing the energy estimates. 
These integral operators are well known objects, which are usually referred to as Calder\'{o}n's commutators.
We first state some $L^2$-bounds like the ones already given in \cite{WuAG}.

Let $H \in C^1$, $A_i \in C^1$ for $i=1,\dots,m$, and $F \in C^\infty$. Using the same notation in \cite{WuAG} we define
\begin{align}
\label{C_11} 
C_1 (H,A,f) & :=  \mbox{p.v.} \int F \left(  \frac{ H(x) - H(y) }{x-y} \right) 
    \frac{ \prod_{i=1}^m ( A_i(x) - A_i(y) ) }{ {(x-y)}^{m+1} }  f(y) \, dy 
\\
\label{C_21}
C_2 (H,A,f) & :=  \mbox{p.v.} \int F \left(  \frac{ H(x) - H(y) }{x-y} \right) 
    \frac{ \prod_{i=1}^m ( A_i(x) - A_i(y) ) }{ {(x-y)}^{m} } \partial_y f(y) \, dy \, .
\end{align}
Integrals like the ones above are always to be understood in the principal value sense, but, as before,
for simplicity we will often omit the p.v. notation.

We also define the quadratic bilinear operators
\begin{align}
\label{Q_0} 
& Q_0 (f,g) :=  \int \frac{ f(\a) - f(\b) }{ \z(\a)-\z(\b) } g(\b) \, d\b \, ,
\\
\label{Q_1} 
& Q_1 (f,g) :=  \int \frac{ f(\a) - f(\b) }{ {(\z(\a)-\z(\b))}^2 } g(\b) \, d\b \, ,
\\
\label{Q_2} 
& Q_2 (f,g) :=  \int \frac{ f(\a) - f(\b) }{ {|\z(\a)-\z(\b)|}^2 } g(\b) \, d\b \, .
\end{align}
We denote by $\bQ$ indistictly any scalar multiple of the operators $Q_1$ or $Q_2$:
\begin{align}
\label{opQ}
\bQ (f,g) := c_i Q_i(f,g)
\end{align}
for $c_i \in \C$, $i=1,2$.
$Q_0$ causes some difficulties because it does not admit standard $L^2 \times L^\infty \rightarrow L^2$ estimates.
Moreover, it does not admit $L^\infty$ type estimates like those in Lemma \ref{lemCCL^infty} below for $Q_1$ and $Q_2$;
in order to bound it we need to resort to a stronger space than $L^\infty$.

We recall the following:
\begin{theo}[Coifman-McIntosh-Meyer \cite{CMM}, Wu \cite{WuAG}]\label{theoCMM}
There exists $c = c(F, {\| H^\p \|}_{L^\infty})$ such that the operators $C_j$, for $j=1,2$, satisfy the bounds
\begin{align}
{\| C_j (H,A,f) \|}_{L^2} & \leq c \prod_{i=1}^m {\| \partial A_i \|}_{L^\infty} {\| f \|}_{L^2}
\\
{\| C_j (H,A,f) \|}_{L^2} & \leq c {\| \partial A_1 \|}_{L^2} \prod_{i=2}^m {\| \partial A_i \|}_{L^\infty} {\| f \|}_{L^\infty} \, .
\end{align}
\end{theo}

From the above Theorem we can infer the following bounds on the operators of the type $\bC$ defined in \eqref{C} and $\bQ$ in \eqref{opQ}:
\begin{cor}\label{theoCCL^2}
There exists a constant $c = c({\| \partial_\a \z \|}_{L^\infty})$ such that
\begin{align*}
{\| \bQ (f, g) \|}_{L^2} & \leq c {\| \partial_\a f \|}_{L^\infty} {\| g \|}_{L^2}
\\
{\| \bQ (f, g) \|}_{L^2} & \leq c {\| \partial_\a f \|}_{L^2} {\| g \|}_{L^\infty}
\end{align*}
and
\begin{align*}
{\| \bC (f, g, h_\a) \|}_{L^2} & \leq c {\| \partial_\a f \|}_{L^\infty} {\| \partial_\a g \|}_{L^\infty} {\| h \|}_{L^2}
\\
{\| \bC (f, g, h_\a) \|}_{L^2} & \leq c {\| \partial_\a f \|}_{L^2} {\| \partial_\a g \|}_{L^\infty} {\| h \|}_{L^\infty} \, .
\end{align*}
\end{cor}

In \ref{prlemCCL^infty} we will prove the following simple Lemma:
\begin{lem}\label{lemCCL^infty}
There exists a constant $c = c( {\| \partial_\a \z \|}_{W^{1,\infty}} )$ such that the operators $\bQ$ satisfy the bound
\begin{align}
\label{estlemCCL^infty}
{\| \bQ (f, g) \|}_{L^\infty} & \leq c {\| f \|}_{W^{2,\infty}} {\| g \|}_{W^{1,\infty}} \, .
\end{align}
\end{lem}

We will also need to bound operators of the type $Q_0$ in $L^\infty$. However, they will only appear with a derivative in front,
so that we can use the following Lemma:
\begin{lem}\label{lemCCL^infty2}
There exists a constant $c = c( {\| \z_\a - 1 \|}_{H^3} )$ such that
\begin{align}
\label{estlemL^infty2}
{\| \partial_\a Q_0 (f, g) \|}_{L^\infty} & \leq c {\| f \|}_{W^{2,\infty}} \left( {\| \H g \|}_{L^\infty} + {\| g \|}_{W^{1,\infty}} \right) \, .
\end{align}
\end{lem}

The above results, together with some commutation identities, will give us the following Proposition:
\begin{pro}\label{proCCmain}
Recall the definitions
\begin{equation}
\label{vectorL}
 L := (\z_\a - 1, u, w, \Im\z, \partial_\a \chi, v) \in \C^6
\end{equation}
and 
\begin{equation}
\label{vectorL-}
 L^- := (\z_\a - 1, u, w, \partial_\a \chi, v)   \in \C^5 \, .
\end{equation}
Let $\bQ$ and $\bC$ be given by \eqref{opQ} and \eqref{opC}--\eqref{C}. 
Then

\begin{enumerate}

\item  There exists a constant $c = c({\| \z_\a-1 \|}_{H^{N_1+4}})$ such that 
for any integer 
$k \leq N_1$
\begin{align}
\label{estbQL^inftyfg}
& {\| \bQ (f, g) \|}_{W^{k,\infty}} \leq c {\| f \|}_{W^{k+2,\infty}} {\| g \|}_{W^{k+2,\infty}}  \, .
\end{align}
In particular
\begin{align}
\label{estQL^infty}
& {\| \bQ (L_i, L_j) \|}_{W^{[\frac{m}{2}]+1,\infty}} \leq c {\| L \|}^2_{W^{[\frac{m}{2}]+3,\infty}}
\end{align}
for any $i,j \in \{ 1, \dots, 6 \}$ and $0 \leq m \leq N_0$.

\item  There exists a constant $c = c({\| \z_\a -1 \|}_{H^{N_1+4}})$ such that for any integer $0 \leq k \leq N_0$
\begin{align}
\nn
{\| Q_0 (f, g) \|}_{X_k} & \lesssim_c {\| f \|}_{W^{\frac{N_0}{2}+1,\infty}} {\| g \|}_{X_k}
  + {\| (g, \H g) \|}_{W^{\frac{N_0}{2}+1,\infty}} {\| f \|}_{X_k} 
\\
\label{estQ_0L^2}
& + {\| f \|}_{W^{\frac{N_0}{2}+1,\infty}} {\| g \|}_{W^{\frac{N_0}{2}+1,\infty}} {\| \z_\a-1 \|}_{X_k}
\end{align}and
\begin{align}
\nn
{\| Q_0 (f, \partial_\a g) \|}_{X_k} & \lesssim_c {\| \partial_\a f \|}_{W^{\frac{N_0}{2}+1,\infty}} {\| g \|}_{X_k}
  + {\| g \|}_{W^{\frac{N_0}{2}+2,\infty}} {\| f \|}_{X_k} 
\\
\label{estQ_0L^2b}
& + {\| f \|}_{W^{\frac{N_0}{2}+1,\infty}} {\| g \|}_{W^{\frac{N_0}{2}+1,\infty}} {\| \z_\a-1 \|}_{X_k} \, .
\end{align}

Furthermore, for $k \leq N_1$,
\begin{align}
\label{estpartialQ_00}
{\| \partial_\a Q_0 (f, g) \|}_{W^{k,\infty}} & \leq c {\| f \|}_{W^{k+2,\infty}}
  \left( {\| \H g \|}_{W^{k,\infty}} + {\| g \|}_{W^{k+1,\infty}} \right) \, ,
\end{align}
so that
\begin{align}
\label{estpartialQ_0}
{\| \partial_\a Q_0 (L_i, L_j^-) \|}_{W^{[\frac{m}{2}]+1,\infty}} & \leq c {\| L \|}_{W^{[\frac{m}{2}]+3,\infty}} 
  \left( {\| \H L^- \|}_{W^{[\frac{m}{2}]+2,\infty}} + {\| L^- \|}_{W^{[\frac{m}{2}]+2,\infty}} \right)
\end{align}
for any $i \in \{ 1, \dots, 6 \}$, $j \in \{ 1, \dots, 5 \}$ and $0 \leq m \leq N_0$.

\item  There exists a constant $c$ as above
such that for any triple $(f,g,h)$ with ${\| (f,g,h) \|}_{H^{N_1-2}} \leq 1$, and any integer $m$, one has
\begin{align}
\label{estCCmain00}
{\| \bC (f, g, h) \|}_{X_m} + {\| \bC (f, g, \partial_\a h) \|}_{X_m} & \leq c {\| (f,g,h,\z_\a-1) \|}_{X_m} 
  {\| (f,g,h) \|}_{W^{\frac{N_0}{2}+3,\infty}}^2 \, .
\end{align}
In particular
\begin{align}
\label{estCCmain0}
{\| \bC (L_i, L_j, L_k) \|}_{X_m} + {\| \bC (L_i, L_j, \partial_\a L_k) \|}_{X_m} 
  & \leq c {\| L \|}_{X_m} {\| L \|}_{W^{\frac{N_0}{2}+3,\infty}}^2 
\end{align}
for any $i,j,k \in \{ 1, \dots, 6 \}$ and $0 \leq m \leq N_0$. 
\end{enumerate}
\end{pro}

The proof of the above Proposition is given in \ref{secproCCmain}.
We will also need the following simple Lemma:

\begin{lem}\label{lemHinfty}
Let $\H = \H_\z$ denote the Hilbert transform along a curve $\z$ satisfying 
${\| \z_\a - 1\|}_{H^{N_1+4}} \leq \frac{1}{2}$.
Then for any $f$ with ${\| f \|}_{H^{k+2}} \leq 1$, and $k \leq N_1$, we have
\begin{align}
\label{estHdfL^infty}
& {\| \H \partial_\a f(t) \|}_{W^{k,\infty}} + {\left\| \H \frac{1}{\z_\a} \partial_\a f(t) \right\|}_{W^{k,\infty}}  
  \lesssim {\| f(t) \|}_{W^{k+2,\infty}} \, 
\end{align}
and for any $2 \leq p < \infty$
\begin{align}
\label{estHL^infty}
& {\| \H  f(t) \|}_{W^{k,\infty}} + {\left\| \H \frac{1}{\z_\a} f(t) \right\|}_{W^{k,\infty}}  
  \lesssim {\| f(t) \|}_{W^{k+1,p}} + {\| f(t) \|}_{W^{k+1,\infty}}  \, .
\end{align}
\end{lem}

\subsubsection{Commutator identities}
Let $\bK$ be an integral operator of the form
\begin{align}
 \label{defK}
\bK f (\a,t) = \mbox{p.v.} \int K(\a,\b;t) f(\b,t) \, d\b
\end{align}
with kernel $K(\a,\b;t)$ or $(\a-\b)K(\a,\b;t)$ continuous and bounded, and $K$ smooth away from the diagonal $\a=\b$.
One can easily verify that
\begin{subequations}
\label{commK}
\begin{align}
\label{commK1}
[\partial_t, \bK] f(\a,t) &= \int \partial_t K(\a,\b;t) f(\b,t) \, d\b \, ,
\\
\label{commK2}
[\partial_\a, \bK] f(\a,t) &= \int (\partial_\a + \partial_\b) K(\a,\b;t) f(\b,t) \, d\b \, ,
\\
\label{commK3}
[S, \bK] f(\a,t) &= \int \left( \a \partial_\a + \b \partial_\b + \frac{1}{2}t \partial_t \right) K(\a,\b;t) f(\b,t) \, d\b +  \bK f (\a,t)  \, ,
\end{align}
\end{subequations}
for any sufficiently smooth and decaying $f$.

\subsubsection{Proof of Lemma \ref{lemCCL^infty}}\label{prlemCCL^infty}
It is enough to just look at the case of $Q_1$, as the treatment of $Q_2$ is identical.
Expanding out the denominator in \eqref{Q_1} we can write
\begin{align*}
 \frac{1}{{(\z(\a)-\z(\b))}^2} = F \left(  \frac{ \z(\a) - \a - (\z(\b) - \b) }{\a-\b} \right) \frac{1}{ {(\a-\b)}^2 }
\end{align*}
where $F(x) = \sum_{k\geq0} {(-1)}^k (k+1) x^k$.
Then one can see that proving \eqref{estlemCCL^infty} 
can be reduced to proving the following estimate for operators of the type $C_1$ as in \eqref{C_11}:
\begin{align}
\label{estinfty1} 
{\left\| \mbox{p.v.} \int F \left(  \frac{ H(x) - H(y) }{x-y} \right) \frac{ A(x) - A(y) }{ {(x-y)}^2 }  f(y) \, dy \right\|}_{L^\infty}
	& \lesssim \prod_{i=1}^m {\| A \|}_{W^{2,\infty}} {\| f \|}_{W^{1,\infty}} \, ,
\end{align}
where the implicit constant depends on ${\| H^{\p\p} \|}_{L^\infty}$.
To show this we split the integral into two pieces:
\begin{align*}
& \int F \left(  \frac{ H(x) - H(y) }{x-y} \right) \frac{ A(x) - A(y) }{ {(x-y)}^2 }  f(y) \, dy = I_1(x) + I_2(x)
\\
 & I_1(x) = \int_{|x-y| \leq 1} F \left(  \frac{ H(x) - H(y) }{x-y} \right) \frac{ A(x) - A(y) }{ {(x-y)}^2 }  f(y) \, dy
\\
 & I_2(x) = \int_{|x-y| \geq 1} F \left(  \frac{ H(x) - H(y) }{x-y} \right) \frac{ A(x) - A(y) }{ {(x-y)}^2 }  f(y) \, dy \, .
\end{align*}
We write
\begin{align*}
I_1(x) & = \int_{|x-y| \leq 1} \left[ F \left(  \frac{ H(x) - H(y) }{x-y} \right) - F(H^\p(x)) \right] \frac{ A(x) - A(y)}{ {(x-y)}^2 }  f(y) \, dy
	\\
	& + F(H^\p(x)) \int_{|x-y| \leq 1} \frac{ A(x) - A(y) - A^\p(x) (x-y)}{ {(x-y)}^2 }  f(y) \, dy
	\\
	& + F(H^\p(x)) \int_{|x-y| \leq 1} \frac{A^\p(x)}{x-y}  ( f(y) - f(x) )\, dy
	=: I_{1,1}(x) + I_{1,2}(x) + I_{1,3}(x) \, .
\end{align*}
It is then easy to see that we can then estimate
\begin{align*}
| I_{1,1}(x) | & \lesssim {\| F^\p \|}_{L^\infty} {\| H^{\p\p} \|}_{L^\infty} {\| A^\p \|}_{L^\infty} {\| f \|}_{L^\infty}
\\
| I_{1,2}(x) | & \lesssim {\| F \|}_{L^\infty} {\| A^{\p\p} \|}_{L^\infty} {\| f \|}_{L^\infty}
\\
| I_{1,3}(x) | & \lesssim {\| F^\p \|}_{L^\infty} {\| A^\p \|}_{L^\infty} {\| f^\p \|}_{L^\infty}
\end{align*}
so that
${\| I_1 \|}_{L^\infty} \lesssim c\left( {\| F \|}_{W^{1,\infty}}, {\| H^{\p\p} \|}_{L^\infty} \right) 
  {\| A^\p \|}_{W^{1,\infty}} {\| f \|}_{W^{1,\infty}}$.
Since $|x-y|^{-2}$ is integrable for $|x-y| \geq 1$ one has
\begin{align*}
{\| I_2 \|}_{L^\infty} & \lesssim c\left( {\| F \|}_{L^\infty} \right) {\| A \|}_{L^\infty} {\| f \|}_{L^\infty} \, .
\end{align*}
The bound \eqref{estinfty1} follows. $\hfill \Box$

\subsubsection{Proof of Lemma \ref{lemCCL^infty2}}
We start by calculating
\begin{align*}
\partial_\a Q_0 (f,g) & = \partial_\a \int \frac{ f(\a) - f(\b) }{ \z(\a)-\z(\b) } g(\b) \, d\b 
\\
& = - \partial_\a \z(\a) \int \frac{ f(\a) - f(\b) }{ {(\z(\a)-\z(\b))}^2 } g(\b) \, d\b 
  + \int \frac{ \partial_\a f(\a) }{ \z(\a)-\z(\b) } g(\b) \, d\b 
  =: Q_0^1(\a) + Q_0^2(\a) \, .
\end{align*}
Since the integral operators in $Q^0_1$ is of the type $\bQ$, we can use Lemma \ref{lemCCL^infty} to bound
\begin{align*}
{\| Q_0^1 \|}_{L^\infty} & \lesssim {\| \z_\a \|}_{L^\infty} {\| f \|}_{W^{2,\infty}} {\| g \|}_{W^{1,\infty}} \, .
\end{align*}
The second contribution to $\partial_\a Q_0 (f,g)$ is
\begin{align*}
Q_0^2 =  \partial_\a f \left( \H \frac{1}{\z_\a} g \right) =  \partial_\a f \H g +  \partial_\a f \H \left(\frac{1}{\z_\a} - 1\right) g
\end{align*}
Thus, using also \eqref{estH}, we see that
\begin{align*}
{\| Q_0^2 \|}_{L^\infty} & \lesssim {\| \partial_\a f \|}_{L^\infty} {\| \H g \|}_{L^\infty} 
	+  {\| \partial_\a f \|}_{L^\infty} {\left\| \H \left(\frac{1}{\z_\a} - 1\right) g \right\|}_{H^1}
\\
& \lesssim {\| \partial_\a f \|}_{L^\infty} {\| \H g \|}_{L^\infty} 
	+  {\| \partial_\a f \|}_{L^\infty} c( {\| \z_\a - 1 \|}_{H^3}) {\| g \|}_{W^{1,\infty}} \, .
\end{align*}
We conclude that $Q_0^2$ satisfies the desired bound and so does $\partial_\a Q_0 (f,g)$. $\hfill \Box$

\subsubsection{Proof of Proposition \ref{proCCmain}}\label{secproCCmain}

\paragraph{{\it Proof of \eqref{estbQL^inftyfg}}}
We want to show that for any two functions $f$ and $g$
\begin{align}
\label{estQL^infty2}
{\| \bQ (f, g) \|}_{W^{k,\infty}} & \leq c {\| f \|}_{W^{k+2,\infty}} {\| g \|}_{W^{k+2,\infty}} \, .
\end{align}
This can be shown by induction, using \eqref{estlemCCL^infty} as the base of the induction. 
Again, it is enough to just look at the case of $Q_1$.
Let us assume that \eqref{estQL^infty2} holds true for some $1 \leq k\leq \frac{N_0}{2}+1$.
We want to show the estimate for $k+1$.
Notice that we can write $\bQ$ as an operator of the form $\bK$, see \eqref{defK}, with Kernel
\begin{align*}
 K(\a,\b;t) = \frac{f(\a) - f(\b) }{ {(\z(\a)-\z(\b))}^2 } \, .
\end{align*}
Using the commutation identity \eqref{commK2} we see that
\begin{align*}
\partial_\a \bQ (f,g) & = \bQ (f, \partial_\a g) + \bQ (\partial_\a f,g) + I(\a)
\end{align*}
where
\begin{align*}
I(\a) = - \int \frac{ (f(\a) - f(\b))( \partial_\a \z(\a) - \partial_\b \z(\b)) }{ {(\z(\a)-\z(\b))}^3 } g(\b) \,d\b \, .
\end{align*}
Using the inductive hypothesis we have
\begin{align*}
{\| \bQ (\partial_\a f, g) \|}_{W^{k,\infty}} + {\| \bQ (f, \partial_\a g) \|}_{W^{k,\infty}} 
  & \leq c {\| \partial_\a f \|}_{W^{k+2,\infty}} {\| g \|}_{W^{k+2,\infty}} 
  +  c {\| f \|}_{W^{k+2,\infty}} {\| \partial_\a g \|}_{W^{k+2,\infty}}\, .
\end{align*}
By expanding the denominator in the integral defining $I$, we see that 
$I$ is an operator of the form  $C_1(H , A, g )$, see \eqref{C_11}, with $A = (f, \z_\a-1)$ and $H = \z - \id$.
Letting $k+1 = k_1 + k_2 + k_3$, and using \eqref{commK2}, we see that $D^{k+1} I $ is a sum of operators of the form 
\begin{align*}
C_1 \left(\z - \a, A_{k_2,k_3}, D^{k_1} g) \right)
\end{align*}
where $A_{k_2,k_3} = (\wt{A}_{k_3}, D^{k_2} f)$, and $\wt{A}_{k_3}$ is a vector with at most $k_3$ components 
satisfying $${\| A_{k_3}^\p \|}_{L^p} \lesssim {\| D^{k_3 + 1} \z_\a \|}_{L^p}$$ for $p=2,\infty$.
Applying Theorem \ref{theoCMM} we see that
\begin{align*}
{\| I \|}_{H^{k+2,\infty}} & \leq
  c {\| \partial_\a  f \|}_{W^{k+2,\infty}} {\| \z_\a -1\|}_{H^{k+2}} {\| g \|}_{W^{k+2,\infty}}
  \leq c {\| f \|}_{W^{k+3,\infty}} {\| g \|}_{W^{k+2,\infty}} \, ,
\end{align*}
where the constant $c$ depends only on ${\| \z_\a \|}_{H^{N_1}}$.
We can then deduce
\begin{align*}
{\| \bQ (f,g) \|}_{W^{k+1,\infty}} 
  & \leq {\| \bQ (\partial_\a  f,g) \|}_{W^{k+1,\infty}} + {\| \bQ (f,  \partial_\a g) \|}_{W^{k+1,\infty}} + {\| I \|}_{W^{k+1,\infty}} 
  \leq c {\| f \|}_{W^{k+3,\infty}} {\| g \|}_{W^{k+3,\infty}} 
\end{align*}
which is exactly \eqref{estQL^infty2} with $k+1$ replacing $k$.

\vskip10pt
\paragraph{\it{Proof of \eqref{estQ_0L^2}}}
Let us first look at the $H^k$ component of the $X_k$ norm. Since 
\begin{align}
\label{Q_0form}
Q_0(f,g) = \H \frac{1}{\z_\a} (fg) - f \H \frac{1}{\z_\a} g
\end{align}
we can use product Sobolev estimates and the $H^k$ bounds on the Hilbert transform \eqref{estH} to obtain
\begin{align*}
{\| Q_0(f,g) \|}_{H^k} 
& \lesssim {\| f g \|}_{H^k} + {\| \z_\a - 1 \|}_{H^k}  {\| f g \|}_{W^{\frac{k}{2}+1,\infty}}
  + {\| f \|}_{H^k} {\| \H g \|}_{L^\infty} + {\| f \|}_{L^\infty} {\| \H g \|}_{H^k}
\\
& \lesssim {\| f \|}_{H^k} {\| (\H g, g) \|}_{L^\infty}  + {\| f \|}_{L^\infty} {\| g \|}_{H^k}
  + {\| \z_\a - 1 \|}_{H^k}  {\| f \|}_{W^{\frac{k}{2}+1,\infty}} {\| g \|}_{W^{\frac{k}{2}+1,\infty}}
\end{align*}
where the implicit constants depend only on ${\| \z_\a -1\|}_{H^{N_1}}$.

A similar argument can be used to bound the $S^{-1} H^k$ norm of $Q_0(f,g)$ for $0 \leq k \leq \frac{N_0}{2}$.
First we observe that for any $0 \leq k \leq \frac{N_0}{2}$ one has
\begin{align}
\nn
{\| S Q_0(f,g) \|}_{H^k} & \leq {\left\| S \H \frac{1}{\z_\a} (f g) \right\|}_{H^k} + {\left\| S f \H \frac{1}{\z_\a} g \right\|}_{H^k} 
  + {\left\| f  S \H \frac{1}{\z_\a} g \right\|}_{H^k}
\\
\label{estQ_0L^210}
& \leq {\left\| \H \frac{1}{\z_\a} (f g) \right\|}_{X_k} + {\| f \|}_{X_k} {\left\| \H \frac{1}{\z_\a} g \right\|}_{W^{k,\infty}} 
  + {\| f \|}_{W^{k,\infty}} {\left\| \H \frac{1}{\z_\a} g \right\|}_{X_k} \, .
\end{align}
We can then use \eqref{estHb} to obtain
\begin{align*}
{\left\| \H \frac{1}{\z_\a} (f g) \right\|}_{X_k} & \leq c {\| f g \|}_{X_k} + {\| \z_\a - 1 \|}_{X_k}  {\| f g \|}_{W^{\frac{k}{2}+1,\infty}}
\\
{\left\| \H \frac{1}{\z_\a} g \right\|}_{X_k} & \leq c {\| g \|}_{X_k} + {\| \z_\a - 1 \|}_{X_k}  {\| g \|}_{W^{\frac{k}{2}+1,\infty}} \, .
\end{align*}
Since we also have
\begin{align*}
{\| f g \|}_{X_k} & \leq c {\| f \|}_{X_k}  {\| g \|}_{W^{k,\infty}} + {\| f \|}_{W^{k,\infty}} {\| g \|}_{X_k}  
\end{align*}
we can plug the above bounds in \eqref{estQ_0L^210} and get the desired conclusion.

\vskip10pt
\paragraph{\it{Proof of \eqref{estQ_0L^2b}}}
Let us start again with the $H^k$ component of the $X_k$ norm.
First of all observe that $Q_0 (f, \partial_\a g)$ is an operator of the form $C_2(\z-\a, f ,g)$, see \eqref{C_21}.
Distributing derivatives on $Q_0(f, \partial_\a g)$ by using \eqref{commK2}, 
we see that for any integer $k = k_1 + k_2 + k_3$, we have that $D^k Q_0(f, \partial_\a g)$ is a sum of operators of the form 
\begin{align*}
C_2 \left(\z - \a, A_{k_2,k_3}, D^{k_1} g) \right)
\end{align*}
where $A_{k_2,k_3} = (\wt{A}_{k_3}, D^{k_2} f)$, and $\wt{A}_{k_3}$ is a vector with at most $k_3$ components 
satisfying $${\| A_{k_3}^\p \|}_{L^p} \lesssim {\| D^{k_3 + 1} \z_\a \|}_{L^p}$$ for $p=2,\infty$.
One can then apply Theorem \ref{theoCMM} to deduce that the $H^k$-norm of $Q_0(f, \partial_\a g)$
is bounded by the right-hand side of \eqref{estQ_0L^2b} for any $0 \leq k \leq N_0$.

The estimate for ${\| S Q_0(f, \partial_\a g) \|}_{H^k}$, for $0 \leq k \leq \frac{N_0}{2}$,
follows similarly by using the commutation identity \eqref{commK3}. 
Indeed, applying $S$ to $Q_0(f, \partial_\a g) \sim C_2 (\z-\a, f, g)$, and commuting $S$ and $\partial_\a$
when $S$ falls on $\partial_\a g$, 
one obtains operators of the form $C_2 (\z-\a, S f, g)$, $C_2 (\z-\a, (f, S \z_\a), g)$, $C_2 (\z-\a, f, S g)$ or $C_2 (\z-\a, f, g)$ itself.
Applying and distributing $k$ derivatives as above, one can then estimate the resulting expressions in $L^2$
via Theorem \ref{theoCMM}, eventually obtaining the desired bound.

\vskip15pt
\paragraph{\it{Proof of \eqref{estpartialQ_00}}}

This estimate follows from the same proof of Lemma \ref{lemCCL^infty2}, which is the case $l=0$, 
after applying and commuting $k$ derivatives similarly to what has been already done before.
Since the proof is straightforward, we skip it.

\vskip15pt
\paragraph{\it{Proof of \eqref{estCCmain00}}}
Let us start by showing the $H^m$ estimate
\begin{align}
\label{estCCmain01}
{\| \bC (f, g, h) \|}_{H^m} + {\| \bC (f, g, \partial_\a h) \|}_{H^m} & \leq c {\| (f,g,h,\z_\a-1) \|}_{H^m} 
  {\| (f,g,h,\z_\a-1) \|}_{W^{\frac{N_0}{2}+3,\infty}}^2 \, ,
\end{align}
for all integers $0 \leq m \leq N_0$.
Again we will us induction and commutation identities. The base for the induction is given by the estimates
\begin{align}
{\| \bC (f, g, h) \|}_{L^2} & \leq c {\| (f,g,h) \|}_{L^2} {\| (f,g,h) \|}_{W^{\frac{N_0}{2}+3,\infty}}^2 \, ,
\\
\label{estCCmain012}
{\| \bC (f, g, \partial_\a h) \|}_{L^2} & \leq c {\| (f,g,h)\|}_{L^2} {\| (f,g,h) \|}_{W^{\frac{N_0}{2}+3,\infty}}^2 \, .
\end{align}
To verify these we cannot use directly Theorem \ref{theoCMM}. We instead write
\begin{align*}
\bC (f, g, h) & = f \bQ (g, h) - \bQ (g, f h)
\\
\bC (f, g, \partial_\a h) & = f \bQ (g, \partial_\a h) - \bQ (g, f \partial_\a h) \, .
\end{align*}
From Theorem \ref{theoCCL^2} we have
\begin{align*}
{\| \bQ (a, b) \|}_{L^2} \leq c {\| \partial_\a a \|}_{L^\infty} {\| b \|}_{L^2} \, .
\end{align*}
Thus, using \eqref{estQL^infty}, we obtain
\begin{align*}
{\| \bC (f,g,h) \|}_{L^2} & \leq {\| f \bQ (g, h) \|}_{L^2} + {\| \bQ (g, f h) \|}_{L^2}
\\
& \lesssim  {\| f \|}_{L^2} {\| \bQ (g, h) \|}_{L^\infty} + {\| \partial_\a g \|}_{L^\infty}  {\| f h \|}_{L^2}
\lesssim {\| (f,g,h) \|}_{L^2} {\| (f,g,h) \|}^2_{W^{2,\infty}} \, .
\end{align*}
Similarly we have
\begin{align*}
{\| \bC (f,g,\partial_\a h) \|}_{L^2} & \leq {\| f \bQ (g, \partial_\a h) \|}_{L^2} + {\| \bQ (g, f \partial_\a h) \|}_{L^2}
\\
& \lesssim  {\| f \|}_{L^2} {\| \bQ (g, \partial_\a h) \|}_{L^\infty} + {\| \partial_\a g \|}_{L^\infty}  {\| f \partial_\a h \|}_{L^2}
\lesssim {\| (f,g,h) \|}_{L^2} {\| (f,g,h) \|}^2_{W^{3,\infty}} \, .
\end{align*}

Now let us assume that \eqref{estCCmain01} holds true for some integer $0 \leq l \leq m-1$.
Using the commutation identity \eqref{commK2} we see that
\begin{align*}
\partial_\a \bC (f,g,h) & = \bC (\partial_\a f,g,h) + \bC (f, \partial_\a g, h) + \bC (f, g, \partial_\a h) + J_1(\a)
\end{align*}
where
\begin{align*}
J_1(\a) = - \int \frac{ (f(\a) - f(\b))(g(\a) - g(\b)) ( \partial_\a \z(\a) - \partial_\b \z(\b)) }{ {(\z(\a)-\z(\b))}^3 } h(\b) \,d\b  \, .
\end{align*}
Since
\begin{align*}
\bC (\partial_\a f,g,h) = \partial_\a f \bQ (g, h) - \bQ (g, \partial_\a f h)
\end{align*}
we have
\begin{align}
\label{proCCmain10}
{\| \bC (\partial_\a  f,g,h) \|}_{H^l} & \leq {\| \partial_\a f \|}_{H^l} {\| \bQ (g, h) \|}_{L^\infty} 
  + {\| \partial_\a f \|}_{L^\infty} {\| \bQ (g, h) \|}_{H^l} + {\| \bQ (g, \partial_\a f h) \|}_{H^l} \, .
\end{align}
From Theorem \ref{theoCCL^2}, and commutation identities, it is not hard to see that
\begin{align}
\label{proCCmain11}
{\| \bQ (a, b) \|}_{H^l} \leq c {\| a \|}_{W^{\frac{l}{2}+2,\infty}} {\| b \|}_{H^l}
  + c {\| \partial_\a a \|}_{H^l} {\| b \|}_{W^{\frac{l}{2}+2,\infty}}
  + {\| a \|}_{W^{\frac{l}{2}+2,\infty}} {\| b \|}_{W^{\frac{l}{2}+2,\infty}} {\| \z_\a - 1 \|}_{H^l} \, .
\end{align}
We can then use the above estimate and \eqref{estQL^infty} to bound the right-hand side of \eqref{proCCmain10} and obtain
\begin{align*}
& {\| \bC (\partial_\a f, g, h) \|}_{H^l} \lesssim {\| \partial_\a f \|}_{H^l} {\| (f,g,h) \|}^2_{W^{2,\infty}} 
  + {\| \partial_\a  f \|}_{L^\infty} {\| (g,h) \|}_{H^{l+1}} {\| (g,h) \|}_{W^{\frac{l}{2}+2,\infty}} 
\\
& + {\| (\partial_\a f,g,h) \|}^2_{W^{\frac{l}{2}+2,\infty}} {\| \z_\a - 1 \|}_{H^l}
  + {\| g \|}_{W^{\frac{l}{2}+2,\infty}} {\| \partial_\a f h \|}_{H^l} 
  + {\| \partial_\a g \|}_{H^l} {\| \partial_\a f  h \|}_{W^{\frac{l}{2}+2,\infty}} 
\\
& \lesssim {\| (f,g,h,\z_\a-1) \|}_{H^{l+1}} {\| (f,g,h) \|}^2_{W^{\frac{l}{2}+3,\infty}}  \, .
\end{align*}
An identical bound clearly holds for $\bC (f, \partial_\a g, h)$.
Since $l \leq N_0$ we have then obtained
\begin{align*}
{\| \partial_\a \bC (f, g, h) - \bC (f, g, \partial_\a h) - J_1 \|}_{H^l} 
  & \leq c {\| (f,g,h,\z_\a-1) \|}_{H^{l+1}} {\| (f,g,h) \|}_{W^{\frac{N_0}{2}+3,\infty}}^2 \, .
\end{align*}

To estimate
\begin{align*}
\bC (f, g, \partial_\a h) =  \int \frac{ (f(\a) - f(\b))(g(\a) - g(\b)) }{ {(\z(\a)-\z(\b))}^2 } \partial_\b h(\b) \,d\b
\end{align*}
we need to get rid of the extra derivative falling on $h$. Integrating by parts in $\b$ we have
\begin{align*}
\bC (f, g, \partial_\a h) & = \bQ (g, h \partial_\a f) +  \bQ (f, h \partial_\a g) + J_2(\a)
\end{align*}
where
\begin{align*}
 J_2(\a) & = - 2 \int \frac{ (f(\a) - f(\b))(g(\a) - g(\b)) }{ {(\z(\a)-\z(\b))}^3 } \z_\b(\b) h(\b) \,d\b \, .
\end{align*}
Using \eqref{proCCmain11}  we can bound
\begin{align*}
{\| \bQ (g, h\partial_\a f) \|}_{H^l} +  {\| \bQ (f, h\partial_\a g) \|}_{H^l} 
 \leq c {\| (f,g,h,\z_\a-1) \|}_{H^{l+1}} {\| (f,g,h) \|}_{W^{\frac{N_0}{2}+3,\infty}}^2
\end{align*}
as desired.
To bound $J_2$, which is an operator of the form $C_1(\z-\id, (f,g), \z_\a h)$, 
we can again commute derivatives via \eqref{commK2} and apply Theorem \ref{theoCMM} to obtain:
\begin{align*}
{\| J_2 \|}_{H^l} & \leq c {\| (f,g,h,\z_\a-1)\|}_{H^{l+1}} {\| (f,g,h) \|}_{W^{\frac{N_0}{2}+3,\infty}}^2 \, .
\end{align*}
We have then shown
\begin{align*}
{\| \partial_\a \bC (f, g, h) - J_1 \|}_{H^l} & \leq c {\| (f,g,h,\z_\a-1) \|}_{H^{l+1}} {\| (f,g,h) \|}_{W^{\frac{N_0}{2}+3,\infty}}^2 \, .
\end{align*}

To eventually estimate $J_1$ we notice that
\begin{align*}
J_1(\a) = - \z_\a \bC (f, g, h) + \bC (f, g, \z_\a h) \, ,
\end{align*}
so that 
\begin{align*}
{\| J_1 \|}_{H^l} \leq c {\| \bC (f, g, h) \|}_{H^l} + {\| \bC (f, g, \z_\a h) \|}_{H^l} \, .
\end{align*} 
Using the inductive hypotheses we see that
\begin{align*}
{\| J_1 \|}_{H^l} & \leq c {\| (f,g,h,\z_\a-1) \|}_{H^l} {\| (f,g,h) \|}_{W^{\frac{N_0}{2}+3,\infty}}^2  
+  c {\| (f,g, \z_\a h,\z_\a-1) \|}_{H^l}  {\| (f,g, \z_\a h) \|}_{W^{\frac{N_0}{2}+3,\infty}}^2 
\\
& \leq c {\| (f,g,h,\z_\a-1) \|}_{H^l} {\| (f,g,h)\|}_{W^{\frac{N_0}{2}+3,\infty}}^2  
\end{align*} 
where the constant $c$ depends only on lower Sobolev norms of $(f,g,h,\z_\a-1)$, which are unifromly bounded by assumption.
It follows that
\begin{align*}
{\| \partial_\a \bC (f, g, h) \|}_{H^l} & \leq c {\| (f,g,h,\z_\a-1) \|}_{H^{l+1}} {\| (f,g,h) \|}_{W^{\frac{N_0}{2}+3,\infty}}^2 \, ,
\end{align*}
which gives the bound on the first term on the left-hand side of \eqref{estCCmain01}.

To complete the proof of \eqref{estCCmain01} we need to show
\begin{align*}
{\| \bC (f, g, \partial_\a h) \|}_{H^m} & \leq c {\| (f,g,h,\z_\a-1) \|}_{H^m} {\| (f,g,h) \|}^2_{W^{\frac{N}{2}+3,\infty}} \, .
\end{align*}
Again we proceed by induction, the base being given by \eqref{estCCmain012} which has already been verified.
The argument is similar to those above.
Applying a derivative to $\bC (f, g, \partial_\a h)$ we get
\begin{align*}
\partial_\a \bC (f, g, \partial_\a h) & = \bC (\partial_\a f, g, \partial_\a h) 
  + \bC (f, \partial_\a g, \partial_\a h) + \bC (f, g, \partial^2_\a h) + J_3(\a)
\end{align*}
where
\begin{align*}
J_3(\a) = - 2 \int \frac{ (f(\a) - f(\b))(g(\a) - g(\b)) ( \partial_\a \z(\a) - \partial_\b \z(\b)) }{ {(\z(\a)-\z(\b))}^3 } 
  \partial_\b h(\b) \,d\b  \, .
\end{align*}
Since
\begin{align*}
\bC (\partial_\a f, g, \partial_\a h) = \partial_\a f \bQ (g, \partial_\a h) - \bQ (g, \partial_\a f \partial_\a h)
\end{align*}
this term can be directly estimated using \eqref{estbQL^inftyfg} and \eqref{proCCmain11}. 
One can bound similarly $\bC (f, \partial_\a g, \partial_\a h)$.

To control $\bC (f, g, \partial^2_\a h)$ we need to resort again to an integration by parts to remove the presence of the extra derivative.
More precisely we have
\begin{align*}
\bC (f, g, \partial_\a^2 h) & = \bQ (g, \partial_\a h \partial_\a f) +  \bQ (f, \partial_\a h \partial_\a g) + J_4(\a)
\end{align*}
where
\begin{align*}
 J_4(\a) & = - 2 \int \frac{ (f(\a) - f(\b))(g(\a) - g(\b)) }{ {(\z(\a)-\z(\b))}^3 } \z_\b(\b) \partial_\b h(\b) \,d\b \, .
\end{align*}
The terms $\bQ (g, \partial_\a h \partial_\a f)$ and $\bQ (f, \partial_\a h \partial_\a g)$ can be estimated via \eqref{proCCmain11}:
\begin{align*}
 {\| \bQ (g, \partial_\a h \partial_\a f) \|}_{H^l} + {\| \bQ (f, \partial_\a h \partial_\a g) \|}_{H^l} 
& \leq c {\| (f,g,h,\z_\a-1) \|}_{H^{l+1}} {\| (f,g,h) \|}^2_{W^{\frac{N_0}{2}+3,\infty}} \, .
\end{align*}
Similarly to what has been done before, we can expand the factor ${(\z(\a)-\z(\b))}^{-3}$, 
and write $J_4$ as an operator of the type $C_1$ as in \eqref{C_11}.
By using the commutation identity \eqref{commK2} we can then bound it by
\begin{align*}
 {\| J_4 \|}_{H^l} \leq c {\| \partial_\a (f,g,h,\z_\a-1) \|}_{H^l} {\| \partial_\a (f,g,h) \|}^2_{W^{\frac{l}{2}+2,\infty}} 
\leq c {\| (f,g,h,\z_\a-1) \|}_{H^{l+1}} {\| (f,g,h) \|}^2_{W^{\frac{N_0}{2}+3,\infty}} \, .
\end{align*}
This shows that
\begin{align*}
{\| \partial_\a \bC (f, g, \partial_\a h) - J_3 \|}_{H^l} 
  & \leq c {\| (f,g,h,\z_\a-1) \|}_{H^{l+1}} {\| (f,g,h) \|}_{W^{\frac{N_0}{2}+3,\infty}}^2 \, .
\end{align*}
To eventually bound $J_3$ in $H^l$ notice that it can be written as follows:
\begin{align*}
J_3 = - 2 \partial_\a \z  \bC (f, g, \partial_\a h) - J_4 \, .
\end{align*}
Using the inductive hypothesis for the first summand above, and the bound we have already obtained for $J_4$,
one can easily see how the desired bound for $J_3$ follows. This eventually yields
\begin{align*}
{\| \partial_\a \bC (f, g, \partial_\a h) \|}_{H^l} & \leq c {\| (f,g,h,\z_\a-1) \|}_{H^{l+1}} {\| (f,g,h) \|}_{W^{\frac{N_0}{2}+3,\infty}}^2 \, ,
\end{align*}
thereby completing the proof of \eqref{estCCmain01}.

\vskip10pt
We now prove the estimate in the space $S^{-1} H^k$ with $k := [\frac{m}{2}]$, $0 \leq m \leq N_0$:
\begin{align}
\label{estCCmain02}
{\| S \bC (f, g, h) \|}_{H^k} + {\| S \bC (f, g, \partial_\a h) \|}_{H^k} 
  & \leq c {\| S L \|}_{H^k} {\| L \|}_{W^{\frac{N_0}{2}+2,\infty}}^2 \, .
\end{align}
For simplicity we just show the proof of the bound for the second term in the above right-hand side.
The first term can be bounded similarly, and it is actually easier to estimate, since there is one less derivative on $h$ to worry about.
Let us start by computing $S \bC (f, g, \partial_\a h)$ in $L^2$.
By using the commutation identity \eqref{commK3}, and $[S, \partial_\a] =  - \partial_\a$, we see that
\begin{align*}
S \bC (f, g, \partial_\a h) & = \bC (S f, g, \partial_\a h) 
  + \bC (f, S g, \partial_\a h) + \bC (f, g, \partial_\a  S h) +  K_1(\a)
\end{align*}
where
\begin{align*}
 K_1 (\a) & = - 2 \int \frac{ (f(\a) - f(\b))(g(\a) - g(\b)) (S\z(\a) - S\z(\b)) }{ {(\z(\a)-\z(\b))}^3 } \partial_\b h(\b) \,d\b \, .
\end{align*}
Notice that we can write
\begin{align*}
\bC (S f, g, \partial_\a h) = S f \bQ (g,\partial_\a h) - \bQ (g, S f \partial_\a h) \, .
\end{align*}
Then, using the $W^{l,\infty}$ estimate \eqref{estQL^infty} and the $H^l$ estimate \eqref{proCCmain11} for $\bQ$, we see that
for any $l \leq k$:
\begin{align*}
{\| \bC (S f, g, \partial_\a h) \|}_{H^l} & \leq {\| S f \|}_{H^l} {\| \bQ (g,\partial_\a h) \|}_{W^{l,\infty}} 
  +  {\| \bQ (g, S f \partial_\a h) \|}_{H^l}
\\
& \leq c {\| S f \|}_{H^l} {\| (g ,\partial_\a h) \| }_{W^{l+2,\infty}}^2 + 
  {\| \partial_\a g \|}_{W^{l,\infty}} {\| S f \partial_\a h \|}_{H^l}
  \lesssim {\| S f \|}_{H^l} {\| (g,h) \| }_{W^{l+3,\infty}}^2 \, .
\end{align*}
An analogous bound holds for $\bC (f, S g, \partial_\a h)$.

To control  $\bC (f, g, \partial_\a S h)$ we need to integrate by parts in order to remove the derivative from $S h$.
This integration by parts gives:
\begin{align*}
\bC (f, g, \partial_\a S h) = \bQ (g, \partial_\a f S h) + \bQ (f, \partial_\a g S h) + K_2(\a)
\end{align*}
where
\begin{align*}
 K_2 (\a) & = - 2 \int \frac{ (f(\a) - f(\b))(g(\a) - g(\b)) }{ {(\z(\a)-\z(\b))}^3 } \partial_\b \z(\b) h(\b) \,d\b \, .
\end{align*}
The $\bQ$ terms can be treated as before, and therefore satisfy the desired bound.
Thus, so far we have obtained
\begin{align*}
{\| S \bC (f, g, \partial_\a h) - K_1 - K_2 \|}_{H^l} & \leq c {\| S (f,g,h) \|}_{H^l} {\| (f,g,h) \|}_{W^{l+2,\infty}}^2
\end{align*}
for any $l \leq k$.

To conclude we notice that $K_1$, respectively $K_2$, are operators of the form $C_1(H,A,f)$ as in \eqref{C_11},
for some smooth $F$, $H = \z - \id$, and $(A,f) = (f,g,S\z , \partial_\a h)$, respectively $(A,f)=(f,g, \z_\a h)$.
Commuting derivatives, using Theorem \ref{theoCMM} and the assumptions, we can deduce that
\begin{align*}
{\| K_1 \|}_{H^l} + {\| K_2 \|}_{H^l} & \leq 
  c {\| \partial_\a f \|}_{H^l} {\| \partial_\a g \|}_{W^{l,\infty}} {\| \partial_\a S \z \|}_{H^l} {\| \partial_\a  h \|}_{W^{l,\infty}}
  + c {\| \partial_\a f \|}_{H^l} {\| \partial_\a g \|}_{W^{l,\infty}} {\| \z_\a h \|}_{W^{l,\infty}}
\\
& 
\leq c {\| (f,g,h) \|}_{W^{l+1,\infty}}^2 {\| (\partial_\a f, S(\z_\a-1)) \|}_{H^l}
\end{align*}
where $c$ depends only on the $H^{l+2}$-norm of $(f,g,h, \z_\a-1)$, which is uniformly bounded by assumptions.
Here we  have used $\partial_\a S \z = S (\z_\a - 1) + \partial_\a \z$ for the second inequality.
We can then conclude that
\begin{align}
{\| S \bC (f, g, \partial_\a h) \|}_{H^l} & \leq c {\| S (f,g,h,\z_\a-1) \|}_{H^l} {\| (f,g,h) \|}_{W^{l+3,\infty}}^2
\end{align}
for any $l \leq k$. This shows the validity of \eqref{estCCmain02} and finishes the proof of \eqref{estCCmain00}. $\hfill \Box$

\subsubsection{Proof of Lemma \ref{lemHinfty}}
We want to show that for any $f$ with ${\| f \|}_{H^{k+2}} \leq 1$, $0\leq k \leq N_1$,  we have
\begin{align}
\label{lemHinfty1}
{\| \H \partial_\a f \|}_{W^{k,\infty}}  \lesssim {\| f \|}_{W^{k+2,\infty}} \, . 
\end{align}
From the definition of $\H$ we can write $i\pi \H \partial_\a f = I_1 + I_2$ with
\begin{align*}
I_1 (\a) & = \int \frac{ \partial_\b f(\b) }{ \z(\a)-\z(\b) } \,d\b
\\
I_2 (\a) & = \int \frac{ \partial_\b f(\b) }{ \z(\a)-\z(\b) } (\partial_\b \z(\b) - 1) \,d\b = \H \frac{\partial_\a f (\z_\a -1)}{\z_\a} \, .
\end{align*}
$I_2$ is a quadratic term and can be directly estimated using Sobolev's embedding and the boundedness of $\H$ on Sobolev spaces \eqref{estH}:
\begin{align*}
{\| I_2 \|}_{W^{k,\infty}} & \lesssim  {\| I_2 \|}_{H^{k+1}} \lesssim  
    {\| \partial_\a f (\z_\a -1) \|}_{H^{k+1}} + {\| \z_\a - 1 \|}_{H^{k+1}} {\| \partial_\a f (\z_\a -1) \|}_{W^{\frac{k}{2}+1,\infty}}
\\
& \lesssim {\| \z_\a -1 \|}_{H^{k+1}} {\| f \|}_{W^{k+2,\infty}} 
  \lesssim  {\| f \|}_{W^{k+2,\infty}}
\end{align*}
having used ${\| f \|}_{H^{k+2}} \leq 1$ and the assumption on the Sobolev norm of $\z_\a -1$.

To estimate $I_1$ we expand the expression $ (\z(\a)-\z(\b))^{-1}$ in a geometric sum as follows:
\begin{align*}
\frac{1}{ \z(\a)-\z(\b) } & = \frac{1}{\a-\b} \sum_{k \geq 0} {\left( \frac{\z(\a) - \a - (\z(\b) - \b)}{\a-\b} \right)}^k 
  \\
& =  \frac{1}{\a-\b} + \frac{H(\a)- H(\b)}{\a-\b} F \left( \frac{H(\a)- H(\b)}{\a-\b} \right)  \, ,
\end{align*}
where $H := \z - \id$ and $F$ is a smooth function.
We can then write
\begin{align*}
I_1 (\a) & = I_0(\a) + C_2 (F,H,f) (\a)
\end{align*}
where $C_2$ is as in \eqref{C_21}, and
\begin{align}
I_0 (\a) & = \int \frac{ \partial_\b f(\b) }{ \a-\b } \, d\b 
\end{align}
is a constant multiple of the (flat) Hilbert transform $H_0 := \H_{\mbox{\tiny id}}$.
To estimate $C_2(F,H,\partial_\a f)$ we can use Sobolev's embedding, 
the commutation identity \eqref{commK2}, and the bounds provided by Theorem \ref{theoCMM}:
\begin{align*}
{\| C_2(F,H,\partial_\a f) \|}_{W^{k,\infty}} & \lesssim  {\| C_2(F,H,\partial_\a f) \|}_{H^{k+1}} \lesssim  
    {\| f \|}_{H^{k+1}} {\| H_\a \|}_{W^{k+1,\infty}} \lesssim  {\| f \|}_{W^{k+1,\infty}} \, . 
\end{align*}
In the last inequality above we have used again the assumptions ${\| f \|}_{H^{k+1}} \leq 1$
and ${\| \z_\a -1 \|}_{H^{k+1}} \leq \frac{1}{2}$.
So far we have shown
\begin{align*}
{\| \H \partial_\a f - H_0 \partial_\a f \|}_{W^{k,\infty}}  \lesssim {\| f \|}_{W^{k+2,\infty}} \, . 
\end{align*}
Applying the Littlewood-Paley decomposition to $f$, and using the boundedness of $H_0 P_l$ on $L^\infty$, we see that
\begin{align*}
{\| H_0 \partial_\a f \|}_{W^{k,\infty}} \lesssim \sum_l {\| H_0 P_l \partial_\a f \|}_{W^{k,\infty}}
  \lesssim \sum_l 2^l {\| f \|}_{W^{k,\infty}} \lesssim {\| f \|}_{W^{k+2,\infty}} \, .
\end{align*}
This concludes the proof of \eqref{lemHinfty1}. 
The bound for the second summand in the left-hand side of \eqref{estHdfL^infty} can be obtain similarly.

To prove \eqref{estHL^infty} one can use an argument similar to the one just showed,
replacing $\partial_\a  f$ with $f$. The same estimates as above will show:
\begin{align*}
{\| \H f - H_0 f \|}_{W^{k,\infty}}  \lesssim {\| f \|}_{W^{k+1,\infty}} \, .
\end{align*}
To conclude it is then enough to observe that for any $ 2 \leq p < \infty$
\begin{align*}
{\| H_0 f \|}_{W^{k,\infty}}  \lesssim {\| H_0 f \|}_{W^{k+1,p}}  \lesssim {\| f \|}_{W^{k+1,p}} \, .
\end{align*}
The second summand in the left-hand side of \eqref{estHL^infty} can be estimated analogously. $\hfill \Box$

%
%
%
%
\section{The symbols $c^{\iota_1\iota_2\iota_3}$}\label{secsym}
In this section we calculate explicitly the symbols $c^{\iota_1\iota_2\iota_3}$ defined in \eqref{fd5} and prove the bounds \eqref{csymbols} and \eqref{csymbols2}. 

With $V^+(t)=V(t)$ and $V^-(t)=\overline{V}(t)$, recall that
\begin{equation*}
H(t)=\frac{V^+(t)+V^-(t)}{2},\qquad \Psi(t)=\frac{i[\Lambda^{-1}V^-(t)-\Lambda^{-1}V^+(t)]}{2}.
\end{equation*}
Starting from the formula \eqref{p_3},
\begin{equation*}
M_3(H,H,\Psi)=- (1/2) \La \left[  H^2 \La^2 \Psi + \La (H^2 \La \Psi)
	  - 2 H \La (H \La \Psi ) \right],
\end{equation*}
we calculate easily
\begin{equation*}
\mathcal{F}[M_3(H,H,\Psi)](\xi)=\frac{i}{4\pi^2}\sum_{(\iota_1\iota_2\iota_3)}\int_{\mathbb{R}^2}c_1^{\iota_1\iota_2\iota_3}(\xi,\eta,\sigma)\widehat{V^{\iota_1}}(\xi-\eta,t)\widehat{V^{\iota_2}}(\eta-\sigma,t)\widehat{V^{\iota_3}}(\sigma,t)\,d\eta d\sigma,
\end{equation*}
where the sum is taken over $(\iota_1\iota_2\iota_3)\in\{(++-),(--+),(+++),(---)\}$, and
\begin{equation}\label{sy1}
\begin{split}
c^{++-}_1(\xi,\eta,\sigma)&=\frac{2|\xi||\eta-\sigma|^{3/2}-|\xi||\sigma|^{3/2}+2|\xi|^2|\eta-\sigma|^{1/2}-|\xi|^2|\sigma|^{1/2}}{16}\\
&+\frac{-|\xi||\xi-\sigma||\eta-\sigma|^{1/2}-|\xi||\eta||\eta-\sigma|^{1/2}+|\xi||\eta||\sigma|^{1/2}}{8},\\
c^{+++}_1(\xi,\eta,\sigma)&=\frac{|\xi||\sigma|^{3/2}+|\xi|^2|\sigma|^{1/2}-2|\xi||\eta||\sigma|^{1/2}}{16},\\
c^{--+}_1(\xi,\eta,\sigma)&=-c^{++-}_1(\xi,\eta,\sigma),\\
c^{---}_1(\xi,\eta,\sigma)&=-c^{+++}_1(\xi,\eta,\sigma).
\end{split}
\end{equation}

Using now the formula \eqref{q_3},
\begin{equation*}
Q_3(\Psi,H,\Psi)=|\partial_x|\Psi\big[H|\partial_x|^2\Psi-|\partial_x|(H|\partial_x|\Psi)\big]
\end{equation*}
we calculate easily
\begin{equation*}
\mathcal{F}[i\Lambda Q_3(\Psi,H,\Psi)](\xi)=\frac{i}{4\pi^2}\sum_{(\iota_1\iota_2\iota_3)}\int_{\mathbb{R}^2}c_2^{\iota_1\iota_2\iota_3}(\xi,\eta,\sigma)\widehat{V^{\iota_1}}(\xi-\eta,t)\widehat{V^{\iota_2}}(\eta-\sigma,t)\widehat{V^{\iota_3}}(\sigma,t)\,d\eta d\sigma,
\end{equation*}
where the sum is taken over $(\iota_1\iota_2\iota_3)\in\{(++-),(--+),(+++),(---)\}$, and
\begin{equation}\label{sy2}
\begin{split}
c^{++-}_2&(\xi,\eta,\sigma)=\frac{|\xi|^{1/2}|\xi-\eta|^{3/2}|\sigma|^{1/2}+|\xi|^{1/2}|\xi-\eta|^{1/2}|\sigma|^{3/2}-|\xi|^{1/2}|\xi-\eta|^{1/2}|\eta-\sigma|^{3/2}}{8}\\
&+\frac{-|\xi|^{1/2}|\xi-\sigma||\eta-\sigma|^{1/2}|\sigma|^{1/2}-|\xi|^{1/2}|\xi-\eta|^{1/2}|\eta||\sigma|^{1/2}+|\xi|^{1/2}|\xi-\eta|^{1/2}|\eta||\eta-\sigma|^{1/2}}{8},\\
c^{+++}_2&(\xi,\eta,\sigma)=\frac{-|\xi|^{1/2}|\xi-\eta|^{1/2}|\eta-\sigma|^{3/2}+|\xi|^{1/2}|\xi-\eta|^{1/2}|\eta||\sigma|^{1/2}}{8},\\
c^{--+}_2&(\xi,\eta,\sigma)=c^{++-}_2(\xi,\eta,\sigma),\\
c^{---}_2&(\xi,\eta,\sigma)=c^{+++}_2(\xi,\eta,\sigma).
\end{split}
\end{equation}

Let
\begin{equation}\label{sy3}
\widetilde{q}_2(\xi,\eta)=\Lambda^{-1}(\xi-\eta)\Lambda^{-1}(\eta)q_2(\xi,\eta),\qquad \widetilde{m}_2(\xi,\eta)=\Lambda^{-1}(\eta)m_2(\xi,\eta).
\end{equation}
Using the fact that $A,Q_2$ are symmetric we calculate
\begin{equation*}
\mathcal{F}[2A(M_2(H,\Psi),H)](\xi)=\frac{i}{4\pi^2}\sum_{(\iota_1\iota_2\iota_3)}\int_{\mathbb{R}^2}c_3^{\iota_1\iota_2\iota_3}(\xi,\eta,\sigma)\widehat{V^{\iota_1}}(\xi-\eta,t)\widehat{V^{\iota_2}}(\eta-\sigma,t)\widehat{V^{\iota_3}}(\sigma,t)\,d\eta d\sigma,
\end{equation*}
where the sum is taken over $(\iota_1\iota_2\iota_3)\in\{(++-),(--+),(+++),(---)\}$, and
\begin{equation}\label{sy5}
\begin{split}
c^{++-}_3&(\xi,\eta,\sigma)=\frac{a(\xi,\eta)\widetilde{m}_2(\eta,\sigma)-a(\xi,\eta)\widetilde{m}_2(\eta,\eta-\sigma)-a(\xi,\xi-\sigma)\widetilde{m}_2(\xi-\sigma,\xi-\eta)}{4},\\
c^{+++}_3&(\xi,\eta,\sigma)=\frac{-a(\xi,\eta)\widetilde{m}_2(\eta,\sigma)}{4},\\
c^{--+}_3&(\xi,\eta,\sigma)=-c^{++-}_3(\xi,\eta,\sigma),\\
c^{---}_3&(\xi,\eta,\sigma)=-c^{+++}_3(\xi,\eta,\sigma).
\end{split}
\end{equation}

Similarly we calculate
\begin{equation*}
\mathcal{F}[i\Lambda B(M_2(H,\Psi),\Psi)](\xi)=\frac{i}{4\pi^2}\sum_{(\iota_1\iota_2\iota_3)}\int_{\mathbb{R}^2}c_4^{\iota_1\iota_2\iota_3}(\xi,\eta,\sigma)\widehat{V^{\iota_1}}(\xi-\eta,t)\widehat{V^{\iota_2}}(\eta-\sigma,t)\widehat{V^{\iota_3}}(\sigma,t)\,d\eta d\sigma,
\end{equation*}
where the sum is taken over $(\iota_1\iota_2\iota_3)\in\{(++-),(--+),(+++),(---)\}$, and
\begin{equation}\label{sy6}
\begin{split}
c^{++-}_4(\xi,\eta,\sigma)&=\frac{|\xi|^{1/2}b(\xi,\xi-\eta)|\xi-\eta|^{-1/2}\widetilde{m}_2(\eta,\sigma)-|\xi|^{1/2}b(\xi,\xi-\eta)|\xi-\eta|^{-1/2}\widetilde{m}_2(\eta,\eta-\sigma)}{8}\\
&+\frac{|\xi|^{1/2}b(\xi,\sigma)|\sigma|^{-1/2}\widetilde{m}_2(\xi-\sigma,\xi-\eta)}{8},\\
c^{+++}_4(\xi,\eta,\sigma)&=\frac{-|\xi|^{1/2}b(\xi,\xi-\eta)|\xi-\eta|^{-1/2}\widetilde{m}_2(\eta,\sigma)}{8},\\
c^{--+}_4(\xi,\eta,\sigma)&=c^{++-}_4(\xi,\eta,\sigma),\\
c^{---}_4(\xi,\eta,\sigma)&=c^{+++}_4(\xi,\eta,\sigma).
\end{split}
\end{equation}

Finally we calculate
\begin{equation*}
\mathcal{F}[i\Lambda B(H,Q_2(\Psi,\Psi))](\xi)=\frac{i}{4\pi^2}\sum_{(\iota_1\iota_2\iota_3)}\int_{\mathbb{R}^2}c_5^{\iota_1\iota_2\iota_3}(\xi,\eta,\sigma)\widehat{V^{\iota_1}}(\xi-\eta,t)\widehat{V^{\iota_2}}(\eta-\sigma,t)\widehat{V^{\iota_3}}(\sigma,t)\,d\eta d\sigma,
\end{equation*}
where the sum is taken over $(\iota_1\iota_2\iota_3)\in\{(++-),(--+),(+++),(---)\}$, and
\begin{equation}\label{sy7}
\begin{split}
c^{++-}_5&(\xi,\eta,\sigma)=\frac{2|\xi|^{1/2}b(\xi,\eta)\widetilde{q}_2(\eta,\sigma)-|\xi|^{1/2}b(\xi,\xi-\sigma)\widetilde{q}_2(\xi-\sigma,\xi-\eta)}{8},\\
c^{+++}_5&(\xi,\eta,\sigma)=\frac{-|\xi|^{1/2}b(\xi,\eta)\widetilde{q}_2(\eta,\sigma)}{8},\\
c^{--+}_5&(\xi,\eta,\sigma)=c^{++-}_5(\xi,\eta,\sigma),\\
c^{---}_5&(\xi,\eta,\sigma)=c^{+++}_5(\xi,\eta,\sigma).
\end{split}
\end{equation}

The following lemma gives the desired bounds \eqref{csymbols} and \eqref{csymbols2}.

\begin{lem}\label{cprop}
The symbols $c^{\iota_1\iota_2\iota_3}$ satisfy the uniform bounds
\begin{equation}\label{sy10}
\big\|\mathcal{F}^{-1}[c^{\iota_1\iota_2\iota_3}(\xi,\eta,\sigma)\cdot\varphi_l(\xi)\varphi_{k_1}(\xi-\eta)\varphi_{k_2}(\eta-\sigma)\varphi_{k_3}(\sigma)]\big\|_{L^1(\mathbb{R}^3)}\lesssim 2^{l/2}2^{2\max(k_1,k_2,k_3)},
\end{equation}
for any $(\iota_1\iota_2\iota_3)\in\{(++-),(--+),(+++),(---)\}$ and $l,k_1,k_2,k_3\in\mathbb{Z}$. Moreover, for any $\mathbf{k}=(k_1,k_2,k_3),\mathbf{l}=(l_1,l_2,l_3)\in\mathbb{Z}^3$ let
\begin{equation*}
\begin{split}
&c^\ast_\xi(x,y)=c^{++-}(\xi,-x,-\xi-x-y),\\
&(\partial_xc^\ast_\xi)_{\mathbf{k},\mathbf{l}}(x,y)=(\partial_xc^\ast_\xi)(x,y)\cdot \varphi_{k_1}(\xi+x)\varphi_{k_2}(\xi+y)\varphi_{k_3}(\xi+x+y)\varphi_{l_1}(x)\varphi_{l_2}(y)\varphi_{l_3}(2\xi+x+y),\\
&(\partial_yc^\ast_\xi)_{\mathbf{k},\mathbf{l}}(x,y)=(\partial_yc^\ast_\xi)(x,y)\cdot \varphi_{k_1}(\xi+x)\varphi_{k_2}(\xi+y)\varphi_{k_3}(\xi+x+y)\varphi_{l_1}(x)\varphi_{l_2}(y)\varphi_{l_3}(2\xi+x+y).
\end{split}
\end{equation*}
Then, for any $\mathbf{k},\mathbf{l}\in\mathbb{Z}^3$, and $\xi\in\mathbb{R}$
\begin{equation}\label{sy11}
\begin{split}
&\|(\partial_xc^\ast_\xi)_{\mathbf{k},\mathbf{l}}\|_{\mathcal{S}^\infty}\lesssim 2^{-\min(k_1,k_3)}2^{5\max(k_1,k_2,k_3)/2},\\
&\|(\partial_yc^\ast_\xi)_{\mathbf{k},\mathbf{l}}\|_{\mathcal{S}^\infty}\lesssim 2^{-\min(k_2,k_3)}2^{5\max(k_1,k_2,k_3)/2}.
\end{split}
\end{equation}
\end{lem}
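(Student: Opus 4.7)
My plan is to prove \eqref{sy10} by using the decomposition $c^{\iota_1\iota_2\iota_3} = \sum_{j=1}^{5} c_j^{\iota_1\iota_2\iota_3}$ extracted from formulas \eqref{sy1}, \eqref{sy2}, \eqref{sy5}, \eqref{sy6}, \eqref{sy7}. Each piece is a finite linear combination of monomials, where each monomial is a product of (i) one-variable homogeneous factors of the form $|\mu|^s$ with $\mu\in\{\xi,\xi-\eta,\eta-\sigma,\sigma,\eta,\xi-\sigma\}$ and $s\in\{1/2,1,3/2,2\}$, and (ii) at most one bilinear factor drawn from $\{a,b,\widetilde{m}_2,\widetilde{q}_2\}$. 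For the single-variable factors, an elementary computation gives $\|\,|\mu|^s\varphi_k(\mu)\|_{\mathcal{S}^\infty}\lesssim 2^{sk}$. For the bilinear factors I will use Lemma \ref{description}, combined with the affine invariance \eqref{al9} to handle arguments like $\eta=\xi-(\xi-\eta)$ or $\xi-\sigma=(\xi-\eta)+(\eta-\sigma)$: these can be reduced to standard frequencies after an auxiliary dyadic decomposition in the missing variable, whose cost is only $O(1)$ after summing geometric series.

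Once each factor is localized, the product rule \eqref{al8} of Lemma \ref{touse}(i) assembles them into a single $\mathcal{S}^\infty$ bound. A direct count shows that every monomial appearing in the $c_j^{\iota_1\iota_2\iota_3}$'s has total homogeneity $5/2$ in the four frequency variables, and combining this with the support constraint $l \leq \max(k_1,k_2,k_3)+O(1)$ (forced by $\xi=(\xi-\eta)+(\eta-\sigma)+\sigma$ and the outer cutoff $\varphi_l(\xi)$) one obtains a bound of the form $2^{l/2}\,2^{2\max(k_1,k_2,k_3)}$ for each monomial, with slack when $l < \max(k_1,k_2,k_3)$. Since $\|\mathcal{F}^{-1}m\|_{L^1}=\|m\|_{\mathcal{S}^\infty}$ by definition, this establishes \eqref{sy10} after summing finitely many terms.

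For \eqref{sy11}, I use the explicit formula $c^\ast_\xi(x,y)=c^{++-}(\xi,-x,-\xi-x-y)$ and differentiate each factor individually. In the $(x,y)$ variables the arguments $\xi-\eta,\eta-\sigma,\sigma$ become $\xi+x,\xi+y,-(\xi+x+y)$, localized at scales $2^{k_1},2^{k_2},2^{k_3}$, while $\eta=-x$ and $\xi-\sigma=2\xi+x+y$ are localized at scales $2^{l_1},2^{l_3}$. Applying $\partial_x$ differentiates at most one of these factors, producing an explicit loss of $2^{-k_1}$, $2^{-k_3}$, $2^{-l_1}$, or $2^{-l_3}$ depending on which factor is hit; all other factors are left unchanged. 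The cases involving $|x|^s$ or $|2\xi+x+y|^s$ stay within the envelope $2^{-\min(k_1,k_3)}2^{5\max/2}$ on the common support of all cutoffs, via the triangle inequalities $l_1\lesssim\max(l,k_1)$ and $l_3\lesssim\max(k_1,k_2)$ together with the $5/2$-homogeneity bookkeeping used for \eqref{sy10}. The remaining cases yield the worst loss $2^{-\min(k_1,k_3)}$, and combining with the homogeneity count gives the first bound in \eqref{sy11}. The $\partial_y$ bound follows by symmetry with $k_1$ replaced by $k_2$.

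The main obstacle will be the bookkeeping for the $c_3,c_4,c_5$ pieces, whose bilinear factors (such as $a(\xi,\xi-\sigma)\widetilde{m}_2(\xi-\sigma,\xi-\eta)$, $b(\xi,\xi-\eta)\widetilde{m}_2(\eta,\sigma)$, and $b(\xi,\xi-\sigma)\widetilde{q}_2(\xi-\sigma,\xi-\eta)$) have arguments that are nontrivial linear combinations of the basic frequencies rather than the basic frequencies themselves. For these I will perform the auxiliary dyadic decompositions mentioned above, verify that Lemma \ref{description} still applies to each resulting piece, and sum the geometric series. The procedure is tedious but routine, and I do not anticipate any essential new difficulty beyond what is already present for the purely homogeneous pieces $c_1,c_2$.
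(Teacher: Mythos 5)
Your proposal is correct and follows essentially the same route as the paper: decompose $c^{\iota_1\iota_2\iota_3}=\sum_{j=1}^{5}c_j^{\iota_1\iota_2\iota_3}$, combine the piecewise-explicit forms of $a,b,\widetilde m_2,\widetilde q_2$ from Lemma~\ref{description} with the algebra property of Lemma~\ref{touse}, and track the total homogeneity $5/2$; for~\eqref{sy11}, substitute and differentiate factor by factor, with the derivative loss controlled by the six dyadic cutoffs. The paper's write-up differs only in presentation: it expands $c^\ast_\xi$ completely into a sum of explicit monomials in $|\xi+x|^s,|\xi+y|^s,|\xi+x+y|^s,|x|,|2\xi+x+y|$ and sign factors $\iota(\cdot)$ (using the piecewise formulas for $a,b$ from the proof of Lemma~\ref{proE1+}) rather than treating the bilinear symbols as black boxes, and then simply observes the bounds on each monomial, whereas you propose auxiliary dyadic re-localizations to invoke Lemma~\ref{description} directly. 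Both routes are sound; just note that in your envelope argument for~\eqref{sy11} the ``$l$'' should be read as the (implicit) dyadic scale of the fixed parameter $|\xi|$, since no cutoff in $\xi$ appears in the definition of $(\partial_xc^\ast_\xi)_{\mathbf{k},\mathbf{l}}$ — equivalently one can use $l_1\lesssim\max(k_2,k_3)$ from $x=(\xi+x+y)-(\xi+y)$, which is the cleaner support constraint.
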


\begin{proof}[Proof of Lemma \ref{cprop}] Clearly, for any $(\iota_1\iota_2\iota_3)\in\{(++-),(--+),(+++),(---)\}$
\begin{equation*}
c^{\iota_1\iota_2\iota_3}=\sum_{l=1}^5c^{\iota_1\iota_2\iota_3}_l.
\end{equation*}
The bound \eqref{sy10} follows from the explicit formulas \eqref{sy1}--\eqref{sy7}, the symbol bounds in Lemma \ref{description}, and the algebra properties in Lemma \ref{touse} (ii).

Let $\iota:\mathbb{R}\setminus\{0\}\to\{-1,1\}$, $\iota(x):=x/|x|$. Recalling the formulas in Lemma \ref{proE1+} and using \eqref{sy1}--\eqref{sy7} we calculate
\begin{equation*}
c^\ast_\xi(x,y)=c^\ast_{\xi,1}(x,y)+c^\ast_{\xi,2}(x,y)+c^\ast_{\xi,3}(x,y)+c^\ast_{\xi,4}(x,y)+c^\ast_{\xi,5}(x,y),
\end{equation*}
where
\begin{equation*}
\begin{split}
c^\ast_{\xi,1}(x,y)&=\frac{|\xi||\xi+y|^{3/2}}{8}-\frac{|\xi||\xi+x+y|^{3/2}}{16}+\frac{|\xi|^2|\xi+y|^{1/2}}{8}-\frac{|\xi|^2|\xi+x+y|^{1/2}}{16}\\
&-\frac{|\xi||2\xi+x+y||\xi+y|^{1/2}}{8}-\frac{|\xi||x||\xi+y|^{1/2}}{8}+\frac{|\xi||x||\xi+x+y|^{1/2}}{8},
\end{split}
\end{equation*}
\begin{equation*}
\begin{split}
c^\ast_{\xi,2}(x,y)&=\frac{|\xi|^{1/2}|\xi+x|^{3/2}|\xi+x+y|^{1/2}}{8}+\frac{|\xi|^{1/2}|\xi+x|^{1/2}|\xi+x+y|^{3/2}}{8}-\frac{|\xi|^{1/2}|\xi+x|^{1/2}|\xi+y|^{3/2}}{8}\\
&-\frac{|\xi|^{1/2}|2\xi+x+y||\xi+y|^{1/2}|\xi+x+y|^{1/2}}{8}\\
&-\frac{|\xi|^{1/2}|\xi+x|^{1/2}|x||\xi+x+y|^{1/2}}{8}+\frac{|\xi|^{1/2}|\xi+x|^{1/2}|x||\xi+y|^{1/2}}{8},
\end{split}
\end{equation*}
\begin{equation*}
\begin{split}
c^\ast_{\xi,3}(x,y)&=\frac{|\xi||\xi+x+y|^{1/2}\iota(\xi+x)[|x|\iota(\xi+x+y)-x]}{8}+\frac{|\xi||\xi+y|^{1/2}\iota(\xi+x)[|x|\iota(\xi+y)+x]}{8}\\
&-\frac{|\xi||\xi+x|^{1/2}\iota(\xi+x+y)[|2\xi+x+y|\iota(\xi+x)-(2\xi+x+y)]}{8},
\end{split}
\end{equation*}
\begin{equation*}
\begin{split}
c^\ast_{\xi,4}(x,y)&=\frac{|\xi|^{1/2}|\xi+x|^{1/2}|\xi+x+y|^{1/2}\iota(\xi)[|x|\iota(\xi+x+y)-x]}{8}\\
&+\frac{|\xi|^{1/2}|\xi+x|^{1/2}|\xi+y|^{1/2}\iota(\xi)[|x|\iota(\xi+y)+x]}{8}\\
&-\frac{|\xi|^{1/2}|\xi+x|^{1/2}|\xi+x+y|^{1/2}\iota(\xi)[|2\xi+x+y|\iota(\xi+x)-(2\xi+x+y)]}{8},
\end{split}
\end{equation*}
and
\begin{equation*}
\begin{split}
c^\ast_{\xi,5}(x,y)&=-\frac{|\xi|^{1/2}|\xi+y|^{1/2}|\xi+x+y|^{1/2}\iota(\xi)\iota(\xi+x)|x|[1-\iota(\xi+y)\iota(\xi+x+y)]}{8}\\
&-\frac{|\xi|^{1/2}|\xi+y|^{1/2}|\xi+x|^{1/2}\iota(\xi)\iota(\xi+x+y)|2\xi+x+y|[1+\iota(\xi+y)\iota(\xi+x)]}{16}.
\end{split}
\end{equation*}
The desired bounds \eqref{sy11} are verified easily for every term in these formulas.

Using these formulas, we also calculate
\begin{equation}\label{c0}
c^\ast_{\xi}(0,0)=-|\xi|^{5/2}/2.
\end{equation}
\end{proof}


\section{Estimate of remainder terms}
\label{appR}
In the first section below we give estimates for the Dirichlet-Neumann operator $\N$ in $L^2$, weighted $L^2$, and $L^1$-based Sobolev spaces.
We will the use these to establish several bounds for $R_1$ and $R_2$ in \eqref{defR_1}--\eqref{defR_2}.
We then proceed to estimate all quartic and higher order remainder terms, and in particular prove \eqref{bn22}.

\subsection{Dirichlet-to-Neumann operator: multilinear estimates}\label{secDN}
Here we recall that if $N$ denotes the outward normal vector of the interface $S_0$
\begin{align}
\label{DN}
\N(h) \phi := N \cdot \nabla  \Phi = N \cdot \nabla  \phi_\H \qquad , \qquad G(h) \phi = \sqrt{1 + {|h^\p|}^2} \N(h) \phi \, .
\end{align}
We are interested in particular in estimating quartic and higher order terms in the expansion of the Dirichlet-Neumann operator.
The $L^2$ and weighted $L^2$ estimates are needed to obtain the improved weighted bounds on $V$ in Proposition \ref{proE3}.
The $L^1$ estimates are used to bound these higher order terms in the $Z$-norm.

The first Proposition below gives estimates in $L^2$-based spaces and its proven in \ref{prproDNL^2}:
\begin{pro}\label{proDNL^2}
The Dirichlet-Neumann operator $G$ can be expanded in a series
\begin{align}
\label{DN0}
G(h) f = \sum_{n\geq 0} M_{n+1}(h,\dots, h, f) \, ,
\end{align}
where $M_{n+1}$ is an $n+1$-linear operator satisfying the following $L^2$ bounds:
\begin{align}
\label{DN1}
{\| M_{n+1} (h_1, \dots, h_n, f) \|}_{L^2}  \leq C_0^n \min \left\{ \prod_{i=1}^n {\| h_i^\p \|}_{L^\infty} {\| f^\p \|}_{L^2},
    \min_{j\in\{1,\dots,n\}} \prod_{i\neq j} {\| h_i^\p \|}_{L^\infty} {\| h_j^\p \|}_{L^2} {\| f^\p \|}_{L^\infty} \right\} \, ,
\end{align}
for some absolute constant $C_0$.

Moreover, $G$ is invariant under translation and scaling symmetries, and the following identities hold:
\begin{align}
\label{DN2}
\partial_x M_{n+1}(h_1,\dots, h_n, f) & = \sum_{i=1}^n M_{n+1}(h_1, \dots, \partial_x h_i, \dots, h_n, f) + M_{n+1}(h_1,\dots, h_n, \partial_x f) 
\\
\label{DN3}
\begin{split}
S M_{n+1}(h_1,\dots, h_n, f) & = \sum_{i=1}^n M_{n+1}(h_1,\dots, S h_i, \dots, h_n, f) + M_{n+1}(h_1,\dots, h_n, S f) 
\\
 & - \sum_{i=1}^n M_{n+1} ( h_1, \dots, h_n, f)  \, ,
\end{split}
\end{align}
where $S$ denotes the scaling vector field.
As a consequence, for any integer $l \geq 0$ one has:
\begin{align}
\label{DN4}
{\| M_{n+1}(h_1,\dots, h_n, f) \|}_{H^l} & \lesssim  
  \sum_{i=1}^n  {\| h_i^\p \|}_{H^l} \prod_{j \neq i} {\| h_j \|}_{W^{N_1,\infty}} {\| f_x \|}_{W^{N_1,\infty}} 
  + \prod_{i=1}^n {\| h_i \|}_{W^{N_1,\infty}} {\| f_x \|}_{H^l} \, ,
\\
\nn
{\| S M_{n+1}(h_1,\dots, h_n, f) \|}_{H^l} & \lesssim \sum_{i,j = 1, i\neq j}^n {\| (S h_i)^\p \|}_{H^l} {\| h_j^\p \|}_{W^{l,\infty}} 
  \prod_{k\neq i,j} {\| h_k \|}_{W^{N_1,\infty}} {\| f_x \|}_{W^{l,\infty}} 
\\
\label{DN5}
& + \sum_{i=1}^n {\| h_i^\p \|}_{H^l} \prod_{j \neq i} {\| h_j \|}_{W^{N_1,\infty}} 
  \left( {\| {(Sf)}^\p \|}_{H^l} + {\| f^\p \|}_{H^l} \right)  \, ,
\end{align}
where the implicit constants are bounded by $C_0^n$ for some absolute constant $C_0$.
\end{pro}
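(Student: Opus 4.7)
The plan is threefold: construct the expansion \eqref{DN0} by flattening the fluid domain; prove the $L^2$ bounds \eqref{DN1} by multilinear energy estimates on the flattened elliptic problem; and derive the symmetry identities \eqref{DN2}--\eqref{DN3} from the geometric nature of $G$, combining them with \eqref{DN1} to deduce \eqref{DN4}--\eqref{DN5}.

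For the expansion, I would flatten $\Omega_t = \{y<h(x)\}$ via the change of variables $z = y - h(x)$: if $\tilde\Phi(x,z) := \Phi(x, z+h(x))$, the equation $\Delta\Phi = 0$ becomes an elliptic equation in divergence form on $\{z<0\}$ with coefficients polynomial in $h'$, with boundary data $\tilde\Phi(x,0) = f(x)$ and $\tilde\Phi \to 0$ as $z \to -\infty$. Inverting the flat Laplacian yields an iterative scheme $\tilde\Phi = \sum_{n\geq 0}\tilde\Phi_n$, initialized by $\tilde\Phi_0 = e^{z|\partial_x|}f$, in which each $\tilde\Phi_n$ is $n$-linear in $h$ and linear in $f$; the operators $M_{n+1}$ are then read off from the identity $G(h)f = (\partial_z\tilde\Phi - h'\partial_x\tilde\Phi)(x,0)$, giving $M_1 = |\partial_x|$ and an explicit recursion for the higher $M_{n+1}$ as Fourier multipliers.

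The $L^2$ bound \eqref{DN1} then follows by multilinear energy estimates in the lower half-plane: testing the flattened problem at each order yields $\|\partial_x\tilde\Phi_n\|_{L^2(\{z<0\})} \lesssim C^n\prod_i\|h_i'\|_{L^\infty}\|f'\|_{L^2}$, and a trace inequality transfers this to the desired bound on $M_{n+1}(h_1,\dots,h_n,f)$ in $L^2(\mathbb{R})$. The geometric constant $C_0^n$ reflects that at each recursion step one contracts with a fixed bounded Fourier-multiplier operator whose norm is controlled uniformly in $n$. The symmetric version of \eqref{DN1} (with $h_j'$ in $L^2$ and $f'$ in $L^\infty$) is obtained by duality: the adjoint of $f\mapsto M_{n+1}(h_1,\ldots,h_n,f)$ has an analogous multilinear structure in which the role of $f$ and any $h_j$ can be exchanged at the cost of the same constant.

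Translation invariance \eqref{DN2} is immediate from the geometric definition of $G$, while the scaling identity \eqref{DN3} follows from the homogeneity of the multilinear expansion under the rescaling $R_\lambda g(x) := g(\lambda x)$: differentiating the scaling relation of $M_{n+1}$ at $\lambda = 1$ yields the commutator of $x\partial_x$ with $M_{n+1}$, and combining with the trivial action of $(1/2)t\partial_t$ completes \eqref{DN3}. The bounds \eqref{DN4}--\eqref{DN5} then follow by distributing $\partial_x^l$ (respectively $S$) on $M_{n+1}$ via \eqref{DN2} (respectively \eqref{DN3}) and applying \eqref{DN1} termwise, placing the factor carrying the top derivatives in $L^2$ and the remaining factors in $W^{N_1,\infty}$. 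The main obstacle I anticipate is proving the symmetric form of \eqref{DN1}, where the top derivative sits on some $h_j$ rather than on $f$: this exchange is not automatic from the elliptic energy estimate and requires either a careful duality argument or direct exploitation of the paired Hardy-type structure of the multilinear expression, while at the same time ensuring that the constants remain geometric $C_0^n$ rather than factorial in $n$.
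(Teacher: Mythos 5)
Your overall plan is sensible, but it takes a genuinely different route from the paper, and the step you flag as an anticipated obstacle is in fact the crux: the duality argument you propose does not close the symmetric $L^2$ bound in \eqref{DN1}. Self-adjointness of $G(h)$ on $L^2$ only exchanges $f$ with a test function $g$; it does not move the ``bad'' $L^\infty$ slot from $f'$ onto one of the $h_j'$, nor does it move the $L^2$ slot onto $h_j'$. Moreover, a naive pairing of $\langle M_{n+1}(h_1,\dots,h_n,f),g\rangle$ obtained from the flattened elliptic energy estimate produces factors like $\|h_j\|_{L^2}$ rather than $\|h_j'\|_{L^2}$, so one would still need to exploit cancellation in the multilinear symbol to trade low-frequency growth for a derivative on $h_j$. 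You correctly identify this as the hard point, but you do not supply a mechanism for it, and your proposal of ``paired Hardy-type structure'' is not developed.

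The paper avoids all of this by never flattening the domain. Instead it uses a single-layer potential ansatz \eqref{ansatz} for the harmonic extension, writes the Dirichlet trace condition as a Neumann series \eqref{exprho} involving the operators $P_n(h)$ in \eqref{P_n}, and expands the normal derivative through the operators $Q_n(h)$ in \eqref{expN}. This expresses each $M_{n+1}$ as a composition of Cauchy-integral and Calder\'on-commutator operators $C_1$, $C_2$ of the type \eqref{C_11}--\eqref{C_21}, for which the Coifman--McIntosh--Meyer theorem (Theorem \ref{theoCMM}) supplies \emph{both} halves of \eqref{DN1} simultaneously --- one with $\|f\|_{L^2}\prod\|\partial A_i\|_{L^\infty}$ and one with $\|\partial A_1\|_{L^2}\prod_{i\geq 2}\|\partial A_i\|_{L^\infty}\|f\|_{L^\infty}$ --- with constants growing geometrically in the number of factors, so the $C_0^n$ dependence is also for free. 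The remainder of your proposal (the translation and scaling identities \eqref{DN2}--\eqref{DN3}, and distributing $\partial_x^l$ and $S$ to deduce \eqref{DN4}--\eqref{DN5}) matches the paper's argument and is fine as stated.
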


Let us denote
\begin{align}
\label{DN10}
{[ G (h) \phi ]}_{\geq 4}(t) := \sum_{n\geq 3} M_{n+1}(h(t),\dots, h(t), \phi(t))
\end{align}
to be the quartic and higher order terms (in $h$ and $\phi$) in the expansion of $G$.
A corollary of this expansion and Proposition \ref{proDNL^2} is the following:

\begin{cor}\label{corDN1}
Under the a priori assumptions \eqref{apriori0} on $h$ and $\phi$ one has 
\begin{align}
\label{DN11}
{\| {[ G (h) \phi ]}_{\geq 4}(t) \|}_{H^{N_0 - 2}} +  {\| S {[ G (h) \phi ]}_{\geq 4} (t) \|}_{H^{\frac{N_0}{2} - 2}}
  & \lesssim \e_1^4 {(1+t)}^{3p_0 -3/2} \, .
\end{align}
Moreover, for  $R_1$ and $R_2$ defined in \eqref{defR_1}-\eqref{defR_2} we have
\begin{align}
\label{estRL^2}
{\| (R_1 + i\Lambda R_2) (t) \|}_{H^{N_0-5}} +  {\| S (R_1 + i\Lambda R_2)(t) \|}_{H^{\frac{N_0}{2}-5}}
  & \lesssim \e_1^4 {(1+t)}^{3p_0 -3/2} \, .
\end{align}
\end{cor}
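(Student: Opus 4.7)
The plan is to exploit the convergent series expansion \eqref{DN0} of the Dirichlet--Neumann operator together with the multilinear estimates \eqref{DN4}--\eqref{DN5} of Proposition \ref{proDNL^2}, reducing the bounds on $[G(h)\phi]_{\geq 4}$ to a term-by-term analysis, and then bootstrapping to $R_1$ and $R_2$.

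For the first bound in \eqref{DN11}, I would write $[G(h)\phi]_{\geq 4}(t) = \sum_{n\geq 3} M_{n+1}(h,\ldots,h,\phi)$ and apply \eqref{DN4} with $l = N_0-2$ to each summand. Under the apriori assumption \eqref{apriori0}, one has the Sobolev bound ${\|(h,\phi_x)\|}_{H^{N_0}} \lesssim \e_1(1+t)^{p_0}$ together with the decay bound ${\|(h,\phi_x)\|}_{W^{N_1,\infty}} \lesssim \e_1(1+t)^{-1/2}$, the decay of $\phi_x$ being extracted from ${\|\Lambda\phi\|}_{W^{N_1+4,\infty}}$ via a standard Littlewood-Paley decomposition. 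Each summand then contributes at most $C_0^n \e_1^{n+1}(1+t)^{p_0 - n/2}$, where the factor $(1+t)^{-n/2}$ comes from having to place $n$ of the $n+1$ factors in the decaying $L^\infty$ norm. The dominant $n=3$ term gives $\e_1^4(1+t)^{p_0-3/2}$, which is within the stated bound, and the remainder is summable by the smallness of $\e_1$.

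For the weighted bound in \eqref{DN11}, I would combine the commutation identity \eqref{DN3} with the estimate \eqref{DN5}, applied with $l = N_0/2 - 2$, using the apriori weighted bounds ${\|S(h,\phi_x)\|}_{H^{N_0/2}} \lesssim \e_1(1+t)^{p_0}$. The worst case (corresponding to putting $S$ on either an $h$-factor or on $\phi$ while keeping one extra factor in Sobolev and the rest in $L^\infty$) yields, for $n=3$, a bound $\e_1^4(1+t)^{2p_0-1}$, which accommodates the target $(1+t)^{3p_0-3/2}$; higher $n$ again produce smaller contributions summable by $\e_1$-smallness.

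Finally, for $R_1 + i\Lambda R_2$: since $R_1 = [G(h)\phi]_{\geq 4}$ by definition \eqref{defR_1}, the bound in $H^{N_0-5}$ and in $S^{-1}H^{N_0/2-5}$ is immediate from the two steps above, with room to spare. For $R_2$, I would Taylor expand $(1+h_x^2)^{-1/2} = \sum_k c_k h_x^{2k}$ (justified by $\|h_x\|_{L^\infty}\ll 1$), multiply out $(G(h)\phi + h_x\phi_x)^2$ using the expansion \eqref{DN0}, and collect all resulting multilinear forms of order $\geq 4$ in $(h,\phi)$. Each such form is estimated via Sobolev algebra, $H^s$-boundedness of $\Lambda$, and the bounds on $M_{n+1}$ from Proposition \ref{proDNL^2}; the three-derivative loss from $N_0-2$ down to $N_0-5$ absorbs the half-derivative coming from $\Lambda$ together with a couple of product-rule losses.

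The main obstacle is the combinatorial bookkeeping in the expansion of $R_2$: one has to verify that after substituting the full series for $G(h)\phi$ into $(G(h)\phi + h_x\phi_x)^2$ and multiplying by the Taylor expansion of $(1+h_x^2)^{-1/2}$, all terms of order $\leq 3$ in $(h,\phi)$ cancel with $Q_2(\phi,\phi) + Q_3(\phi,h,\phi)$, leaving a convergent series of multilinear forms of total order $\geq 4$. Each such form must then be estimated individually, and the scaling vector field $S$ has to be distributed among the factors via an analogue of \eqref{DN3} together with the Leibniz rule; keeping the accounting of how many factors carry the growing Sobolev/weighted norms versus the decaying $L^\infty$ norms is the delicate part of the argument.
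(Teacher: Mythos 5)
Your overall plan matches the paper's: apply the term-by-term multilinear estimates \eqref{DN4}--\eqref{DN5} to the series \eqref{DN0} for \eqref{DN11}, and reduce the $R_2$ bound to the already-known estimates on $M_2$, $M_3$, $R_1$. There is, however, a concrete error in your treatment of the weighted bound. You claim that the worst $n=3$ contribution from \eqref{DN5} (the second sum, with an $h^\p$-factor and the $(S\phi)^\p$-factor both in $H^l$) yields $\e_1^4(1+t)^{2p_0-1}$, and that this ``accommodates the target $(1+t)^{3p_0-3/2}$''. The inequality runs the wrong way: since $2p_0-1 > 3p_0-3/2$ whenever $p_0<1/2$, the bound $(1+t)^{2p_0-1}$ is \emph{weaker} than $(1+t)^{3p_0-3/2}$, so your argument as written does not establish \eqref{DN11}. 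To close the estimate one must observe that $l = N_0/2-2$ is strictly below $N_1+3$, so that when $S$ falls on $\phi$ and the $l$ Sobolev derivatives fall on an $h$-factor, the resulting $\partial_x^l h$ can be placed in the \emph{decaying} $W^{1,\infty}$ slot of the basic estimate \eqref{DN1}, using $\|h\|_{W^{l+1,\infty}}\lesssim\e_1(1+t)^{-1/2}$ from the $Z^\p$-control, leaving the single $L^2$ slot for $(S\phi)_x$. This gives $\e_1^4(1+t)^{p_0-3/2}$, which is strictly stronger than required; blindly inserting the growth/decay rates into the second sum of \eqref{DN5} as literally written does not give the stated exponent, and the key point is that only one factor ever needs to land in a growing $L^2$-based norm.

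On $R_2$: you propose re-expanding $G(h)\phi$ in the full series $\sum_n M_{n+1}$ and verifying that all low-order terms cancel against $Q_2+Q_3$ after multiplying by the Taylor expansion of $(1+|h_x|^2)^{-1/2}$. This is unnecessary work: $R_2$ is by \eqref{defR_2} \emph{defined} as the $[\,\cdot\,]_{\geq 4}$ truncation, so no cancellation needs to be re-checked. The paper instead substitutes the already-established decomposition $G(h)\phi = |\partial_x|\phi + M_2 + M_3 + R_1$ into the square, reads off the quartic-and-higher part \eqref{expR_2} as a finite combination of products of $|\partial_x|\phi$, $M_2$, $M_3$, $R_1$, $h_x\phi_x$ and a Taylor tail of $(1+|h_x|^2)^{-1/2}$, and estimates each term by H\"{o}lder using the $L^2$/$L^\infty$ bounds on $M_2,M_3,R_1$ already in hand. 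This sidesteps the combinatorial bookkeeping that you identify as the main obstacle.
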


The next Proposition establishes $L^1$-type estimates:
\begin{pro}\label{proDNL^1}
With the same notations of Proposition \ref{proDNL^2}, and for any $n \geq 3$, we have
\begin{align}
\label{DN13}
{\left\| {|\partial_x|}^\frac{\b}{4} M_{n+1} (h_1, \dots, h_n, f) \right\|}_{W^{1,l}} 
  \lesssim \sum_{i=1}^{n}  {\| h_i\|}_{H^{l+3}} \prod_{j\neq i} {\| h_j \|}_{W^{N_1,\infty}} {\| f_x \|}_{H^l} \, .
\end{align}
As a consequence, under the a priori  assumption \eqref{apriori0} on $h$ and $\phi$,
\begin{align}
\label{DN13.1}
{\left\| {|\partial_x|}^\frac{\b}{4} {[ G (h) \phi ]}_{\geq 4}(t) \right\|}_{W^{1,N_0-10}} 
  \lesssim {(1+t)}^{3p_0 - 1} \, .
\end{align}
Here $\beta = 1/100$ is the parameter that appears in the definition of the $Z$ norm \eqref{defZ}.
\end{pro}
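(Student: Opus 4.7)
\medskip

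The plan is to obtain \eqref{DN13} from the explicit multilinear representation of $M_{n+1}$ developed in the proof of Proposition \ref{proDNL^2}, where each $M_{n+1}$ is realized as an $(n+1)$-linear singular integral of Coifman-Meyer/Calderón-commutator type acting on $(h_1,\dots,h_n,f)$, with symbol of order $1$ in the highest frequency and bounded implicit constants growing at most like $C_0^n$. In particular, $M_{n+1}$ admits a representation whose operator norm on mixed $L^p$ spaces is governed by Theorem \ref{theoCMM}, and for which derivatives can be distributed via the commutator identities \eqref{commK2}.

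First I would Littlewood-Paley decompose each argument and, for the $W^{1,l}$ norm, use Leibniz/paraproduct to reduce to estimating $P_k M_{n+1}(P_{k_1}h_1,\dots,P_{k_n}h_n,P_{k_{n+1}}f)$ in $L^1$ with up to $l+1$ derivatives distributed among the factors. Since $n\geq 3$, there are at least four tensor slots, so one can always choose \emph{two} distinguished slots to place in $L^2$ (taking the two highest-frequency factors, after reordering, to absorb the $l+1$ derivatives and the mild $|\partial_x|^{\beta/4}$, $\beta=1/100$), and the remaining $n-1$ factors in $L^\infty$. H\"older then yields an $L^2\cdot L^2\cdot L^\infty\cdots L^\infty\hookrightarrow L^1$ bound of the form
\begin{equation*}
\|P_k M_{n+1}(\ldots)\|_{L^1}\lesssim C_0^n 2^{-\varepsilon\max k_j}\sum_{i_1\neq i_2}\|h_{i_1}\|_{H^{l+3}}\|h_{i_2}\|_{H^{l+3}}\prod_{j\neq i_1,i_2}\|h_j\|_{W^{N_1,\infty}}\|f_x\|_{H^l},
\end{equation*}
with a small frequency gain at high frequencies allowing summation. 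Symmetrizing one of the $L^2$ slots against one of the $L^\infty$ factors and absorbing the second via a fixed $h_i$ produces the asymmetric bound \eqref{DN13} with one distinguished $\|h_i\|_{H^{l+3}}$ and the rest in $W^{N_1,\infty}$, plus the pair $\|f_x\|_{H^l}\cdot\|\cdot\|_{W^{N_1,\infty}}$.

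To derive \eqref{DN13.1}, I would apply \eqref{DN13} with $h_i=h(t)$, $f=\phi(t)$, taking $l=N_0-10$ and using the \'a priori bounds \eqref{apriori0}: the Sobolev factor gives $\|h(t)\|_{H^{N_0-7}}\lesssim \varepsilon_1(1+t)^{p_0}$ (interpolating between $X_{N_0}$ and $H^{N_1+5}$ as in \eqref{appz_a-13} is not needed here since $N_0-7\leq N_0$), the single $\|\phi_x\|_{H^{N_0-10}}$ factor gives another $\varepsilon_1(1+t)^{p_0}$, and the $n-1\geq 2$ remaining $W^{N_1,\infty}$ factors contribute $\varepsilon_1^{n-1}(1+t)^{-(n-1)/2}$. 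Summing the geometric series in $n\geq 3$ using the smallness of $\varepsilon_1$ and $C_0^n\varepsilon_1^{n-2}\leq C$ yields a total bound $\lesssim \varepsilon_1^4(1+t)^{2p_0-1}\leq (1+t)^{3p_0-1}$, which is \eqref{DN13.1}.

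The main obstacle will be justifying the $L^1$ bound in the multilinear inequality above. Standard $T1$-type theory for Calderón commutators produces $L^2$ bounds (as used in Theorem \ref{theoCMM}) but not $L^1$ bounds, so one cannot place only a single factor in $L^2$. The critical leverage is precisely that $n+1\geq 4$: pairing two $L^2$ factors allows us to close via H\"older at the $L^1$ endpoint, after using Coifman-Meyer boundedness of the Fourier-multiplier piece on $L^2\times L^\infty\times\cdots\times L^\infty\to L^2$. Extracting this cleanly requires tracking the symbol structure of $M_{n+1}$ carefully enough to see that one of the $L^2$ factors can always be chosen to carry the highest frequency (so derivatives and the $|\partial_x|^{\beta/4}$ factor are absorbed without a logarithmic loss in $n$), which is the only point where the explicit form of the Dirichlet-Neumann expansion from Proposition \ref{proDNL^2} enters essentially.
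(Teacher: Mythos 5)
Your proposal diverges from the paper's proof in an essential way, and the rationale you give for why the argument closes is incorrect. The paper does \emph{not} use Littlewood-Paley decomposition, paraproducts, or any multilinear Coifman-Meyer endpoint theory. Instead, it reduces $\|M_{n+1}\|_{L^1}$ to the $L^1$ norm of the Calder\'on-commutator kernel $C_1(h_1,\dots,h_n,f)=\mathrm{p.v.}\int \frac{\prod_i(h_i(x)-h_i(y))}{(x-y)^{n+1}}f(y)\,dy$, distributes derivatives via the commutation identity \eqref{DN2}, and then estimates $C_1$ in $L^1$ by a completely elementary near/far diagonal splitting (explicitly written out only for $n=1$, with $n\geq 2$ said to be similar): the far part $|x-y|\geq 1$ has an integrable kernel, and the near part $|x-y|\leq 1$ gains integrability from the Taylor remainder $h(x)-h(y)-h'(x)(x-y)$. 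In each case, two factors land in $L^2$ and one closes via H\"older $L^2\cdot L^2\to L^1$.

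This exposes the conceptual error in your proposal. You assert that ``the critical leverage is precisely that $n+1\geq 4$'', i.e.\ that one needs at least four slots so that two can go to $L^2$ and at least two to $L^\infty$. But the $L^1$ bound on $C_1$ already holds for $n=1$ (two arguments, both in $L^2$, no $L^\infty$ slots at all), exactly as the paper proves. The hypothesis $n\geq 3$ in Proposition \ref{proDNL^1} plays no role in obtaining the $L^1$ endpoint; it is only needed for the application \eqref{DN13.1}, where one wants the $(n-1)\geq 2$ decaying $W^{N_1,\infty}$ factors to give time decay at rate at least $(1+t)^{-1}$. Moreover, your proposed route of first applying a Coifman-Meyer bound ``on $L^2\times L^\infty\times\cdots\times L^\infty\to L^2$'' and then ``closing via H\"older at the $L^1$ endpoint'' does not make sense for a genuine $(n+1)$-linear singular integral: the operator is not a pointwise product of a Coifman-Meyer output with another $L^2$ function, so this two-step factorization is unavailable. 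What you actually need is a direct multilinear Coifman-Meyer endpoint estimate $L^2\times L^2\times (L^\infty)^{n-1}\to L^1$ for the Calder\'on-commutator multiplier, with constants growing at most like $C_0^n$. That result exists in the multilinear Calder\'on-Zygmund literature, so your strategy is not fatally flawed, but it is vastly heavier machinery than the paper's two-line Taylor/H\"older argument, and the $n$-uniformity of the constants would need a separate justification that you do not supply. The computation you give for \eqref{DN13.1} is essentially correct once \eqref{DN13} is in hand.
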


\subsubsection{Proof of Proposition \ref{proDNL^2}}\label{prproDNL^2}
This Proposition follows from some standard potential theory, arguments similar to those in \cite[sec. 7.2]{GMSC}, 
and Theorem \ref{theoCMM}. We sketch the proof below.

\vskip5pt
\paragraph{{\it Expansion of the Dirichlet-Neuman operator}}
In order to find an explicit formula for the Dirichlet-Neumann operator we start with an ansatz for the harmonic extension 
of a function $f$ to the domain $\{ (x,z) \, : \, z \leq h(x) \}$:
\begin{align}
 \label{ansatz}
\Phi (x,z) = \int \frac{1}{2} \log \left( {|x-y|}^2 + {|z-h(y)|}^2 \right) \rho(y) \, dy \, .
\end{align}
By standard potential theory one has
\begin{align}
\nn
\N(h) f(x) & = \lim_{z \rightarrow h(x)} \nabla \Phi (x,z) \cdot N(x)
\\
\label{N1}
& = \frac{1}{2} \frac{\rho(x) }{\sqrt{1+{|h^\p(x) |}^2}} + \int \frac{ h(x) - h(y) + h^\p(x) (y-x) }{ {|x-y|}^2 + {|h(x)-h(y)|}^2} \rho(y) \, dy \, .
\end{align}
We then aim at determining $\rho$ in terms of $h$ and $f$.
Using \eqref{ansatz} and $f(x) = \Phi (x,h(x))$, one has
\begin{align}
f(x) = \int \frac{1}{2} \log \left( {|x-y|}^2 + {|h(x)-h(y)|}^2 \right) \rho(y) \, dy \, .
\end{align}
It follows that
\begin{align*}
|\partial_x| f(x) & = iH_0 \int \frac{x-y + (h(x) - h(y))h^\p(x)}{{|x-y|}^2 + {|h(x)-h(y)|}^2} \rho(y) \, dy 
\\
& = \rho(x)  +  \sum_{n = 1}^{\infty} i H_0 \int \left( \frac{h(x)-h(y)}{x-y} \right)^{2n}
		\frac{x-y + (h(x) - h(y))h^\p(x)}{{(x-y)}^2} \rho(y) \, dy   
\\
& =: \rho(x) + \sum_{n = 1}^{\infty} P_{n}(h) \rho (x) \, .
\end{align*}
One can then invert the above series expansion and write
\begin{align}
 \label{exprho}
\rho = \sum_{k \geq 0} {(-1)}^k {\left[ \sum_{n=1}^\infty P_{n}(h) \right]}^k |\partial_x| f  
\end{align}
where
\begin{align}
\label{P_n}
P_{n}(h)g(x)  & := iH_0 \int \frac{ {\left( h(x) - h(y) \right)}^{2n} h^\p(x) }{ {(x-y)}^{2n+1} } g(y) \, dy 
	+ iH_0 h^\p(x) \int \frac{ {\left( h(x) - h(y) \right)}^{2n+1}  }{ {(x-y)}^{2n+2} } g(y) \, dy  \, .
\end{align}
Expanding the second summand in \eqref{N1} one can write
\begin{align}
\nn
\N(h) f(x) & = \frac{1}{2} \frac{\rho(x) }{\sqrt{1+{|h^\p(x) |}^2}} + \sum_{n=0}^\infty Q_{n}(h) \rho
\\
\label{expN}
Q_{n}(h) & := \int \left( \frac{ h(x) - h(y) }{x-y}\right)^{2n} \frac{h(x) - h(y) + h^\p(x) (y-x)}{{(x-y)}^2} \rho(y) \, dy  \, .
\end{align}
Putting together \eqref{DN}, \eqref{exprho} and \eqref{expN} we eventually obtain \eqref{DN0}.

\vskip5pt
\paragraph{{\it Symmetries and $L^2$-bounds}}
The basic $L^2$-type bounds \eqref{DN1} follow directly from the expansion \eqref{exprho}-\eqref{expN} and Theorem \ref{theoCMM}.
The formulas \eqref{DN2} and \eqref{DN3} follow from the space translation and scaling invariances of the 
basic operators $P_n$ and $Q_n$ in \eqref{P_n} and \eqref{expN}.
More precisely, for any $\d \in \R$ and $\l>0$
\begin{align*}
& [G( h (\cdot+\d) ) f(\cdot+\d)] (x) =  [G(h) f] (x+\d) \, ,
\\
& G \left( \frac{1}{\l} h (\l \cdot) \right) f(\l \cdot)  (x) =  \l [G(h) f] (\l x) \, .
\end{align*}
These identities hold true for each operator $M_n$ in the expansion \eqref{DN0}, that is
\begin{align}
\label{DNt}
& M_n ( h_1 (\cdot+\d), \dots, h_n (\cdot+\d) , f(\cdot+\d) ) (x) = M_n ( h_1,\dots, h_n, f) (x+\d)  \, ,
\\
\label{DNs}
& M_n \left( \frac{1}{\l} h_1 (\l \cdot), \dots, \frac{1}{\l} h_n (\l \cdot), f(\l \cdot) \right) (x) =  M_n (h_1,\dots, h_n, f) (\l x) \, ,
\end{align}
and can be verified directly on the operators $P_n$ and $Q_n$ defined above.
Differentiating with respect to the parameters in \eqref{DNt} and \eqref{DNs},
one sees that
\begin{align*}
\partial_x M_n ( h_1,\dots, h_n, f ) & = \sum_{i=1}^n M_n ( h_1, \dots, \partial_x h_i, \dots, h_n, f) 
  + M_n (h_1, \dots, h_n, \partial_x f)  \, ,
\\
x \partial_x M_n (h,\dots, h, f) (x) & = \sum_{i=1}^n M_n ( h_1, \dots, x \partial_x h_i, \dots, h_n, f)
  + M_n \left( h_1, \dots, h_n,  x \partial_x  f \right) 
\\ & - \sum_{i=1}^n M_n ( h_1, \dots, h_n, f) \, .
\end{align*}
The first identity is \eqref{DN2}.
If $h$ and $f$ depend on time, one can similarly derive \eqref{DN3} from the last identity above.
The estimates \eqref{DN4} and \eqref{DN5} follow by repeated applications of \eqref{DN2} and \eqref{DN3} 
and the $L^2$ estimate \eqref{DN1}. $\hfill \Box$

\subsubsection{Proof of Corollary \ref{corDN1}}\label{prcorDN1}
The estimate \eqref{DN11} is an immediate consequence of the bounds \eqref{DN4} and \eqref{DN5}.
To prove \eqref{estRL^2} it then suffices to prove
\begin{align}
\label{estRL^2pr}
{\| \Lambda R_2 (t) \|}_{H^{N_0-5}} +  {\| S \Lambda R_2(t) \|}_{H^{\frac{N_0}{2}-5}} & \lesssim \e_1^4 {(1+t)}^{3p_0 -3/2} \, .
\end{align}
From the definition of $R_2$ \eqref{defR_2} we see that
\begin{align}
\label{expR_2}
\begin{split}
R_2 & = {\left[ {(|\partial_x|\phi + M_2(h,\phi) + M_3 (h,h,\phi) + R_1(h,\phi)+ h_x\phi_x)}^2 \right]}_{\geq 4} 
  \\
& + {\left[ 2 {(1+{|h_x|}^2)}^{-1/2} \right] }_{\geq 2} {(|\partial_x| \phi + M_2(h,\phi) + M_3 (h,h,\phi) + R_1(h,\phi)+ h_x\phi_x)}^2 
\\
& = {(M_2(h,\phi) + M_3 (h,h,\phi) + R_1(h,\phi)+ h_x\phi_x)}^2 
  + 2 |\partial_x| \phi (M_3 (h,h,\phi) + R_1(h,\phi))
\\
& + {\left[ 2 {(1+{|h_x|}^2)}^{-1/2} \right] }_{\geq 2} {\left(|\partial_x| \phi + M_2(h,\phi) + M_3 (h,h,\phi) + R_1(h,\phi)+ h_x\phi_x\right)}^2 \, .
\end{split}
\end{align}
To obtain the desired bound it suffices to apply appropriately H\"{o}lder's inequality 
in combination with the a priori estimates \eqref{Eapriori}, the $L^2$ estimates \eqref{Al40} for $M_2$, and \eqref{Al45} for $M_3$,
and the following $L^\infty$ estimates:
\begin{align}
& {\|P_k M_2(h,\phi)\|}_{L^\infty} \lesssim \e_1^2 (1+t)^{-1} 2^k 2^{-N_0 k_+/2 },
\\
& {\|P_k M_3(h,h,\phi)\|}_{L^\infty} \lesssim \e_1^2 (1+t)^{-3/2} 2^k 2^{-N_0 k_+/2 } \, .
\end{align}
The last two estimates above can be obtained by inspection of \eqref{p_2} and \eqref{p_3}
using the a priori bounds \eqref{Al32}.

\subsubsection{Proof of Proposition \ref{proDNL^1}}\label{prproDNL^1}
Given the expansion of $\N$ in \eqref{exprho}-\eqref{expN}, the already established $L^2$-based estimates, 
and the commutation property \eqref{DN2},
it is not hard to see that \eqref{DN13} would follow if one can show that operators of the form
\begin{align*}
C_1 \left( h_1, \dots, h_n, f \right) 
  & :=  \mbox{p.v.} \int \frac{ \prod_{i=1}^n ( h_i(x) - h_i(y) ) }{ {(x-y)}^{n+1} }  f(y) \, dy
\end{align*}
satisfy
\begin{align}
\label{DN100}
{\left\| {|\partial_x|}^\frac{\b}{4} H_0 C_1 (h_1, \dots, h_n, f) \right\|}_{L^1} \lesssim \min_{i=1,\dots, n}  
  {\| h_i\|}_{H^3} \prod_{j\neq i} {\| h_j \|}_{W^{2,\infty}} {\| f \|}_{H^2} \, .
\end{align}
The above estimate is in turn implied by
\begin{align*}
{\left\| C_1 (h_1, \dots, h_n, f) \right\|}_{W^{1,1}} \lesssim \min_{i=1,\dots, n}  
  {\| h_i\|}_{H^3} \prod_{j\neq i} {\| h_j \|}_{W^{2,\infty}} {\| f \|}_{H^2} \, .
\end{align*}
Since the action of derivatives on operators of the type $C_1$ produces operators of the same type 
(acting on derivatives of the arguments), it is enough to obtain
\begin{align*}
{\left\| C_1 (h_1, \dots, h_n, f) \right\|}_{L^1} \lesssim \min_{i=1,\dots, n}  
  {\| h_i\|}_{H^2} \prod_{j\neq i} {\| h_j \|}_{W^{1,\infty}} {\| f \|}_{H^1} \, .
\end{align*}
We only provide details of the proof of the above estimate in the case $n=1$, that is
\begin{align}
\label{DN105}
{\left\| \mbox{p.v.} \int \frac{ h(y) - h(x) }{ {(y-x)}^{2} }  f(y) \, dy  \right\|}_{L^1}
  \lesssim {\| h \|}_{H^2} {\| f \|}_{H^1} \, ,
\end{align}
as the case $n \geq 2$ can be treated similarly. Let us write
\begin{align}
\nn
& \int \frac{ h(x) - h(y) }{ {(x-y)}^{2} }  f(y) \, dy = I_1 + I_2 + I_3
\\
\label{DN106}
& I_1(x) := \int_{|y-x| \geq 1} \frac{ h(y) - h(x) }{ {(y-x)}^{2} }  f(y) \, dy
\\
\label{DN107}
& I_2(x) := \int_{|y-x| \leq 1} \frac{ h(y) - h(x) - h^\p(x) (y-x)}{ {(y-x)}^{2} }  f(y) \, dy
\\
\label{DN108}
& I_3(x) := h^\p(x) \int_{|y-x| \leq 1} \frac{f(y)}{ y-x } \, dy \, .
\end{align}
Notice that $I_1$ can be written as $I_1 = K \ast (hf) - h K \ast f$,
where $K(x) := {|x|}^{-2} \chi_{|x| \geq 1}$ is an $L^1$ kernel. It follows that
\begin{align*}
& {\| I_1 \|}_{L^1} \lesssim {\| K \ast (hf) \|}_{L^1} + {\|h\|}_{L^2} {\|K \ast f\|}_{L^2}  \lesssim {\|h\|}_{L^2} {\|f\|}_{L^2} \, ,
\end{align*}
Using Taylor's formula and a change of variables we can write
\begin{align*}
& I_2 = - \int_{|y| \leq 1} \int_0^1 t h^{\p\p}(x + ty) \, dt  f(y+x) \, dy \, .
\end{align*}
It follows that
\begin{align*}
& {\| I_2 \|}_{L^1} \lesssim \int_0^1  \int_{|y| \leq 1} \int |h^{\p\p}(x + ty)|  |f(y+x)| \, dx \, dy \, dt 
\lesssim {\|h^{\p\p} \|}_{L^2} {\|f\|}_{L^2} \, .
\end{align*}
For the last term \eqref{DN108} we first write
\begin{align*}
& I_3 = h^\p(x) \int_{|y-x| \leq 1} \frac{f(y)-f(x)}{ y-x } \, dy  = h^\p(x) \int_{|y| \leq 1} \int_0^1 t f^{\p}(x + ty) \, dt \, dy
\end{align*}
and then estimate
\begin{align*}
& {\| I_3 \|}_{L^1} \lesssim \int_0^1  \int_{|y| \leq 1} \int  |h^\p(x)| |f^{\p}(x + ty)| \, dx \, dy \, dt \lesssim {\|h^\p\|}_{L^2} {\|f^\p\|}_{L^2} \, .
\end{align*}
This shows that \eqref{DN105} holds and completes the proof of Proposition \ref{proDNL^1}.  $\hfill \Box$


\subsection{Proof of \eqref{bn22}}

For $m\in\mathbb{Z}\cap[20,\infty)$, $k \in \Z \cap [-m/2,m/50-1000]$, $|\xi|\in[2^k,2^{k+1}]$, $t_1\leq t_2\in[2^{m-1},2^{m+1}]\cap[0,T]$, 
we want to show
\begin{equation}\label{bn22app}
\Big|\int_{t_1}^{t_2} e^{iH(\xi,s)}e^{is\Lambda(\xi)}\widehat{R}(\xi,s)\,ds\Big|
\lesssim \varepsilon_1^32^{-p_1m}(2^{\beta k}+2^{(N_1+15)k})^{-1}.
\end{equation}
where
\begin{equation}\label{fd3app}
R:=\mathcal{N}_3+\mathcal{N}_4-\widetilde{\mathcal{N}}_3.
\end{equation}
with $\N_3$, $\N_4$ and $\widetilde{\N}_3$ defined respectively in \eqref{defN_3}, \eqref{defN_4} and \eqref{fd2}.
To prove \eqref{bn22app} we will use Lemma \ref{lemR} and \ref{lemrem} below.

\begin{lem}\label{lemR}
Let $R$ be defined as in \eqref{fd3app}.
Under the a priori assumptions \eqref{Al32} on $h$ and $\phi$, we have for $k\in\Z$
\begin{align}
\label{lemR1}
\big| \what{P_k R}(\xi,t) \big|  \lesssim \e_1^4 {(1+t)}^{10p_0 - 1} 2^{-(N_0-20)k_+} 2^{-\b k/4}
\end{align}
and
\begin{align}
\label{lemR2}
{\| P_k R(t) \|}_{L^2} + {\| P_k S R(t) \|}_{L^2}  \lesssim \e_1^4 {(1+t)}^{20p_0 - 3/2} 2^{-(N_0/2-20)k_+} \, .
\end{align}
\end{lem}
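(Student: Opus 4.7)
The plan is to split $R=\mathcal{N}_3+\mathcal{N}_4-\widetilde{\mathcal{N}}_3$ into two families and bound each separately in the two required norms. The first family collects the pieces inherited from the Dirichlet--Neumann expansion, namely $R_1(h,\phi)$ and $i\Lambda R_2(h,\phi)$ themselves together with all the terms in $\mathcal{N}_4$ of the form $A(R_1(h,\phi),h)$, $\Lambda B(h,R_2(h,\phi))$, $A(M_3(h,h,\phi),h)$, and $\Lambda B(M_3(h,h,\phi),\phi)$. The second family is $\mathcal{N}_3-\widetilde{\mathcal{N}}_3$, whose trilinear entries become quartic after substituting $H=h+A(h,h)$ and $\Psi=\phi+B(h,\phi)$ and Taylor-expanding $M_3$, $Q_3$, $M_2$, $Q_2$, $A$, $B$ by multilinearity; every resulting term carries at least one copy of $A(h,h)$ or $\Lambda B(h,\phi)$.

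For the $L^2$ and $S$-derivative bounds \eqref{lemR2} the first family is handled by Corollary \ref{corDN1}, which already gives $\|(R_1+i\Lambda R_2)(t)\|_{H^{N_0-5}}+\|S(R_1+i\Lambda R_2)(t)\|_{H^{N_0/2-5}}\lesssim\varepsilon_1^4(1+t)^{3p_0-3/2}$; the outer $A$, $\Lambda B$ wrappers are absorbed by the bilinear mechanism in Lemma \ref{Al60}, using the cubic bounds \eqref{Al40}, \eqref{Al45} and the $S$-commutation identities \eqref{Al26}. For the second family, \eqref{Al1000} and \eqref{Al1002} control $A(h,h)$ and $\Lambda B(h,\phi)$ in $H^{N_0-3}$ and $S^{-1}H^{N_0/2-3}$ by $\varepsilon_1^2(1+t)^{2p_0-1/2}$, and each summand of $\mathcal{N}_3-\widetilde{\mathcal{N}}_3$ pairs such a factor with a trilinear expression whose $L^\infty$ norm decays like $\varepsilon_1^2(1+t)^{-1}$ (again through \eqref{Al40}, \eqref{Al45}); multiplying gives the bound $\varepsilon_1^4(1+t)^{5p_0-3/2}$, with room to absorb the weighted $(1+t)^{20p_0}$ loss.

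For the pointwise Fourier bound \eqref{lemR1} the strategy is to combine the Bernstein-type inequalities $|\widehat{P_kR}(\xi)|\leq\min(\|P_kR\|_{L^1},\,2^{k/2}\|P_kR\|_{L^2})$. The high-frequency factor $2^{-(N_0-20)k_+}$ is immediate from the $L^2$ bound just established. The low-frequency factor $2^{-\beta k/4}$ is the delicate point. For the first family it is furnished by Proposition \ref{proDNL^1}, which yields $\||\partial_x|^{\beta/4}[G(h)\phi]_{\geq 4}\|_{W^{1,N_0-10}}\lesssim\varepsilon_1^4(1+t)^{3p_0-1}$ after inserting the apriori bounds; the bilinear wrappers $A$, $\Lambda B$ preserve this via the symbol estimates in Lemma \ref{description} and Lemma \ref{touse}(ii). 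For the second family one writes each quartic in the form (factor from $A$ or $\Lambda B$) $\times$ (three $L^\infty$-controlled factors) and estimates the $L^1$ norm by Hölder with two $L^2$ entries bounded by $\varepsilon_1(1+t)^{p_0}$ and two $L^\infty$ entries decaying like $\varepsilon_1(1+t)^{-1/2}$; the requisite low-frequency gain at $k<0$ is then free, because the symbols of $A$ and $\Lambda B$ vanish at low frequency as in \eqref{Al1001}, contributing a factor $2^{k/4}\gg 2^{-\beta k/4}$ since $\beta=1/100$.

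The main obstacle will be the Fourier $L^\infty$ estimate for the normal-form-difference family, where one must organise the Taylor expansion of the trilinear symbols so that every term factors cleanly as \emph{one quadratic normal-form piece} times \emph{three functions with either Sobolev or $L^\infty$-decay bounds}, and must verify that the $S$-vector field only ever falls on factors that are controlled by Proposition \ref{proE2} or by \eqref{Al26}. Once that combinatorial accounting is done, the $L^1$ estimate and the low-frequency gain follow mechanically from the symbol descriptions in Lemma \ref{description} and the a priori bounds \eqref{Al32}, \eqref{bn13}.
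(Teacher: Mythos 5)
Your two-family decomposition of $R=\mathcal{N}_4+(\mathcal{N}_3-\widetilde{\mathcal{N}}_3)$ matches the paper's (the paper writes it as $R=\mathcal{N}_4+\sum_{j=1}^5\widetilde{\mathcal{N}}_{3,j}$, with $\widetilde{\mathcal{N}}_{3,j}$ the term-by-term differences; you omit the $i\Lambda B(h,Q_3)$ piece of $\mathcal{N}_4$ in your list, but that's a bookkeeping slip). The $L^2$/weighted-$L^2$ estimate \eqref{lemR2} along the lines you sketch --- Corollary \ref{corDN1} for $R_1,\Lambda R_2$, Lemma \ref{Al60}-style bilinear bounds for the $A,\Lambda B$ wrappers, \eqref{Al40}, \eqref{Al45} for $M_2,M_3,Q_2,Q_3$, and \eqref{Al26} plus the analogous commutation identities for $M_2,Q_2,M_3,Q_3$ for the $S$-weight --- is essentially what the paper does.

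The gap is in your argument for the pointwise Fourier bound \eqref{lemR1}. You write that ``the high-frequency factor $2^{-(N_0-20)k_+}$ is immediate from the $L^2$ bound just established,'' via the Bernstein estimate $|\widehat{P_kR}(\xi)|\le 2^{k/2}\|P_kR\|_{L^2}$. But the $L^2$ bound \eqref{lemR2} only gains $2^{-(N_0/2-20)k_+}$ (the weighted part forces the derivative count down to $N_0/2-O(1)$), and $2^{k/2}\cdot 2^{-(N_0/2-20)k_+}$ does not dominate $2^{-(N_0-20)k_+}$ for $k\ge 0$. So Bernstein from \eqref{lemR2} does not produce \eqref{lemR1} at high frequency. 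You need a separate route to the pointwise estimate that keeps $N_0-O(1)$ derivatives. The paper does this by proving pointwise Fourier bounds directly on each term: for $R_1={[G(h)\phi]}_{\geq 4}$ it uses the $L^1$-based estimate \eqref{DN13.1} from Proposition \ref{proDNL^1} (which carries both the $N_0-10$ derivative gain and the $|\partial_x|^{\beta/4}$ low-frequency smoothing), and for all the bilinear $A$, $B$ wrappers and the trilinears $M_3,Q_3,M_2,Q_2$ it uses Cauchy--Schwarz-type bounds on the Fourier convolution such as \eqref{PkwhatA}, \eqref{PkwhatB}, \eqref{whatP_kM_3}, \eqref{whatP_kQ_3}, \eqref{PkM_2FG}, \eqref{PkQ_2FG}, each of which pairs a symbol bound with a high-regularity $L^2$ norm and therefore retains the full $N_0-O(1)$ factor. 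To fix your proof, replace the Bernstein-from-\eqref{lemR2} step with these term-by-term Fourier estimates (or prove an unweighted $L^2$ bound with $N_0-O(1)$ derivatives as a separate statement, which does hold by the same arguments, and only then Bernstein). Your low-frequency argument (the $2^{k/4}$ gain from \eqref{Al1001} dominating $2^{-\beta k/4}$ for $k<0$, and Proposition \ref{proDNL^1} for $R_1$) is correct.
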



\begin{lem}\label{lemrem}
Assume that a function $D = D(\xi,t)$ satisfies for all $t \in [0,T]$
\begin{align}
\label{asslemrem}
\begin{split}
& {\| D (t) \|}_{L^2} +  {\| S D(t) \|}_{L^2} \lesssim \d {(1+t)}^{-11/8 } \, ,
\\
&
{\| \what{D} (t) \|}_{L^\infty} \lesssim \d {(1+t)}^{20 p_0 -1} \, .
\end{split}
\end{align}
It follows that for $k\in\Z$, $|\xi| \in [2^k,2^{k+1}]$, $m\in\{1,2,\dots\}$ and $t_1\leq t_2 \in [2^m-2,2^{m+1}] \cap [0,T]$
\begin{align}
\label{conclemrem}	
\left| \int_{t_1}^{t_2} e^{i H(\xi,s)} e^{is\Lambda(\xi)} \what{D}(\xi,s) \, ds \right| & 
    \lesssim \d (1 + 2^{-k}) 2^{-m/16} \, . 
\end{align}
\end{lem}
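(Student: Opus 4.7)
The plan is to estimate the oscillatory integral by splitting into two regimes based on the size of $|\xi|$ relative to $2^m$. The linear phase in the integrand is $\Phi(\xi,s) = s\Lambda(\xi) + H(\xi,s)$, and since $|\partial_s H(\xi,s)| \lesssim \varepsilon_1^2(1+s)^{-1}\lesssim \varepsilon_1^2 2^{-m}$ on the interval of integration (see \eqref{bn97.15}), we have $\partial_s\Phi = \Lambda(\xi) + O(\varepsilon_1^2 2^{-m})$. For very small $|\xi|$ (precisely, when $k\lesssim -(20p_0 + 1/16)m$), the linear phase gives no useful cancellation on $[t_1,t_2]$ and I would simply use the trivial estimate
\begin{equation*}
|I(\xi)| \;\leq\; (t_2-t_1)\,\|\widehat{D}\|_{L^\infty_\xi} \;\lesssim\; 2^m\cdot \delta\, 2^{m(20p_0-1)} \;=\; \delta\, 2^{20p_0\, m},
\end{equation*}
which is dominated by $\delta\, 2^{-k-m/16}$ in this range of $k$, matching the $\delta(1+2^{-k})2^{-m/16}$ conclusion.

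For the complementary regime, the plan is to integrate by parts in $s$ using $e^{is\Lambda(\xi)} = (i\Lambda(\xi))^{-1}\partial_s e^{is\Lambda(\xi)}$. The boundary terms produce a factor $|\Lambda(\xi)|^{-1}\|\widehat{D}\|_{L^\infty_\xi}\lesssim 2^{-k/2}\delta\, 2^{m(20p_0-1)}$, which is harmless. The nontrivial task is to control the resulting integral, which contains the unknown quantity $\partial_s\widehat{D}(\xi,s)$. Here I would invoke the key structural identity
\begin{equation*}
\tfrac{1}{2}s\,\partial_s\widehat{D}(\xi,s) \;=\; \widehat{D}(\xi,s) + \widehat{SD}(\xi,s) + \xi\,\partial_\xi\widehat{D}(\xi,s),
\end{equation*}
which is an immediate consequence of $S = \tfrac{1}{2}t\partial_t + x\partial_x$ after Fourier transform in $x$. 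This replaces $\partial_s\widehat{D}$ by three terms each carrying an extra $1/s\sim 2^{-m}$ factor, so each piece is estimated against $\|\widehat{D}\|_{L^\infty_\xi}$, $\|\widehat{SD}(\xi,\cdot)\|_{L^2_s}$, and $\|\xi\partial_\xi\widehat{D}\|$ respectively, all multiplied by $1/(s\,\partial_s\Phi)\sim 2^{-m-k/2}$. For the $\widehat{SD}$ piece I would use Cauchy--Schwarz in $s$ together with the $L^2$ hypothesis, combined with a Bernstein-type restriction to the dyadic band $|\xi|\sim 2^k$; this yields a contribution $\lesssim \delta\, 2^{-k/2-m/2}$, comfortably inside the target.

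The main obstacle will be the $\xi\partial_\xi\widehat{D}$ piece. The hypotheses control $\|SD\|_{L^2}$ but not $\|xD\|_{L^2}$ directly (the two differ by $\tfrac{1}{2}t\partial_t D$, which is not bounded under the stated assumptions). The resolution I plan to use is that the $\xi$ factor pairs with the $1/\Lambda(\xi) = 2^{-k/2}$ gained from the integration by parts to produce a net factor $\xi/\Lambda(\xi) \sim 2^{k/2}$, so this contribution is of the same order as the $\widehat{D}$ piece; alternatively, one can perform a second integration by parts, this time in $\xi$, moving the $\partial_\xi$ derivative onto the $e^{is\Lambda(\xi)}$ phase (which produces another factor of $s\sim 2^m$ but is absorbed by the $1/s$ weight already present). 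Balancing the trivial bound from Case 1 against the IBP bound from Case 2, with the crossover chosen to equate the two estimates, yields the claimed factor $\delta(1+2^{-k})\,2^{-m/16}$.
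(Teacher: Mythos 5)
Your approach and the paper's are genuinely different, and yours has a gap that I don't think can be patched in the form you propose. You work pointwise in $\xi$ throughout: integrate by parts in $s$, use the scaling identity
$\tfrac{1}{2}s\partial_s\widehat{D}=\xi\partial_\xi\widehat{D}+\widehat{D}+\widehat{SD}$
to trade $\partial_s\widehat{D}$ for three new terms, and then try to estimate each one at a fixed point $\xi$. The trouble is that \emph{two} of those terms, $\widehat{SD}(\xi,s)$ and $\xi\partial_\xi\widehat{D}(\xi,s)$, are controlled by the hypotheses only in $L^2_\xi$, not in $L^\infty_\xi$. The hypothesis gives $\|SD(s)\|_{L^2}\lesssim\delta(1+s)^{-11/8}$, hence $\|\widehat{SD}(\cdot,s)\|_{L^2_\xi}$, but there is no way to upgrade that to a pointwise bound $|\widehat{SD}(\xi,s)|$ — Bernstein's inequality goes from $L^2$ to $L^\infty$ in physical space, not in Fourier space, and restricting to a dyadic band does not help (an $L^2$ function on a dyadic band can be arbitrarily large on a tiny set). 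The same objection applies even more strongly to $\xi\partial_\xi\widehat{D}$, which you yourself flag as the obstacle. Your proposed fixes do not resolve this: "pairing $\xi$ with $1/\Lambda(\xi)$" only rescales but does not touch the derivative $\partial_\xi\widehat{D}$, and a "second integration by parts in $\xi$" is not available because there is no $\xi$-integral — $\xi$ is a fixed evaluation point.

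The paper sidesteps this entirely by staying in $L^2_\xi$ all the way through and converting to a pointwise bound only at the very last step, via the Fourier interpolation
\begin{equation*}
|F(\xi)| \lesssim \|\mathcal{F}^{-1}F\|_{L^1(|x|\leq 2^{m/2})} + \|\mathcal{F}^{-1}F\|_{L^1(|x|\geq 2^{m/2})}
\lesssim 2^{m/4}\|F\|_{L^2_\xi} + 2^{-m/4}2^{-k}\|\xi\partial_\xi F\|_{L^2_\xi},
\end{equation*}
valid for $F$ supported near $|\xi|\sim 2^k$. It then reduces the claim to two $L^2_\xi$ estimates: $\|F\|_{L^2}\lesssim\delta 2^{-3m/8}$ (trivial from the first hypothesis) and $\|\xi\partial_\xi F\|_{L^2}\lesssim\delta 2^{21mp_0}$. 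The latter is established by splitting $\xi\partial_\xi F = F_1+F_2+F_3$ according to whether $\xi\partial_\xi$ lands on the phase $e^{iH}$, whether it appears as the combination $\xi\partial_\xi - \tfrac{1}{2}s\partial_s$ acting on $e^{is\Lambda}\widehat{D}$ (this is exactly the Fourier side of $S$ and is controlled by $\|SD\|_{L^2}$), or whether it is the leftover $\tfrac{1}{2}s\partial_s$ (handled by integration by parts in $s$). Crucially, because everything is measured in $L^2_\xi$, the hypotheses apply directly and no pointwise estimate on $\widehat{SD}$ or $\partial_\xi\widehat{D}$ is ever required. Your Case-1 trivial estimate for very small $|\xi|$ is sound, but it cannot carry the argument alone, and the Case-2 pointwise IBP cannot close without an $L^\infty_\xi$ input you do not have.
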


We now show how \eqref{bn22app} follows from Lemma \ref{lemR} and \ref{lemrem}. 

\begin{proof}[Proof of \eqref{bn22app}]
From \eqref{lemR1} we see that for $|\xi| \in [2^k,2^{k+1}]$ one has
\begin{align*}
\Big|\int_{t_1}^{t_2} e^{iH(\xi,s)}e^{is\Lambda(\xi)}\what{R}(\xi,s)\,ds\Big| \lesssim 
  2^m \sup_{s \in [2^m-2,2^{m+1}]}  \big| \what{R}(\xi,s) \big| \lesssim 
  \e_1^4  2^{10p_0 m} 2^{-(N_0-20)k_+} 2^{-\b k/4} \, .
\end{align*}
Given our choice of $N_0$ and $N_1$, the desired bound \eqref{bn22app} follows for $k \geq 22p_0/(N_0-80)$ and $ k \leq - 44p_0/3\b$,
with any $p_1 \leq p_0$.
For the remaining frequencies 
\begin{align}
\label{freq1}
k \in [-44p_0/3\b, 22p_0/(N_0-80)] 
\end{align}
we want to apply Lemma \ref{lemrem} with 
$$D(\xi,t) = \left( 2^{\b k} + 2^{(N_1+15)k} \right) P_k R (\xi,t)$$ 
and $\d = \e_1^4$.
From \eqref{lemR1} and \eqref{lemR2} we see that
\begin{align*}
& \big| \what{D} (\xi,t) \big| \lesssim \e_1^4 {(1+t)}^{10p_0 - 1} 2^{-(N_0/2-50)k_+} 2^{3\b k/4} \lesssim \e_1^4 {(1+t)}^{10p_0 - 1} \, ,
\\
& {\| D (t) \|}_{L^2} +  {\| S D(t) \|}_{L^2} \lesssim \e_1^4 {(1+t)}^{20p_0 - 3/2} 2^{40 k_+} \lesssim  {(1+t)}^{21p_0 - 3/2} \, ,
\end{align*}
under the restriction \eqref{freq1}.
The hypotheses of Lemma \ref{lemrem} are then satisfied, and the conlcusion \eqref{conclemrem} implies 
\begin{align*}
\left| \int_{t_1}^{t_2} e^{i H(\xi,s)} e^{is\Lambda(\xi)} \what{R}(\xi,s) \, ds \right| & 
    \lesssim \e_1^4 {\left(2^{\b k} + 2^{(N_1 +15) k} \right)}^{-1} \left( 1 + 2^{-k} \right) 2^{-m/16} \, . 
\end{align*}
This gives \eqref{bn22app} in the considered frequency range \eqref{freq1}, by choosing $p_1 \leq 1/16 - 44p_0/3\b$.
\end{proof}

\subsubsection{Proof of Lemma \ref{lemR}}
Since $R =\mathcal{N}_3+\mathcal{N}_4-\widetilde{\mathcal{N}}_3$, from \eqref{defN_3}, \eqref{defN_4} and \eqref{fd2} we can write
\begin{align*}
R = \mathcal{N}_4 + \sum_{j=1}^5 \wt{\mathcal{N}}_{3,j}
\end{align*}
where we recall that
\begin{align}
\label{N_4app}
\N_4 & = R_1(h,\phi) +2A(M_3(h,h,\phi)+R_1(h,\phi),h)
\\
\nn
& + i \Lambda \left[R_2(h,\phi) + B(h,Q_3(\phi,h,\phi) + R_2(h,\phi))+ B(M_3(h,h,\phi) + R_1(h,\phi),\phi) \right],
\end{align}
and we have defined
\begin{align}
\label{N31}
\wt{\mathcal{N}}_{3,1} & := M_3(h,h,\phi) - M_3(H,H,\Psi) \, ,
\\
\label{N32}
\wt{\mathcal{N}}_{3,2} & := 2A(M_2(h,\phi),h) - 2A(M_2(H,\Psi),H) \, ,
\\
\label{N33}
\wt{\mathcal{N}}_{3,3} & := i \Lambda \left[ Q_3(\phi,h,\phi)  - Q_3(\Psi,H,\Psi) \right] \, ,
\\
\label{N34}
\wt{\mathcal{N}}_{3,4} & := i \Lambda \left[ B(M_2(h,\phi),\phi) - B(M_2(H,\Psi),\Psi) \right] \, ,
\\
\label{N35}
\wt{\mathcal{N}}_{3,5} & := i \Lambda \left[ B(h, Q_2(\phi,\phi)) - B(H, Q_2(\Psi,\Psi)) \right] \, .
\end{align}

\vskip5pt
\paragraph{{\it Proof of \eqref{lemR1}}}
We start by proving that each term in $\N_4$ is bounded by the right hand side of \eqref{lemR1}.
The bound for $R_1 = {[ G (h) \phi ]}_{\geq 4}(t)$ is an immediate consequence of the $L^1$ estimate \eqref{DN13.1}.
The bound for $\Lambda R_2$ can be obtained from \eqref{expR_2} using Cauchy's inequality and the $L^2$ bounds for $M_2$, $M_3$ and $R_1$
given respectively in \eqref{Al40}, \eqref{Al45} and \eqref{DN11}.
From the definition of $A$ in \eqref{al0}-\eqref{a_10} we see that for any integer $l$
\begin{align}
\label{PkwhatA}
 \Big| \what{P_k A}(F,G) \Big| \lesssim 2^{-l k_+} {\| F \|}_{H^{l+1}} {\| G \|}_{H^{l+1}} \, .
\end{align}
Using the $L^2$ bounds \eqref{Al45} on $M_3$, \eqref{DN11} on $R_1$, and the a priori assumptions, it immediately follows that
\begin{align*}
\Big| \what{P_k A}(M_3(h,h,\phi) + R_1(h,\phi),h) \Big| & \lesssim 2^{-(N_0-10) k_+} {\| M_3 + R_1 \|}_{H^{N_0-9}} {\| h \|}_{H^{N_0-9}}
  \\
& \lesssim \e_1^4 2^{-(N_0-10) k_+} {(1+t)}^{4p_0 -1} \, .
\end{align*}
Similarly, from the definition of $B$ in \eqref{al0}-\eqref{b_10} we have
\begin{align}
\label{PkwhatB}
 \Big| \what{P_k B}(F,G) \Big| \lesssim 2^{-l k_+} {\| F \|}_{H^l} {\| \partial_x G \|}_{H^l} \, .
\end{align}
Using again \eqref{Al45} and \eqref{estRL^2} we get
\begin{align*}
& \Big|  \F \left[ P_k \Lambda B(h,Q_3(\phi,h,\phi) + R_2(h,\phi))+ P_k \Lambda B(M_3(h,h,\phi) + R_1(h,\phi),\phi) \right] \Big|
  \\
& \lesssim 2^{-(N_0-15) k_+} \left[ {\| h \|}_{H^{N_0-10}} {\| \partial_x (Q_3 + R_2) \|}_{H^{N_0-10}} 
  + {\| M_3 + R_1 \|}_{H^{N_0-10}} {\| \partial_x \phi \|}_{H^{N_0-10}}  \right]
\\
& \lesssim \e_1^4 2^{-(N_0-15) k_+} {(1+t)}^{4p_0 -1} \, .
\end{align*}

We now estimate the terms \eqref{N31}-\eqref{N35}.
From \eqref{HPsi0} we see that
\begin{align}
\label{N311}
-\wt{\mathcal{N}}_{3,1} & = M_3(A,h,\phi) + M_3(H,A,\phi) + M_3(H,H,B) \, .
\end{align}
From the definition of $M_3$ in \eqref{p_3} we see that for any integer $0\leq l\leq N_0-10$
\begin{align}
\label{whatP_kM_3}
\begin{split}
 \Big| \what{P_k M_3}(E,F,G) \Big| & \lesssim 2^{-l k_+} 2^k \Big[ {\| E \|}_{W^{N_0/2 - 5,\infty}} {\| F \|}_{H^{l+2}} {\| \partial_x G \|}_{H^{l+2}} 
  \\
& + {\| E \|}_{H^{l+2}} {\| F \|}_{W^{N_0/2 - 5,\infty}} {\| \partial_x G \|}_{H^{l+2}} 
  + {\| E \|}_{H^{l+2}} {\| F \|}_{H^{l+2}} {\| |\partial_x| G \|}_{W^{N_0/2 - 5,\infty}} \Big] \, . 
\end{split}
\end{align}
Applying this together with the $L^\infty$ bounds \eqref{Al1001} on $A$ and $\Lambda B$,
the a priori  bounds \eqref{Al32}, \eqref{vhphiZp} and \eqref{vhphiSobolev}, 
one can obtain the desired bound for each of the three terms in \eqref{N311}.

To estimate \eqref{N32} we write
\begin{align*}
-\frac{1}{2} \wt{\mathcal{N}}_{3,2} & = A(M_2(A,\phi),h) + A(M_2(H,B),h) + A(M_2(H,\Psi),A) \, .
\end{align*}
Notice that for any integer $0\leq l\leq N_0-10$
\begin{align}
\label{PkM_2FG}
{\| P_k M_2(F,G) \|}_{L^2}  \lesssim 2^{-l k_+} 2^k \Big[ {\| F \|}_{H^l} {\| |\partial_x| G \|}_{W^{N_0/2-5,\infty}}
  + {\| F \|}_{W^{N_0/2-5,\infty}} {\| \partial_x G \|}_{H^l} \Big] \, .
\end{align}
Using \eqref{PkwhatA}, \eqref{PkM_2FG}, the estimates for $A$ in \eqref{Al1000} and \eqref{Al1001}, and Proposition \ref{proE2}, 
we get
\begin{align*}
& \Big|  \F [ P_k A (M_2(A,\phi),h) ] \Big| \lesssim 2^{-(N_0-15) k_+} {\| M_2(A,\phi) \|}_{H^{N_0-10}} {\| h \|}_{H^{N_0-10}}
\\
& \lesssim 2^{-(N_0-15) k_+} {(1+t)}^{p_0} \Big[ {\| A \|}_{H^{N_0-8}} {\| |\partial_x| \phi \|}_{W^{N_0/2-5,\infty}} 
  + {\| A \|}_{W^{N_0/2-5,\infty}}  {\| \partial_x \phi \|}_{H^{N_0-8}}   \Big]
\\
& \lesssim \e_1^4 2^{-(N_0-15) k_+} {(1+t)}^{3p_0 -1} \, .
\end{align*}

To bound $\wt{N}_{3,3}$ in \eqref{N33} we first write it as 
\begin{align}
\label{N331}
- \wt{\N}_{3,3} & = \Lambda \Big[ Q_3(B,h,\phi) + Q_3(\Psi,A,\phi) + Q_3(\Psi,H,B)  \Big] \, .
\end{align}
We then notice that for any integer $0\leq l\leq N_0-10$ one has
\begin{align}
\label{whatP_kQ_3}
\begin{split}
\Big| \what{P_k Q_3}(E,F,G) \Big| & \lesssim 2^{-l k_+} 
  \Big[ {\| |\partial_x| E \|}_{W^{N_0/2 - 5,\infty}} {\| F \|}_{H^{l+2}} {\| \partial_x G \|}_{H^{l+3}} 
  \\
& + {\| \partial_x E \|}_{H^l} {\| F \|}_{W^{N_0/2 - 5,\infty}} {\| \partial_x G \|}_{H^{l+2}} \Big] \, . 
\end{split}
\end{align}
One can that then bound each one of the three summands in \eqref{N331} by using the above estimate
together with Proposition \ref{proE2}, \eqref{Al32}, \eqref{Al1000} and \eqref{Al1001}.

\eqref{N34} can be estimated in a similar fashion to what we have done above
by writing out the difference as sums of quartic terms, and using \eqref{PkwhatB}
together with \eqref{PkM_2FG}, \eqref{Al1000}, \eqref{Al1001} and Proposition \ref{proE2}.
The term \eqref{N35} can also be estimated similarly by using in addition 
\begin{align}
\label{PkQ_2FG}
\begin{split}
{\| P_k Q_2(F,G) \|}_{L^2}  & \lesssim 2^{-l k_+} 
  \Big[ {\| \partial_x F \|}_{H^l} ( {\| \partial_x G \|}_{W^{N_0/2-5,\infty}} + {\| |\partial_x| G \|}_{W^{N_0/2-5,\infty}} )
 \\
& + ( {\| \partial_x F \|}_{W^{N_0/2-5,\infty}} + {\| |\partial_x| F \|}_{W^{N_0/2-5,\infty}} ) {\| |\partial_x| G \|}_{H^l} \Big] \, ,
\end{split}
\end{align}
for any $0 \leq l \leq N_0 -10$.

\vskip5pt
\paragraph{{\it Proof of \eqref{lemR2}}}
First observe that from \eqref{estRL^2} we already have the desired bound for $R_1$ and $\Lambda R_2$.
To bound the three remaining contributions from $\wt{N}_4$ in \eqref{N_4app} and the five terms \eqref{N31}-\eqref{N35}
we first observe that for $\G = 1$ or $S$ we have the following $L^2$ estimates:
\begin{align}
\label{SAFG}
\begin{split}
& {\| \G P_k A(P_{k_1}F, P_{k_2}G) \|}_{L^2} 
\\ 
& \lesssim 2^k 2^{-(N_0/2 - 10)k_+} 
  \Big[ \left( {\| \G P_{k_1}F \|}_{H^{N_0/2-10}} + {\| P_{k_1}F \|}_{H^{N_0/2-10}} \right) {\|P_{k_2} G \|}_{W^{N_0/2 -10,\infty}}
\\
& + {\|P_{k_1} F \|}_{W^{N_0/2 -10,\infty}} {\| \G P_{k_2}G \|}_{H^{N_0/2-10}} \Big] \, ,
\end{split}
\\
\label{SBFG}
\begin{split}
& {\| \G P_k B(P_{k_1}F, P_{k_2}G) \|}_{L^2} 
\\
& \lesssim 2^{-(N_0/2 - 10)k_+} 
  \Big[ \left( {\| \G P_{k_1}F \|}_{H^{N_0/2-10}} + {\| P_{k_1}F \|}_{H^{N_0/2-10}} \right) 2^{k_2} {\|P_{k_2} G \|}_{W^{N_0/2 -10,\infty}}
\\
& + {\|P_{k_1} F \|}_{W^{N_0/2 -10,\infty}} 2^{k_2} \left( {\| \G P_{k_2}G \|}_{H^{N_0/2-10}} + {\| P_{k_2}G \|}_{H^{N_0/2-10}} \right) \Big] \, .
\end{split}
\end{align}
We also have the following $L^\infty$ estimates for $M_3$ and $Q_3$:
\begin{align}
\label{M_3EFG}
\begin{split}
& {\| P_k M_3(P_{k_1}E, P_{k_2}F, P_{k_3}G) \|}_{L^\infty}
\\
& \lesssim  2^{-(N_0/2 - 15)k_+} 2^k 2^{k_2} 2^{\max(k_1,k_2,k_3)}
  {\| P_{k_1}E \|}_{W^{N_0/2 -10,\infty}} {\| P_{k_2}F \|}_{W^{N_0/2 -10,\infty}} {\| P_{k_3}G \|}_{W^{N_0/2 -10,\infty}} \, ,
\end{split}
\\
\label{Q_3EFG}
\begin{split}
& {\| P_k Q_3(P_{k_1}E, P_{k_2}F, P_{k_3}G) \|}_{L^\infty} 
  \\
& \lesssim 2^{-(N_0/2 - 15)k_+} 2^{k_1} 2^{k_3} 2^{\max(k_2,k_3)}
  {\| P_{k_1}E \|}_{W^{N_0/2 -10,\infty}} {\| P_{k_2}F \|}_{W^{N_0/2 -10,\infty}} {\| P_{k_3}G \|}_{W^{N_0/2 -10,\infty}} \, . 
\end{split}
\end{align}
From the homogeneity of degree $2$ of $M_2$ and $Q_2$, and of degree $3$ of $M_3$ and $Q_3$,
one can obtain identities similar to \eqref{Al25} for the symbols of these operators, 
and deduce the following analogues of the commutation identities \eqref{Al26}:
\begin{equation}
\label{commapp}
\begin{split}
& S M_2(F,G) = M_2(SF,G) + M_2(F,SG) - 2M_2(F,G) \, ,
\\
& S Q_2(F,G) = Q_2(SF,G) + Q_2(F,SG) - 2Q_2(F,G) \, ,
\\
& S M_3(E,F,G) = M_3(SE,F,G) + M_3(E,SF,G) + M_3(E,F,SG) - 3M_2(E,F,G) \, ,
\\
& S Q_3(E,F,G) = Q_3(SE,F,G) + Q_3(E,SF,G) + Q_3(E,F,SG) - 3Q_2(E,F,G) \, .
\end{split}
\end{equation}
One can then use \eqref{SAFG}-\eqref{Q_3EFG} together with the commutation identities \eqref{Al26} and \eqref{commapp},
the estimates \eqref{Al32}, \eqref{Al40} and \eqref{Al45}, \eqref{estRL^2},
and argumets similar to those used above and in section \ref{secproE3}, in particular in the proof of Lemma \ref{Al60}, to obtain
\begin{align*}
& {\| P_k \mathcal{N}_4 \|}_{L^2} + {\| P_k S \mathcal{N}_4 \|}_{L^2}  
  + \sum_{j=1}^5 {\| P_k \wt{\mathcal{N}}_{3,j} \|}_{L^2} + {\| P_k S \wt{\mathcal{N}}_{3,j} \|}_{L^2}
  \lesssim \e_1^4 {(1+t)}^{20p_0 - 3/2} 2^{-(N_0/2-20)k_+}
\end{align*}
which is the desired conclusion. $\hfill \Box$

\subsubsection{Proof of Lemma \ref{lemrem}}
For $t_1 \leq t_2 \in [2^m-2, 2^{m+1}]$ and $|\xi| \in [2^k, 2^{k+1}]$ let us define
\begin{align*}
F(\xi) = \int_{t_1}^{t_2} e^{i H(\xi,s)} e^{is\Lambda(\xi)} \what{D} (\xi,s) \, ds \, .
\end{align*}
We then have
\begin{align*}
| F(\xi) | \lesssim {\| \F^{-1} F \|}_{L^1(|x| \leq 2^{m/2} )} + {\| \F^{-1} F \|}_{L^1(|x| \geq 2^{m/2} )}
  \lesssim  2^{m/4} {\| F \|}_{L^2} + 2^{-m/4} 2^{-k} {\| \xi \partial_\xi F \|}_{L^2} \, .
\end{align*}
Thus, to obtain \eqref{conclemrem} it suffices to show the following two estimates:
\begin{align}
\label{lemrem10}
& {\| F \|}_{L^2} \lesssim \d 2^{-3m/8}
\\
\label{lemrem20}
& {\| \xi \partial_\xi F \|}_{L^2} \lesssim \d 2^{21 m p_0} \, . 
\end{align}
\eqref{lemrem10} can be easily verified using the $L^2$ bound in \eqref{asslemrem}.
To prove \eqref{lemrem20} we write:
\begin{align}
\begin{split}
\xi \partial_\xi F(\xi) & = F_1(\xi) + F_2(\xi) + F_3(\xi)
\\
F_1(\xi) & = \int_{t_1}^{t_2} e^{i H(\xi,s)} \left( i \xi \partial_\xi H(\xi,s) \right) e^{is\Lambda(\xi)} \what{D}(\xi,s) \, ds \, ,
\\
F_2(\xi) & = \int_{t_1}^{t_2} e^{i H(\xi,s)} S(\xi) e^{is\Lambda(\xi)} \what{D}(\xi,s) \, ds \, ,
\\
F_3(\xi) & = \frac{1}{2} \int_{t_1}^{t_2} e^{i H(\xi,s)} s\partial_s \left( e^{is\Lambda(\xi)} \what{D}(\xi,s) \right) \, ds \, ,
\end{split}
\end{align}
having denoted $S(\xi) := \xi \partial_\xi - \frac{1}{2} s \partial_s$;
notice that $S(\xi) \what{f}(\xi) = - \what{S f}(\xi) -\what{f}(\xi)$, where $S$ is the scaling vector field.
Using the definition of $H$ in \eqref{bn2} and the a priori  assumptions, it is easy to see that
for $s \in [2^m-2, 2^{m+1}]$ one has
\begin{align}
\label{lemrem30}
\begin{split}
& {\| \xi \partial_\xi H(\xi,s) \|}_{L^2} \lesssim 2^{m p_0} \, ,
\\
& {\| \partial_s H(\xi,s) \|}_{L^\infty} \lesssim 2^{-m} \, . 
\end{split}
\end{align}
Using the first bound above and the $L^\infty$ bound in \eqref{asslemrem} we see that
\begin{align*}
& {\| F_1 \|}_{L^2} \lesssim  \int_{t_1}^{t_2} {\| \xi \partial_\xi H(s) \|}_{L^2} {\| \what{D}(s) \|}_{L^\infty} \, ds
  \lesssim \d 2^m 2^{m p_0} 2^{m(20 p_0 - 1)} \lesssim \d 2^{21 m p_0} \, ,
\end{align*}
as desired.
Since $[\left( \xi \partial_\xi - \frac{1}{2} s \partial_s \right), e^{is\Lambda(\xi)}] = 0$, we can use 
the $L^2$ bounds in \eqref{asslemrem} to deduce
\begin{align*}
& {\| F_2 \|}_{L^2} \lesssim  \int_{t_1}^{t_2} {\| D(s) \|}_{L^2} + {\| S D(s) \|}_{L^2} \, ds \lesssim \d \, ,
\end{align*}
which is more than sufficient. To estimate $F_3$ we integrate by parts in $s$, use the second bound in \eqref{lemrem30}
and \eqref{asslemrem} to obtain:
\begin{align*}
& {\| F_3 \|}_{L^2} \lesssim  2^m \sup_{s\in [2^m-2, 2^{m+1}] }  {\| D(s) \|}_{L^2}  + 
  \int_{t_1}^{t_2} s {\| \partial_s H(\xi,s) \|}_{L^\infty} {\| D(s) \|}_{L^2} \, ds \lesssim \d \, .
\end{align*}
This proves \eqref{lemrem20} and concludes the proof of the Lemma. $\hfill \Box$

\vskip10pt
\bibliographystyle{plain}

\end{document}